\title{Generic transversality of travelling fronts, standing fronts, and standing pulses for parabolic gradient systems}
\author{Romain \textsc{Joly} and Emmanuel \textsc{Risler}}
\begin{document}
\hypersetup{pageanchor=false} 
\maketitle
\nnfootnote{%
\emph{2020 Mathematics Subject Classification:} 35K57, 37C20, 37C29, 37J46.\\%
\emph{Key words and phrases:} parabolic gradient systems, travelling fronts, standing fronts and pulses, homoclinic and heteroclinic orbits of Hamiltonian systems, generic transversality, Morse--Smale theorem.
}
\begin{abstract}
For nonlinear parabolic systems of the form
\[
\partial_t w(x,t) = \partial_{x}^2 w(x,t) - \nabla V\bigl(w(x,t)\bigr)
\,,
\]
the following conclusions are proved to hold generically with respect to the potential $V$: every travelling front invading a minimum point of $V$ is bistable, there is no standing front, every standing pulse is stable at infinity, the profiles of these fronts and pulses approach their limits at $\pm\infty$ tangentially to the eigenspaces corresponding to the smallest eigenvalues of $D^2V$ at these points, these fronts and pulses are robust with respect to small perturbations of the potential, and the set of their profiles is discrete. These conclusions are obtained as consequences of generic transversality results for heteroclinic and homoclinic solutions of the differential systems governing the profiles of such fronts and pulses. Among these results, it is proved that, for a generic Hamiltonian system of the form
\[
\ddot u=\nabla V(u)
\,,
\]
every asymmetric homoclinic orbit is transverse and every symmetric homoclinic orbit is elementary.
\end{abstract}
\thispagestyle{empty}
\pagestyle{empty}
\pagebreak
\tableofcontents
\pagebreak
\hypersetup{pageanchor=true} 
\pagestyle{plain}
\setcounter{page}{1}
\section{Introduction}
\label{sec:intro}
The purpose of this paper is to address the generic properties of travelling fronts and standing fronts/pulses of nonlinear parabolic systems of the form
\begin{equation}
\label{partial_differential_system}
\partial_t w(x,t) = \partial_{x}^2 w(x,t) - \nabla V\bigl(w(x,t)\bigr)
\,,
\end{equation}
where time variable $t$ and space variable $x$ are real, the spatial domain is the whole real line, the function $(x,t)\mapsto w(x,t)$ takes its values in $\rr^d$ with $d$ a positive integer, and the nonlinearity is the gradient of a \emph{potential} function $V:\rr^d\to\rr$, which is assumed to be regular (of class at least $\ccc^2$). Travelling fronts and standing fronts/pulses are the solutions of system \cref{partial_differential_system} of the form $w(x,t)=u(x-c t)$ that are stationary in a travelling ($c>0$) or standing ($c=0$) frame and that approach critical points of $V$ at the two ends of space. An insight into the main results of this paper (\cref{thm:main}, completed with \cref{thm:generic_asympt_behaviour}) is provided by the following corollary, illustrated by \cref{fig:spaghetti}. Its terms are precisely defined in the following \namecrefs{subsec:trav_fronts_stand_fronts_stand_pulses}. 
\begin{corollary}
\label{cor:main}
For a generic potential $V$ the following conclusions hold:
\begin{enumerate}
\item every travelling front invading a minimum point of $V$ is bistable;
\label{item:cor_main_bistable_front}
\item there is no standing front, and every standing pulse is stable at infinity;
\label{item:cor_main_bistable_pulse}
\item the set of all bistable travelling fronts and all standing pulses is discrete;
\label{item:cor_main_countable}
\item every travelling front and every standing pulse (considered individually) is robust with respect to small perturbations of $V$;
\label{item:cor_main_robust}
\item the profile of every bistable travelling front or standing pulse stable at infinity approaches its limit at $+\infty$ ($-\infty$) tangentially to the eigenspace corresponding to the smallest eigenvalue of $D^2V$ at this point.
\label{item:cor_main_direction_of_approach_of_limits_at_ends_of_R}
\end{enumerate}
\end{corollary}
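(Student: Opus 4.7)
The plan is to deduce each of the five claims from \cref{thm:main} (generic transversality of connecting manifolds of the profile ODE) and \cref{thm:generic_asympt_behaviour} (generic description of the approach of connecting orbits to their endpoints), via a dimension count for the stable and unstable manifolds at the relevant critical points of the profile ODE $\ddot u+c\dot u=\nabla V(u)$. At a critical point $p$ of Morse index $k$, the characteristic equation $\mu^2+c\mu-\lambda=0$ attached to each eigenvalue $\lambda$ of $D^2V(p)$ yields $\dim W^u(p)=d-k$ and $\dim W^s(p)=d+k$ when $c>0$; when $c=0$ the same equation gives $\dim W^u(p)=\dim W^s(p)=d-k$, both manifolds being trapped in the level set $\{H=-V(p)\}$ of the Hamiltonian $H(u,\dot u)=\frac{1}{2}|\dot u|^2-V(u)$, of dimension $2d-1$.

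For \cref{item:cor_main_bistable_front} I would treat the speed $c$ as an additional parameter and work in the extended phase space $(u,\dot u,c)\in\rr^{2d+1}$, in which the center-unstable manifold of $p_-$ and the center-stable manifold of $p_+$ have dimensions $d-k_-+1$ and $d+k_++1$. Transversality along the connecting orbit, granted by \cref{thm:main} for generic $V$, produces an intersection of dimension $k_+-k_-+1$. For this to accommodate the orbit one needs $k_+\ge k_-$; if $p_+$ is a minimum, then $k_+=0$ forces $k_-=0$, which is bistability. The same transversality, examined at a fixed $c$ on a section transverse to the flow, gives a zero-dimensional intersection, which yields the discreteness claim \cref{item:cor_main_countable}; the openness of transverse intersections, combined with the implicit function theorem, delivers the robustness in \cref{item:cor_main_robust} at once.

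For \cref{item:cor_main_bistable_pulse} I would first eliminate standing fronts: conservation of $H$ forces $V(p_-)=V(p_+)$ for any standing orbit between distinct critical points, but for generic $V$ the critical values of $V$ are pairwise distinct, so no such connection exists. For a standing pulse the orbit lies in $W^u(p)\cap W^s(p)$ inside $\{H=-V(p)\}$, and transversality there gives an intersection dimension $(d-k)+(d-k)-(2d-1)=1-2k$; for this to accommodate a one-dimensional orbit one needs $k=0$, hence $p$ is a minimum. The main subtlety, and the step I expect to require the most care, is that \cref{thm:main} delivers full transversality only for asymmetric homoclinics and mere elementarity for symmetric ones; for the latter I would restrict to the fixed-point set of the reversing involution and redo the count, the symmetry decreasing the level set and both invariant manifolds by the same codimension and thus preserving the formula $1-2k$.

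Finally, \cref{item:cor_main_direction_of_approach_of_limits_at_ends_of_R} is a direct translation of \cref{thm:generic_asympt_behaviour}. The moduli of the stable eigenvalues at a minimum $p_+$, namely $(c+\sqrt{c^2+4\lambda_i})/2$ for $c>0$ and $\sqrt{\lambda_i}$ for $c=0$, are strictly increasing functions of the eigenvalues $\lambda_i$ of $D^2V(p_+)$, so the slowest contracting direction is spanned by the eigenspace of the smallest $\lambda_i$. For generic $V$ that smallest eigenvalue is simple and, by \cref{thm:generic_asympt_behaviour}, the connecting orbit has a non-zero component on the associated one-dimensional subspace, which is exactly the tangency claim at $+\infty$; the argument at $-\infty$ is identical, applied to the unstable eigenvalues at $p_-$.
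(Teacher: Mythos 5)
Your proposal follows essentially the same strategy as the paper (dimension counting with the transversality of \cref{thm:main} and the invariance of the Hamiltonian), and items \cref{item:cor_main_bistable_front}, \cref{item:cor_main_countable}, \cref{item:cor_main_robust} and \cref{item:cor_main_direction_of_approach_of_limits_at_ends_of_R}, as well as the standing-front and asymmetric-pulse parts of item \cref{item:cor_main_bistable_pulse}, match the argument the paper gives in \cref{subsec:proof_thm_travelling}.

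The weak spot is your treatment of \emph{symmetric} standing pulses. You propose to ``restrict to the fixed-point set of the reversing involution and redo the count, the symmetry decreasing the level set and both invariant manifolds by the same codimension and thus preserving the formula $1-2k$''. That is not what elementarity (\cref{def:elementary_symm_stand_pulse}) says, and the dimension count does not come out to $1-2k$. Elementarity is a transversality statement in the \emph{ambient space} $\rr^{2d}$, not in the energy level set: it asserts that $\Wu_{V}(E)$ (of dimension $d-m(e)$) meets the $d$-dimensional subspace $\sssSym$ transversally at the turning point $U(\xiTurn)$. The expected intersection dimension is therefore $(d-m(e))+d-2d = -m(e)$, and because the intersection must contain $U(\xiTurn)$ it is nonempty, which forces $m(e)=0$. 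Your ``$1-2k$'' is the count for the \emph{asymmetric} case inside the $(2d-1)$-dimensional level set, and it does not transfer to the symmetric case by any ``codimension cancellation'': $\sssSym$ is not even contained in the level set, so the restriction you describe does not make sense. Both routes happen to give $k=0$, but only because the numerical outcome coincidentally agrees; the justification as written is incorrect.

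Relatedly, your discreteness argument (``at a fixed $c$ on a section transverse to the flow'') only covers travelling fronts and asymmetric pulses. For symmetric pulses there is no speed parameter and the relevant finiteness comes from the fact that $\Wu_{V}(E)\cap\sssSym$, being a transverse intersection in $\rr^{2d}$ of a $(d)$-dimensional space with a $(d-m(e))=d$-dimensional manifold, is discrete, hence the set of turning points is discrete. This is the argument the paper actually uses; you should state it separately rather than fold it into the flow-section picture.
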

\begin{figure}[htbp]
\centering
\resizebox{\textwidth}{!}{\input{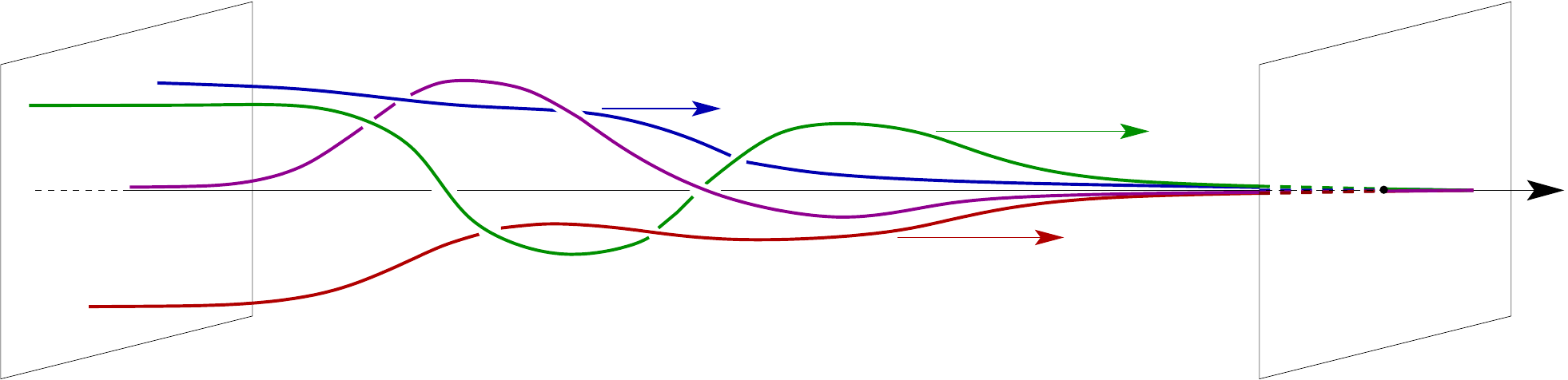_t}}
\caption{Illustration of \cref{cor:main}. The coloured lines represent the profiles of travelling fronts or standing fronts/pulses $w_i(x,t)=u_i(x-c_i t)$ approaching a minimum point $e_+$ of a given potential at the right end of space. If this potential is generic, the critical point $e_{i,-}$ approached at the left end of space by every such profile must be a minimum point, and for the standing profile (speed $c=0$) this minimum point must be $e_+$. In addition, these profiles (up to translation of the argument) are isolated from each other, so that the set of such profiles (up to translation of the argument) is countable with respect to both speed and profile, and robust with respect to small perturbations of the potential. Additionally (conclusion \cref{item:cor_main_direction_of_approach_of_limits_at_ends_of_R}), these profiles approach their limits $e_+$ ($e_{i,-}$) tangentially to the eigenspace corresponding to the smallest eigenvalue of $D^2V(e_+)$ ($D^2V(e_{i,-})$), but this last feature is not displayed on the figure.}
\label{fig:spaghetti}
\end{figure}
\subsection{Travelling fronts, standing fronts and standing pulses}
\label{subsec:trav_fronts_stand_fronts_stand_pulses}
Let $c$ be a real quantity. A function 
\[
u:\rr\to\rr^d, \quad \xi\mapsto u(\xi)
\]
is the profile of a wave travelling at speed $c$ (if $c$ is nonzero), respectively a stationary solution (if $c$ equals $0$), for system \cref{partial_differential_system} if the function $w:(x,t)\mapsto u(x-ct)$ is a solution of this system, that is if $u$ is a solution of the second order differential system
\begin{equation}
\label{trav_wave_system_order_2}
\ddot u = -c\dot u + \nabla V(u) 
\,,
\end{equation}
where $\dot u$ and $\ddot u$ denote the first and second derivatives of $u$. 
Up to applying the transform $(\xi,c)\mapsto(-\xi,-c)$, which leaves system \cref{trav_wave_system_order_2} unchanged, we may assume that that the speed $c$ is nonnegative (and will always do so). Let us recall that a critical point of the potential $V$ is a point $e$ of $\rr^d$ such that $\nabla V(e)=0$, and that a non-degenerate local minimum point of $V$ is a critical point $m$ of $V$ such that $D^2V(m)$ is positive definite. If $e_-$ and $e_+$ are two critical points of $V$, and if $u$ is a \emph{non-constant} global solution of system \cref{trav_wave_system_order_2} such that the following limits hold
\begin{equation}
\label{approach_to_crit_points_at_ends_of_R}
u(\xi)\xrightarrow[\xi\to -\infty]{} e_-
\quad\text{and}\quad
u(\xi)\xrightarrow[\xi\to -\infty]{} e_+
\,,
\end{equation}
then the solution $(x,t)\mapsto u(x-ct)$ of \cref{partial_differential_system} is said to \emph{connect $e_-$ to $e_+$} and is called:
\begin{itemize}
\item a \emph{travelling front} if $c\neq 0$ and $e_-\neq e_+$,
\item a \emph{standing front} if $c=0$ and $e_-\neq e_+$,
\item a \emph{standing pulse} if $c=0$ and $e_-=e_+$. 
\end{itemize}
In addition, a travelling or standing front connecting a critical point $e_-$ to a critical point $e_+$ is said to be \emph{bistable} if both these critical points are non-degenerate (local or global) minimum points of $V$. Accordingly, a standing pulse connecting a critical point to itself is said to be \emph{stable at infinity} if this critical point is a non-degenerate (local or global) minimum point of $V$. Among standing pulses, it is relevant to distinguish \emph{symmetric pulses}, which are even with respect to some time (the solution goes back and forth following the same path), from \emph{asymmetric pulses} which are not.

Travelling fronts and standing fronts and pulses can be interpreted in terms of energy as follows. Let us denote by $\tilde{V}$ the opposite potential $-V$. Then, in system \cref{trav_wave_system_order_2} (where the argument $\xi$ plays the role of a time), the speed plays the role of a damping coefficient, and the nonlinear conservative force derives from the potential $\tilde{V}$. In other words, the system governs the motion of a ball rolling on the graph of $\tilde{V}$, submitted to the gravitational force and to a friction force $-c\dot u$. Its \emph{Hamiltonian energy} is the function $H_V$ defined as:
\begin{equation}
\label{Hamiltonian}
H_V: \rr^{2d}\to \rr
\,,\qquad
(u,v) \mapsto \frac{1}{2}|v|^2 - V(u)
= \frac{1}{2}|v|^2 + \tilde{V}(u)
\,,
\end{equation}
and, for every solution $\xi\mapsto u(\xi)$ of this system and every time $\xi$ where this solution is defined, the time derivative of $H_V$ along this solution reads 
\begin{equation}
\label{time_derivative_Hamiltonian}
\frac{d}{d\xi}H_V\bigl(u(\xi),\dot u(\xi)\bigr) = -c |\dot u(\xi)|^2
\,.
\end{equation}
As a consequence, if such a solution satisfies the limits \cref{approach_to_crit_points_at_ends_of_R},  
\begin{itemize}
\item if $c$ is positive then $e_-$ and $e_+$ must differ and $V(e_-)$ must be smaller than $V(e_+)$; then,
\begin{itemize}
\item from the point of view of the parabolic system \cref{partial_differential_system}, the travelling front will be said to \emph{invade} the ``higher'' (with respect to $V$) critical point $e_+$ (which is ``replaced'' with the ``lower'' one $e_-$); 
\item from the point of view of the Hamiltonian system \cref{trav_wave_system_order_2} the damping ``absorbs'' the positive lag $\tilde{V}(e_-)-\tilde{V}(e_+)$ (the ``higher'' critical point with respect to $\tilde{V}$ is $e_-$ and the ``lower'' one is $e_+$);
\end{itemize}
\item and if $c$ is zero then $e_-$ and $e_+$ must belong to the same level set of $V$.
\end{itemize}
In addition, as explained on \cref{fig:two_dim_heterocline,fig:two_dim_sym_homocline,fig:two_dim_asym_homocline}, the mechanical interpretation provides an intuitive explanation of \cref{cor:main}. 
\begin{figure}[htbp]
\centering
\resizebox{0.6\textwidth}{!}{\input{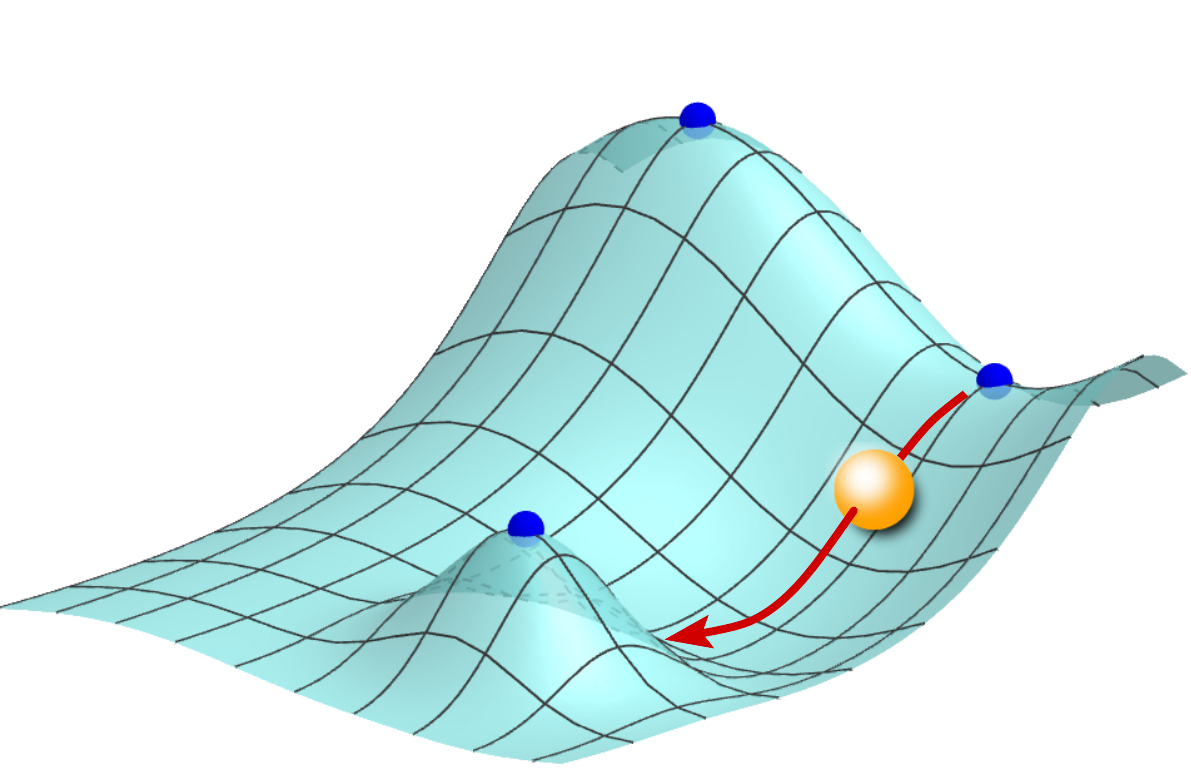_t}}
\caption{Heteroclinic connections between critical points belonging to different level sets of $V$ for system \cref{trav_wave_system_order_2} (dimension $d$ equals $2$). This system governs the motion of a ball rolling on the surface $u\mapsto \tilde{V}(u)=-V(u)$, submitted to the gravitational force and to a friction force $-c\dot u$. The minimum points $e_1$ and $e_3$ of $V$ are maximum points for $-V$, whereas $e_2$ denotes a saddle point. A travelling front connecting $e_1$ or $e_2$ to $e_3$ corresponds to the ball leaving $e_1$ or $e_2$ with speed zero at time $-\infty$, and rolling towards $e_3$ with the suitable damping $c$ such that is reaches $e_3$ at rest when time goes to $+\infty$. Roughly speaking, this asymptotic behaviour in the future requires two conditions: the right direction (towards $e_3$) and the right damping (to reach $e_3$ and stop there). As can be intuitively seen on the figure, starting from $e_1$ provides two degrees of freedom (direction and damping), whereas starting from $e_2$ provides only one (damping). For that reason, connections between $e_1$ and $e_3$ (bistable travelling fronts invading $e_3$) are expected to occur generically and to be a robust feature, by contrast with connections between $e_2$ and $e_3$ (non bistable travelling fronts invading $e_3$), which should occur only for rare potentials. Conclusions \cref{item:cor_main_bistable_front,item:cor_main_countable} of \cref{cor:main} above and \Cref{thm:main} below formally confirm these expectations.}
\label{fig:two_dim_heterocline}
\end{figure}
\begin{figure}[p]
\centering
\resizebox{0.44\textwidth}{!}{\input{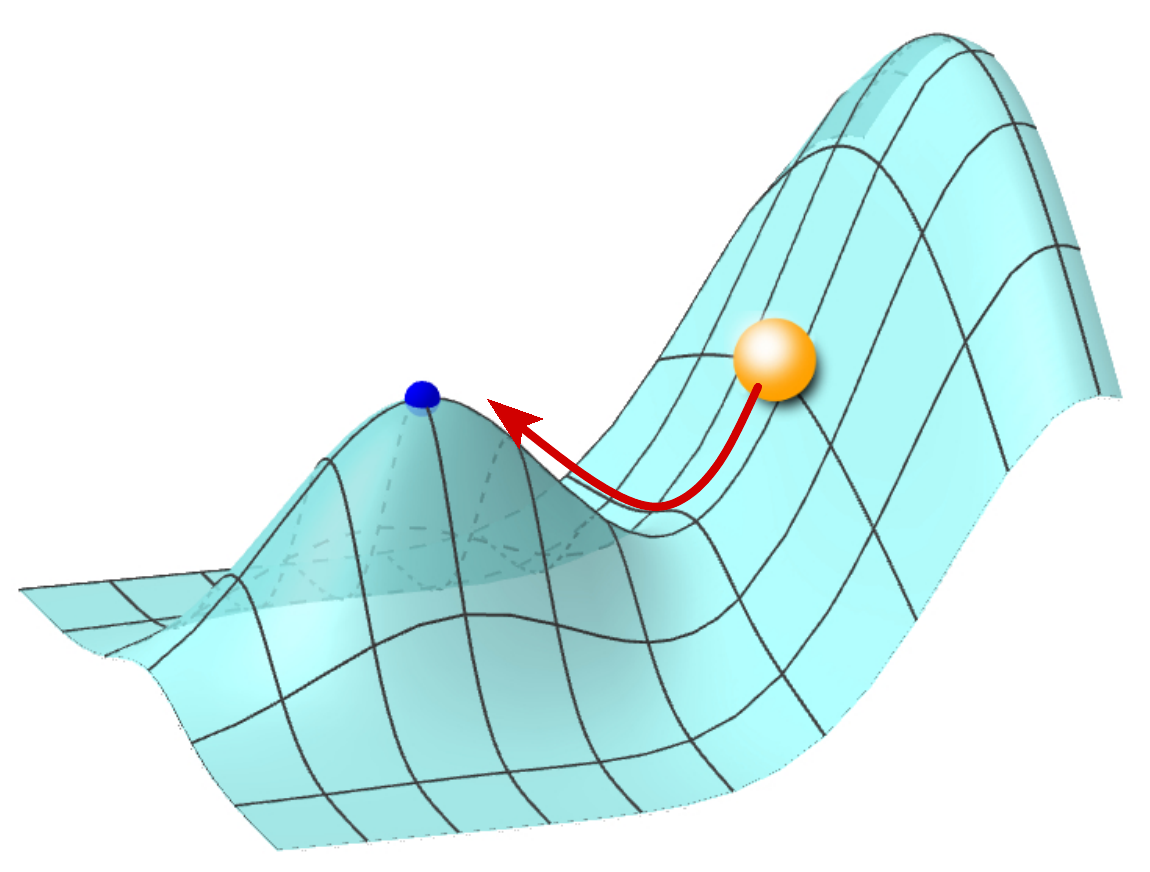_t}}\\[2mm]
\caption{A symmetric standing pulse. A ball is dropped with speed zero at the same level of $V$ as the critical point $e$, and there is no damping. Because the Hamiltonian (energy) is conserved, reaching $e$ as time goes to $+\infty$ only requires to adjust the ``direction'' towards $e$. If $e$ is a minimum point of $V$ (a maximum point of $-V$) as on the figure, this condition can be fulfilled by choosing the adequate dropping point on the one-dimensional level set $V^{-1}\bigl(\{e\}\bigr)$. If by contrast $e$ was a saddle point, the dropping point should \emph{also} lie on the one-dimensional stable manifold of $e$, adding an additional condition. For that reason, symmetric standing pulses stable at infinity are expected to be a generic and robust feature, whereas those not stable at infinity should not occur but for rare potentials. Conclusions \cref{item:cor_main_bistable_pulse,item:cor_main_countable} of \cref{cor:main} above and \cref{thm:main} below confirm these expectations.}
\label{fig:two_dim_sym_homocline}
\end{figure}
\begin{figure}[p]
\centering
\resizebox{0.48\textwidth}{!}{\input{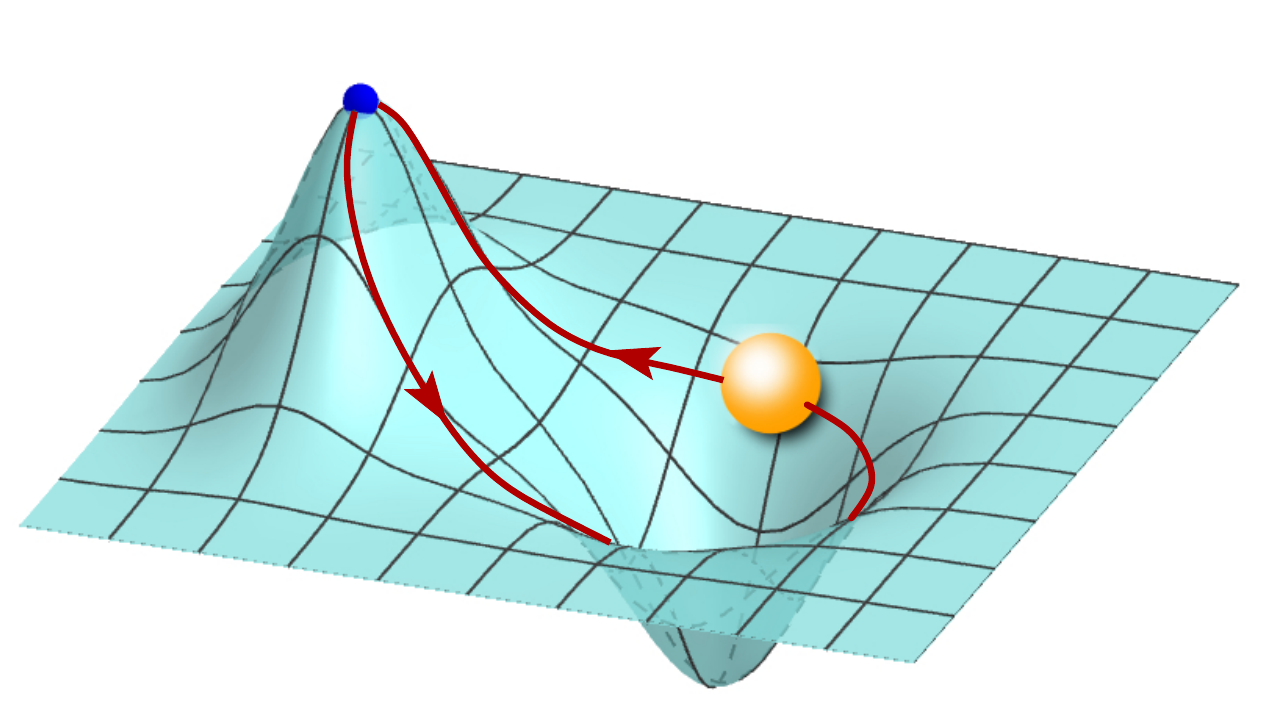_t}}\\[2mm]
\caption{An asymmetric standing pulse. A ball ``leaves'' the critical point $e$ with speed zero at time $-\infty$, and there is no damping. Because the Hamiltonian (energy) is conserved, going back to $e$ as time goes to $+\infty$ only requires to adjust the ``direction'' towards $e$. If $e$ is a minimum point of $V$ (a maximum point of $-V$) as on the figure, this condition can be fulfilled by leaving $e$ in the adequate direction. If by contrast $e$ was a saddle point, there would be no choice for the direction of leaving (and in addition, going back to $e$ would require to do so through a particular direction). For that reason, asymmetric standing pulses stable at infinity are expected to be a generic and robust feature, whereas those not stable at infinity should not occur but for rare potentials. Conclusions \cref{item:cor_main_bistable_pulse,item:cor_main_countable} of \cref{cor:main} above and \cref{thm:main} below confirm these expectations.}
\label{fig:two_dim_asym_homocline}
\end{figure}
\subsection{Differential system governing the profiles of fronts and pulses}
Keeping the previous notation, let us consider the vector field
\begin{equation}
\label{vector_field}
F_{c,V}: \rr^{2d}\to\rr^{2d}, 
\qquad
\begin{pmatrix}
u \\ v 
\end{pmatrix}
\mapsto 
\begin{pmatrix}
v \\ \nabla V(u)-cv 
\end{pmatrix}
\,.
\end{equation}
The second order differential system \cref{trav_wave_system_order_2} is equivalent to the first order differential system
\begin{equation}
\label{trav_wave_system_order_1}
\left\{
\begin{aligned}
\dot u &= v \\
\dot v &= \nabla V(u)-cv
\end{aligned}
\right.
\quad\text{or equivalently}\quad
\dot U = F_{c,V}(U) \,\text{ with }\, U=(u,v)\in \rr^{2d}\,.
\end{equation}
A point $E$ of $\rr^{2d}$ is an equilibrium point of system \cref{trav_wave_system_order_1} if and only if there exists a critical point $e$ of $V$ such that $E$ equals $(e,0)$. Assume that $e$ is \emph{non-degenerate}, or in other words that $0$ is not in the spectrum of the symmetric matrix $D^2V(e)$. Let $\Ws_{c,V}(E)$ and $\Wu_{c,V}(E)$ denote the \emph{stable} and \emph{unstable manifolds} of $E$ for the differential system \cref{trav_wave_system_order_1}. 
Recall that each of these manifolds is defined as the union of the images of the solutions $\xi\mapsto U(\xi)$ that converge to $E$ at an exponential rate as $\xi$ goes to $+\infty$/$-\infty$, tangentially to the stable/unstable linear space of this equilibrium (see \cref{sec:dynamics}). The following statement, proved in \cref{subsec:proof_prop_equivalence_fronts_pulses_connections}, formalizes the correspondence between the profiles of travelling fronts and standing fronts/pulses and the intersections between such manifolds.
\begin{proposition}
\label{prop:profiles_of_fronts_pulses_as_intersections_between_Wu_Ws}
Let $e_-$ and $e_+$ be two (possibly equal) non-degenerate critical points of $V$, let $E_-$ and $E_+$ denote the corresponding equilibria for system \cref{trav_wave_system_order_1}, and let $c$ denote a real (zero or nonzero) quantity. 
For every profile $\xi\mapsto u(\xi)$ of a front/pulse connecting $e_-$ to $e_+$ and travelling at speed $c$ (or standing if $c$ equals zero), the image of the corresponding solution $\xi\mapsto \bigl(u(\xi),\dot u(\xi)\bigr)$ of system \cref{trav_wave_system_order_1} belongs to $\Wu_{c,V}(E_-)\cap\Ws_{c,V}(E_+)$. 
\end{proposition}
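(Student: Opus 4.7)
The plan is to verify, for the solution $U(\xi)=\bigl(u(\xi),\dot u(\xi)\bigr)$ of system \cref{trav_wave_system_order_1}, the three defining properties of membership in $\Wu_{c,V}(E_-)\cap\Ws_{c,V}(E_+)$: convergence $U(\xi)\to E_\pm$ as $\xi\to\pm\infty$, at an exponential rate, tangentially to the stable (respectively unstable) linear subspace of the linearization $DF_{c,V}(E_\pm)$. By symmetry, I will focus on the behaviour at $+\infty$.

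First, I would upgrade the hypothesis $u(\xi)\to e_+$ to the full statement $U(\xi)\to E_+$, i.e.\ $\dot u(\xi)\to 0$. Boundedness of $\dot u$ near $+\infty$ follows from the energy identity \cref{time_derivative_Hamiltonian}: for $c>0$ the Hamiltonian $H_V$ is non-increasing, while for $c=0$ it is conserved, so in either case the relation $|\dot u|^2=2\bigl(H_V+V(u)\bigr)$ keeps $|\dot u|$ bounded because $u$ is. To conclude $\dot u\to 0$ I would argue by contradiction: if along some subsequence $\xi_n\to+\infty$ one had $\dot u(\xi_n)\to v_\infty\neq 0$, then continuous dependence on initial data would make the translated profiles $\tau\mapsto u(\xi_n+\tau)$ converge uniformly on compact intervals to the solution of \cref{trav_wave_system_order_2} starting at $(e_+,v_\infty)$, which is non-constant; but those translates also converge pointwise to the constant $e_+$ by hypothesis, a contradiction.

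Next, to obtain exponential convergence tangential to the stable subspace, the natural tool is the classical local stable manifold theorem. The linearization $DF_{c,V}(E_+)$ has characteristic values $\lambda$ satisfying $\lambda^2+c\lambda-\mu=0$ for each eigenvalue $\mu$ of $D^2V(e_+)$; when $c>0$, non-degeneracy of $e_+$ (so $\mu\neq 0$) forces $\mathrm{Re}(\lambda)\neq 0$ for every such $\lambda$, and hence $E_+$ is hyperbolic. Any orbit entering a sufficiently small neighbourhood of $E_+$ and converging to it must then lie on the local stable manifold, which provides both the exponential decay and the required tangency; propagating along the flow places the entire orbit in $\Ws_{c,V}(E_+)$, and the symmetric argument at $-\infty$ places it in $\Wu_{c,V}(E_-)$.

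The hard part will be the remaining case $c=0$ with $e_+$ not a local minimum of $V$: the characteristic equation then produces purely imaginary roots $\pm i\sqrt{-\mu}$ for each negative eigenvalue $\mu$ of $D^2V(e_+)$, so $E_+$ is a saddle-centre rather than a hyperbolic equilibrium, and convergence of an orbit to $E_+$ is not automatically exponential. Here I would exploit the Hamiltonian structure: conservation of $H_V$ pins the orbit to the level set $\{H_V=H_V(E_+)\}$, and decomposing $U-E_+$ along the stable, unstable and centre subspaces of $DF_{0,V}(E_+)$, one shows that the centre component of any orbit converging to $E_+$ must vanish---otherwise the purely oscillatory linear dynamics in the centre directions, which the $o(|U-E_+|)$ nonlinear correction cannot destroy over arbitrarily long times, would prevent convergence. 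The orbit therefore lies on the strong stable manifold associated with the negative-real-part spectrum, and the required exponential decay tangential to the stable linear subspace is recovered.
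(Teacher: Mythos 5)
Your proposal follows the same overall architecture as the paper's proof: first upgrade $u(\xi)\to e_\pm$ to $\bigl(u(\xi),\dot u(\xi)\bigr)\to E_\pm$, then invoke the local stable/unstable manifold structure at $E_\pm$ to place the orbit in $\Wu_{c,V}(E_-)\cap\Ws_{c,V}(E_+)$, with the only genuine difficulty being the case $c=0$ with $e_\pm$ not a minimum point (so $E_\pm$ is a saddle-centre). Your route to $\dot u\to 0$ (boundedness of $\dot u$ from the energy identity, compactness of $(u,\dot u)(\xi_n)$, continuous dependence, and uniqueness of the constant limit) is a legitimate alternative to the paper's argument (which obtains $\dot u\in L^2$ for $c>0$ from integrating \cref{time_derivative_Hamiltonian} together with monotonicity of $H_V$, and for $c=0$ uses uniform continuity of $\dot u$ arising from $\ddot u\to 0$); both are standard and reach the same conclusion.

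The gap is in your treatment of the non-hyperbolic case $c=0$, $m(e_+)>0$. You need to rule out a nonzero dominant centre component for an orbit converging to $E_+$, and your justification --- that purely oscillatory linear dynamics in the centre directions, with only an $o(|U-E_+|)$ correction, cannot lead to convergence --- is not a valid argument by itself: for a general nonlinear system, arbitrarily small perturbations of a linear centre can produce a weakly attracting focus, so having an oscillatory linear part does not preclude convergence. You correctly sense that the Hamiltonian structure is the right tool, but the way you use it (the orbit is pinned to a level set of $H_V$) is too weak; you must exploit the explicit form $H_V(u,v)=\tfrac12|v|^2-V(u)$. The paper's \cref{lem:approach_through_stable_unstable_manifold} does this as follows: if the orbit is not in the stable manifold, then the centre component of $U(\xi)-E_+$ dominates as $\xi\to+\infty$; projecting onto position coordinates, $u(\xi)-e_+$ lies asymptotically in the span of the eigenvectors of $D^2V(e_+)$ with \emph{negative} eigenvalues, on which $D^2V(e_+)$ is negative definite, so $V\bigl(u(\xi)\bigr)<V(e_+)$ for $\xi$ large; but conservation of $H_V$ along the orbit reads $V\bigl(u(\xi)\bigr)=V(e_+)+\tfrac12|\dot u(\xi)|^2\ge V(e_+)$, a contradiction. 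You would need to supply this (or an equivalent) argument to close the gap; the appeal to oscillatory dynamics alone does not.
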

The meaning of this proposition is twofold. First, it states that the convergence of $u(\xi)$ towards $e_\pm$ at $\pm\infty$ yields the convergence of $\bigl(u(\xi),\dot u(\xi)\bigr)$ towards $(e_\pm,0)$. In other words, every profile of a travelling or standing front of the partial differential system \cref{partial_differential_system} corresponds to a heteroclinic orbit of system \cref{trav_wave_system_order_1}, and every profile of a standing pulse corresponds to a homoclinic orbit of this system. Second, those convergences occur at an exponential rate, thus not along a centre manifold (which exists for a non-degenerate critical point which is not a minimum point when $c$ vanishes, see \cref{subsec:eigenspaces_and_dimensions}).
\subsection{Transversality of fronts and pulses}
Usually, the transversality of a heteroclinic orbit connecting two equilibria $E_-$ and $E_+$ is defined as the transversality of the intersection between the unstable manifold of $E_-$ and the stable manifold of $E_+$. For travelling fronts, however, the freedom of moving the speed $c$ must be taken into account, and leads to the following definition.
\begin{definition}[transversality of a travelling front]
\label{def:transverse_travelling_front}
Let $e_-$ and $e_+$ be two non-degenerate critical points of $V$ and let $E_-$ and $E_+$ denote the corresponding equilibria for system \cref{trav_wave_system_order_1}. A front with profile $\xi\mapsto u(\xi)$ travelling at a positive speed $c$ and connecting $e_-$ to $e_+$ is said to be \emph{transverse} if the intersection
\[
\left(\bigcup_{c'>0} \{c'\}\times \Wu_{c',V}(E_-) \right) 
\cap 
\left(\bigcup_{c'>0} \{c'\}\times \Ws_{c',V}(E_+) \right) 
\]
is transverse, in $\rr^{2d+1}$, along the set $\{c\}\times U(\rr)$.
\end{definition}
For a standing pulse (connecting a critical point $e$ of $V$ to itself) the speed $c$ equals $0$, so that system \cref{trav_wave_system_order_1} preserves the Hamiltonian $H_V$ defined in \cref{Hamiltonian}. As a consequence, the stable and unstable manifolds of the equilibrium $E$ corresponding to $e$ belong to the same level set of $H_V$, so that the transversality between those manifolds cannot hold in $\rr^{2d}$, but only in this level set (which is a $2d-1$-manifold of class $\ccc^{k+1}$ outside of the set of equilibria). This leads to the following definition.
\begin{definition}[transversality of a standing pulse]
\label{def:transverse_stand_pulse}
Let $e$ denote a non-degenerate critical point of $V$ and let $E=(e,0)$. A standing pulse with profile $\xi\mapsto u(\xi)$ and connecting $e$ to itself is said to be \emph{transverse} if the intersection
\[
\Wu_{0,V}(E)\cap\Ws_{0,V}(E)
\]
is transverse, inside the level set $H_V^{-1}\bigl(-V(e)\bigr)$ deprived of $E$, along the trajectory $U(\rr)$.
\end{definition}
As mentioned above, standing pulses divide into two classes (symmetric and asymmetric, see \cref{fig:two_dim_sym_homocline,fig:two_dim_asym_homocline}), which will require separate treatments in the proofs. Here is a more precise definition. 
\begin{definition}[symmetric standing pulse, turning time]
\label{def:symmetric_pulse}
Let $e$ denote a non-degenerate critical point of $V$. A standing pulse with profile $\xi\mapsto u(\xi)$ connecting $e$ to itself is said to be \emph{symmetric} if there exists a time $\xiTurn$, called the \emph{turning time} of the pulse, such that $\dot u(\xiTurn)$ vanishes; or equivalently, such that $U(\xiTurn)$ belongs to $\rr^d\times\{0_{\rr^d}\}$. This subspace $\rr^d\times\{0_{\rr^d}\}$, often called the \emph{reversibility} or \emph{symmetry} subspace, will be denoted by $\sssSym$. 
\end{definition}
If such a turning time exists then it is unique and the profile of the pulse is indeed symmetric with respect to this turning time, see \cref{lem:equivalent_def_symm_hom_orbit}. Note that in the scalar case $d=1$ every standing pulse is symmetric (the derivative $\dot u$ must vanish if the solution approaches the same limits at both ends of $\rr$). 
For symmetric standing pulses (for any value of the dimension $d$), instead of the transversality defined in \cref{def:transverse_stand_pulse}, the following weaker transversality property (\cite{Devaney_reversibleDiffeoFlows_1976,FiedlerVanderbauwhede_homoclinicPeriodBlowUpRevConSyst_1992,Harterich_cascadesReversibleHomoclinicOrbitsSaddleFocus_1998}) will be required. 
\begin{definition}[elementary symmetric standing pulse]
\label{def:elementary_symm_stand_pulse}
Assume that the standing pulse $\xi\mapsto u(\xi)$ is symmetric with turning time $\xiTurn$. This pulse is said to be \emph{elementary} if the intersection
\[
\Wu_{0,V}(E)\cap\sssSym
\]
is transverse, in $\rr^{2d}$, at the point $U(\xiTurn)$. The feature of being elementary, for a standing pulse, will be called \emph{elementarity}.
\end{definition} 
Note that every transverse symmetric standing pulse is elementary: due to the time reversibility when $c$ is zero, a non-transverse intersection between $\Wu_{0,V}(E)$ and $\sssSym$ induces a non-transverse intersection between $\Wu_{0,V}(E)$ and $\Ws_{0,V}(E)$. But the converse is false: for a symmetric standing pulse, the intersection $\Wu_{0,V}(E)\cap\Ws_{0,V}(E)$ may be non-transverse in the sense of \cref{def:transverse_stand_pulse} while this intersection still crosses transversally the reversibility subspace $\sssSym$. This may occur, for instance, if a symmetric standing pulse is the limit of a one-parameter family of asymmetric ones. 
\subsection{The space of potentials}
\label{subsec:space_of_potentials}
For the remaining of the paper, let us take and fix an integer $k\ge1$. Let us denote by $\CkbFull$ the space of functions $\rr^d\to\rr$ of class $\ccc^{k+1}$ which are bounded, as well as their derivatives of order not larger than $k+1$, equipped with the norm
\[
\norm{W}_{\CkbShort} = \max_{\alpha\text{ multi-index, }|\alpha|\leq k+1} \|\partial^{|\alpha|}_{u_\alpha} W\|_{L^\infty(\rr^d,\rr)}
\,. 
\]
Let us embed the larger space $\CkFull$ with the following topology: for $V$ in this space, a basis of neighbourhoods of $V$ is given by the sets $V+\ooo$, where $\ooo$ is an open subset of $\CkbFull$ embedded with the topology defined by $\norm{\cdot}_{\CkbShort}$. This topology (which can be viewed as the one of an extended metric) is convenient, since local properties may be studied in Banach spaces of the form 
\[
V + \left(\CkbFull,\normCkb\right)
\,,
\]
with $\normCkb$ viewed as a classical norm. In this paper, the space $\CkFull$ will always be embedded with this topology (if a topology is needed) and $\Big(\CkFull,\normCkb\Big)$ will be denoted simply by $\CkFull$. 

Let us recall that a subset $A$ of a topological set $B$ is said to be a \emph{generic subset of $B$} if it contains a countable intersection of dense open subsets of $B$; accordingly, a property is said to \emph{hold for a generic potential} if it holds for every potential in a generic subset of $\CkFull$. 
It is important to notice that $\CkFull$ is a Baire space because it is locally equal to the Baire space $\CkbFull$. 
Thus, the notion of genericity provides relevant definitions of ``large'' subsets and ``almost everywhere satisfied'' properties. Other definitions exist and the results stated in this paper presumably still hold for those (the interested reader may consider \cite{Aronszajn_differentiabilityLipschitzianMappingsBanachSpaces_1976,OttYorke_prevalence_2005,Joly_adaptationGenericPDEResultsPrevalence_2007,BernardMandorino_someRemarksOnThomTransversalityTheorem_2015}).

Actually, the results stated in this paper also hold with other natural topologies, such as Whitney's topology. However the space $\CkFull$ is not locally a metric space for Whitney's topology (which is not characterized by sequences) and this leads to technical difficulties. The framework chosen above is thus convenient to state the main arguments while avoiding unessential technicalities, but the choice of the topology is not a key issue.

To finish, let us recall that a function having only non-degenerate critical points is commonly called a \emph{Morse function}. According to a classical result (see for instance \cite{Hirsch_differentialTopology_1976}), the set of Morse functions is a generic subset of $\CkFull$. Since the intersection of two generic subsets is still a generic subset, and since our purpose is to state results which hold generically, assuming that the potential $V$ under consideration is a Morse function does not present any inconvenience. As a consequence, only nondegenerate critical points will be considered in the following, and the potential $V$ will often be assumed to be a Morse function. 
\subsection{Main results}
The following generic transversality statement is the main result of this paper. 
\begin{theorem}[generic transversality of fronts and pulses]
\label{thm:main}$~$\\
There exists a generic subset of $\CkFull$ such that, for every potential function $V$ in this subset, $V$ is a Morse function and the following conclusions hold for the fronts and pulses defined by $V$: 
\begin{enumerate}
\item every travelling front invading a minimum point of $V$ is transverse;
\label{item_thm:main_travelling_front}
\item every symmetric standing pulse is elementary;
\label{item_thm:main_symmetric_standing_pulse}
\item every asymmetric standing pulse is transverse; 
\label{item_thm:main_asymmetric_standing_pulse}
\item there is no standing front.
\label{item_thm:main_standing_front}
\end{enumerate}
\end{theorem}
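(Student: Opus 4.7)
I would prove the four conclusions in parallel, each by realizing ``bad'' connections as the zero set of a suitable evaluation map and applying a Sard--Smale style argument. Since ``$V$ is a Morse function with pairwise distinct critical values'' is an open and dense condition in $\CkFull$, I may assume it throughout, work locally on a neighbourhood $V_0 + \ooo$ of a fixed such $V_0$, and there label the non-degenerate critical points by a locally constant index set $\bigl\{e_i(V)\bigr\}_i$. It then suffices, for each ordered pair $(e_-, e_+)$ of labelled critical points and each of the three remaining connection types (travelling front invading a minimum, symmetric standing pulse, asymmetric standing pulse), to exhibit a dense open subset of $V_0 + \ooo$ on which every connection of that type is transverse or elementary; the countable intersection over such pairs, extended to a countable cover of $\CkFull$ by Morse neighbourhoods, then yields the generic subset sought.

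\textbf{Absence of standing fronts.} Conclusion \cref{item_thm:main_standing_front} is immediate under this reduction: when $c = 0$ the Hamiltonian $H_V$ from \cref{Hamiltonian} is preserved along the flow of \cref{trav_wave_system_order_1}, so a standing front connecting $e_-$ to $e_+$ would force $V(e_-) = V(e_+)$, contradicting generic distinctness of critical values.

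\textbf{Parametric transversality for items \cref{item_thm:main_travelling_front}, \cref{item_thm:main_symmetric_standing_pulse}, \cref{item_thm:main_asymmetric_standing_pulse}.} For each of these I would encode the existence of a connection as the vanishing of an evaluation map built from local charts of the relevant invariant manifolds together with a well-chosen section. For item \cref{item_thm:main_travelling_front}, I take local charts of $\bigcup_{c'>0}\{c'\}\times\Wu_{c',V}(E_-)$ and of $\bigcup_{c'>0}\{c'\}\times\Ws_{c',V}(E_+)$ near the front and a codimension-$1$ transversal $\Sigma$ to the flow at a chosen point on the trajectory; the resulting ``gluing'' map has a regular zero iff the front is transverse in the sense of \cref{def:transverse_travelling_front}. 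For item \cref{item_thm:main_symmetric_standing_pulse}, the analogous map is the projection of $\Wu_{0,V}(E)$ onto the velocity coordinate (whose zero set is $\Wu_{0,V}(E) \cap \sssSym$), and elementarity is transversality of this zero at the turning time. For item \cref{item_thm:main_asymmetric_standing_pulse}, everything takes place inside the level set $H_V^{-1}\bigl(-V(e)\bigr)$, a smooth codimension-$1$ submanifold of $\rr^{2d}$ away from $E$, in which transversality of $\Wu_{0,V}(E) \cap \Ws_{0,V}(E)$ is the sought condition. In each case the parametric transversality theorem reduces the claim to showing that $0$ is a regular value of the full $V$-parameterized evaluation map along its zero set.

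\textbf{The main obstacle.} The heart of the proof is a perturbation lemma: at every zero $(U,V)$ of the evaluation map, the partial derivative in $V$ must cover the cokernel of the partial derivative in $U$. By Fredholm theory along $u$, this cokernel is naturally identified with bounded solutions $\zeta$ of the adjoint linearized equation
\[
\ddot\zeta - c\dot\zeta - D^2V\bigl(u(\xi)\bigr)\zeta = 0
\]
decaying at the appropriate ends (and, for pulses, additionally constrained by $H_V$ and, in the symmetric case, by reversibility). A perturbation $\delta V \in \CkbFull$ contributes $-\nabla\delta V\bigl(u(\xi)\bigr)$ to the right-hand side, whose pairing with $\zeta$ is proportional to
\[
\int_\rr \bigl\langle \nabla\delta V\bigl(u(\xi)\bigr), \zeta(\xi)\bigr\rangle\, d\xi.
\]
The task is to construct a compactly supported $\delta V$ on $\rr^d$ making this integral nonzero for any prescribed $\zeta$. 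This rests on the existence of a time $\xi_0$ at which $\xi\mapsto u(\xi)$ is a local embedding into $\rr^d$ and $\zeta(\xi_0)\neq 0$: a bump for $\delta V$ localized at $u(\xi_0)$ then produces the desired pairing. The technical difficulty concentrates on the pulse cases: for symmetric pulses one must choose $\xi_0$ off the reversibility locus so that the bump is not cancelled by the symmetry $(u,v)\mapsto(u,-v)$, while for asymmetric pulses one must simultaneously respect the Hamiltonian constraint and exploit the asymmetry of $u$ to break the reversibility that would otherwise prevent transversality inside $H_V^{-1}\bigl(-V(e)\bigr)$. Verifying, case by case, that the geometric freedom of the orbit in $\rr^d$ suffices to realize the entire (constrained) cokernel is where the bulk of the technical work will lie.
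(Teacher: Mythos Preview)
Your overall architecture---Sard--Smale applied to an evaluation map, cokernel identified with solutions of the adjoint linearization, perturbation of $V$ via a bump localized along the orbit---is exactly the paper's. However, two steps are stated too optimistically and, as written, would not go through.

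\textbf{The perturbation lemma is harder than you indicate.} You write that it suffices to find $\xi_0$ where $u$ is a local embedding and $\zeta(\xi_0)\neq0$, then place a bump at $u(\xi_0)$. First, even a local embedding does not prevent the orbit from revisiting the same point of $\rr^d$ at other times, so the bump contributes at several $\xi$-values simultaneously; the paper has to prove a ``values reached only once'' statement (its \cref{prop:values_reached_only_once}) to secure an interval where this does not happen. Second, and more seriously, the perturbation enters only through a \emph{gradient} $\nabla W(u)$, so you cannot freely prescribe its direction while keeping $W$ localized. When the adjoint solution is tangent to the orbit, i.e.\ $\zeta(\xi)=\alpha(\xi)\dot u(\xi)$ on an interval, the naive bump produces $\int\alpha(\xi)\,\frac{d}{d\xi}W(u(\xi))\,d\xi$, which vanishes identically if $\alpha$ is constant. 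The paper handles this by a three-case analysis: if $\zeta\not\parallel\dot u$ somewhere, a bump transverse to $\dot u$ works (Case~1); if $\zeta=\alpha(\xi)\dot u$ with $\alpha$ nonconstant, one integrates by parts and uses a different bump (Case~2); and the case $\alpha$ constant is ruled out by ad hoc arguments specific to each setting---the dissipation identity $c\alpha|\dot u|^2=0$ for travelling fronts, the vanishing $\dot u(\xi_{\mathrm{turn}})=0$ for symmetric pulses, and the Hamiltonian-level-set constraint for asymmetric pulses (Case~3). Your sketch contains none of this, and without it the surjectivity onto the cokernel is not established.

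\textbf{The passage from local to global genericity fails as stated.} You propose to extend the local result ``to a countable cover of $\CkFull$ by Morse neighbourhoods''. But with the extended topology of \cref{subsec:space_of_potentials}, $\CkFull$ is not separable (even locally it is $\CkbFull$, which is not separable), so no countable cover exists. The paper circumvents this by first proving everything in the separable affine subspace $\vvvQuad{R}$ of potentials quadratic past radius $R$ (where the flow is also automatically global, another issue you do not address), and then transferring the genericity to $\vvvFull$ via restriction maps and a careful argument (\cref{sec:proof_main}) that hinges on choosing the dense open sets $\ooo_N$ so that membership depends only on $V|_{\widebar B(0,R)}$. This machinery is not optional.
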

The core of this paper (\cref{sec:generic_transversality_travelling_fronts,sec:generic_elementarity_sym_stand_pulses,sec:generic_tranversality_asym_stand_pulses,sec:gen_non_existence_standing_fronts}) is devoted to the proof of this result among potentials which are quadratic past some radius (see their definition in \cref{notation_vvvQuad_of_R}). For such potentials, conclusion \cref{item_thm:main_travelling_front} is proved by \cref{prop:global_generic_transversality_travelling_fronts}, conclusion \cref{item_thm:main_symmetric_standing_pulse}, by \cref{prop:global_generic_elementarity_sym_stand_pulses}, conclusion \cref{item_thm:main_asymmetric_standing_pulse} by \cref{prop:global_generic_transversality_asym_stand_pulses}, and conclusion \cref{item_thm:main_standing_front} by \cref{prop:gen_non_existence_standing_fronts}. \Cref{sec:generic_transversality_travelling_fronts,sec:generic_elementarity_sym_stand_pulses} are devoted, respectively, to the proofs of these propositions. In \cref{sec:proof_main}, the proof of \cref{thm:main} is completed by extending these conclusions to general potentials of $\CkFull$ (not necessarily quadratic past some radius), and the qualitative conclusions \cref{item:cor_main_bistable_front,item:cor_main_bistable_pulse,item:cor_main_countable,item:cor_main_robust} of \cref{cor:main} are derived from \cref{thm:main}. 

Using the same techniques, the following extension of \cref{thm:main} (and of conclusions \cref{item:cor_main_bistable_front,item:cor_main_bistable_pulse,item:cor_main_countable,item:cor_main_robust} of \cref{cor:main}) is proved in \cref{sec:generic_asympt_behaviour}. The second conclusion of this extension is nothing but the last conclusion \cref{item:cor_main_direction_of_approach_of_limits_at_ends_of_R} of \cref{cor:main}. 
\begin{theorem}
\label{thm:generic_asympt_behaviour}
There exists a generic subset of $\CkFull$ such that, for every potential function $V$ in this subset, in addition to the conclusions of \cref{thm:main} (and to the conclusions \cref{item:cor_main_bistable_front,item:cor_main_bistable_pulse,item:cor_main_countable,item:cor_main_robust} of \cref{cor:main}), the following two additional conclusions hold:
\begin{enumerate}
\item for every minimum point of $V$, the smallest eigenvalue of the Hessian $D^2V$ at this minimum point is simple; 
\label{item:thm_generic_asympt_behaviour_simple_smallest_eig}
\item the profile of every bistable travelling front or standing pulse stable at infinity approaches its limit at $+\infty$ ($-\infty$) tangentially to the eigenspace corresponding to the smallest eigenvalue of $D^2V$ at this point. 
\label{item:thm_generic_asympt_behaviour_approach_limits}
\end{enumerate}
\end{theorem}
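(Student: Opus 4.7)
The plan is to derive both conclusions by intersecting, with the generic set produced by \cref{thm:main}, two further generic sets: one enforcing the spectral simplicity of conclusion \cref{item:thm_generic_asympt_behaviour_simple_smallest_eig}, and one enforcing the tangential approach of conclusion \cref{item:thm_generic_asympt_behaviour_approach_limits}. Throughout, I would follow the reduction used in the rest of the paper, first treating potentials which are quadratic past some radius and then extending to general potentials in $\CkFull$ exactly as in \cref{sec:proof_main}.

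Conclusion \cref{item:thm_generic_asympt_behaviour_simple_smallest_eig} is a matter of linear algebra plus a standard perturbation. At each non-degenerate minimum $e$ the Hessian $D^2V(e)$ is symmetric positive definite, and the subset of such matrices with simple smallest eigenvalue is open and dense. For a potential quadratic past some radius (hence having only finitely many critical points), one adds to $V$ a small quadratic form $\langle Q(u-e),u-e\rangle$ cut off by a bump function whose support avoids all other critical points. Such a perturbation modifies $D^2V(e)$ freely among symmetric matrices, breaks any multiple smallest eigenvalue, and creates no new critical point; together with the openness of the property, this provides density, hence a generic subset after a finite intersection over the minima.

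For conclusion \cref{item:thm_generic_asympt_behaviour_approach_limits}, the relevant linear-algebraic observation is that at a minimum $e_+$ the stable spectrum of the linearization $DF_{c,V}(E_+)$ consists of the negative numbers $\mu_j^- = \bigl(-c-\sqrt{c^2+4\lambda_j}\bigr)/2$, where $0<\lambda_1\le\cdots\le\lambda_d$ are the eigenvalues of $D^2V(e_+)$, and that the least negative one is $\mu_1^-$. When $\lambda_1$ is simple (as granted by conclusion \cref{item:thm_generic_asympt_behaviour_simple_smallest_eig}), the slow stable eigenspace in $\rr^{2d}$ is one-dimensional and projects onto the $\lambda_1$-eigendirection in $\rr^d$. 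Standard invariant manifold theory then furnishes a codimension-one strong stable submanifold $W^{ss}_{c,V}(E_+)\subset\Ws_{c,V}(E_+)$ formed by the orbits decaying strictly faster than the slow rate, and every orbit of $\Ws_{c,V}(E_+)$ not contained in $W^{ss}_{c,V}(E_+)$ necessarily approaches $E_+$ tangentially to the slow eigenspace. The conclusion is therefore equivalent to the statement that the profile of each front or pulse avoids $W^{ss}_{c,V}(E_+)$ at $+\infty$, and symmetrically avoids the strong unstable submanifold $W^{uu}_{c,V}(E_-)$ at $-\infty$.

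The avoidance of these strong submanifolds is a codimension-one transversality condition with respect to the potential, to be established by the same parameter-dependent transversality scheme employed in \cref{sec:generic_transversality_travelling_fronts,sec:generic_elementarity_sym_stand_pulses,sec:generic_tranversality_asym_stand_pulses}. Concretely, for each bistable front or pulse stable at infinity I would consider the projection of the orbit onto the one-dimensional quotient $\Ws_{c,V}(E_+)/W^{ss}_{c,V}(E_+)$ and show, by a surjectivity lemma, that a compactly supported perturbation of $V$ localized along the trajectory and away from both equilibria moves this projection freely. Since conclusion \cref{item:cor_main_countable} of \cref{cor:main} guarantees that the set of such fronts and pulses is countable, a countable intersection of open dense sets yields the desired generic subset; for standing pulses one restricts to the energy level $H_V^{-1}\bigl(-V(e)\bigr)$, and for symmetric pulses to the reversibility subspace $\sssSym$, mirroring the treatment in the corresponding sections. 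The main difficulty lies in this surjectivity lemma: one must check that the strong-stable component of the orbit responds non-trivially to the localized perturbations of $V$, while keeping track of the fact that $W^{ss}_{c,V}(E_+)$ itself varies with $V$. This is a refinement of the transversality computations of the preceding sections, specialized to a distinguished one-dimensional coordinate in $\Ws_{c,V}(E_+)$ rather than to the full stable direction.
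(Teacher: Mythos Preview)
Your treatment of conclusion \cref{item:thm_generic_asympt_behaviour_simple_smallest_eig} matches the paper's, and your reduction of conclusion \cref{item:thm_generic_asympt_behaviour_approach_limits} to the statement that profiles generically avoid the strong stable and strong unstable submanifolds is exactly the right reformulation. However, the execution you sketch for the latter has a gap, and it also misses a substantial simplification.

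The gap is your appeal to countability. You propose to invoke conclusion \cref{item:cor_main_countable} of \cref{cor:main} to enumerate the fronts and pulses and then intersect open dense sets indexed by them. But that enumeration is available only \emph{for each fixed generic $V$}; there is no single countable family over which to intersect as $V$ varies through $\vvvFull$. When you perturb $V$ to push one profile off $W^{\mathrm{ss}}$, all other profiles move too, and nothing in your argument prevents a new profile from being created inside $W^{\mathrm{ss}}$ under the perturbation. Controlling this uniformly in $V$ is precisely what the Sard--Smale scheme is for, so the countability shortcut does not close the argument on its own.

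The paper's route is simpler and sidesteps this entirely. Rather than projecting onto the quotient $\Ws_{c,V}(E_+)/W^{\mathrm{ss}}_{c,V}(E_+)$ and proving a new surjectivity lemma for that one coordinate, the paper reruns the Sard--Smale setups of \cref{sec:generic_transversality_travelling_fronts,sec:generic_elementarity_sym_stand_pulses,sec:generic_tranversality_asym_stand_pulses} verbatim, with a single substitution: replace the sphere $\bbbu$ (or $\bbbs$) bounding the local unstable (stable) manifold by the corresponding sphere for the local \emph{strongly} unstable (stable) manifold. This lowers $\dim(\mmm)$ or $\dim(\www)$ by one, so that now $\dim(\mmm)-\codim(\www)=-1$ and a transverse intersection with $\www$ is forced to be empty. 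The key observation, which makes this cost nothing, is that the perturbation \cref{lem:reach_all_directions_orthogonal_to_UzeroPrime_of_xiZero,lem:reach_all_directions_symm_stand_pulse,lem:reach_all_directions_asymm_stand_pulse} never use the freedom in the variables $\bu$ or $\bs$; they rely only on the time variable $\xi$ and on the potential $V$. Hence hypothesis \cref{item:thm_sard_smale_condition_transversality} of \cref{thm:Sard_Smale} carries over with no new computation whatsoever. This is the content of \cref{subsec:idea_proof_generic_asympt_behaviour}.
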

As explained in \cref{subsec:trav_fronts_stand_fronts_stand_pulses}, conclusions \cref{item_thm:main_symmetric_standing_pulse,item_thm:main_asymmetric_standing_pulse,item_thm:main_standing_front} of \cref{thm:main} can be interpreted in terms of homoclinic and heteroclinic orbits of the Hamiltonian system
\begin{equation}
\label{equation_hamiltonienne}
\left\{\begin{aligned} 
\dot u &= \partial_v \mathcal{H}(u,v) \\ 
\dot v &= -\partial_u \mathcal{H}(u,v) 
\end{aligned}\right. 
\quad \text{where}\quad 
\mathcal{H}(u,v)=\frac12 |v|^2 + \tilde V(u)
\quad \text{and}\quad 
\tilde{V} = -V
\,.
\end{equation}
The following statement explicitly provides this interpretation (for conclusions \cref{item_thm:main_symmetric_standing_pulse,item_thm:main_asymmetric_standing_pulse} only, since conclusion \cref{item_thm:main_standing_front} is actually elementary and well known, see \cref{sec:gen_non_existence_standing_fronts}). No proof is given since it is exactly a translation of these conclusions, with obvious meanings for (a)symmetry and elementarity of homoclinic orbits.
\begin{theorem}[the Hamiltonian point of view]
\label{thm:main2}
There exists a generic subset of $\CkFull$ such that, for every potential function $\tilde V$ in this subset,
\begin{enumerate}
\item every asymmetric homoclinic orbit of the Hamiltonian system \cref{equation_hamiltonienne} is transverse;
\label{item:thm_Ham_asym_hom}
\item every symmetric homoclinic orbit of the Hamiltonian system \cref{equation_hamiltonienne} is elementary.
\label{item:thm_Ham_sym_hom}
\end{enumerate}
\end{theorem}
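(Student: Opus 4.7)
The plan is to recognise that \cref{thm:main2} is a transcription of conclusions \cref{item_thm:main_symmetric_standing_pulse,item_thm:main_asymmetric_standing_pulse} of \cref{thm:main} into Hamiltonian vocabulary, rather than an independent result requiring fresh analysis. First, I would identify the Hamiltonian flow \cref{equation_hamiltonienne} with the first-order system \cref{trav_wave_system_order_1} at $c=0$: indeed $\partial_v\mathcal{H}=v$ and $-\partial_u\mathcal{H}=-\nabla\tilde V(u)=\nabla V(u)$, so the vector field is exactly $F_{0,V}$. By \cref{prop:profiles_of_fronts_pulses_as_intersections_between_Wu_Ws}, a non-constant homoclinic orbit of \cref{equation_hamiltonienne} at an equilibrium $(e,0)$ is therefore the same object, through the lift $\xi\mapsto\bigl(u(\xi),\dot u(\xi)\bigr)$, as a standing pulse connecting $e$ to itself.

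Next, I would align the finer vocabulary. A homoclinic orbit of the reversible Hamiltonian system \cref{equation_hamiltonienne} is called symmetric precisely when it is invariant under the involution $(u,v)\mapsto(u,-v)$ composed with time reversal, which happens if and only if the orbit meets the symmetry subspace $\sssSym$ at some time $\xiTurn$; but that is exactly the turning time of \cref{def:symmetric_pulse}. Hence ``symmetric'' and ``asymmetric'' match between the two languages. Transversality of an asymmetric homoclinic orbit inside the pertinent level set of $\mathcal{H}$ is, verbatim, \cref{def:transverse_stand_pulse}, and elementarity of a symmetric homoclinic orbit, namely the transverse intersection of the unstable manifold with the fixed subspace of the reversing involution at the turning point, is \cref{def:elementary_symm_stand_pulse}.

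Finally, I would handle the harmless sign swap between $V$ and $\tilde V=-V$: the map $V\mapsto -V$ is a bicontinuous linear involution of $\CkFull$, so it sends generic subsets to generic subsets. Applying it to the generic subset furnished by \cref{thm:main} yields a generic set of potentials $\tilde V$ for which the corresponding $V=-\tilde V$ satisfies conclusions \cref{item_thm:main_symmetric_standing_pulse} and \cref{item_thm:main_asymmetric_standing_pulse}; the dictionary above then delivers \cref{item:thm_Ham_asym_hom} and \cref{item:thm_Ham_sym_hom} at once. No substantive obstacle is anticipated, the entire argument being bookkeeping; the sole point deserving a moment's care is keeping straight the sign convention relating the driving potential $V$ of the parabolic system to the Hamiltonian potential $\tilde V$.
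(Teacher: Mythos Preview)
Your proposal is correct and matches the paper's own treatment: the paper states explicitly that ``no proof is given since it is exactly a translation of these conclusions, with obvious meanings for (a)symmetry and elementarity of homoclinic orbits,'' and your dictionary (identifying \cref{equation_hamiltonienne} with $F_{0,V}$, matching symmetry via \cref{def:symmetric_pulse}, transversality via \cref{def:transverse_stand_pulse}, elementarity via \cref{def:elementary_symm_stand_pulse}, and handling the involution $V\mapsto -V$) is precisely that translation made explicit.
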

\subsection{Short historical review}
\Cref{thm:main} and its proof rely on transversality theorems, also known as Sard--Smale or Thom's theorems, and are closely related to classical transversality results for differential systems, see for instance \cite{AbrahamRobbin_transversalMappingsFlows_1967,Kupka_contributionTheorieChampsGeneriques_1963,Peixoto_onApprroximationTheoremKupkaSmale_1967,Robbin_algebraicKupkaSmaleTheory_1981,Smale_stableManifolsDiffEquDiffeo_2011}. Significant differences with respect to previous works still deserve to be mentioned. 

First, genericity in \cref{thm:main} holds with respect to the sole potential function $V$, \emph{not} general vector fields in $\rr^{2d}$. Thus, perturbations of a given potential only provide a partial control on the dynamics (in other words, differential systems of the form \cref{trav_wave_system_order_1} do not generate all possible flows in $\rr^{2d}$). This constraint is balanced by the peculiarities of the systems considered, which will have to be taken into account. To our best knowledge, the first genericity result about the dynamics of a special class of differential systems goes back to \cite{Robbin_algebraicKupkaSmaleTheory_1981}, and deals with polynomial flows. 

Concerning Hamiltonian flows, homoclinic orbits play an important role, both from theoretical and physical points of view, see for instance the reviews \cite{Devaney_homoclinicOrbitsHamiltonianSystems_1976,Champneys_homoclinicOrbitsReversibleSystemsApplications_1998} and articles \cite{Devaney_blueSkyCatastrophesReversibleHamiltonianSystems_1977,Vanderbauwhede_bifurcationDegenerateHomoclinics_1992,FiedlerVanderbauwhede_homoclinicPeriodBlowUpRevConSyst_1992,Harterich_cascadesReversibleHomoclinicOrbitsSaddleFocus_1998,BessaFerreiraRochaVarandas_genericHamiltonianDynamics_2017,Motreanu_genericExistenceNondegenerateHomSol_2017}. The transversality/elementarity of such orbits has important dynamical consequences, as the presence of Smale horseshoes associated to complex dynamics. In \cite{Knobloch_bifurcationDegHomocOrbitsReversibleConservativeSyst_1997,Vanderbauwhede_bifurcationDegenerateHomoclinics_1992}, the genericity of these properties is considered in a general abstract framework, and obtained only under sufficient conditions corresponding to the assumptions of the transversality \cref{thm:Sard_Smale}. In \cite{Motreanu_genericExistenceNondegenerateHomSol_2017}, this genericity is proved, but in the case of non-autonomous systems.
Other references dealing with the generic transversality of connecting orbits include \cite{Robinson_genericPropertiesConservativeSystems_1970,Oliveira_genPropLagrangianOnSurfKupkaSmale_2008,Rifford_closingGeodesicsC1Topology_2012,RiffordRuggiero_genPropClosedOrbHamFlowsManeViewpoint_2012} and references therein. In \cite{Robinson_genericPropertiesConservativeSystems_1970}, genericity holds with respect to all Hamiltonian flows, and not only second order conservative systems as \cref{equation_hamiltonienne}. In \cite{Oliveira_genPropLagrangianOnSurfKupkaSmale_2008,Rifford_closingGeodesicsC1Topology_2012,RiffordRuggiero_genPropClosedOrbHamFlowsManeViewpoint_2012}, genericity holds with respect to the potential $\tilde V$ only, in a more general setting where the ``kinetic energy'' $\abs{v}^2/2$ of the Hamiltonian in \cref{equation_hamiltonienne} is replaced by a more general expression. But the transversality of homoclinic orbits is not considered in these papers. In \cite{Oliveira_genPropLagrangianOnSurfKupkaSmale_2008}, the transversality of heteroclinic orbits is derived from a perturbation result of \cite{ContrerasPaternain_genericityGeodesicFlowsPositiveTopologicalEntropyS2_2002}. The others results of \cite{Oliveira_genPropLagrangianOnSurfKupkaSmale_2008,Rifford_closingGeodesicsC1Topology_2012,RiffordRuggiero_genPropClosedOrbHamFlowsManeViewpoint_2012} are concerned with closed orbits. Thus, to the best of our knowledge, even \cref{thm:main2} (the results concerning standing pulses, in the language of Hamiltonian systems) is new.

Concerning the nonzero dissipation case (conclusion \cref{item_thm:main_travelling_front} of \cref{thm:main}), the statement differs from usual genericity properties. If $c$ is fixed (and nonzero), heteroclinic connections corresponding to travelling fronts invading a minimum point of $V$ do generically not exist for the flow of system \cref{trav_wave_system_order_1}. But the freedom provided by the parameter $c$ ensures the generic existence, transversality, and robustness of heteroclinic connections corresponding to bistable travelling fronts. This parameter $c$ will thus have to be taken into account in the setting where transversality theorems will be applied, a significant difference with classical genericity results about the flows of differential systems. 

The initial motivation for this paper actually relates to parabolic systems of the form \cref{partial_differential_system}. For such systems, the global dynamics of \emph{bistable solutions}, that is solutions close at both ends of space to local minimum points of the potential $V$, has been described under rather general (assumed to be generic) hypotheses on $V$ by the second author in \cite{Risler_globCVTravFronts_2008,Risler_globalRelaxation_2016,Risler_globalBehaviour_2016}. Every such solutions must approach, as time goes to $+\infty$, far to the left in space a stacked family of bistable fronts travelling to the left, far to the right in space a stacked family of bistable fronts travelling to the right, and in between a pattern of standing pulses/fronts going slowly away from one another (this extends to gradient systems the program initiated in the late seventies by Fife and McLeod for scalar equations \cite{FifeMcLeod_approachTravFront_1977,Fife_longTimeBistable_1979,FifeMcLeod_phasePlaneDisc_1981}). The present paper provides a rigorous proof of the genericity of the hypotheses made on the potential $V$ in \cite{Risler_globCVTravFronts_2008,Risler_globalRelaxation_2016,Risler_globalBehaviour_2016}. The same hypotheses yield similar conclusions for hyperbolic gradient systems \cite{Risler_globalBehaviourHyperbolicGradient_2017} and for radially symmetric solutions of parabolic gradient systems in higher space dimension \cite{Risler_globalBehaviourRadiallySymmetric_2017}. The results obtained in this last reference rely on an additional hypothesis, which is the higher space dimension analogue of conclusion \cref{item_thm:main_symmetric_standing_pulse} of \cref{thm:main} (elementarity of symmetric standing pulses). The genericity of this hypothesis is proved in the companion paper \cite{Risler_genericTransvRadSymStatSol_2023}, using the same approach as in the present paper. 

The extension \cref{thm:generic_asympt_behaviour} of \cref{thm:main} (comprising the last conclusion \cref{item:cor_main_direction_of_approach_of_limits_at_ends_of_R} of \cref{cor:main}) is motivated by the study of the long-range interaction between fronts and pulses of the parabolic system \cref{partial_differential_system}. The long-range interaction between such ``localized structures'' is the object of a large body of literature, both in Mathematics and Physics, see for instance \cite{KawasakiOtha_kinkDynamicsOneDimNonlinSyst_1982,CoulletElphick_topologicalDefectsDynamicsMelnikovTheory_1987,CarrPego_metastablePatternsSolutions_1989,Ei_motionPulses_2002,MielkeZelik_multiPulseEvolutionAndSpaceTimeChaosDissipativeSystems_2009,BethuelSmets_motionLawFrontsScalarRDEquEqualDepthMultWellPot_2017} among many other possible references. The conclusions of \cref{thm:generic_asympt_behaviour} are especially relevant in conjunction with this topic, for the following reason. Consider a solution of the parabolic system \cref{partial_differential_system} close to, say, two standing fronts or pulses or two fronts travelling at the same speed, far away from one another. Let us denote by $\uLeft(\cdot)$ and $\uRight(\cdot)$ their profiles, so that the solution is close to a translate of $\uLeft$ on $\rr_-$ and to a translate of $\uRight$ on $\rr_+$. Then, the (large) distance between these two translates is expected to vary slowly, according to a (long-range) interaction law that can be computed at first order, and which is related to the asymptotics of $\uLeft$ at $+\infty$ and of $\uRight$ at $-\infty$. Basically, when (as in the present context) the tails of $\uLeft$ and $\uRight$ are \emph{not} oscillating, this first order long-range interaction can be attractive or repulsive or neutral, depending on the sign of a scalar product between the (oriented) directions through which $\uLeft$ and $\uRight$ approach their (common) limit (at $+\infty$ and at $-\infty$ respectively), see for instance the conjecture at the bottom of p. 59 of \cite{BethuelOrlandi_slowMotion_2011}, or expressions (2.12) and (2.13) in Theorem 2.3 of \cite{Ei_motionPulses_2002}. In the present context, according to the conclusions of \cref{thm:generic_asympt_behaviour} and for a generic potential, these two oriented directions are aligned with the one-dimensional eigenspace associated with the smallest eigenvalue of the Hessian $D^2V$ of the potential at the minimum point which is the common limit mentioned above. Among the consequences, the first order long-range interaction is thus either attractive or repulsive, but not neutral. 
\section{Stable and unstable manifolds of equilibria}
\label{sec:dynamics}
Throughout all this section $V$ denotes a potential function in $\CkFull$ and $c$ denotes a non-negative quantity (speed). As stated in \cref{prop:profiles_of_fronts_pulses_as_intersections_between_Wu_Ws}, the travelling fronts and standing fronts/pulses of the parabolic equation \cref{partial_differential_system} correspond to heteroclinic and homoclinic connections for the flow in $\rr^{2d}$ generated by the first order differential system \cref{trav_wave_system_order_1}. 
Let $\Omega$ be the maximal (open) subset of $\rr\times\rr^{2d}$ where this flow is defined and let us consider its flow $S_{c,V}$ defined as
\begin{equation}
\label{flow}
S_{c,V}:\Omega\to\rr^{2d}
\,,\quad 
(\xi,U_0) \mapsto U(\xi) 
\,,
\end{equation}
where $U(\xi)$ is the solution of \cref{trav_wave_system_order_1} with $U(0)=U_0$.
By definition, for every $(\xi,U_0)$ in $\Omega$, 
\[
\frac{\partial}{\partial\xi}S_{c,V}(\xi,U_0) = F_{c,V}\bigl(S_{c,V}(\xi,U_0)\bigr)
\quad\text{ where }\quad F_{c,V}:\begin{pmatrix}
u \\ v \end{pmatrix}\mapsto\begin{pmatrix} v \\ \nabla V(u)-cv \end{pmatrix}\,.
\]
Although the variable $\xi$ denotes primarily the space variable in a frame travelling at speed $c$ for the initial partial differential system \cref{partial_differential_system}, this variable also plays the role of a time in the differential systems \cref{trav_wave_system_order_2,trav_wave_system_order_1} prescribing the profiles of travelling and standing waves. In the following, this variable will thus often be referred to as a ``time''.
\subsection{Linearization around an equilibrium point}
\label{subsec:eigenspaces_and_dimensions}
Let $e$ denote a non-degenerate critical point of $V$. Let $(u_1,\dots,u_d)$ denote an orthonormal basis of $\rr^d$ made of eigenvectors of the Hessian $D^2V(e)$ and let $\mu_1,\dots,\mu_d$ denote the corresponding (real) eigenvalues.
\begin{definition}\label{def:Morse_index}
Let us call \emph{Morse index of $e$}, denoted by $m(e)$, the number of negative eigenvalues of $D^2V(e)$, counted with their algebraic multiplicity. 
\end{definition}
Since the critical point $e$ is assumed to be non-degenerate, it is: a minimum point if $m(e)$ equals $0$, a saddle point if $m(e)$ is between $1$ and $d-1$, and a maximum point if $m(e)$ equals $d$. In addition, none of the quantities $\mu_1,\dots,\mu_d$ vanishes, and we may assume that
\[
\begin{aligned}
\mu_1 \le\dots\le\mu_{m(e)}<0<\mu_{m(e)+1}\le\dots\le\mu_d
\quad&\text{if}\quad
m(e)>0 \,,\\
\text{and}\quad
0<\mu_1 \le\dots\le\mu_d
\quad&\text{if}\quad
m(e)=0 
\,.
\end{aligned}
\]
Now, let us consider the equilibrium point $E=(e,0_{\rr^d})$ of $S_{c,V}$ corresponding to $e$. The linearized differential system \cref{trav_wave_system_order_1} at $E$ reads:
\begin{equation}
\label{trav_wave_system_order_1_linearized}
\dot U = DF_{c,V}(E) U
\,,
\quad\text{or equivalently}\quad
\left\{
\begin{aligned}
\dot u &= v \\
\dot v &= D^2V(e) u -cv
\end{aligned}
\right.
\,.
\end{equation}
Observe that a complex quantity $\lambda$ is an eigenvalue for the linear system \cref{trav_wave_system_order_1_linearized} if and only if the quantity $\lambda(\lambda+c)$ is an eigenvalue for the Hessian $D^2V(e)$, that is if $\lambda(\lambda+c)$ is equal to one of the quantities $\mu_1,\dots,\mu_d$. 
For $j$ in $\{1,\dots, d\}$, let 
\begin{equation}
\label{notation_eigenvalues_Rd_times_Rd}
\lambda_{j,+} = -\frac{c}{2}+\sqrt{\frac{c^2}{4}+\mu_j}
\quad\text{and}\quad
\lambda_{j,-} = -\frac{c}{2}-\sqrt{\frac{c^2}{4}+\mu_j}
\end{equation}
denote the two (real or complex) eigenvalues of the linear system \cref{trav_wave_system_order_1_linearized} corresponding to $\mu_j$, and let
\begin{equation}
\label{notation_eigenvectors_in_dim_2d}
U_{j,+} = \begin{pmatrix}
u_j \\ \lambda_{j,+} u_j
\end{pmatrix}
\quad\text{and}\quad
U_{j,-} = \begin{pmatrix}
u_j \\ \lambda_{j,-} u_j
\end{pmatrix}
\end{equation}
denote the corresponding eigenvectors. Let 
\begin{equation}
\label{notation_stable_centre_unstable_subspaces}
\Es_{c,V}(E)
\quad\text{and}\quad
\Ec_{c,V}(E)
\quad\text{and}\quad
\Eu_{c,V}(E)
\end{equation}
denote the stable, centre, and unstable subspaces of $\rr^{2d}$ for the linear operator $DF_{c,V}$ defined in \cref{trav_wave_system_order_1_linearized}, that is the eigenspaces corresponding to eigenvalues with negative, zero and positive real parts respectively. The dimensions of those spaces and of the corresponding invariant manifolds (defined below) derive from expressions \cref{notation_eigenvalues_Rd_times_Rd}, and are as summarized in \cref{table:dim_stable_unstable_centre}.
\begin{table}[htbp]
\centering
\begin{tabular}{|c|c|c|}
\hline
 & $c=0$ & $c>0$ \\ \hline
Dimension of $\Eu_{c,V}(E)$ and $\Wu_{c,V}(E)$ & $d-m(e)$ & $d-m(e)$ \\ \hline
Dimension of $\Es_{c,V}(E)$ and $\Ws_{c,V}(E)$ & $d-m(e)$ & $d+m(e)$  \\ \hline
Dimension of $\Ec_{c,V}(E)$ and $\WcloccV(E)$ & $2m(e)$ & $0$ \\ \hline
\end{tabular}
\caption{Dimensions of stable, unstable, and centre manifolds for an equilibrium point $E=(e,0)$ of the differential system \cref{trav_wave_system_order_1}, corresponding to a critical point $e$ of the potential with Morse index $m(e)$.}
\label{table:dim_stable_unstable_centre}
\end{table}
The case of a negative speed $c$ can be derived by the transformation $(c,\xi)\mapsto (-c,-\xi)$ which leaves the systems \cref{trav_wave_system_order_2,trav_wave_system_order_1} unchanged (and exchanges the stable and unstable dimensions).

The dimension of $\Eu_{c,V}(E)$ is also commonly called the Morse index of $E$. To avoid any confusion, the denomination \emph{Morse index} will only be used for critical points of the potential, not for the corresponding equilibria in $\rr^{2d}$.
\subsection{Local stable and unstable manifolds when the speed \texorpdfstring{$c$}{c} is positive}
\label{subsec:local_stable_unstable_manifolds_positive_speed}
The construction of the local stable (unstable) manifold of an equilibrium of a differential system is classical. A historical reference is Kelley's article \cite{Kelley_stableCenterUnstableMan_1967}, comprising the construction and the dependence on the parameters, however with a slightly non-optimal regularity. A complete construction can be found in many textbooks, for example Theorem 3.2.1 of \cite{GuckenHeimerHolmes_nonlinOscDynSystBif_1983} or Theorem 9.4 of \cite{Teschl_odeDynSyst_2012}. The goal of this \namecref{subsec:local_stable_unstable_manifolds_positive_speed} and of \cref{subsec:local_stable_unstable_manifolds_zero_speed} below is to provide precise statements (\cref{prop:loc_stab_unstab_man_c_positive} below and \cref{prop:loc_stab_unstab_man_c_equals_zero} in \cref{subsec:local_stable_unstable_manifolds_zero_speed} when the speed $c$ equals $0$) concerning these manifolds (for the differential system \cref{trav_wave_system_order_1}), and the associated notation (without proofs); those statements and notation will be called upon in the sequel. 

Take $V_0$ in $\CkFull$, let $e_0$ denote a non-degenerate critical point of $V_0$, and let $c_0$ denote a positive quantity. According to \cref{table:dim_stable_unstable_centre}, the point $(e_0,0)$, which will be denoted by $E_0$, is a hyperbolic equilibrium point and the subspaces $\Eu_{c_0,V_0}(E_0)$ and $\Es_{c_0,V_0}(E_0)$ introduced in \cref{notation_stable_centre_unstable_subspaces} generate the whole space $\rr^{2d}$ (or in other words the central part $\Ec_{c_0,V_0}(E_0)$ reduces to $\{0_{\rr^{2d}}\}$). Let 
\begin{equation}
\label{betas}
\begin{aligned}
\betau &= \min\left\{\ree(\lambda): \lambda \text{ eigenvalue of } DF_{c_0,V_0}(E_0)\text{ with }\ree(\lambda)>0 \right\} 
 \\ \text{and}\quad
\betas &= \max\left\{\ree(\lambda): \lambda \text{ eigenvalue of } DF_{c_0,V_0}(E_0)\text{ with }\ree(\lambda) <0\right\}
\,.
\end{aligned}
\end{equation}
There exist norms $\normu{\cdot}$ on the unstable subspace $\Eu_{c_0,V_0}(E_0)$ and $\norms{\cdot}$ on the stable subspace $\Es_{c_0,V_0}(E_0)$ such that, for every non negative quantity $\xi$, 
\begin{equation}
\label{normu_norms_characterization}
\begin{aligned}
\normu{\exp\Bigl[-\xi DF_{c_0,V_0}(E_0)_{|\Eu_{c_0,V_0}(E_-)}\Bigr]} &\le \exp\left(-\frac{\betau}{2}\xi\right) \,, \\
\text{and}\quad
\norms{\exp\Bigl[\xi DF_{c_0,V_0}(E_0)_{|\Es_{c_0,V_0}(E_+)}\Bigr]} &\le \exp\left(\frac{\betas}{2}\xi\right)
\,.
\end{aligned}
\end{equation}
For every positive quantity $r$, let 
\begin{equation}
\label{def_Bu_Bs_B_around_E0}
\begin{aligned}
\Bu_{E_0}(r) &= \bigl\{\Uu \in \Eu_{c_0,V_0}(E_0) : \normu{\Uu}\le r\bigr\}\,, \\
\text{and}\quad
\Bs_{E_0}(r) &= \bigl\{\Us \in \Es_{c_0,V_0}(E_0) : \norms{\Us}\le r\bigr\}\,, \\
\text{and}\quad
\widebar{B}_{E_0}(r) &= \bigl\{\Uu+\Us : \Uu\in\Bu_{E_0}(r) \text{ and } \Us\in \Bs_{E_0}(r) \bigr\}
\,.
\end{aligned}
\end{equation}
\begin{proposition}[local stable and unstable manifolds]
\label{prop:loc_stab_unstab_man_c_positive}
There exist a neighbourhood $\nu$ of $V_0$ in $\CkFull$, a neighbourhood $\ccc$ of $c_0$ in $(0,+\infty)$ and a positive quantity $r$ such that, for every $(c,V)$ in $\ccc\times\nu$, the following statements hold.
\begin{enumerate}
\item There exists a unique critical point $e(V)$ of $V$ such that $E(V)=(e(V),0)$ belongs to $E_0+\widebar{B}_{E_0}(r)$. In addition, $e(V)$ has the same Morse index as $e_0$ and the map $\nu\to\rr^d$, $V\mapsto e(V)$ is of class $\ccc^k$.
\item There exist $\ccc^k$-functions
\[
\wuloc{c}{V}:\Bu_{E_0}(r)\to \Bs_{E_0}(r)
\quad\text{and}\quad
\wsloc{c}{V}:\Bs_{E_0}(r)\to \Bu_{E_0}(r)
\]
such that, if we consider the sets
\[
\begin{aligned}
\Wuloc{c}{V}\bigl(E(V)\bigr) &= \left\{E(V) + \Uu + \wuloc{c}{V}(\Uu) : \Uu\in \Bu_{E_0}(r)\right\} \\ 
\text{and}\quad
\Wsloc{c}{V}\bigl(E(V)\bigr) &= \left\{E(V) + \Us + \wsloc{c}{V}(\Us) : \Us\in \Bs_{E_0}(r)\right\}
\,,
\end{aligned}
\]
then, for every $U$ in $\widebar{B}_{E_0}(r)$ the following two assertions are equivalent:
\begin{enumerate}
\item $U$ is in $\Wuloc{c}{V}\bigl(E(V)\bigr)$;
\label{item:U_is_in_Wuloc}
\item $S_{c,V}(\xi,U)-E(V)$ remains in $\widebar{B}_{E_0}(r)$ for all $\xi$ in $(-\infty,0]$ and $S_{c,V}(\xi,U)\to E(V)$ as $\xi\to -\infty$;
\label{item:U_goes_to_E_at_minus_infty}
\end{enumerate}
and for every $U$ in $\widebar{B}_{E_0}(r)$ the following two assertions are equivalent:
\begin{enumerate}
\setcounter{enumii}{2}
\item $U\in\Wsloc{c}{V}\bigl(E(V)\bigr)$;
\label{item:U_is_in_Wsloc}
\item $S_{c,V}(\xi,U)-E(V)$ remains in $\widebar{B}_{E_0}(r)$ for all $\xi$ in $[0,+\infty)$ and $S_{c,V}(\xi,U)\to E(V)$ as $\xi\to +\infty$.
\label{item:U_goes_to_E_at_plus_infty}
\end{enumerate}
\item Both differentials $D\wuloc{c_0}{V_0}(0)$ and $D\wsloc{c_0}{V_0}(0)$ vanish, and both maps 
\[
\begin{aligned}
\ccc\times\nu\times \Bu_{E_0}(r)\to \Bs_{E_0}(r), &\quad (c,V,\Uu)\mapsto \wuloc{c}{V}(\Uu) \\
\text{and}\quad
\ccc\times\nu\times \Bs_{E_0}(r)\to \Bu_{E_0}(r), &\quad (c,V,\Us)\mapsto \wsloc{c}{V}(\Us)
\end{aligned}
\]
are of class $\ccc^k$. 
\end{enumerate}
\end{proposition}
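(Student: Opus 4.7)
\emph{Overall strategy.} The proposition bundles three assertions — persistence of the equilibrium, existence of local invariant manifolds as graphs of $\ccc^k$ functions, and $\ccc^k$ dependence on the parameters — and the plan is to obtain the first by the implicit function theorem and the other two by a Lyapunov--Perron fixed-point argument carried out in the splitting of the \emph{base} linearization $DF_{c_0,V_0}(E_0)$. This last choice is what will permit the manifolds to be parametrized by the \emph{unperturbed} subspaces $\Eu_{c_0,V_0}(E_0)$ and $\Es_{c_0,V_0}(E_0)$, as required by the formulation of the statement.

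\emph{Step 1: persistence of $e(V)$.} I would apply the implicit function theorem to the map $(V,e)\mapsto\nabla V(e)$ from $\CkFull\times\rr^d$ to $\rr^d$: at $(V_0,e_0)$ its partial derivative with respect to $e$ is the invertible operator $D^2V_0(e_0)$, yielding a unique $\ccc^k$-branch $V\mapsto e(V)$ on a neighbourhood of $V_0$. Continuity in $V$ of the eigenvalues of $D^2V\bigl(e(V)\bigr)$ prevents any eigenvalue from crossing $0$ on a sufficiently small neighbourhood, so that the Morse index is locally constant.

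\emph{Step 2: invariant manifolds by Lyapunov--Perron.} Set $A_0=DF_{c_0,V_0}(E_0)$, let $P_u^0,P_s^0$ denote the spectral projectors onto $\Eu_{c_0,V_0}(E_0)$ and $\Es_{c_0,V_0}(E_0)$, and write $L_u^0,L_s^0$ for the corresponding restrictions of $A_0$. In the translated coordinate $\tilde U=U-E(V)$, the differential system \cref{trav_wave_system_order_1} becomes
\[
\dot{\tilde U}=A_0\tilde U+R_{c,V}(\tilde U),\qquad R_{c,V}(\tilde U)=\bigl[DF_{c,V}(E(V))-A_0\bigr]\tilde U+N_{c,V}(\tilde U),
\]
where $N_{c,V}$ vanishes to second order at $0$ and the linear part of $R_{c,V}$ is small in operator norm as $(c,V)\to(c_0,V_0)$. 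Fix a weight $\eta\in(0,\min(\betau,\betas)/2)$ and let $X_\eta^-$ denote the Banach space of continuous maps $\varphi:(-\infty,0]\to\rr^{2d}$ with $\sup_{\xi\le 0}e^{-\eta\xi}|\varphi(\xi)|<\infty$. For $\Uu\in\Bu_{E_0}(r)$ I consider the Lyapunov--Perron operator
\begin{align*}
T_{\Uu,c,V}(\varphi)(\xi)=e^{\xi L_u^0}\Uu&+\int_0^\xi e^{(\xi-s)L_u^0}P_u^0R_{c,V}(\varphi(s))\,ds \\
&+\int_{-\infty}^\xi e^{(\xi-s)L_s^0}P_s^0R_{c,V}(\varphi(s))\,ds,
\end{align*}
which, by \cref{normu_norms_characterization} and the smallness of $R_{c,V}$, is a contraction on a closed ball of $X_\eta^-$ provided $r$ and the neighbourhoods $\nu,\ccc$ are chosen small enough. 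Its fixed point $\varphi^*(\cdot;\Uu,c,V)$ is an orbit decaying exponentially to $0$ as $\xi\to-\infty$, and evaluating the fixed-point equation at $\xi=0$ gives $P_u^0\varphi^*(0)=\Uu$; setting $\wuloc{c}{V}(\Uu)=P_s^0\varphi^*(0;\Uu,c,V)$ thus produces the graph $\Wuloc{c}{V}\bigl(E(V)\bigr)$ of the statement. The equivalence of \cref{item:U_is_in_Wuloc,item:U_goes_to_E_at_minus_infty} is now immediate: any orbit satisfying \cref{item:U_goes_to_E_at_minus_infty} belongs to $X_\eta^-$ once translated, satisfies the integral equation by variation of constants together with the boundedness requirement at $-\infty$, and hence coincides with $\varphi^*$ attached to $\Uu=P_u^0\bigl(U-E(V)\bigr)$ by uniqueness. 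The symmetric construction on $[0,+\infty)$ with weight $-\eta$ produces $\wsloc{c}{V}$ and the equivalence of \cref{item:U_is_in_Wsloc,item:U_goes_to_E_at_plus_infty}.

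\emph{Step 3: regularity and the main obstacle.} The $\ccc^k$ dependence of $\varphi^*$, and hence of $\wuloc{c}{V}(\Uu)$, on $(\Uu,c,V)$ follows from the parametric contraction principle, extended to higher order by the classical device of checking that each formal derivative $D^j\varphi^*$ satisfies a contraction equation in the weighted space $X_{j\eta}^-$ (requiring $j\eta<\betau$ for $j\le k$, and $F_{c,V}$ of class $\ccc^k$, which is ensured by $V\in\CkFull$). At $(\Uu,c,V)=(0,c_0,V_0)$ the fixed point is $\varphi^*\equiv 0$ with $\partial_\Uu\varphi^*(\xi;0,c_0,V_0)=e^{\xi L_u^0}\in\Eu_{c_0,V_0}(E_0)$, so $P_s^0$ applied at $\xi=0$ vanishes, giving $D\wuloc{c_0}{V_0}(0)=0$; the statement for $\wsloc{c_0}{V_0}$ is dual. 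The step I expect to be most delicate is precisely this combination of requirements --- parametrizing both manifolds by the \emph{base} subspaces $\Eu_{c_0,V_0}(E_0),\Es_{c_0,V_0}(E_0)$ while obtaining \emph{uniform} $\ccc^k$ estimates in $(c,V)$ --- which forces writing the fixed-point equation with the base projectors $P_u^0,P_s^0$ and absorbing the linear discrepancy $DF_{c,V}(E(V))-A_0$ into the small perturbation $R_{c,V}$. This last manoeuvre is legitimate only because the spectral gap at $(c_0,V_0)$ quantified by $\betau,\betas$ persists under small perturbations of $(c,V)$.
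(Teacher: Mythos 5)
The paper does not prove this proposition: at the start of Section 2.2 it declares the construction classical, citing Kelley, Guckenheimer--Holmes and Teschl, and states explicitly that the statements and notation are provided ``without proofs.'' Your Lyapunov--Perron sketch is precisely the standard route of those references --- implicit function theorem for the persistence of the equilibrium, a fixed point in a weighted space written against the \emph{base} spectral data $\Eu_{c_0,V_0}(E_0)$, $\Es_{c_0,V_0}(E_0)$ so that the graphs are parametrized as in the statement, and fiber-contraction bookkeeping for $\ccc^k$ dependence on $(c,V,\Uu)$ --- and its outline is correct. One small point worth an extra line: in the equivalence of \cref{item:U_is_in_Wuloc} and \cref{item:U_goes_to_E_at_minus_infty}, condition \cref{item:U_goes_to_E_at_minus_infty} gives only boundedness in $\widebar{B}_{E_0}(r)$ together with convergence, whereas your fixed point lives in an exponentially weighted space; for a hyperbolic equilibrium the spectral gap does force the exponential rate once $r$ is small enough, but this a priori estimate needs stating, or alternatively the fixed point can be posed on bounded continuous functions on $(-\infty,0]$ and the exponential rate derived a posteriori, which matches the wording of \cref{item:U_goes_to_E_at_minus_infty} more literally.
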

With the notation provided by \cref{prop:loc_stab_unstab_man_c_positive}, for every $(c,V)$ in $\ccc\times\nu$, let us introduce the maps
\[
\begin{aligned}
\hatwuloc{c}{V}:\Bu_{E_0}(r)\to\rr^{2d}, &\quad \Uu\mapsto E(V) + \Uu + \wuloc{c}{V}(\Uu) \,,\\
\text{and}\quad
\hatwsloc{c}{V}:\Bs_{E_0}(r)\to\rr^{2d}, &\quad \Us\mapsto E(V) + \Us + \wsloc{c}{V}(\Us)
\,.
\end{aligned}
\]
Local unstable and stable manifolds of $E(V)$ can be defined as
\begin{equation}
\label{def_local_stab_unstab_man_as_images_non_zero_speed}
\begin{aligned}
\Wuloc{c}{V}\bigl(E(V)\bigr) &= \hatwuloc{c}{V}\bigl(\Bu_{E_0}(r)\bigr) \,, \\
\text{and}\quad
\Wsloc{c}{V}\bigl(E(V)\bigr) &= \hatwsloc{c}{V}\bigl(\Bs_{E_0}(r)\bigr)
\,.
\end{aligned}
\end{equation}
Those manifolds depend smoothly of $c$ and $V$. The global unstable and stable manifolds
\[
\begin{aligned}
\Wu_{c,V}\bigl(E(V)\bigr)&=\{U\in\rr^{2d}:S_{c,V}(\xi,U)\rightarrow E(V)\text{ when }\xi\rightarrow - \infty\} \\
\text{and}\quad \Ws_{c,V}\bigl(E(V)\bigr)&=\{U\in\rr^{2d}:S_{c,V}(\xi,U)\rightarrow E(V)\text{ when }\xi\rightarrow + \infty\}
\end{aligned}
\]
can then be derived from those local manifolds through the flow $S_{c,V}$ as follows: 
\[
\begin{aligned}
\Wu_{c,V}\bigl(E(V)\bigr)&=S_{c,V}\left(\rr\times \Wuloc{c}{V} \bigl(E(V)\bigr) \right)\,, \\
\text{and}\quad \Ws_{c,V}\bigl(E(V)\bigr)&=S_{c,V}\left(\rr\times \Wsloc{c}{V} \bigl(E(V)\bigr) \right)\,.
\end{aligned}
\]
\begin{remark}
Here are two observations that will turn out to play some role in the forthcoming proofs. 
\begin{itemize}
\item 
According to the characterization provided by this proposition (equivalence between \cref{item:U_is_in_Wuloc} and \cref{item:U_goes_to_E_at_minus_infty} and between \cref{item:U_is_in_Wsloc} and \cref{item:U_goes_to_E_at_plus_infty}), for every solution $\xi\mapsto U(\xi)$ of system \cref{trav_wave_system_order_1}, if this solution belongs to the stable (unstable) manifold of $E(V)$ then it crosses exactly once the border $\partial\Wsloc{c}{V}\bigl(E(V)\bigr)$ of the local stable manifold (the border $\partial\Wuloc{c}{V}\bigl(E(V)\bigr)$ of the local unstable manifold) of $E(V)$. In addition, according to the the conditions \cref{normu_norms_characterization} satisfied by the norms $\normu{\cdot}$ and $\norms{\cdot}$, up to replacing the radius $r$ by a smaller quantity, this intersection between the trajectory of $\xi\mapsto U(\xi)$ and the border of the local stable (unstable) manifold of $E(V)$ is \emph{transverse} inside the full stable (unstable) manifold. Although the transversality  of this intersection is not formally required in the following, assuming that it holds helps figuring out the broad scheme, see for instance \cref{fig:phi_map}.
\item The functions $\wuloc{c}{V}$ and $\wsloc{c}{V}$ are uniquely defined by the characterization provided by \cref{prop:loc_stab_unstab_man_c_positive} once the radius $r$ and the departure sets of these two functions are chosen. As a consequence, those two functions and the local stable and unstable manifolds $\Wuloc{c}{V}\bigl(E(V)\bigr)$ and $\Wsloc{c}{V}\bigl(E(V)\bigr)$ remain unchanged if the potential function $V$ is modified outside a neighbourhood of the set
\[
\piPos\Bigl[\Wuloc{c}{V}\bigl(E(V)\bigr)\cup\Wsloc{c}{V}\bigl(E(V)\bigr)\Bigr]
\,,
\]
where $\piPos$ is the projection onto the position coordinates:
\begin{equation}
\label{def_pi_pos}
\piPos:\rr^{2d}\to\rr^d, \quad (u,v)\mapsto u
\,.
\end{equation}
\end{itemize}
\end{remark}
\subsection{Local stable and unstable manifolds when the speed \texorpdfstring{$c$ equals $0$}{c equals 0}}
\label{subsec:local_stable_unstable_manifolds_zero_speed}
As \cref{table:dim_stable_unstable_centre} shows, an equilibrium $E$ is hyperbolic except if $c$ vanishes and $m(e)$ is positive. In this case, there exists, in addition to the stable and unstable manifolds of $E$, a centre manifold with dimension $2m(e)$ (corresponding to the central part of the spectrum of the linear system \cref{trav_wave_system_order_1_linearized} at $E$). However, as shown by the following lemma, a solution $\xi\mapsto U(\xi)$ of system \cref{trav_wave_system_order_1} cannot asymptotically approach $E$ through such a centre manifold. The statement of \cref{prop:loc_stab_unstab_man_c_equals_zero} below and the proof of \cref{prop:profiles_of_fronts_pulses_as_intersections_between_Wu_Ws}, provided in \cref{subsec:proof_prop_equivalence_fronts_pulses_connections}, rely on this lemma. 
\begin{lemma}[approach of critical points through stable/unstable manifolds]
\label{lem:approach_through_stable_unstable_manifold}
Assume that $c$ equals $0$. For every critical point $e$ of $V$ such that the Morse index $m(e)$ is positive, and for every (maximal) solution $\xi\mapsto U(\xi)$ of the differential system \cref{trav_wave_system_order_1}, if $E$ denotes the point $(e,0)$, the following conclusions hold: 
\begin{enumerate}
\item if $U(\xi)$ goes to $E$ as $\xi$ goes to $+\infty$, then the trajectory of $\xi\mapsto U(\xi)$ converges to $E$ tangentially to the stable space $\Es_V(E)$.
\label{item_lem:approach_through_stable_unstable_manifold_stable}
\item if $U(\xi)$ goes to $E$ as $\xi$ goes to $-\infty$, then the trajectory of $\xi\mapsto U(\xi)$ converges to $E$ tangentially to the unstable space $\Eu_V(E)$,
\label{item_lem:approach_through_stable_unstable_manifold_unstable}
\end{enumerate}
\end{lemma}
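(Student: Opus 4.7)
The plan is to combine the classical centre--stable manifold theorem with the conservation of the Hamiltonian when $c=0$. I will focus on item \cref{item_lem:approach_through_stable_unstable_manifold_stable}; item \cref{item_lem:approach_through_stable_unstable_manifold_unstable} follows by reversing time, which turns system \cref{trav_wave_system_order_1} with $c=0$ into an equivalent Hamiltonian system while exchanging the stable and unstable linear subspaces.

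Since $c=0$ and $m(e)>0$, the centre subspace $\Ec_V(E)$ is non-trivial, of dimension $2m(e)$, and corresponds through the purely imaginary eigenvalues of $DF_{0,V}(E)$ in \cref{notation_eigenvalues_Rd_times_Rd} to the negative eigenvalues $\mu_j$ of $D^2V(e)$. Classical invariant manifold theory (in the spirit of \cite{Kelley_stableCenterUnstableMan_1967}) provides a local $\ccc^k$ centre--stable manifold $W^{cs}$ of $E$, tangent at $E$ to $\Es_V(E)\oplus\Ec_V(E)$, and a local $\ccc^k$ centre manifold $W^{c}$ tangent to $\Ec_V(E)$. Two properties will be used. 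First, every trajectory remaining in a sufficiently small neighbourhood of $E$ for all positive times belongs to $W^{cs}$, so in particular $U(\xi)$ does for $\xi$ large enough. Second, $W^{cs}$ is foliated by strong stable leaves indexed by the points of $W^{c}$; each leaf has dimension $d-m(e)$, the leaf through $E$ is tangent at $E$ to $\Es_V(E)$, and the flow contracts each leaf exponentially onto the orbit of its base point.

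The key step is to prove, using the conservation of $H_V$, that the only orbit of the flow on $W^{c}$ converging to $E$ is the constant orbit $\{E\}$. The Hessian of $H_V$ at $E$ is
\[
D^2 H_V(E) = \begin{pmatrix} -D^2V(e) & 0 \\ 0 & I_d \end{pmatrix},
\]
and by the description of the eigenvectors of $DF_{0,V}(E)$ recalled in \cref{notation_eigenvectors_in_dim_2d}, the subspace $\Ec_V(E)$ is spanned by the pairs $(u_j,0)$ and $(0,u_j)$ with $\mu_j<0$. For $\delta U = \sum_{\mu_j<0}\bigl(a_j(u_j,0)+b_j(0,u_j)\bigr)$ in $\Ec_V(E)$, one has $\langle D^2H_V(E)\delta U,\delta U\rangle = \sum_{\mu_j<0}\bigl(|\mu_j|a_j^2+b_j^2\bigr)$, which is positive definite. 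Since $W^{c}$ is tangent to $\Ec_V(E)$ at $E$, the restriction of $H_V$ to $W^{c}$ has a strict local minimum at $E$, and the level set $\{H_V=-V(e)\}\cap W^{c}$ reduces to $\{E\}$ in a sufficiently small neighbourhood of $E$. As $H_V$ is constant along trajectories, every orbit on $W^{c}$ asymptotic to $E$ is identically equal to $E$.

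To conclude, for $\xi$ large enough $U(\xi)\in W^{cs}$ and lies on the strong stable leaf over some $p(\xi)\in W^{c}$. By invariance of the foliation, $\xi\mapsto p(\xi)$ is an orbit of the restricted flow on $W^{c}$, and because the leaf over $p(\xi)$ contracts exponentially onto the trajectory of $p(\xi)$ while $U(\xi)\to E$, the shadow $p(\xi)$ also converges to $E$; by the previous step, $p(\xi)\equiv E$. Hence $U(\xi)$ lies on the strong stable leaf through $E$, which is tangent to $\Es_V(E)$ at $E$, and the convergence of $U(\xi)$ to $E$ takes place along this leaf, hence tangentially to $\Es_V(E)$. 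The main technical delicacy is the appeal to the $\ccc^k$ strong stable foliation of $W^{cs}$, a classical but non-elementary result; the remainder of the argument rests only on the positive definiteness of $D^2H_V(E)$ restricted to $\Ec_V(E)$ together with the conservation of $H_V$.
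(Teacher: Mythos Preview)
Your argument is correct, but it takes a genuinely different route from the paper. The paper argues directly by contradiction: if $U$ is not in the stable manifold of $E$, then as $\xi\to+\infty$ the centre component of $U(\xi)-E$ dominates the hyperbolic component; projecting onto position coordinates, $u(\xi)-e$ is asymptotically aligned with $\spanset\{u_1,\dots,u_{m(e)}\}$, on which $D^2V(e)$ is negative definite, so $V\bigl(u(\xi)\bigr)<V(e)$ for large $\xi$, contradicting the Hamiltonian identity $\frac12|v(\xi)|^2 - V\bigl(u(\xi)\bigr) = -V(e)$. You instead invoke the centre--stable manifold and its strong stable foliation, then use the positive definiteness of $D^2H_V(E)$ on $\Ec_V(E)$ to conclude that $H_V$ has a strict local minimum on $W^c$ at $E$, forcing the shadow orbit on $W^c$ to be constant. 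Both proofs rest on the same underlying observation (the Hamiltonian is strictly convex along the centre directions at $E$), but the paper's version is lighter: it needs only the qualitative fact that a trajectory outside $W^s$ has a dominant centre component, and then a two-line energy contradiction, whereas your version calls upon the $\ccc^k$ strong stable foliation of $W^{cs}$, which, as you yourself flag, is a substantially heavier piece of machinery. The payoff of your approach is a cleaner geometric picture (the trajectory lies exactly on the stable leaf through $E$); the payoff of the paper's approach is that it avoids the foliation theorem entirely.
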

\begin{proof}
Let $\xi\mapsto U(\xi)=(u,v)(\xi)$ denote a solution of the differential system \cref{trav_wave_system_order_1} for a speed $c$ equal to $0$, and let us assume that $U(\xi)$ goes to $E$ as $\xi$ goes to $+\infty$. It follows from the invariance of the Hamiltonian function $H_V$ (defined in \cref{Hamiltonian}) along $U(\cdot)$ that $H_V(U(\xi))=H_V(E)$, or in other words that
\begin{equation}
\label{HV_of_U_of_xi_equals_HV_of_E}
\frac{1}{2}\abs{v(\xi)}^2 - V\bigl(u(\xi)\bigr) =-V(e) \,.
\end{equation}
Let us proceed by contradiction and assume that this solution does \emph{not} belong to the stable manifold of $E$. With the notation of \cref{subsec:eigenspaces_and_dimensions}, it follows that, as $\xi$ goes to $+\infty$, the component of $U(\xi)-E$ along the centre subspace $\Ec_{V}(E)$ is dominant compared to its component along the hyperbolic subspace $\Es_{V}(E)+\Eu_{V}(E)$; with symbols, if $\piCent$ denotes the projection along $\Es_{V}(E)+\Eu_{V}(E)$ onto $\Ec_{V}(E)$ in $\rr^{2d}$,
\begin{equation}
\label{centre_component_dominates}
U(\xi)-E = \piCent\bigl(U(\xi)-E\bigr) + o_{\xi\to+\infty}\Bigl(\piCent\bigl(U(\xi)-E\bigr)\Bigr)
\,.
\end{equation}
It follows from the expressions \cref{notation_eigenvalues_Rd_times_Rd,notation_eigenvectors_in_dim_2d} of the eigenvalues and eigenvectors of $DF_{0,V}(E)$ that
\[
\Ec_{V}(E) = \spanset\bigl\{U_{1,+},U_{1,-},\dots,U_{m(e),+},U_{m(e),-}\bigr\}
\,.
\]
As a consequence, applying the projection $\piPos$ (projection onto the position coordinates, defined in \cref{def_pi_pos}) to equality \cref{centre_component_dominates}, it follows that, if we denote by $\pi_{m(e)}$ the orthogonal projection onto $\spanset\{u_1,\dots,u_{m(e)}\}$ in $\rr^d$, 
\[
u(\xi)-e = \pi_{m(e)}\bigl(u(\xi)-e\bigr) + o_{\xi\to+\infty}\Bigl(\pi_{m(e)}\bigl(u(\xi)-e\bigr)\Bigr)
\,.
\]
Since the restriction of $D^2V(e)$ to the image of $\pi_{m(e)}$ is negative definite, it follows that, for $\xi$ sufficiently large, $V\bigl(u(\xi)\bigr)$ is smaller than $V(e)$, thus $-V\bigl(u(\xi)\bigr)$ is larger than $-V(e)$, contradicting equality \cref{HV_of_U_of_xi_equals_HV_of_E}. \Cref{lem:approach_through_stable_unstable_manifold} is proved. 
\end{proof}
As for \cref{prop:loc_stab_unstab_man_c_positive} in the case $c>0$, the aim of the next \cref{prop:loc_stab_unstab_man_c_equals_zero} is to provide (in the case $c=0$) a precise statement and the associated notation concerning the local stable and unstable manifolds of an equilibrium for the differential system \cref{trav_wave_system_order_2}. In this case $c=0$, the conclusions of \cref{lem:approach_through_stable_unstable_manifold} show that centre manifolds are not relevant for homoclinic and heteroclinic solutions; for that reason, those centre manifolds are ignored in \cref{prop:loc_stab_unstab_man_c_equals_zero}. Concerning the construction and properties of the local stable and unstable manifolds, there is no difference with respect to the positive speed case considered in \cref{prop:loc_stab_unstab_man_c_positive}, see again \cite{Kelley_stableCenterUnstableMan_1967,GuckenHeimerHolmes_nonlinOscDynSystBif_1983,Teschl_odeDynSyst_2012}. Observe that, by contrast with the statements that can be found in textbooks, the characterization of these local stable and unstable manifolds does not require an exponential rate of convergence towards $E$, again due to the conclusions of \cref{lem:approach_through_stable_unstable_manifold} (see the equivalence between assertions \cref{item:U_is_in_WulocZero} and \cref{item:U_goes_to_E_at_minus_infty_Zero} and between assertions \cref{item:U_is_in_WslocZero} and \cref{item:U_goes_to_E_at_plus_infty_Zero} in \cref{prop:loc_stab_unstab_man_c_equals_zero} below).
\begin{notation}
For the remaining of this paper, when the speed $c$ vanishes, it will be omitted in the notation. Thus, concerning the previously introduced notation, 
\[
\begin{aligned}
&F_{V} 
&\quad\quad
&S_{V}
&\quad\quad
&\Es_{V}
&\quad\quad
&\Ec_{V}
&\quad\quad
&\Eu_{V}
&\quad\quad
&\Ws_{V}
&\quad\quad
&\Wu_{V} \\
\text{stand for:}\quad
&F_{0,V} 
&\quad\quad
&S_{0,V}
&\quad\quad
&\Es_{0,V}
&\quad\quad
&\Ec_{0,V}
&\quad\quad
&\Eu_{0,V}
&\quad\quad
&\Ws_{0,V}
&\quad\quad
&\Wu_{0,V}
\,.
\end{aligned}
\]
\end{notation}
Take $V_0$ in $\CkFull$ and let $e_0$ denote a non-degenerate critical point of $V_0$ and let $E_0=(e_0,0)$ (which  is not necessarily hyperbolic). Let $\betau$ and $\betas$ be as in \cref{betas}. As in the case $c>0$, there exist norms $\normu{\cdot}$ on the unstable subspace $\Eu_{V_0}(E_0)$ and $\norms{\cdot}$ on the stable subspace $\Es_{V_0}(E_0)$ such that inequalities \cref{normu_norms_characterization} hold for every non negative quantity $\xi$. Let $\normc{\cdot}$ denote any norm on the centre subspace $\Ec_{V_0}(E_0)$ (for instance the euclidean norm). For every positive quantity $r$, let 
\[
\begin{aligned}
\Bu_{E_0}(r) &= \{\Uu \in \Eu_{V_0}(E_0) : \normu{\Uu}\le r\}\,, \\
\Bs_{E_0}(r) &= \{\Us \in \Es_{V_0}(E_0) : \norms{\Us}\le r\}\,, \\
\Bc_{E_0}(r) &= \{\Uc \in \Ec_{V_0}(E_0) : \normc{\Uc}\le r\}\,, \\
\text{and}\quad
\widebar{B}_{E_0}(r) &= \{\Uu+\Us+\Uc : \Uu\in\Bu_{E_0}(r) \text{ and } \Us\in \Bs_{E_0}(r) \text{ and } \Uc\in \Bc_{E_0}(r)\}
\,.
\end{aligned}
\]
\begin{proposition}[local stable and unstable manifolds]
\label{prop:loc_stab_unstab_man_c_equals_zero}
There exist a neighbourhood $\nu$ of $V_0$ in $\CkFull$
and a positive quantity $r$ such that, for every $V$ in $\nu$, the following statements hold.
\begin{enumerate}
\item There exists a unique critical point $e(V)$ of $V$ such that $E(V)=(e(V),0)$ belongs to $E_0+\widebar{B}_{E_0}(r)$. In addition, $e(V)$ has the same Morse index as $e_0$ and the map $\nu\to\rr^d$, $V\mapsto e(V)$ is of class $\ccc^k$.
\item There exist $C^k$-functions
\[
\wulocZero{V}:\Bu_{E_0}(r)\to \Bs_{E_0}(r)+\Bc_{E_0}(r)
\quad\text{and}\quad
\wslocZero{V}:\Bs_{E_0}(r)\to \Bu_{E_0}(r)+\Bc_{E_0}(r)
\]
such that, if we consider the sets
\[
\begin{aligned}
\WulocZero{V}\bigl(E(V)\bigr) &= \left\{E(V) + \Uu + \wulocZero{V}(\Uu) : \Uu\in \Bu_{E_0}(r)\right\}  \\ 
\text{and}\quad
\WslocZero{V}\bigl(E(V)\bigr) &= \left\{E(V) + \Us + \wslocZero{V}(\Us) : \Us\in \Bs_{E_0}(r)\right\}
\,,
\end{aligned}
\]
then, for every $U$ in $\widebar{B}_{E_0}(r)$, the following two assertions are equivalent:
\begin{enumerate}
\item $U$ is in $\WulocZero{V}\bigl(E(V)\bigr)$;
\label{item:U_is_in_WulocZero}
\item $S_{V}(\xi,U)-E(V)$ remains in $\widebar{B}_{E_0}(r)$ for all $\xi$ in $(-\infty,0]$ and $S_{V}(\xi,U)\to E(V)$ as $\xi\to -\infty$,
\label{item:U_goes_to_E_at_minus_infty_Zero}
\end{enumerate}
and for every $U$ in $\widebar{B}_{E_0}(r)$, the following two assertions are equivalent:
\begin{enumerate}
\setcounter{enumii}{2}
\item $U\in\WslocZero{V}\bigl(E(V)\bigr)$;
\label{item:U_is_in_WslocZero}
\item $S_{V}(\xi,U)-E(V)$ remains in $\widebar{B}_{E_0}(r)$ for all $\xi$ in $[0,+\infty)$ and $S_{V}(\xi,U)\to E(V)$ as $\xi\to +\infty$.
\label{item:U_goes_to_E_at_plus_infty_Zero}
\end{enumerate}
\item Both differentials $D\wulocZero{V_0}(0)$ and $D\wslocZero{V_0}(0)$ vanish, and both maps 
\[
\begin{aligned}
\nu\times \Bu_{E_0}(r)\to \Bs_{E_0}(r), &\quad (V,\Uu)\mapsto \WulocZero{V}(\Uu) \\
\text{and}\quad
\nu\times \Bs_{E_0}(r)\to \Bu_{E_0}(r), &\quad (V,\Us)\mapsto \WslocZero{V}(\Us)
\end{aligned}
\]
are of class $\ccc^k$. 
\end{enumerate}
\end{proposition}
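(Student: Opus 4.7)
\emph{Plan.} I would build the three conclusions of the proposition successively: first the branch of critical points $V\mapsto e(V)$, then the local invariant manifolds together with their smooth dependence on $V$, and finally the equivalences \cref{item:U_is_in_WulocZero}$\Leftrightarrow$\cref{item:U_goes_to_E_at_minus_infty_Zero} and \cref{item:U_is_in_WslocZero}$\Leftrightarrow$\cref{item:U_goes_to_E_at_plus_infty_Zero}. The first two steps are routine adaptations of classical material, and the third step is where the Hamiltonian structure specific to $c=0$ plays its role.

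\textbf{Step 1 (branch of critical points).} Since $e_0$ is non-degenerate, $D^2V_0(e_0)$ is invertible. Applying the implicit function theorem to the map $(V,e)\mapsto\nabla V(e)$, viewed on the Banach space $V_0+\CkbFull$ times a neighbourhood of $e_0$, produces a unique $\ccc^k$ branch $V\mapsto e(V)$ on a neighbourhood $\nu$ of $V_0$. Up to shrinking $\nu$, continuity of the spectrum of $D^2V(e(V))$ preserves the Morse index, and up to shrinking $r$ we have uniqueness of $e(V)$ within $E_0+\widebar B_{E_0}(r)$.

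\textbf{Step 2 (local invariant manifolds and smoothness).} Next, I would apply the classical stable--centre--unstable manifold theorem with $\ccc^k$ parameter dependence (see \cite{Kelley_stableCenterUnstableMan_1967,GuckenHeimerHolmes_nonlinOscDynSystBif_1983,Teschl_odeDynSyst_2012}) to the $\ccc^{k+1}$ vector field $F_V$ at its equilibrium $E(V)$, after translating $E(V)$ to the origin and using the splitting $\rr^{2d}=\Eu_{V_0}(E_0)\oplus\Es_{V_0}(E_0)\oplus\Ec_{V_0}(E_0)$. For $V\in\nu$ and $r$ sufficiently small, this provides graphs $\wulocZero{V}$ and $\wslocZero{V}$ representing the local unstable and stable manifolds of $E(V)$ as in the statement, with the required $\ccc^k$ joint regularity in $(V,\Uu)$ and $(V,\Us)$, and with vanishing differential at the origin for $V=V_0$ (local invariant manifolds being tangent at $E_0$ to the corresponding linear eigenspaces). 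The same theorem simultaneously provides a local centre--stable manifold of $E(V)$, characterized as the set of points of $\widebar B_{E_0}(r)$ whose forward orbit remains in $\widebar B_{E_0}(r)$, together with a symmetric centre--unstable manifold for negative times; however, the textbook characterization of $\WslocZero{V}(E(V))$ itself requires convergence to $E(V)$ \emph{at an exponential rate}.

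\textbf{Step 3 (weakening the characterization; main obstacle).} The delicate point is precisely to remove this exponential rate condition in the equivalences of the statement. One direction of each, say \cref{item:U_is_in_WslocZero}$\Rightarrow$\cref{item:U_goes_to_E_at_plus_infty_Zero}, is immediate. For the converse, take $U$ satisfying \cref{item:U_goes_to_E_at_plus_infty_Zero}: forward boundedness of its orbit in $\widebar B_{E_0}(r)$ places $U$ on the local centre--stable manifold of $E(V)$, which contains $\WslocZero{V}(E(V))$ as the subgraph with zero centre component. By \cref{lem:approach_through_stable_unstable_manifold} applied to this orbit, the convergence $S_V(\xi,U)\to E(V)$ forces the trajectory to approach $E(V)$ tangentially to the stable subspace $\Es_V(E(V))$. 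This tangential condition, combined with membership in the centre--stable manifold, forces the centre component of $U$ to vanish, hence $U\in\WslocZero{V}(E(V))$. The unstable case follows by reversing time. The main obstacle of the proposition lies precisely here: the classical invariant manifold theory alone characterizes local stable manifolds via exponential convergence, whereas the statement asks only for mere convergence. This weaker characterization is genuinely specific to the Hamiltonian setting $c=0$ and relies on the energy-conservation argument encoded in \cref{lem:approach_through_stable_unstable_manifold}.
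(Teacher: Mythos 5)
Your proposal is correct and matches the paper's intended route: the paper presents this proposition without a standalone proof, citing textbook invariant-manifold theory (Kelley, Guckenheimer--Holmes, Teschl) for the construction and smooth $V$-dependence, and pointing to \cref{lem:approach_through_stable_unstable_manifold} precisely for the upgrade from mere convergence to the exponential characterization, which is the crux you identify in Step 3. A slight economy is available in your Step 3: since the proof of \cref{lem:approach_through_stable_unstable_manifold} already proceeds by contradiction against the solution lying \emph{off} the stable manifold, one may conclude directly that convergence plus forward containment in $\widebar{B}_{E_0}(r)$ places $U$ in $\WslocZero{V}\bigl(E(V)\bigr)$, without the extra pass through tangential approach and the vanishing of the centre component.
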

With the notation provided by \cref{prop:loc_stab_unstab_man_c_equals_zero}, for every $V$ in $\nu$, let us introduce the maps
\[
\begin{aligned}
\hatwulocZero{V}:\Bu_{E_0}(r)\to\rr^{2d}, &\quad \Uu\mapsto E(V) + \Uu + \wulocZero{V}(\Uu) \,,\\
\text{and}\quad
\hatwslocZero{V}:\Bs_{E_0}(r)\to\rr^{2d}, &\quad \Us\mapsto E(V) + \Us + \wslocZero{V}(\Us)
\,.
\end{aligned}
\]
Local unstable and stable manifolds of $E(V)$ can be defined as
\begin{equation}
\label{def_local_stab_unstab_man_as_images_zero_speed}
\begin{aligned}
\WulocZero{V}\bigl(E(V)\bigr) &= \hatwulocZero{V}\bigl(\Bu_{E_0}(r)\bigr) \,, \\
\text{and}\quad
\WslocZero{V}\bigl(E(V)\bigr) &= \hatwslocZero{V}\bigl(\Bs_{E_0}(r)\bigr)
\,.
\end{aligned}
\end{equation}
Those manifolds depend smoothly of $V$. As in \cref{subsec:local_stable_unstable_manifolds_positive_speed}, the global unstable/stable manifolds of $E(V)$, denoted by $\Wu_{V}(E(V))$ and $\Ws_{V}(E(V))$ can be expressed in terms of those local manifolds and of the flow $S_V$. Both observations made in the remark ending the previous \namecref{subsec:local_stable_unstable_manifolds_positive_speed} are still valid in the present case of zero speed and potential existence of a centre manifold. 
\section{Preliminary properties of travelling fronts and standing fronts and pulses}
\label{sec:fronts_pulses}
Let us take and fix, for this whole \namecref{sec:fronts_pulses}, a potential function $V$ in $\CkFull$. 
\subsection{Proof of \texorpdfstring{\cref{prop:profiles_of_fronts_pulses_as_intersections_between_Wu_Ws}}{Proposition \ref{prop:profiles_of_fronts_pulses_as_intersections_between_Wu_Ws}}}
\label{subsec:proof_prop_equivalence_fronts_pulses_connections}
Let $e_-$ and $e_+$ be two (possibly equal) non-degenerate critical points of $V$, let $c$ denote a non negative quantity (speed), and let $\xi\mapsto u(\xi)$ denote the profile of a front or pulse connecting $e_-$ to $e_+$ and travelling at speed $c$ (or standing if $c$ equals zero) for the potential $V$. 
\begin{lemma}
\label{lem:derivative_goes_to_zero_at_both_ends}
The derivative $\dot u(\xi)$ goes to $0$ as $\xi$ goes to $\pm\infty$. 
\end{lemma}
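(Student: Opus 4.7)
The plan is to establish the stronger assertion that $U(\xi) := \bigl(u(\xi), \dot u(\xi)\bigr)$ converges to $E_\pm := (e_\pm, 0)$ as $\xi \to \pm\infty$, so that $\dot u(\xi) \to 0$ follows at once. The argument has two ingredients: the boundedness of $\dot u$ on $\rr$, and an $\omega$-/$\alpha$-limit set analysis of the trajectory of $U$ for the flow $S_{c,V}$.

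Since $u$ admits limits at $\pm\infty$, the function $u$ is bounded and uniformly continuous on $\rr$, and $\nabla V\bigl(u(\cdot)\bigr)$ is bounded on $\rr$. If $c = 0$, conservation of the Hamiltonian $H_V$ gives $|\dot u(\xi)|^2 = 2\bigl(H_V + V(u(\xi))\bigr)$, which is bounded. If $c > 0$, I would argue by contradiction: assume $|\dot u(\xi_n)| \to \infty$ along some sequence $(\xi_n)$. The equation $\dot v = \nabla V(u) - c v$ satisfied by $v := \dot u$ has uniformly bounded right-hand side, so a Gronwall estimate shows that on the interval $[\xi_n, \xi_n + \delta]$, for a small fixed $\delta > 0$ independent of $n$, one has $|v(s) - v(\xi_n)| \leq |v(\xi_n)|/2$. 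In particular $v(s) \cdot v(\xi_n) \geq |v(\xi_n)|^2/2$ on this interval, so
\[
\bigl|u(\xi_n + \delta) - u(\xi_n)\bigr| \cdot |v(\xi_n)| \geq \left|\int_{\xi_n}^{\xi_n + \delta} v(s) \cdot v(\xi_n)\, ds \right| \geq \frac{\delta}{2} |v(\xi_n)|^2,
\]
which forces $|u(\xi_n + \delta) - u(\xi_n)| \geq (\delta/2)\,|v(\xi_n)| \to \infty$, contradicting the uniform continuity of $u$.

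Once $\dot u$ is known to be bounded on $\rr$, the trajectory $\xi \mapsto U(\xi)$ lies in a compact subset of $\rr^{2d}$. Its $\omega$-limit set $\omega(U)$ at $+\infty$ and $\alpha$-limit set $\alpha(U)$ at $-\infty$ for the flow $S_{c,V}$ are thus nonempty, compact, and flow-invariant. Because $u(\xi) \to e_\pm$, the $u$-projections of $\omega(U)$ and $\alpha(U)$ reduce to the singletons $\{e_+\}$ and $\{e_-\}$ respectively, so $\omega(U) \subseteq \{e_+\} \times \rr^d$ and $\alpha(U) \subseteq \{e_-\} \times \rr^d$.

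Finally, for any critical point $e$ of $V$, the only flow-invariant subset of $\{e\} \times \rr^d$ is the singleton $\{(e, 0)\}$: indeed, a solution starting at $(e, v)$ with $v \neq 0$ satisfies $\dot u(0) = v \neq 0$, so its first coordinate leaves the slice $\{e\}$ immediately, violating invariance. Therefore $\omega(U) = \{E_+\}$ and $\alpha(U) = \{E_-\}$, which yields $U(\xi) \to E_\pm$ and in particular $\dot u(\xi) \to 0$ as $\xi \to \pm\infty$. The main obstacle in the whole argument is establishing the boundedness of $\dot u$ when $c > 0$; once that is in hand, the remaining steps are routine.
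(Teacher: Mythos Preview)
Your proof is correct and takes a genuinely different route from the paper's. The paper splits according to the sign of $c$. For $c>0$ it exploits the Lyapunov structure directly: integrating the dissipation identity \cref{time_derivative_Hamiltonian} gives $\dot u\in L^2(\rr)$, hence $0$ is an adherent value of $|\dot u(\xi)|^2$ at $\pm\infty$; since $H_V\bigl(U(\xi)\bigr)$ is monotone it converges, and the relation $\tfrac12|\dot u|^2 = H_V(U)+V(u)$ then forces $|\dot u(\xi)|^2\to 0$. For $c=0$ the paper notes that $\ddot u=\nabla V(u)\to 0$, so $\dot u$ is uniformly continuous, and a Barbalat-type argument using the convergence of $u$ concludes.

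Your approach instead first secures boundedness of $\dot u$ (immediate from Hamiltonian conservation when $c=0$; via your Gronwall-plus-contradiction argument when $c>0$) and then runs a clean $\omega$/$\alpha$-limit set argument: precompactness of the orbit plus flow-invariance of the limit set forces $\omega(U)\subset\{e_+\}\times\rr^d$ to collapse to $\{E_+\}$. This is more dynamical-systems flavoured and arguably more conceptual, since no monotone quantity is needed in the second stage; the paper's proof is in turn shorter for $c>0$ because the Lyapunov function delivers the limit almost for free. One small quibble of wording: the right-hand side of $\dot v=\nabla V(u)-cv$ is \emph{not} uniformly bounded a priori (the $-cv$ term is exactly what you are trying to control), but your subsequent Gronwall estimate is precisely what handles this linear term, so the argument itself is sound.
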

\begin{proof}
If the speed $c$ is positive, then $\xi\mapsto u(\xi)$ is the profile of a travelling front. It follows from integrating \cref{time_derivative_Hamiltonian} that
\begin{equation}
\label{potential_lag}
\lim_{\xi\rightarrow +\infty} H_V(u(\xi)) - \lim_{\xi\rightarrow -\infty} H_V(u(\xi)) = - c\int_\rr |\dot u(\xi)|^2 \, d\xi
\end{equation}
and thus that $\dot u(\cdot)$ is in $L^2(\rr)$. Thus $0$ is an adherent value of the kinetic part of the Hamiltonian function $\xi\mapsto H_V\bigl(U(\xi)\bigr)$ as $\xi$ goes to $\pm\infty$, meaning that $V(e_\pm)$ are adherent values of $H_V(U(\xi))$. Since according to \cref{time_derivative_Hamiltonian} this last function decreases with $\xi$, it follows that $H_V\bigl(U(\xi)\bigr)$ goes to $V(e_\pm)$ as $\xi$ goes to $\pm\infty$, and the intended conclusion follows. 

If the speed $c$ equals $0$, it follows from the differential system \cref{trav_wave_system_order_2} and the convergence of $u(\cdot)$ to critical points that $\ddot u(\xi)$ goes to $0$ as $\xi$ goes to $\pm\infty$. Thus $\dot u(\cdot)$ is uniformly continuous and the convergence of $u$ yields the intended conclusion. 
\end{proof}

\begin{proof}[Proof of \cref{prop:profiles_of_fronts_pulses_as_intersections_between_Wu_Ws}]
Let us use the notation of \cref{prop:profiles_of_fronts_pulses_as_intersections_between_Wu_Ws}. 
If $c$ is non zero or if $c$ equals $0$ and both Morse indices $m(e_-)$ and $m(e_+)$ of $e_-$ and $e_+$ vanish, then $E_-$ and $E_+$ are hyperbolic equilibria of the differential system \cref{trav_wave_system_order_1} and the conclusions of \cref{prop:profiles_of_fronts_pulses_as_intersections_between_Wu_Ws} follow from \cref{lem:derivative_goes_to_zero_at_both_ends}. 

If $c$ equals $0$ and the Morse indices $m(e_-)$ and $m(e_+)$ are any, then the equilibria $E_-$ and $E_+$ are not necessarily hyperbolic, but again in this case it follows from \cref{lem:derivative_goes_to_zero_at_both_ends} that $U(\xi)$ goes to $E_\pm$ as $\xi\to\pm\infty$; and it follows from \cref{lem:approach_through_stable_unstable_manifold} that the values of $\xi\mapsto U(\xi)$ belong to the unstable manifold of $E_-$ and to the stable manifold of $E_+$.
\end{proof}
\subsection{Equivalent definitions of a symmetric standing pulse}
Let $e$ denote a non-degenerate critical point of $V$, and let $\xi\mapsto u(\xi)$ denote the profile of a standing pulse connecting $e$ to itself. In \cref{def:symmetric_pulse}, the symmetry of such a pulse was defined by the existence of a ``turning time'' where $\dot u$ vanishes. The following standard result (see for instance \cite{Devaney_reversibleDiffeoFlows_1976}) completes this definition. 
\begin{lemma}[equivalent definitions of a symmetric standing pulse]
\label{lem:equivalent_def_symm_hom_orbit}
For every real quantity $\xiTurn$, the following properties are equivalent: 
\begin{enumerate}
\item $\xiTurn$ is a turning time in the sense of \cref{def:symmetric_pulse}, that is $\dot u(\xiTurn)=0$;
\label{item_lem:equivalent_def_symm_hom_orbit_vanishing_speed_turning_time}
\item for every $\xi$ in $\rr$, 
\label{item_lem:equivalent_def_symm_hom_orbit_symmetry_all}
\begin{equation}
\label{reversibility_symmetry_position}
u(\xiTurn-\xi) = u(\xiTurn+ \xi)
\,;
\end{equation}
\item there exists $\xi$ in $\rr$ such that
\begin{equation}
\label{reversibility_symmetry_position_speed}
u(\xiTurn-\xi) = u(\xiTurn+ \xi)
\quad\text{and}\quad
\dot u(\xiTurn-\xi) = -\dot u(\xiTurn + \xi)
\,.
\end{equation}
\label{item_lem:equivalent_def_symm_hom_orbit_symmetry_once}
\end{enumerate}
In addition, these statements hold for at most one real quantity $\xiTurn$. 
\end{lemma}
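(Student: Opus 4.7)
The plan is to exploit two structural features of the autonomous system $\ddot u=\nabla V(u)$ (this is \cref{trav_wave_system_order_2} with $c=0$): its \emph{reversibility}, namely that if $\xi\mapsto u(\xi)$ is a solution then so is $\xi\mapsto u(-\xi)$, and the \emph{uniqueness} of the Cauchy problem, which holds because $V$ is of class $\ccc^{k+1}$ with $k\ge 1$. The whole chain of equivalences will follow from these two ingredients applied to the pair of shifted solutions
\[
u_+(\eta) = u(\xiTurn+\eta)
\quad\text{and}\quad
u_-(\eta) = u(\xiTurn-\eta)
\,,
\]
both viewed as solutions of the same second order differential system.

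For \cref{item_lem:equivalent_def_symm_hom_orbit_vanishing_speed_turning_time}$\Rightarrow$\cref{item_lem:equivalent_def_symm_hom_orbit_symmetry_all}, I would observe that at $\eta=0$ both solutions $u_+$ and $u_-$ have the same position $u(\xiTurn)$, while their velocities are $\dot u(\xiTurn)$ and $-\dot u(\xiTurn)$ respectively; under the assumption $\dot u(\xiTurn)=0$ both velocities also coincide, so the uniqueness of the Cauchy problem yields $u_+\equiv u_-$, which is \cref{reversibility_symmetry_position}. The implication \cref{item_lem:equivalent_def_symm_hom_orbit_symmetry_all}$\Rightarrow$\cref{item_lem:equivalent_def_symm_hom_orbit_symmetry_once} is immediate by differentiating \cref{reversibility_symmetry_position}. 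For \cref{item_lem:equivalent_def_symm_hom_orbit_symmetry_once}$\Rightarrow$\cref{item_lem:equivalent_def_symm_hom_orbit_vanishing_speed_turning_time}, the same reversibility argument applies at time $\eta=\xi$ instead of $\eta=0$: since $\dot u_-(\eta)=-\dot u(\xiTurn-\eta)$, the relations \cref{reversibility_symmetry_position_speed} are exactly the statement that $u_+$ and $u_-$ agree in position and velocity at $\eta=\xi$, so uniqueness forces $u_+\equiv u_-$, i.e.\ \cref{item_lem:equivalent_def_symm_hom_orbit_symmetry_all} holds; differentiating \cref{reversibility_symmetry_position} at $\xi=0$ then gives $\dot u(\xiTurn)=-\dot u(\xiTurn)$, hence \cref{item_lem:equivalent_def_symm_hom_orbit_vanishing_speed_turning_time}.

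For the uniqueness of the turning time I would argue by contradiction. If $\xi_1<\xi_2$ were two turning times, then by the equivalence just established, the profile $u$ would be invariant under each of the two reflections $\xi\mapsto 2\xi_i-\xi$, $i\in\{1,2\}$. Composing these reflections yields invariance under the translation $\xi\mapsto \xi+2(\xi_2-\xi_1)$ of nonzero magnitude, so $u$ would be periodic. But $u(\xi)\to e$ as $\xi\to\pm\infty$ by definition of a standing pulse, and a periodic function with a limit at infinity is constant, contradicting the non-constancy that is built into the definition of a profile (see \cref{approach_to_crit_points_at_ends_of_R} and the surrounding text).

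I do not expect a genuine obstacle in this proof: the key tool, uniqueness of the Cauchy problem combined with $\xi\mapsto-\xi$ reversibility at $c=0$, is very robust, and each implication reduces to a one-line application of it. The only point that requires a mild care is the formulation of \cref{item_lem:equivalent_def_symm_hom_orbit_symmetry_once}, where one must remember that the second equality in \cref{reversibility_symmetry_position_speed} already incorporates the factor $-1$ coming from the chain rule, which is precisely what makes the velocities of $u_+$ and $u_-$ match rather than differ.
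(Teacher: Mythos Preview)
Your proof is correct and follows essentially the same approach as the paper: both arguments rest on the reversibility of the $c=0$ system and the uniqueness of the Cauchy problem applied to the pair $u_\pm(\eta)=u(\xiTurn\pm\eta)$, and both deduce uniqueness of the turning time from the periodicity that two reflections would force. The only difference is cosmetic---you run the cycle $(1)\Rightarrow(2)\Rightarrow(3)\Rightarrow(2)\Rightarrow(1)$ while the paper organizes the same implications slightly differently---but the substance is identical.
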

\begin{proof}
Differentiating equality \cref{reversibility_symmetry_position} with respect to $\xi$ yields equalities \cref{reversibility_symmetry_position_speed} for all $\xi$, so that property \cref{item_lem:equivalent_def_symm_hom_orbit_symmetry_all} implies property \cref{item_lem:equivalent_def_symm_hom_orbit_symmetry_once}, and property \cref{item_lem:equivalent_def_symm_hom_orbit_symmetry_once} for $\xi$ equal to $0$ is equivalent to property \cref{item_lem:equivalent_def_symm_hom_orbit_vanishing_speed_turning_time}, so that property \cref{item_lem:equivalent_def_symm_hom_orbit_symmetry_all} implies property \cref{item_lem:equivalent_def_symm_hom_orbit_vanishing_speed_turning_time} and property \cref{item_lem:equivalent_def_symm_hom_orbit_vanishing_speed_turning_time} implies property \cref{item_lem:equivalent_def_symm_hom_orbit_symmetry_once}. 

It remains to prove that property \cref{item_lem:equivalent_def_symm_hom_orbit_symmetry_once} implies property \cref{item_lem:equivalent_def_symm_hom_orbit_symmetry_all}. Assume that property \cref{item_lem:equivalent_def_symm_hom_orbit_symmetry_once} holds, and, for every real quantity $\xi$, let us write
\[
u_1(\xi) = u(\xiTurn + \xi)
\quad\text{and}\quad
u_2(\xi)= u(\xiTurn - \xi)
\,.
\]
Since $\xi\mapsto u(\xi)$ is a solution of the second order differential system \cref{trav_wave_system_order_2} with $c$ equal to zero, both $\xi\mapsto U_1(\xi)$ and $\xi\mapsto U_2(\xi)$ are solutions of the first order differential system \cref{trav_wave_system_order_1} (again with $c$ equal to zero). According to property \cref{item_lem:equivalent_def_symm_hom_orbit_symmetry_once}, there exists $\xi$ such that $U_1(\xi)$ is equal to $U_2(\xi)$. Thus $U_1(\xi)$ must be equal to $U_2(\xi)$ for every real time $\xi$, and property \cref{item_lem:equivalent_def_symm_hom_orbit_symmetry_all} follows.
Thus the three properties of \cref{lem:equivalent_def_symm_hom_orbit} are equivalent. 

In addition, if property \cref{item_lem:equivalent_def_symm_hom_orbit_symmetry_all} holds for two different turning times $\xiTurn$ and $\xiTurn'$, then $\xi\mapsto u(\xi)$ is periodic with a period equal to $2(\xiTurn'-\xiTurn)$, a contradiction with the assumption that $u$ is a standing pulse connecting $e$ to itself. \Cref{lem:equivalent_def_symm_hom_orbit} is proved. 
\end{proof}
\subsection{Values reached only once by profiles of travelling fronts / standing pulses}
\label{subsub:values_reached_only_once}
The proofs carried on in the  \cref{sec:generic_transversality_travelling_fronts,sec:generic_elementarity_sym_stand_pulses,sec:generic_tranversality_asym_stand_pulses} below rely on the construction of suitable perturbations of the potential $V$. Whereas the uniqueness of the solutions of differential system \cref{trav_wave_system_order_1} ensures that the function $\xi\mapsto \bigl(u(\xi),\dot u(\xi)\bigr)$ defined by such a solution is one-to-one, this is not necessarily true for the function $\xi\mapsto u(\xi)$ (as shown by \cref{fig:one-to-one}). As a consequence, a perturbation of the potential $V$ may affect this solution at different times. The goal of the following proposition is to avoid this inconvenience, by providing in each case under consideration a time interval where $u(\xi)$ is reached only once.
\begin{proposition}
\label{prop:values_reached_only_once}
\hfill
\begin{enumerate}
\item \label{item_prop:values_reached_only_once_travelling_front}
For every profile $\xi\mapsto u(\xi)$ of a front travelling at a positive speed $c$ and connecting two non-degenerate critical points, there exists a time $\xiOnce$ such that, for all times $\xi^*$ in $(-\infty,\xiOnce]$ and $\xi$ in $\rr$, 
\begin{equation}\label{only_once}
u(\xi) = u(\xi^*) \implies \xi = \xi^*\,.
\end{equation}
\item For every profile $\xi\mapsto u(\xi)$ of an asymmetric standing pulse and for every nonempty open interval $I$ of $\rr$, there exists a nonempty open interval $\IOnce$, included in $I$, such that, for all times $\xi^*$ in $\IOnce$ and $\xi$ in $\rr$, implication \cref{only_once} holds. 
\label{item_prop:values_reached_only_once_asymmetric_standing_pulse}
\item For every profile $\xi\mapsto u(\xi)$ of a symmetric standing pulse, if $\xiTurn$ denotes the turning time of this pulse (see \cref{lem:equivalent_def_symm_hom_orbit}), then, for every nonempty open interval $I$ included in $(-\infty,\xiTurn]$, there exists a nonempty open interval $\IOnce$, included in $I$, such that, for all times $\xi^*$ in $\IOnce$ and $\xi$ in $(-\infty,\xiTurn]$, implication \cref{only_once} holds.
\label{item_prop:values_reached_only_once_symmetric_standing_pulse}
\end{enumerate}
\end{proposition}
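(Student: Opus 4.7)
The proof treats the three assertions separately: \cref{item_prop:values_reached_only_once_travelling_front} by a local argument near $E_-$, and \cref{item_prop:values_reached_only_once_asymmetric_standing_pulse,item_prop:values_reached_only_once_symmetric_standing_pulse} by a global argument combining Hamiltonian conservation with ODE uniqueness. For \cref{item_prop:values_reached_only_once_travelling_front}, I argue by contradiction: if no such $\xiOnce$ existed, one could extract sequences $\xi_n^*\to-\infty$ and $\xi_n\neq\xi_n^*$ with $u(\xi_n)=u(\xi_n^*)$, hence $u(\xi_n)\to e_-$. Up to a further subsequence, $\xi_n$ tends to $+\infty$, to a finite $\xi_\infty$, or to $-\infty$. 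The first case gives $u(\xi_n)\to e_+$, contradicting $e_+\neq e_-$. The second yields $u(\xi_\infty)=e_-$, so $H_V\bigl(U(\xi_\infty)\bigr)=\tfrac12|\dot u(\xi_\infty)|^2-V(e_-)\geq -V(e_-)$, in contradiction with the strict decay of $\xi\mapsto H_V\bigl(U(\xi)\bigr)$ from \cref{time_derivative_Hamiltonian} (strict because $\dot u$ is not identically zero on any interval, as $u$ is non-constant) together with the limit $H_V\bigl(U(\xi)\bigr)\to -V(e_-)$ at $-\infty$. In the third case, \cref{lem:derivative_goes_to_zero_at_both_ends} gives $\dot u(\xi_n),\dot u(\xi_n^*)\to 0$, so $U(\xi_n),U(\xi_n^*)\to E_-$; \cref{prop:loc_stab_unstab_man_c_positive} then places both in $\Wuloc{c}{V}(E_-)$ arbitrarily close to $E_-$ for $n$ large. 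The key point is that $\piPos$ is \emph{injective} on $\Wuloc{c}{V}(E_-)$ in a sufficiently small neighbourhood of $E_-$: by \cref{notation_eigenvalues_Rd_times_Rd,notation_eigenvectors_in_dim_2d} the tangent space $\Eu_{c,V}(E_-)$ is spanned by the vectors $U_{j,+}=(u_j,\lambda_{j,+}u_j)$ with $\mu_j>0$, and $D\piPos$ sends this basis to the orthonormal family $(u_j)_{\mu_j>0}$; the inverse function theorem then makes $\piPos$ a local diffeomorphism on $\Wuloc{c}{V}(E_-)$ at $E_-$. Consequently $U(\xi_n)=U(\xi_n^*)$ and $\xi_n=\xi_n^*$, a contradiction.

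For \cref{item_prop:values_reached_only_once_asymmetric_standing_pulse,item_prop:values_reached_only_once_symmetric_standing_pulse}, let $S$ denote the set of times $\xi^*$ for which there exists $\xi\neq\xi^*$ with $u(\xi)=u(\xi^*)$ (with $\xi$ and $\xi^*$ restricted to $(-\infty,\xiTurn]$ in case~\cref{item_prop:values_reached_only_once_symmetric_standing_pulse}). I claim $S$ is closed with empty interior, so that $I\setminus S$ is open and dense in $I$ and contains a nonempty open $\IOnce$. Closedness reduces to the upper semi-continuity of the counting function $\xi^*\mapsto\#\{\xi:u(\xi)=u(\xi^*)\}$, which follows from the local invertibility of $\xi\mapsto u(\xi)$ at each of its finitely many preimages---finitely many because $u(\xi)\to e$ at $\pm\infty$ and $e$ is not attained by $u$ at any finite time (same Hamiltonian argument as above); this invertibility uses that $\dot u$ vanishes only at $\xiTurn$ in case~\cref{item_prop:values_reached_only_once_symmetric_standing_pulse} and nowhere in case~\cref{item_prop:values_reached_only_once_asymmetric_standing_pulse}, both by \cref{lem:equivalent_def_symm_hom_orbit}. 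To show emptiness of the interior, suppose $J\subset S$ is open. Local invertibility yields a smooth branch $\sigma:J\to\rr$ with $\sigma(\xi^*)\neq\xi^*$ and $u\bigl(\sigma(\xi^*)\bigr)=u(\xi^*)$. Differentiating gives $\dot u\bigl(\sigma(\xi^*)\bigr)\sigma'(\xi^*)=\dot u(\xi^*)$, and Hamiltonian conservation at $c=0$ forces $|\dot u(\sigma(\xi^*))|=|\dot u(\xi^*)|$, so $\sigma'\equiv\pm 1$ on $J$. If $\sigma'\equiv 1$, then $\sigma(\xi^*)=\xi^*+c_0$ with $c_0\neq 0$ and $U(\xi^*+c_0)=U(\xi^*)$ on $J$, which extends by ODE uniqueness to all of $\rr$, making $u$ periodic and contradicting $u(\xi)\to e$ at $\pm\infty$. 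If $\sigma'\equiv -1$, then $\sigma(\xi^*)=c_0-\xi^*$, and the reversibility $(u,v)\mapsto(u,-v)$ of system~\cref{trav_wave_system_order_1} at $c=0$ combined with ODE uniqueness propagates $u(\xi)=u(c_0-\xi)$ globally. In case~\cref{item_prop:values_reached_only_once_asymmetric_standing_pulse} this contradicts asymmetry; in case~\cref{item_prop:values_reached_only_once_symmetric_standing_pulse} the uniqueness of the turning time in \cref{lem:equivalent_def_symm_hom_orbit} would force $c_0/2=\xiTurn$, impossible because two distinct elements of $(-\infty,\xiTurn]$ have midpoint strictly below $\xiTurn$.

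I expect the main obstacle to be the sub-case $\xi_n\to-\infty$ of \cref{item_prop:values_reached_only_once_travelling_front}, since it is the only step that genuinely relies on the local invariant-manifold structure of \cref{prop:loc_stab_unstab_man_c_positive} (and in particular on the identification of the tangent directions of $\Wuloc{c}{V}(E_-)$); the remaining sub-cases and the treatment of \cref{item_prop:values_reached_only_once_asymmetric_standing_pulse,item_prop:values_reached_only_once_symmetric_standing_pulse} reduce to Hamiltonian manipulations and Cauchy uniqueness.
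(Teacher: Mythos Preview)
Your argument is correct, and it diverges from the paper's in interesting ways.

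For \cref{item_prop:values_reached_only_once_travelling_front}, the paper does not argue by contradiction. Instead it shows directly that, since $U(\xi)\to E_-$ along a real unstable eigendirection, the function $\xi\mapsto|u(\xi)-e_-|^2$ is strictly increasing on some $(-\infty,\xi_0]$ (so $u$ is injective there), and then chooses $\xiOnce<\xi_0$ so that $-V(u(\xi^*))>H_V(U(\xi_0))$ for $\xi^*\le\xiOnce$; the strict decay of $H_V$ along the trajectory then forbids any return $u(\xi)=u(\xi^*)$ with $\xi>\xi_0$. Your third sub-case replaces this monotonicity by the local injectivity of $\piPos$ on $\Wuloc{c}{V}(E_-)$, which is a clean alternative. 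The paper's route is more constructive and is reused later (\cref{lem:additional_condition_nu_and_r}) where the monotonicity of $|u-e_-|$ is needed uniformly in $(\bu,c,V)$; your contradiction argument would not directly give that.

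For \cref{item_prop:values_reached_only_once_asymmetric_standing_pulse,item_prop:values_reached_only_once_symmetric_standing_pulse}, the paper and you reach the same conclusion from the same two obstructions (periodicity and reversibility symmetry), but packaged differently. The paper argues that if $u(\xi)=u(\xi')$ with $\xi\ne\xi'$, then $|\dot u(\xi)|=|\dot u(\xi')|$ while $\dot u(\xi)\ne\pm\dot u(\xi')$, so the two tangent vectors are non-proportional; hence the set of collision pairs $(\xi,\xi')$ is discrete in $\rr^2$, and compactness forces finitely many with $\xi\in I$. You instead package this as: the bad set $S$ is closed, and if it had interior one could run an implicit-function branch $\sigma$ with $\sigma'=\pm1$, forcing periodicity or a second turning time. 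These are essentially dual formulations---the paper's exclusion of $\dot u(\xi)=\dot u(\xi')$ and $\dot u(\xi)=-\dot u(\xi')$ is exactly your exclusion of $\sigma'\equiv1$ and $\sigma'\equiv-1$. The paper's version yields the slightly stronger conclusion that $I\setminus S$ is cofinite in $I$, not merely open and dense.
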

\begin{figure}[htbp]
\centering
\resizebox{0.4\textwidth}{!}{\input{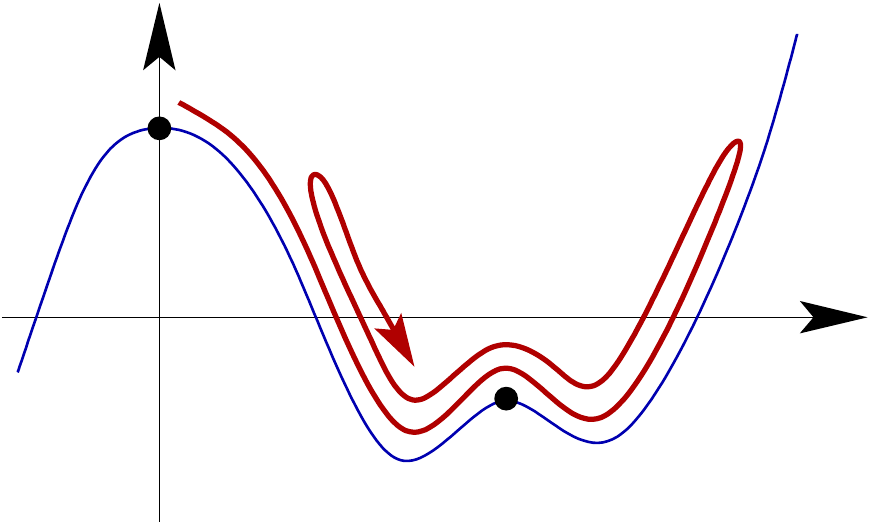_t}}
\caption{The one-dimensional example of this figure shows that property \cref{item_prop:values_reached_only_once_travelling_front} of \cref{prop:values_reached_only_once} may not hold outside a small neighbourhood of the critical point $e_-$.}
\label{fig:one-to-one}
\end{figure}
\begin{proof}[Proof of statement \cref{item_prop:values_reached_only_once_travelling_front} of \cref{prop:values_reached_only_once}]
Let $\xi\mapsto u(\xi)$ denote the profile of a front travelling at a positive speed $c$ for the potential $V$, and let $e_-$ denote the critical point, assumed to be non-degenerate, approached by $u(\xi)$ as $\xi$ goes to $-\infty$. Since all eigenvalues of $DF_{c,V}(E_-)_{|\Eu_{c,V}(E_-)}$ are real and positive (see \cref{subsec:eigenspaces_and_dimensions}), the corresponding solution $U(\xi)$ of system \cref{trav_wave_system_order_1} must approach $E_-$ tangentially to some (real, unstable) eigenvector $\Ueig$ of $DF_{c,V}(E_-)$ as $\xi$ goes to $-\infty$. If $\lambda$ denotes the corresponding (positive) eigenvalue, then $\Ueig$ is of the form $(\ueig,\lambda\ueig)$, where $\ueig$ is an eigenvector of $D^2V(e_-)$, see expression \cref{notation_eigenvectors_in_dim_2d}. Thus there must exist a nonzero scalar function $\xi\mapsto\alpha(\xi)$ so that, as $\xi$ goes to $-\infty$, 
\[
U(\xi)=E_- + \alpha(\xi) \Ueig + o\bigl(\alpha(\xi)\bigr)
\,,
\quad\text{that is}\quad \left\{
\begin{aligned}
u(\xi) &= e_- + \alpha(\xi)\ueig + o\bigl(\alpha(\xi)\bigr)\,,\\
\dot u(\xi) &= \alpha(\xi)\lambda\ueig + o\bigl(\alpha(\xi)\bigr)\,.
\end{aligned}
\right.
\]
It follows that there exists a large negative time $\xi_0$ such that, for every $\xi$ in $(-\infty,\xi_0]$,
\[
\frac{d}{{d}\xi} |u(\xi)-e_-|^2\,=\,2(u(\xi)-e_-)\cdot \dot u(\xi)\,>\,0
\,.
\]
In particular, the function
\begin{equation}
\label{distance_to_e_minus_for_large_negative_xi}
(-\infty,\xi_0]\to\rr^d, \quad \xi\mapsto u(\xi)
\end{equation}
is a $\ccc^1$-diffeomorphism onto its image. According to the decrease \cref{time_derivative_Hamiltonian} of the Hamiltonian, the quantity $H_V\bigl(U(\xi_0)\bigr)$ is smaller than $-V(e_-)$. As a consequence, there exists a time $\xiOnce$ in $(-\infty,\xi_0)$ such that, for every $\xi^*$ in $(-\infty,\xiOnce]$, 
\begin{equation}
\label{minus_V_larger_than_H_at_xiExit_for_large_negative_xi}
H_V\bigl(U(\xi_0)\bigr)<-V\bigl(u(\xi^*)\bigr)
\,.
\end{equation}
Take a time $\xi^*$ in $(-\infty,\xiOnce]$ and a time $\xi$ in $\rr$ and let us assume that $u(\xi)$ equals $u(\xi^*)$. If $\xi$ was larger than $\xi_0$ then it would follow from the expression \cref{Hamiltonian} of the Hamiltonian, its decrease \cref{time_derivative_Hamiltonian} and inequality \cref{minus_V_larger_than_H_at_xiExit_for_large_negative_xi} that
\[
-V\bigl(u(\xi)\bigr) \le H_V\bigl(U(\xi)\bigr) \le H_V\bigl(U(\xi_0)\bigr) < -V\bigl(u(\xi^*)\bigr)
\,,
\]
a contradiction with the equality of $u(\xi)$ and $u(\xi^*)$. Thus $\xi$ is not larger than $\xi_0$, and it follows from the one-to-one property of the function \cref{distance_to_e_minus_for_large_negative_xi} that $\xi$ must be equal to $\xi^*$. Statement \cref{item_prop:values_reached_only_once_travelling_front} of \cref{prop:values_reached_only_once} is proved. 
\end{proof}
\begin{proof}[Proof of statement \cref{item_prop:values_reached_only_once_asymmetric_standing_pulse} of \cref{prop:values_reached_only_once}]
Let $\xi\mapsto u(\xi)$ be the profile of an asymmetric standing pulse for the potential $V$, let $e$ denote the critical point approached by $u(\xi)$ as $\xi$ goes to $\pm\infty$, and let $I$ be a nonempty open interval of $\rr$. In view of the intended conclusion (statement \cref{item_prop:values_reached_only_once_asymmetric_standing_pulse}), we may assume that $I$ is bounded. According to the invariance \cref{time_derivative_Hamiltonian} of the Hamiltonian $H_V$, for every $\xi$ in $\rr$, the difference $V\bigl(u(\xi)\bigr)-V(e)$ is equal to $|\dot u(\xi)|^2/2$ and is therefore nonzero, so that the critical point $e$ is never reached by the function $\xi\mapsto u(\xi)$ on $\rr$. As a consequence there exists a (small) positive quantity $r$ such that $\abs{u(\xi)-e}$ is larger than $r$ for all $\xi$ in $I$; and since $u(\xi)$ approaches $e$ as $\xi$ goes to $\pm\infty$, there exists a (large) positive quantity $M$ such that $\abs{u(\xi)-e}$ is smaller than $r$ outside of $[-M,M]$. 

Assume that there exist two different times $\xi$ and $\xi'$ in $\rr$ such that $u(\xi)$ equals $u(\xi')$. Then, again according to the invariance \cref{time_derivative_Hamiltonian} of the Hamiltonian $H_V$, the time derivatives $\dot u(\xi)$ and $\dot u(\xi')$ must have the same norm. Besides, these two vectors cannot be equal (or else the profile $u$ would be periodic) nor opposite (or else according to \cref{lem:equivalent_def_symm_hom_orbit} the pulse would be symmetric), thus they are not proportional. Thus the couples $(\xi,\xi')$ such that $u(\xi)$ is equal to $u(\xi')$ are isolated in $\rr^2$. In addition, if $(\xi,\xi')$ is such a couple and $\xi$ is in $I$ then $\xi'$ must belong to $[-M,M]$. This shows by compactness that there exists only a finite number of couples $(\xi,\xi')$ in $I\times\rr$ such that $u(\xi)$ equals $u(\xi')$. Statement \cref{item_prop:values_reached_only_once_asymmetric_standing_pulse} of \cref{prop:values_reached_only_once} follows.
\end{proof}
\begin{proof}[Proof of statement \cref{item_prop:values_reached_only_once_symmetric_standing_pulse} of \cref{prop:values_reached_only_once}]
The arguments are the same as in the proof of statement \cref{item_prop:values_reached_only_once_asymmetric_standing_pulse} above. Let $\xi\mapsto u(\xi)$ be the profile of a symmetric pulse with turning time $\xiTurn$ for the potential $V$, let $I$ be a nonempty open interval of $(-\infty,\xiTurn]$, assumed to be bounded. If there exist two different times $\xi$ and $\xi'$ in $(-\infty,\xiTurn]$ such that $u(\xi)$ equals $u(\xi')$, again the time derivatives $\dot u(\xi)$ and $\dot u(\xi')$ have the same norm. These two vectors cannot be equal (or else the profile $u$ would be periodic) nor opposite (or else, according to statement \cref{item_lem:equivalent_def_symm_hom_orbit_symmetry_once} of \cref{lem:equivalent_def_symm_hom_orbit}, $(\xi+\xi')/2$ would be a second turning time --- smaller than $\xiTurn$ --- for $u$, a contradiction with the conclusion of \cref{lem:equivalent_def_symm_hom_orbit}). Thus again, $\dot u(\xi)$ and $\dot u(\xi')$ cannot be proportional, and the same arguments as in the proof of statement \cref{item_prop:values_reached_only_once_asymmetric_standing_pulse} above show that there exists only a finite number of couples $(\xi,\xi')$ in $I\times(-\infty,\xiTurn]$ such that $u(\xi)=u(\xi')$. 
\end{proof}
\section{Tools for genericity}
\subsection{An instance of the Sard--Smale transversality theorem}
\label{subsec:transversality_thm}
To prove that a given property generically holds, a standard method is to express this property as a transversality problem and to use one instance among the family of theorems known as Sard--Smale theorem (or Thom's theorems, or transversality theorems), see \cite{AbrahamRobbin_transversalMappingsFlows_1967,BroerHasselblattTakens_handbookDynSyst3_2010,Hirsch_differentialTopology_1976,Peixoto_onApprroximationTheoremKupkaSmale_1967}. In this paper the following instance will be used (\cref{thm:Sard_Smale} below).
Let us consider a function
\[
\Phi:\mmm\times\Lambda\to\nnn
\,,
\]
where $\mmm$ and $\nnn$ are two finite-dimensional manifolds and $\Lambda$ (``parameter space'') is a Banach manifold, together with a submanifold $\www$ of $\nnn$ (see \cref{fig:sard_smale}). Let us assume that these four manifolds and the function $\Phi$ are of class $C^k$ (as everywhere in the paper $k$ denotes an integer which is not smaller than $1$). Finally, let $\codim(\www)$ denote the codimension of $\www$ in $\nnn$. 

\begin{definition}
With the notation above, the image of $\Phi$ is said to be \emph{transverse to $\www$}, if, for every $(m,\lambda)$ in $\mmm\times\Lambda$ such that $\Phi(m,\lambda)$ is in $\www$, the following equality holds:
\[
D\Phi(T_{m}\mmm\times T_{\lambda}\Lambda)+T_{\Phi(m,\lambda)}\www = T_{\Phi(m,\lambda)}\nnn
\]
(here $D\Phi$ denotes the differential of $\Phi$ at $(m,\lambda)$).
Accordingly, for every $\lambda$ in $\Lambda$, if $\Phi_\lambda$ denotes the function:
\[
\mmm\to\nnn, \quad m\mapsto\Phi(m,\lambda)
\,,
\]
then the image of $\Phi_\lambda$ is said to be \emph{transverse to $\www$} if, for every $m$ in $\mmm$ such that $\Phi(m,\lambda)$ is in $\www$, denoting $D\Phi_\lambda$ the differential of $\Phi_\lambda$ at $m$,
\[
D\Phi_\lambda(T_m\mmm)+T_{\Phi(m,\lambda)}\www = T_{\Phi(m,\lambda)}\nnn
\,.
\]
\end{definition}
\begin{theorem}[Sard--Smale transversality theorem]
\label{thm:Sard_Smale}
With the notation above, if
\begin{enumerate}
\item $k>\dim(\mmm)-\codim(\www)$,
\label{item:thm_sard_smale_condition_regularity}
\item and the image of $\Phi$ is transverse to $\www$,
\label{item:thm_sard_smale_condition_transversality}
\end{enumerate}
then there exists a generic subset $\LambdaGen$ of $\Lambda$ such that, for every $\lambda$ in $\LambdaGen$, the image of $\Phi_\lambda$ is transverse to $\www$. 
\end{theorem}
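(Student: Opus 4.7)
The plan is to reduce the statement to the classical Sard--Smale theorem for Fredholm maps between Banach manifolds. The object to introduce is the preimage
\[
\mathcal{Z} = \Phi^{-1}(\www)\subset\mmm\times\Lambda,
\]
together with the projection $\pi:\mathcal{Z}\to\Lambda$, $(m,\lambda)\mapsto\lambda$. The strategy proceeds in four steps: (i) show that $\mathcal{Z}$ is a $\ccc^k$ Banach submanifold of $\mmm\times\Lambda$ of codimension $\codim(\www)$; (ii) show that $\pi$ is a $\ccc^k$ Fredholm map of index $\dim(\mmm)-\codim(\www)$; (iii) show that $\lambda$ is a regular value of $\pi$ if and only if $\Phi_\lambda$ is transverse to $\www$; (iv) apply the Sard--Smale theorem for Fredholm maps between Banach manifolds.

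For step (i), I would argue locally. Around any point $(m_0,\lambda_0)$ of $\mathcal{Z}$, choose a chart on $\nnn$ in which $\www$ appears as the zero set of a submersion $\rho:\nnn\to\rr^{\codim(\www)}$. The composition $\rho\circ\Phi$ is then of class $\ccc^k$, and the transversality assumption \cref{item:thm_sard_smale_condition_transversality} is precisely the statement that its differential at $(m_0,\lambda_0)$ is surjective onto $\rr^{\codim(\www)}$. The implicit function theorem for $\ccc^k$ maps from a Banach manifold to a finite-dimensional space then yields that $\mathcal{Z}$ is locally a $\ccc^k$ Banach submanifold of $\mmm\times\Lambda$ of codimension $\codim(\www)$, with tangent space
\[
T_{(m_0,\lambda_0)}\mathcal{Z} = \bigl\{(\xi,\eta)\in T_{m_0}\mmm\times T_{\lambda_0}\Lambda : D\Phi(m_0,\lambda_0)(\xi,\eta)\in T_{\Phi(m_0,\lambda_0)}\www\bigr\}.
\]

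For steps (ii) and (iii), note that the differential $d\pi(m,\lambda)$ is the restriction to $T_{(m,\lambda)}\mathcal{Z}$ of the second projection $(\xi,\eta)\mapsto\eta$. A direct linear-algebra computation identifies its kernel with the finite-dimensional space $\bigl\{\xi\in T_m\mmm : D\Phi_\lambda(m)\xi\in T_{\Phi(m,\lambda)}\www\bigr\}$, and exhibits a canonical isomorphism from its cokernel onto the finite-dimensional quotient
\[
T_{\Phi(m,\lambda)}\nnn\,\big/\,\bigl(D\Phi_\lambda(m)(T_m\mmm)+T_{\Phi(m,\lambda)}\www\bigr).
\]
Thus $d\pi(m,\lambda)$ is Fredholm of index $\dim(\mmm)-\codim(\www)$ by a standard count, and its cokernel vanishes exactly when $\Phi_\lambda$ is transverse to $\www$ at $m$. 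Consequently, $\lambda$ is a regular value of $\pi$ if and only if $\Phi_\lambda$ is transverse to $\www$.

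Step (iv) then follows by direct invocation of the Fredholm version of Sard--Smale: the set of regular values of a $\ccc^k$ Fredholm map of index $q$ between second-countable Banach manifolds is generic in the target, provided $k>\max(q,0)$. Since $\pi$ has index $\dim(\mmm)-\codim(\www)$, the hypothesis \cref{item:thm_sard_smale_condition_regularity} supplies precisely the required regularity (the case of a negative index being handled by $k\ge 1$, since then regular values are simply the complement of the meager set $\pi(\mathcal{Z})$). The only non-routine point is the linear-algebra identification in (iii), which translates the geometric transversality of $\Phi_\lambda$ into the analytic condition of regularity for $\pi$; the remaining ingredients, namely the implicit function theorem on Banach manifolds and the Fredholm Sard--Smale theorem, are classical.
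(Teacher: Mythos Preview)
The paper does not supply its own proof of this theorem; immediately after the statement it writes ``The proof of this result can be found in \cite{AbrahamRobbin_transversalMappingsFlows_1967} or in \cite{Hirsch_differentialTopology_1976}.'' Your sketch (pull back to $\mathcal{Z}=\Phi^{-1}(\www)$, show the projection $\pi:\mathcal{Z}\to\Lambda$ is Fredholm of index $\dim(\mmm)-\codim(\www)$, identify regular values of $\pi$ with parameters $\lambda$ for which $\Phi_\lambda\pitchfork\www$, and invoke Smale's Sard theorem) is exactly the classical argument found in those references, so there is nothing to compare.

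One small remark: your step (iv) invokes the Fredholm Sard--Smale theorem for \emph{second-countable} Banach manifolds, a hypothesis the paper's statement of \cref{thm:Sard_Smale} does not list explicitly. This is harmless in context, since every application in the paper takes $\Lambda$ to be an open subset of the separable space $\vvvQuad{R}$ (see \cref{subsec:potentials_quadratic_past_radius_R}), but strictly speaking the theorem as stated is missing that assumption.
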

The proof of this result can be found in \cite{AbrahamRobbin_transversalMappingsFlows_1967} or in \cite{Hirsch_differentialTopology_1976}. The key hypothesis, which is often difficult to check, is the transversality hypothesis \cref{item:thm_sard_smale_condition_transversality}. Notice that the conclusion is stronger than this hypothesis since it states that the transversality holds for a fixed generic parameter $\lambda$, whereas hypothesis \cref{item:thm_sard_smale_condition_transversality} uses the freedom of moving $\lambda$.
\begin{figure}[htbp]
\centering
\resizebox{.9\textwidth}{!}{\input{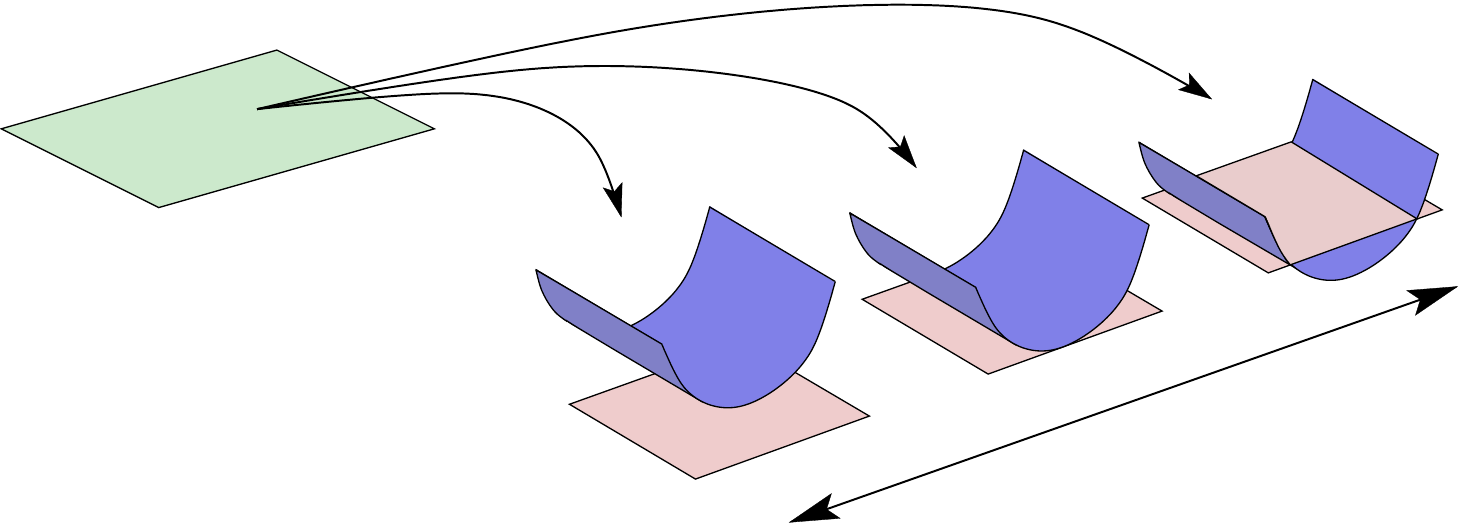_t}}
\caption{Geometric interpretation of \cref{thm:Sard_Smale}. Assume that for a given parameter $\lambda_0$, $D\Phi_{\lambda_0}(T\mmm)+T\www$ is not the whole tangent space $T\nnn$, but that the dependence of $\Phi$ on $\lambda$ provides the missing directions. Then for almost every $\lambda$ close to $\lambda_0$, the image $\Phi(\mmm,\lambda)$ intersects $\www$ transversally.}
\label{fig:sard_smale}
\end{figure}
\subsection{Extending local genericity to global genericity}
\Cref{thm:Sard_Smale} (under the form above or another) is the standard tool to prove that a property generically holds. However, it turns out that is is often difficult, in practice, to express a property using a single function $\Phi$ as above; thus one is often led to patch together several conclusions provided by this theorem. The following lemma provides a way to carry out this patching process. This lemma is identical to Lemma 3.3 of Chapter 3 of \cite{PalisDeMelo_geometricTheoryDynamicalSystems_1982}, where a proof can be found. 
\begin{lemma}[local genericity implies global genericity in a separable Baire space]
\label{lem:local_genericity_implies_global_genericity}
Let $\vvv$ be a separable Baire space and $\vvvDense$ be a dense subset of $\vvv$. For every subset $\vvvGen$ of $\vvv$, the following two assertions are equivalent:
\begin{enumerate}
\item the subset $\vvvGen$ is generic in $\vvv$;
\label{item:lem_local_genericity_implies_global_genericity_global_genericity}
\item for every $V_0$ in $\vvvDense$, there exists a neighbourhood $\nu$ of $V_0$ in $\vvv$ such that $\vvvGen\cap\nu$ is generic in $\nu$. 
\label{item:lem_local_genericity_implies_global_genericity_local_genericity} 
\end{enumerate}
\end{lemma}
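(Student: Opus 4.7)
The plan is to prove the two implications separately. The forward implication, from \cref{item:lem_local_genericity_implies_global_genericity_global_genericity} to \cref{item:lem_local_genericity_implies_global_genericity_local_genericity}, I would dispatch immediately: if $\vvvGen$ contains $\bigcap_{m\ge 1} O_m$ with each $O_m$ open and dense in $\vvv$, then for any open $\nu\subseteq\vvv$ the traces $O_m\cap\nu$ are open in $\nu$ and, because $\nu$ is open, remain dense in $\nu$. Hence $\vvvGen\cap\nu$ contains a dense $G_\delta$ of $\nu$, and this holds for every $V_0\in\vvv$, in particular for every $V_0\in\vvvDense$.

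For the substantial direction, the first step is to replace the \emph{a priori} uncountable family of neighbourhoods supplied by the hypothesis with a countable one. I would use separability to fix a countable dense sequence $(V_n)_{n\ge 1}$ in $\vvv$; since $\vvvDense$ is dense in $\vvv$, for each $n$ I can pick $V_n'\in\vvvDense$ so close to $V_n$ that $V_n$ lies in a neighbourhood $\nu_n$ of $V_n'$ provided by the local-genericity assumption. The union $\mathcal{U}=\bigcup_n \nu_n$ is then open in $\vvv$ and dense (it contains the dense set $\{V_n:n\ge 1\}$). Inside each $\nu_n$, the hypothesis furnishes a sequence $(O_{n,m})_{m\ge 1}$ of dense open subsets of $\nu_n$ with $\bigcap_m O_{n,m}\subseteq\vvvGen\cap\nu_n$.

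The main obstacle is to upgrade each $O_{n,m}$---which is only open and dense relative to $\nu_n$---to an open dense subset of the whole ambient space $\vvv$, in such a way that a countable intersection will still be trapped inside $\vvvGen$. I plan to use the standard trick of enlarging each $O_{n,m}$ with the complement of $\overline{\nu_n}$, by setting
\[
\tilde O_{n,m} \ := \ O_{n,m}\cup\bigl(\vvv\setminus\overline{\nu_n}\bigr)\,.
\]
This set is open in $\vvv$, and dense in $\vvv$: points outside $\overline{\nu_n}$ already lie in the second summand, while any point of $\overline{\nu_n}$ is approached by points of $\nu_n$ and hence by points of the $\nu_n$-dense subset $O_{n,m}$. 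Then $\mathcal{G}:=\mathcal{U}\cap\bigcap_{n,m}\tilde O_{n,m}$ is a countable intersection of open dense subsets of the Baire space $\vvv$, and thus dense $G_\delta$.

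The final step would be to verify $\mathcal{G}\subseteq\vvvGen$, which closes the argument: any $V\in\mathcal{G}$ lies in some $\nu_{n_0}$, so the alternative $V\in\vvv\setminus\overline{\nu_{n_0}}$ in each $\tilde O_{n_0,m}$ is excluded, forcing $V\in O_{n_0,m}$ for every $m$ and hence $V\in\bigcap_m O_{n_0,m}\subseteq\vvvGen\cap\nu_{n_0}$. Genericity of $\vvvGen$ in $\vvv$ follows.
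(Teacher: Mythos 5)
The paper does not supply its own proof of this lemma; it defers to Lemma~3.3, Chapter~3, of Palis--de~Melo. Your architecture (the extension sets $\tilde O_{n,m}=O_{n,m}\cup(\vvv\setminus\overline{\nu_n})$, the use of $\mathcal U$ to ``trap'' each point inside one of the charts, and the final inclusion $\mathcal G\subseteq\vvvGen$) is precisely the classical argument behind that reference and is also the mechanism the paper reuses in \cref{sec:proof_main} (compare \cref{def_tilde_ooo_ext}). The forward implication and the trapping argument are correct as you wrote them.

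There is, however, a genuine gap in the construction of the countable family $(\nu_n)$. You claim that for each $n$ you can choose $V_n'\in\vvvDense$ ``so close to $V_n$ that $V_n$ lies in a neighbourhood $\nu_n$ of $V_n'$ provided by the local-genericity assumption.'' This is not justified: assertion~\cref{item:lem_local_genericity_implies_global_genericity_local_genericity} hands you \emph{some} neighbourhood of $V_n'$, of unspecified and possibly arbitrarily small size, and nothing lets you anticipate that this neighbourhood will swallow $V_n$ just because $V_n'$ was chosen near $V_n$ (indeed the assignment $V_0\mapsto\nu(V_0)$ could shrink faster than the distance from $V_0$ to $V_n$). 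The correct selection goes the other way around. First shrink each $\nu(V_0)$ to an open neighbourhood (genericity passes down to open subsets, so nothing is lost), and set $\mathcal U_*=\bigcup_{V_0\in\vvvDense}\nu(V_0)$; this is open and dense because it contains $\vvvDense$. Now use separability: fixing a countable dense sequence $(V_n)$ in $\vvv$, the subset $\{n:V_n\in\mathcal U_*\}$ already indexes a dense set, since $\mathcal U_*$ is open and dense; and for each such $n$ there exists, \emph{by definition of $\mathcal U_*$}, some $W_n\in\vvvDense$ with $V_n\in\nu(W_n)$. Setting $\nu_n=\nu(W_n)$, the union $\mathcal U=\bigcup_n\nu_n$ is open, countable, and contains the dense family $\{V_n:V_n\in\mathcal U_*\}$, hence is dense. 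From there your argument with the $\tilde O_{n,m}$ proceeds unchanged. (This extraction uses only separability, not the Lindel\"of property, though in the application $\vvvQuad{R}$ is in any case second countable.)
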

\subsection{Potentials that are quadratic past a given radius}
\label{subsec:potentials_quadratic_past_radius_R}
The whole space $\CkFull$ of potentials is somewhat difficult to handle, for various reasons: it is not separable, even locally, and the flow of system \cref{trav_wave_system_order_1} is not globally well-defined for some of the potentials $V$ in this space. To get around these difficulties, the proofs of the next \cref{sec:generic_transversality_travelling_fronts,sec:generic_elementarity_sym_stand_pulses} will be carried out on a more restricted class $\vvvQuad{R}$ of potentials, after what the results will be extended to the full set $\CkFull$ in the final \cref{sec:proof_main}. Let 
\begin{equation}
\label{notation_vvvFull}
\vvvFull = \CkFull 
\,,
\end{equation}
and, for a positive quantity $R$, let
\begin{equation}
\label{notation_vvvQuad_of_R}
\vvvQuad{R} = \left\{ V\in\vvvFull : \text{for all $u$ in $\rr^d$}, \ \abs{u}\ge R \implies V(u) = \frac{|u|^2}{2} \right\} 
\,.
\end{equation}
By contrast with $\vvvFull$, the affine subspace $\vvvQuad{R}$ of $\vvvFull$ is separable, and therefore provides a framework where \cref{lem:local_genericity_implies_global_genericity} may be applied. 
The next lemma states some (nice) properties of the flow of system \cref{trav_wave_system_order_1} for a potential $V$ in $\vvvQuad{R}$. It is followed by another one (\cref{cor:truncation_extension_of_potentials} below) providing the adequate tools to proceed with the extension mentioned above and carried out in \cref{sec:proof_main}. 
\begin{notation}
For every non negative quantity $r$, let $B_{\rr^d}(0,r)$ and $\widebar{B}_{\rr^d}(0,r)$ denote the open ball and the closed ball centred at the origin and of radius $r$ in $\rr^d$. 
\end{notation}
\begin{lemma}
\label{lem:global_flow_and_a_priori_bound_on_profiles}
For every positive quantity $R$ and for every potential $V$ in $\vvvQuad{R}$, the following conclusions hold. 
\begin{enumerate}
\item For every speed $c$, the flow defined by the differential system \cref{trav_wave_system_order_1} is global.
\label{item:lem_global_flow}
\item Every profile $\xi\mapsto u(\xi)$ of a travelling front or a standing front or a standing pulse, for this potential, satisfies the following bound: 
\begin{equation}
\label{a_priori_bound_on_profiles}
\sup_{\xi\in\rr} \abs{u(\xi)} < R
\,.
\end{equation}
\label{item:lem_a_priori_bound_on_profiles}
\end{enumerate}
\end{lemma}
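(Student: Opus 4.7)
The plan is to handle the two conclusions separately.

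For conclusion \cref{item:lem_global_flow}, the global existence of the flow follows from a linear growth estimate on the vector field $F_{c,V}$. Indeed, on the complement of the open ball $B_{\rr^d}(0,R)$, the potential $V$ coincides with $u\mapsto |u|^2/2$, so $\nabla V(u)=u$ there. On the compact closed ball $\widebar{B}_{\rr^d}(0,R)$, $\nabla V$ is continuous (since $V$ is $\ccc^{k+1}$), hence bounded by some constant $C>0$. Combining both, $|\nabla V(u)|\le |u|+C$ for every $u\in\rr^d$, and therefore $|F_{c,V}(u,v)|\le (1+|c|)|v|+|u|+C$. A standard Grönwall argument then rules out finite-time blow-up, and every maximal solution of \cref{trav_wave_system_order_1} is defined on all of $\rr$.

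For conclusion \cref{item:lem_a_priori_bound_on_profiles}, I would argue by contradiction via a maximum-principle argument on the scalar observable $\rho:\xi\mapsto |u(\xi)|^2$. First, any critical point $e$ of $V$ must lie in the open ball $B_{\rr^d}(0,R)$: if $|e|\ge R$ then $\nabla V(e)=e\ne 0$, a contradiction. Consequently the limits $e_\pm$ of the profile at $\pm\infty$ satisfy $|e_\pm|<R$, and therefore $\rho(\xi)\to |e_\pm|^2<R^2$ as $\xi\to\pm\infty$.

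Now assume for contradiction that $\sup_\xi \rho(\xi)\ge R^2$. Since the limits of $\rho$ at $\pm\infty$ are strictly smaller than $R^2$, the supremum is attained at some interior point $\xi_0\in\rr$ with $\rho(\xi_0)\ge R^2$, $\dot\rho(\xi_0)=0$, and $\ddot\rho(\xi_0)\le 0$. Using the differential system \cref{trav_wave_system_order_2}, I compute
\[
\ddot\rho = 2|\dot u|^2 - c\dot\rho + 2\, u\cdot \nabla V(u)
\,.
\]
At $\xi_0$, the condition $|u(\xi_0)|\ge R$ forces $\nabla V(u(\xi_0))=u(\xi_0)$, and combined with $\dot\rho(\xi_0)=0$ this yields
\[
\ddot\rho(\xi_0)=2|\dot u(\xi_0)|^2+2|u(\xi_0)|^2\ge 2R^2>0
\,,
\]
contradicting $\ddot\rho(\xi_0)\le 0$. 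Hence $\sup_\xi |u(\xi)|<R$, as required.

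Neither step is substantial, but the one point requiring care is the choice of the scalar observable $\rho=|u|^2$ rather than the Hamiltonian $H_V$: outside the ball of radius $R$, $\tilde V(u)=-|u|^2/2$ is concave, and it is precisely this strong convexity of $V$ past the radius $R$ that turns the boundedness of the profile into a one-line maximum-principle computation, applying uniformly to the travelling, standing-front, and standing-pulse cases without any case distinction.
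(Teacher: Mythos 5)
Your proof is correct. Conclusion \cref{item:lem_global_flow} follows the paper's route exactly: the quadratic tail of $V$ outside $B_{\rr^d}(0,R)$ plus compactness inside give a linear bound $|\nabla V(u)|\le|u|+C$, hence linear growth of $F_{c,V}$, hence no finite-time blow-up. For conclusion \cref{item:lem_a_priori_bound_on_profiles} you work with the same scalar observable as the paper (your $\rho=|u|^2$ is the paper's $q=|u|^2/2$ up to a factor) and the same identity $\ddot\rho = 2|\dot u|^2 - c\dot\rho + 2\,u\cdot\nabla V(u)$, but you close the argument by a different mechanism. The paper observes that $\frac{d}{d\xi}\bigl(e^{c\xi}\dot q\bigr)>0$ wherever $|u|\ge R$ and runs a forward-invariance argument: if $|u|$ ever first reaches $R$ at a time $\xi_0$, then $\dot q(\xi_0)\ge 0$ and the integrating factor forces $\dot q>0$ from then on, so $|u|$ never returns inside $B_{\rr^d}(0,R)$, contradicting convergence to an interior critical point at $+\infty$. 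You instead note that $\rho$ attains its supremum at an interior time $\xi_0$ (since $\rho\to|e_\pm|^2<R^2$ at $\pm\infty$), where $\dot\rho(\xi_0)=0$ annihilates the damping term and the second-derivative test yields the contradiction $0\ge\ddot\rho(\xi_0)\ge 2R^2$. Your variant is a touch cleaner: it dispenses with the exponential integrating factor, treats $c>0$ and $c=0$ uniformly, and replaces the paper's differential inequality along the trajectory by a single pointwise evaluation at the maximum; the mild price is that you must (and correctly do) justify that the supremum is attained, via the strict inequalities $|e_\pm|<R$ and compactness of the set where $\rho$ can be large.
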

\begin{proof}
Let $V$ be in $\vvvQuad{R}$ and let $c$ be a real quantity. According to the definition \cref{notation_vvvQuad_of_R} of $\vvvQuad{R}$, there exists a positive quantity $K$ such that, for every $u$ in $\rr^d$, 
\[
\abs{\nabla V(u)}\le K + \abs{u}
\,.
\]
As a consequence, it follows from the expression \cref{vector_field} of $F_{c,V}$ that, for every solution $U=(u,v)$ of \cref{trav_wave_system_order_1} in $\rr^{2d}$, 
\[
\abs{\dot U(\xi)}=\abs{F_{c,V}(u,v)} = \abs{(v,\nabla V(u)-cv)} = \mathcal{O}_{|U|\rightarrow \infty } \big( \abs{U(\xi)}\big)\,.
\]
This bound prevents solutions from blowing up in finite time, proving conclusion \cref{item:lem_global_flow}. 

Now let $\xi\mapsto u(\xi)$ denote a solution of \cref{trav_wave_system_order_2} approaching critical points of $V$ at both ends of $\rr$. Let us write $q=|u|^2/2$, so that
\begin{equation}
\label{dot_S_a_priori_bounds}
\dot q = u\cdot \dot u \quad \text{ and }\quad \ddot q = -c \dot q +  \dot u^2 + u\cdot\nabla V(u)
\,,
\end{equation}
and so that, since $V$ is in $\vvvQuad{R}$, for every real quantity $\xi$, 
\[
\abs{u(\xi)}\ge R \implies \frac{d}{{d}\xi}_{|\xi} \big( e^{c\xi}\dot q(\xi)\big) = e^{c\xi}\bigl(\dot u^2(\xi) + u^2(\xi)\bigr)>0
\,.
\]
Since $V$ is quadratic outside the ball $B_{\rr^d}(0,R)$, its critical points must belong to the interior of $B_{\rr^d}(0,R)$, and the same must be true for $u(\xi)$ when $\abs{\xi}$ is large. Now, if $\abs{u(\cdot)}$ were to reach the value $R$ at some (finite) time $\xi_0$, then (if $\xi_0$ is the first time when this happens) $\dot q(\xi_0)$ would be nonnegative; the implications above show that, from this time on, the quantity $e^{c\xi}\dot q(\xi)$ (and thus also the quantity $\dot q(\xi)$) would remain positive; so that $q(\xi)$ and $\abs{u(\xi)}$ would keep increasing with $\xi$, a contradiction with the fact that $u(\xi)$ must be back inside $B_{\rr^d}(0,R)$ for $\xi$ large. Conclusion \cref{item:lem_a_priori_bound_on_profiles} is proved. 
\end{proof}
\subsection{Topological properties of restriction maps}
\label{subsec:topological_properties_restriction_maps}
Let $R$ denote a positive quantity and let us consider the set
\begin{equation}
\label{notation_vvvRes}
\vvvRes{R} = \ccc^{k+1}\bigl(\widebar{B}_{\rr^d}(0,R),\rr\bigr)
\,.
\end{equation}
The next \cref{lem:res_R_R_prime_is_continuous_and_surjective_and_open} will be used to carry out, in \cref{sec:proof_main}, the extension mentioned at the beginning of this \namecref{subsec:potentials_quadratic_past_radius_R}. To ease its formulation, let us adopt $\vvvQuad{\infty}$ as an alternative notation for the space $\vvvFull$. Let $R'$ denote either a quantity larger than $R$ or $\infty$, and let us consider the restriction operator: 
\begin{equation}
\label{def_res_R_prime_R}
\res_{R,R'}:\vvvQuad{R'}\to\vvvRes{R}\,, \quad V\mapsto V_{|\widebar{B}_{\rr^d}(0,R)}
\,.
\end{equation}
\begin{lemma}
\label{lem:res_R_R_prime_is_continuous_and_surjective_and_open}
The restriction map $\res_{R,R'}$ is continuous, surjective and open. 
\end{lemma}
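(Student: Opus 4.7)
My proof will rest on two standard ingredients, which I will combine in a common way for the three assertions. The first is a bounded linear Whitney-type extension operator
\[
E:\vvvRes{R}\to\CkbFull\,,\qquad \norm{EW}_{\CkbShort}\le C\,\norm{W}_{\ccc^{k+1}(\widebar{B}_{\rr^d}(0,R))}\,,
\]
whose existence is classical (Whitney's extension theorem, applicable since $\widebar{B}_{\rr^d}(0,R)$ has Lipschitz boundary). The second is a fixed plateau function $\chi\in\ccc^{\infty}(\rr^d,\rr)$ with compact support, $\chi\equiv 1$ on $\widebar{B}_{\rr^d}(0,R)$, and $\supp\chi\subset B_{\rr^d}(0,\rho)$, where $\rho$ is any intermediate radius in $(R,R')$ (when $R'=\infty$, any $\rho>R$ works). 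Once these are at hand, all three conclusions reduce to short arguments.

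\emph{Continuity} is immediate from the local structure of the topology on $\vvvFull$: a neighbourhood of $V_0\in\vvvQuad{R'}$ consists by definition of $V_0+\ooo$ for $\ooo$ open in $\CkbFull$, so any $V\in\vvvQuad{R'}$ close to $V_0$ satisfies $V-V_0\in\CkbFull$, and the elementary inequality
\[
\norm{\res_{R,R'}(V)-\res_{R,R'}(V_0)}_{\ccc^{k+1}(\widebar{B}_{\rr^d}(0,R))}\le\norm{V-V_0}_{\CkbShort}
\]
(the right-hand side is a supremum over a larger set) yields continuity at once. For \emph{surjectivity}, given $W\in\vvvRes{R}$ I exhibit the explicit extension
\[
V(u)=\chi(u)\,(EW)(u)+\bigl(1-\chi(u)\bigr)\frac{|u|^2}{2}\,,
\]
which is of class $\ccc^{k+1}$ on $\rr^d$, coincides with $W$ on $\widebar{B}_{\rr^d}(0,R)$ (where $\chi\equiv 1$), and coincides with $|u|^2/2$ outside $B_{\rr^d}(0,\rho)$ and so in particular outside $B_{\rr^d}(0,R')$; hence $V\in\vvvQuad{R'}$ and $\res_{R,R'}(V)=W$.

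The main step is \emph{openness}. My strategy will be to construct, near any fixed base point $V_0\in\vvvQuad{R'}$, an explicit continuous affine right inverse of $\res_{R,R'}$: setting $W_0=\res_{R,R'}(V_0)$ and
\[
\Psi(W)=V_0+\chi\cdot E(W-W_0)\,,
\]
the perturbation $\chi\cdot E(W-W_0)$ is supported inside $B_{\rr^d}(0,\rho)\subset B_{\rr^d}(0,R')$, so $\Psi(W)$ agrees with $V_0$, and hence with $|u|^2/2$, outside $B_{\rr^d}(0,R')$; therefore $\Psi(W)\in\vvvQuad{R'}$. By construction $\res_{R,R'}\bigl(\Psi(W)\bigr)=W$, and the Leibniz rule together with the operator bound on $E$ yield a constant $C'$, depending only on $\chi$, $k$ and $C$, such that
\[
\norm{\Psi(W)-V_0}_{\CkbShort}\le C'\,\norm{W-W_0}_{\ccc^{k+1}(\widebar{B}_{\rr^d}(0,R))}\,.
\]
From this openness follows in one line: given any open $\ooo\subset\vvvQuad{R'}$ and any $W\in\res_{R,R'}(\ooo)$, pick $V_0\in\ooo$ mapping to $W$ and choose $\epsilon>0$ with $\{V\in\vvvQuad{R'}:\norm{V-V_0}_{\CkbShort}<\epsilon\}\subset\ooo$; then every $W'\in\vvvRes{R}$ satisfying $\norm{W'-W}_{\ccc^{k+1}(\widebar{B}_{\rr^d}(0,R))}<\epsilon/C'$ lies in $\res_{R,R'}(\ooo)$ via $\Psi$.

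The only nonelementary input in the whole argument is the existence of the bounded linear extension operator $E$; I expect this to be the only place where slight care is needed (in particular the operator norm must control all partial derivatives up to order $k+1$), and everything else is routine bookkeeping with a smooth cutoff.
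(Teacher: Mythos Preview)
Your proof is correct and follows essentially the same route as the paper's: both build a continuous right inverse to $\res_{R,R'}$ by composing a bounded linear extension operator (Seeley in the paper, Whitney in your version) with a smooth cutoff that interpolates between the extension and $|u|^2/2$. The only noticeable difference is presentational: the paper simply constructs the global right inverse $\ext_{R',R}$ and asserts that this yields both surjectivity and openness, whereas you spell out the openness step by exhibiting, for each base point $V_0$, the translated section $\Psi(W)=V_0+\chi\cdot E(W-W_0)$ sending $W_0$ back to $V_0$. This is exactly the implicit affine-translation argument underlying the paper's claim, made explicit; neither approach offers a real advantage over the other.
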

\begin{proof}
If two potentials of $\vvvQuad{R'}$ are $\ccc^k$-close, then their restrictions to the closed ball $\widebar{B}_{\rr^d}(0,R)$ are still $\ccc^k$-close on this ball, so that the map $\res_{R,R'}$ is continuous. 

To prove that the map $\res_{R,R'}$ is surjective and open, it is sufficient to construct a continuous right inverse for this map. For this purpose we may consider Seeley's extension 
\[
\ext_{\infty,R}:\vvvRes{R}\to\vvvFull
\,,
\]
 which is a right inverse for $\res_{R,\infty}$ (that is $\res_{R,\infty}\circ \ext_{\infty,R}$ is the identity map of $\vvvRes{R}$). The map defined in Seeley's original paper \cite{Seeley_extensionCinftyFunctionsHalfSpace_1964} extends to the whole space $\rr^d$ a function initially defined on a half space, but using spherical coordinates the same definition leads to this extension $\ext_{\infty,R}$. This map $\ext_{\infty,R}$ is linear and continuous for the usual topology for the departure set $\vvvRes{R}$ and the topology of uniform convergence of derivatives up to order $k$ on compact subsets of $\rr^d$ for the arrival set $\vvvFull$. Now, if $\chi:[0,+\infty)\to\rr$ denotes a smooth truncation function satisfying
\[
\chi\equiv 1 \text{ on }[0,R] 
\quad\text{and}\quad 
\chi\equiv 0 \text{ on }\bigl[\min(R+1,R'),+\infty\bigr)
\,, 
\]
then the map $\ext_{R',R}:\vvvRes{R}\to\vvvQuad{R}$ defined, for every $V$ in $\vvvRes{R}$, by 
\[
\ext_{R',R}(V)(u) = \chi(|u|) \ext_{\infty,R}(V)(u) + \bigl(1-\chi(|u|)\bigr) \frac{|u|^2}{2}
\,,
\] 
is a right inverse of $\res_{R,R'}$ and is continuous (for the topologies of uniform convergence of derivatives up to order $k$ for the departure and arrival sets). \Cref{lem:res_R_R_prime_is_continuous_and_surjective_and_open} is proved. 
\end{proof}
\begin{corollary}
\label{cor:truncation_extension_of_potentials}
For every couple $(A,B)$ of subsets of $\vvvRes{R}$, let $A'=\res_{R,R'}^{-1}(A)$ and $B'=\res_{R,R'}^{-1}(B)$ denote the sets of the potentials of $\vvvQuad{R'}$ whose restrictions to $\widebar{B}_{\rr^d}(0,R)$ belong to $A$ and $B$ respectively. Then the following equivalences hold: 
\begin{align}
\label{open_iff_open}
\text{$A$ is open in $\vvvRes{R}$}&\iff\text{$A'$ is open in $\vvvQuad{R'}$}\,,\\
\label{dense_iff_dense}
\text{$A$ is dense in $B$}&\iff\text{$A'$ is dense in $B'$}\,,\\
\label{dense_in_full_set_iff_dense_in_full_set}
\text{$A$ is dense in $\vvvRes{R}$}&\iff\text{$A'$ is dense in $\vvvQuad{R'}$}
\,.
\end{align}
\end{corollary}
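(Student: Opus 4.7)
The plan is to deduce all three equivalences from \cref{lem:res_R_R_prime_is_continuous_and_surjective_and_open}, which asserts that $\res_{R,R'}$ is continuous, surjective, and open. Taken together these three properties make $\res_{R,R'}$ a topological quotient map with an additional openness property on arbitrary (not only saturated) subsets, and the corollary is essentially a formal consequence of this fact. Two preliminary observations will be used repeatedly: since $\res_{R,R'}$ is surjective, $\res_{R,R'}(\res_{R,R'}^{-1}(X))=X$ for every $X\subseteq\vvvRes{R}$, and consequently $X\subseteq Y\iff \res_{R,R'}^{-1}(X)\subseteq \res_{R,R'}^{-1}(Y)$ for subsets $X,Y$ of $\vvvRes{R}$.

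For the equivalence \cref{open_iff_open}, the direction $A$ open $\Rightarrow A'$ open is just the continuity of $\res_{R,R'}$. Conversely, if $A'$ is open in $\vvvQuad{R'}$ then by the openness of $\res_{R,R'}$ its image $\res_{R,R'}(A')$ is open in $\vvvRes{R}$, and by surjectivity this image equals $A$.

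For density, the key step is to prove the identity
\[
\overline{\res_{R,R'}^{-1}(A)}=\res_{R,R'}^{-1}\bigl(\overline{A}\bigr)
\]
for every subset $A$ of $\vvvRes{R}$. The inclusion $\subseteq$ follows from continuity: the right-hand side is closed and contains $\res_{R,R'}^{-1}(A)$, hence contains its closure. The reverse inclusion uses openness: given $V$ in $\res_{R,R'}^{-1}\bigl(\overline{A}\bigr)$ and any open neighbourhood $\mathcal{U}$ of $V$ in $\vvvQuad{R'}$, the image $\res_{R,R'}(\mathcal{U})$ is an open neighbourhood of $\res_{R,R'}(V)\in\overline{A}$, so it meets $A$ at some point $W$; any preimage of $W$ inside $\mathcal{U}$ lies in $\mathcal{U}\cap\res_{R,R'}^{-1}(A)$, which shows $V\in\overline{\res_{R,R'}^{-1}(A)}$.

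Once this closure identity is established, \cref{dense_iff_dense} reduces to the chain
\[
B\subseteq \overline{A}\iff \res_{R,R'}^{-1}(B)\subseteq \res_{R,R'}^{-1}\bigl(\overline{A}\bigr)=\overline{\res_{R,R'}^{-1}(A)}\,,
\]
where the first equivalence uses the preliminary observation about inclusions under $\res_{R,R'}^{-1}$. Finally, \cref{dense_in_full_set_iff_dense_in_full_set} is the special case $B=\vvvRes{R}$, $B'=\vvvQuad{R'}$. I do not foresee a serious obstacle here: all the real work (in particular the construction of the continuous right inverse via a Seeley-type extension) has already been absorbed into \cref{lem:res_R_R_prime_is_continuous_and_surjective_and_open}, and the only point deserving a moment's care is that the reverse inclusion in the closure identity genuinely requires openness of $\res_{R,R'}$ and is not a mere continuity statement.
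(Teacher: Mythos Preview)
Your proof is correct and uses the same ingredients as the paper (continuity, surjectivity, and openness of $\res_{R,R'}$). The only organisational difference is that you factor the density argument through the closure identity $\overline{\res_{R,R'}^{-1}(A)}=\res_{R,R'}^{-1}(\overline{A})$ and then reduce \cref{dense_iff_dense} to an inclusion of preimages, whereas the paper argues each direction of \cref{dense_iff_dense} directly (forward image of a dense set under a continuous map for $\Longleftarrow$, and pulling back a nonempty relatively open set via openness for $\Longrightarrow$); both routes are equally short and rest on the same three properties.
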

\begin{proof}
Equivalence \cref{open_iff_open} follows from the continuity and the openness of $\res_{R,R'}$. 

According to the surjectivity of $\res_{R,R'}$, the set $\res_{R,R'}(A')$ is equal to $A$ and the set $\res_{R,R'}(B')$ is equal to $B$. Since the image of a dense set by a continuous map is dense in its image, if $A'$ is dense in $B'$ then $A$ is dense in $B$. Implication ``$\impliedby$'' of \cref{dense_iff_dense} is proved. 

On the other hand, if $A$ is dense in $B$, then, for every open subset $\Omega'$ of $B'$, its image $\Omega:=\res_{R,R'}(\Omega')$ is, according to \cref{lem:res_R_R_prime_is_continuous_and_surjective_and_open}, open in $B$ so that the intersection $A\cap\Omega$ is nonempty. According to the surjectivity of $\res_{R,R'}$, the set $\res_{R,R'}^{-1}(A\cap\Omega)$ is also nonempty and it is by construction included in $A'\cap\Omega'$, which is a fortiori nonempty. This proves that $A'$ is dense in $B'$ and completes the proof of equivalence \cref{dense_iff_dense}.

Finally, equivalence \cref{dense_in_full_set_iff_dense_in_full_set} follows from \cref{dense_iff_dense} by setting $B'$ equal to $\vvvQuad{R'}$ and $B$ equal to $\vvvRes{R}$. \Cref{cor:truncation_extension_of_potentials} is proved.
\end{proof}
\section{Generic transversality of travelling fronts}
\label{sec:generic_transversality_travelling_fronts}
\subsection{Notation and statement}
\label{subsec:notation_and_statement_gen_transv_trav_fronts}
\begin{notation}
Let us recall the notation $\vvvFull$ and $\vvvQuad{R}$ introduced in \cref{notation_vvvFull,notation_vvvQuad_of_R}.
For every potential function $V$ in $\vvvFull$, let $\SigmaCrit(V)$ and $\SigmaMin(V)$ denote the set of non-degenerate critical points and of non-degenerate minimum points of $V$, respectively, and let us consider the set
\begin{equation}
\label{notation_fff_V}
\begin{aligned}
\fff_V = \bigl\{ 
&(c,u) \in  (0,+\infty)\times \ccc^{k+1}(\rr,\rr^d): \xi\mapsto u(\xi) \text{ is a global solution of the} \\
&\text{system }\ddot u = - c\dot u + \nabla V(u) \text{ and there exists }(e_-,e_+)\text{ in }\\
&\SigmaCrit(V)\times\SigmaMin(V)\text{ such that }\lim_{\xi\to-\infty} u(\xi)= e_-\text{ and }\lim_{\xi\to+\infty} u(\xi)= e_+
\bigr\}
\,.
\end{aligned}
\end{equation}
In other words, $(c,u)$ is in $\fff_V$ if and only if $c$ is a positive quantity and $\xi\mapsto u(\xi)$ is the profile of a front travelling at speed $c$ and connecting a non-degenerate critical point (at the left end) to a non-degenerate minimum point (at the right end), for the potential $V$. 
\end{notation}
Let us take and fix a positive quantity $R$. The goal of this section is to prove \cref{prop:global_generic_transversality_travelling_fronts} below, which is a weaker version of statement \cref{item_thm:main_travelling_front} of \cref{thm:main} since the potentials under consideration belong to the subspace $\vvvQuad{R}$ and not to the full space $\vvvFull$. The reasons for first proving the intended genericity result in this restricted setting are explained at the beginning of \cref{subsec:potentials_quadratic_past_radius_R}, and the extension from $\vvvQuad{R}$ to $\vvvFull$ will be carried out in the last \cref{sec:proof_main}. As a reminder, the transversality of a travelling front was defined in \cref{def:transverse_travelling_front}. 
\begin{proposition}
\label{prop:global_generic_transversality_travelling_fronts}
For every positive quantity $R$, there exists a generic subset of $\vvvQuad{R}$ such that, for every potential $V$ in this subset, every travelling front $(c,u)$ in $\fff_V$ is transverse. 
\end{proposition}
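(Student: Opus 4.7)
\textbf{Proof plan for \cref{prop:global_generic_transversality_travelling_fronts}.}

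The plan is to apply the Sard--Smale theorem (\cref{thm:Sard_Smale}) in a local setting and then globalise through \cref{lem:local_genericity_implies_global_genericity}. Since the Morse potentials form a dense subset of the separable Baire space $\vvvQuad{R}$, it suffices, for each Morse $V_0\in\vvvQuad{R}$, to exhibit a neighbourhood $\nu\subset\vvvQuad{R}$ of $V_0$ on which the ``good'' set of potentials is generic. So we fix such a $V_0$. By \cref{lem:global_flow_and_a_priori_bound_on_profiles}, every front profile remains in $B_{\rr^d}(0,R)$, so its endpoints belong to the finite set $\SigmaCrit(V_0)\cap\widebar{B}_{\rr^d}(0,R)$; after shrinking $\nu$, these critical points persist smoothly (implicit function theorem) and no new ones appear. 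We then exhaust $(0,+\infty)$ by a countable family of compact speed intervals $[c_-^n,c_+^n]$ and transit-time bounds $T_m\to+\infty$. For each triple $\bigl((e_-^0,e_+^0),n,m\bigr)$ with $e_+^0\in\SigmaMin(V_0)$ and $V_0(e_-^0)<V_0(e_+^0)$ (the energy lag required for $c>0$), \cref{prop:loc_stab_unstab_man_c_positive} yields local manifolds $\Wuloc{c}{V}\bigl(E_-(V)\bigr)$ and $\Wsloc{c}{V}\bigl(E_+(V)\bigr)$ of some common radius $r>0$, uniformly in $(c,V)\in[c_-^n,c_+^n]\times\nu$.

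For each such triple we will consider the manifold $\mmm=\partial\Bu_{E_-^0}(r)\times\partial\Bs_{E_+^0}(r)\times(0,T_m)\times(c_-^n,c_+^n)$ and the map
\[
\Phi:\mmm\times\nu\to\rr^{2d}\,,\quad(\Uu,\Us,\tau,c,V)\mapsto S_{c,V}\bigl(\tau,\hatwuloc{c}{V}(\Uu)\bigr)-\hatwsloc{c}{V}(\Us)\,.
\]
A zero of $\Phi_V$ encodes an orbit exiting $\Wuloc{c}{V}(E_-(V))$ through its boundary sphere, flowing for time $\tau$, and hitting the boundary sphere of $\Wsloc{c}{V}(E_+(V))$, hence precisely a travelling front. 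A direct tangent-space check identifies the transversality of $\Phi_V$ to $\{0\}$ in $\rr^{2d}$ with the transversality of \cref{def:transverse_travelling_front} in $\rr^{2d+1}$: at a zero $p$, the partial derivatives $\partial_\Uu\Phi$, $\partial_\tau\Phi$ and $\partial_{\Us}\Phi$ jointly generate $T_p\Wu_{c,V}(E_-)+T_p\Ws_{c,V}(E_+)$ (the boundary-sphere restrictions are made up by $\partial_\tau$, which supplies the flow direction), while $\partial_c\Phi=\partial_c\hatwuloc{c}{V}(\Uu)-\partial_c\hatwsloc{c}{V}(\Us)$ represents the relative speed-variation between the two $c$-parametrised families of local manifolds, which is exactly the extra direction that distinguishes transversality in $\rr^{2d+1}$ from transversality in $\rr^{2d}$. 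The dimension count yields $\dim\mmm-\codim\{0\}=2d-m(e_-^0)-2d=-m(e_-^0)\leq 0$, so the regularity hypothesis $k>\dim\mmm-\codim\{0\}$ of \cref{thm:Sard_Smale} is trivially satisfied for any $k\geq 1$.

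The main obstacle will be to verify the transversality hypothesis of \cref{thm:Sard_Smale}, i.e.\ the surjectivity of $D\Phi$ onto $\rr^{2d}$ at every zero when $V$-variations are allowed. We plan to show that $\partial_V\Phi$ alone is already surjective via a localised perturbation. At a zero $(\Uu_*,\Us_*,\tau_*,c_*,V_*)$, let $U_*(\xi)=\bigl(u_*(\xi),\dot u_*(\xi)\bigr)$ denote the profile and $\xi^u_*<\xi^s_*$ its exit/entry times on the boundary spheres (so $\tau_*=\xi^s_*-\xi^u_*$). Statement~\cref{item_prop:values_reached_only_once_travelling_front} of \cref{prop:values_reached_only_once} supplies a time $\xiOnce$ past which (to the left) $u_*$ is injective; choosing $r$ small enough at the outset forces $\xi^u_*<\xiOnce$. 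For any $\xi_0\in(\xi^u_*,\xiOnce)$ and any compactly supported $\delta V\in\ccc^{k+1}$ with support in a small ball around $u_*(\xi_0)$ disjoint from neighbourhoods of $e_\pm(V_*)$, the remark following \cref{prop:loc_stab_unstab_man_c_positive} ensures that $\hatwuloc{c_*}{V_*}$ and $\hatwsloc{c_*}{V_*}$ are left unchanged, so Duhamel's formula gives
\[
\partial_V\Phi\cdot\delta V\,=\,\int_{\xi^u_*}^{\xi^s_*}DS_{c_*,V_*}\bigl(\xi^s_*-\xi,U_*(\xi)\bigr)\cdot\bigl(0,\nabla\delta V(u_*(\xi))\bigr)\,d\xi\,.
\]
By the injectivity of $u_*$ on $(\xi^u_*,\xiOnce)$, we may prescribe $\nabla\delta V(u_*(\cdot))$ to be any compactly supported $\rr^d$-valued function $g$ on this interval. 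The resulting operator $g\mapsto\int DS(\xi^s_*-\xi,U_*(\xi))\cdot(0,g(\xi))\,d\xi$ is the reachability map of the linearised system $\dot X=DF_{c_*,V_*}(U_*)X+(0,g)$, which is controllable on any interval of positive length thanks to the coupling $\dot u=v$ (standard observability duality: on the adjoint system, $Y_2\equiv 0$ on an interval forces $Y_1\equiv 0$ and hence $Y\equiv 0$). Surjectivity of $\partial_V\Phi$ onto $\rr^{2d}$ follows.

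Finally, \cref{thm:Sard_Smale} delivers a generic subset of $\nu$ on which $\Phi_V$ is transverse to $\{0\}$. This forces every travelling front associated to the triple $\bigl((e_-^0,e_+^0),n,m\bigr)$ to be transverse when $m(e_-^0)=0$ (the bistable case), and to not exist when $m(e_-^0)\geq 1$, since the expected preimage dimension $-m(e_-^0)$ is negative. Intersecting over the finitely many pairs $(e_-^0,e_+^0)$ and countably many $(n,m)$ preserves genericity in the Baire space $\nu$, and \cref{lem:local_genericity_implies_global_genericity} globalises the result to the desired generic subset of $\vvvQuad{R}$.
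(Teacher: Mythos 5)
Your proposal follows the paper's broad strategy — reduce to a local statement, apply Sard--Smale (\cref{thm:Sard_Smale}), globalise through \cref{lem:local_genericity_implies_global_genericity}. Formulating $\Phi$ as a difference rather than as a pair mapped against the diagonal is equivalent (this is the content of \cref{lem:equivalence_between_transv_ABC}), and your dimension count agrees with the paper's. However, the key transversality verification has a genuine gap.

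The claim that ``$\partial_V\Phi$ alone is already surjective'', resting on ``we may prescribe $\nabla\delta V(u_*(\cdot))$ to be any compactly supported $\rr^d$-valued function $g$ on this interval'', is false. If $\delta V$ is compactly supported with support away from the endpoints $e_\pm$, then $\delta V\bigl(u_*(\xi)\bigr)\to0$ as $\xi\to\pm\infty$, so integrating $\tfrac{d}{d\xi}\delta V\bigl(u_*(\xi)\bigr)=\nabla\delta V\bigl(u_*(\xi)\bigr)\cdot\dot u_*(\xi)$ over $\rr$ forces the constraint
\[
\int_{\rr} g(\xi)\cdot\dot u_*(\xi)\,d\xi = 0\,.
\]
The admissible controls $g$ thus lie in a codimension-one subspace, and the observability-duality argument (which rules out $\psi\equiv 0$ on the control interval) is not enough: to obtain surjectivity subject to that constraint one must rule out a nonzero adjoint solution with $\psi(\xi)=\alpha\,\dot u_*(\xi)$ for a \emph{constant} $\alpha\neq0$ on the interval. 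This is precisely the paper's Case 3 in the proof of \cref{lem:reach_all_directions_orthogonal_to_UzeroPrime_of_xiZero}, and the exclusion there uses both $c>0$ and the hypothesis that $(\phi_1,\psi_1)$ is orthogonal to $\dot U_1(\xi_1)$ (equation \cref{dissipation_nulle}). The paper deliberately only claims the $W$-variations reach the orthogonal complement of $\dot U_1(\xi_1)$; the remaining direction is supplied separately by the $\partial_\xi$-variation, which yields $\dot U_1(\xi_1)$. Without that orthogonality restriction, an adjoint solution with $\psi\equiv\alpha\dot u_*$ (constant $\alpha\neq0$) is not a priori excluded — its inner product with $\dot U_1(\xi)$ equals $c\alpha\abs{\dot u_*(\xi)}^2\neq0$, identifying it exactly with the direction supplied by $\partial_\tau\Phi$. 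As written, your argument therefore does not establish surjectivity of $D\Phi$. You need to re-introduce the $\partial_\tau$-variation into the surjectivity claim (it supplies $\dot U_1(\xi^s_*)$), reduce the $\partial_V$-target to the orthogonal complement of $\dot U_1(\xi^s_*)$, and then carry out the case distinction as in the paper's lemma.
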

\subsection{Reduction to a local statement}
\label{subsec:reduction}
Let $V_0$ denote a potential function in $\vvvQuad{R}$, and let $e_{-,0}$ and $e_{+,0}$ denote a non-degenerate critical point and a non-degenerate minimum point of $V_0$, respectively. According to \cref{prop:loc_stab_unstab_man_c_positive} (or simply to the implicit function theorem), there exists a small neighbourhood $\nuRobust(V_0,e_{-,0},e_{+,0})$ of $V_0$ in $\vvvQuad{R}$ and two $\ccc^{k+1}$-functions $e_-(\cdot)$ and $e_+(\cdot)$, defined on $\nuRobust(V_0,e_{-,0},e_{+,0})$ and with values in $\rr^d$, such that $e_-(V_0)$ equals $e_{-,0}$ and $e_+(V_0)$ equals $e_{+,0}$ and, for every $V$ in $\nuRobust(V_0,e_{-,0},e_{+,0})$, both $e_-(V)$ and $e_+(V)$ are critical point of $V$ close to $e_{+,0}$. The following local generic transversality statement, which calls upon this notation, yields \cref{prop:global_generic_transversality_travelling_fronts} (as shown below). 
\begin{proposition}
\label{prop:local_generic_transversality_travelling_fronts_given_critical_points_speed}
For every positive speed $c_0$, there exist a neighbourhood $\nu_{\VZeroeMinusZeroePlusZerocZero}$ of $V_0$ in $\vvvQuad{R}$, included in $\nuRobust(V_0,e_{-,0},e_{+,0})$, a neighbourhood $\ccc_{\VZeroeMinusZeroePlusZerocZero}$ of $c_0$ in $(0,+\infty)$, and a generic subset $\nuThingGen{\VZeroeMinusZeroePlusZerocZero}$ of $\nu_{\VZeroeMinusZeroePlusZerocZero}$ such that, for every $V$ in $\nuThingGen{\VZeroeMinusZeroePlusZerocZero}$, every front travelling at a speed $c$ in $\ccc_{\VZeroeMinusZeroePlusZerocZero}$ and connecting $e_-(V)$ to $e_+(V)$, for the potential $V$, is transverse. 
\end{proposition}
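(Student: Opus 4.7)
\medskip

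\noindent\textbf{Proof plan for \cref{prop:local_generic_transversality_travelling_fronts_given_critical_points_speed}.} The strategy is to cast the transversality of travelling fronts as the preimage of a submanifold under a well-chosen $\ccc^k$ map, and then invoke the Sard--Smale theorem (\cref{thm:Sard_Smale}) with the potential $V$ playing the role of the parameter.

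\medskip

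\noindent\emph{Step~1: setting up the map $\Phi$.} Apply \cref{prop:loc_stab_unstab_man_c_positive} at $E_{\pm,0}=(e_{\pm,0},0)$ with speed $c_0$ and potential $V_0$: this yields a common neighbourhood $\nu\subset\vvvQuad{R}$ of $V_0$, a neighbourhood $\ccc$ of $c_0$, a radius $r>0$, and smooth parameterizations $\hatwuloc{c}{V}:\Bu_{E_{-,0}}(r)\to\rr^{2d}$ and $\hatwsloc{c}{V}:\Bs_{E_{+,0}}(r)\to\rr^{2d}$, depending $\ccc^k$-ly on $(c,V)$. Using the ``first exit'' picture of the remark after \cref{prop:loc_stab_unstab_man_c_positive}, I parameterize $\Wu_{c,V}(E_-(V))$ by the boundary $\partial\Bu_{E_{-,0}}(r)$ combined with a positive forward-flow time, and similarly for the stable side. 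Consider then the finite-dimensional manifold
\[
\mmm = \ccc\times\partial\Bu_{E_{-,0}}(r)\times(0,+\infty)\times\partial\Bs_{E_{+,0}}(r)\times(0,+\infty),
\]
and the $\ccc^k$ map
\[
\Phi:\mmm\times\nu \to \rr^{2d}\times\rr^{2d},\qquad
(c,\Uu,\tau_-,\Us,\tau_+,V)\mapsto\bigl(S_{c,V}(\tau_-,\hatwuloc{c}{V}(\Uu)),\ S_{c,V}(-\tau_+,\hatwsloc{c}{V}(\Us))\bigr).
\]
Let $\Delta\subset\rr^{2d}\times\rr^{2d}$ denote the diagonal (a codimension-$2d$ submanifold). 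A point of $\mmm\times\nu$ lies in $\Phi^{-1}(\Delta)$ precisely when the forward flow of $\Wuloc{c}{V}(E_-(V))$ matches the backward flow of $\Wsloc{c}{V}(E_+(V))$ at a common point, that is, when it records a heteroclinic orbit connecting $e_-(V)$ to $e_+(V)$ at speed $c$. In particular, $\Phi_V$ transverse to $\Delta$ is equivalent to the transversality property of \cref{def:transverse_travelling_front}.

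\medskip

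\noindent\emph{Step~2: verifying the transversality of $\Phi$ to $\Delta$.} Assume $\Phi(c,\Uu,\tau_-,\Us,\tau_+,V)\in\Delta$; let $U(\cdot)$ be the associated heteroclinic orbit and $\xi\mapsto u(\xi)=\piPos U(\xi)$ its profile. I must show that
\[
D\Phi\bigl(T\mmm\oplus T_V\nu\bigr) + T\Delta = \rr^{2d}\oplus\rr^{2d},
\]
or equivalently, writing $\Phi=(\Phi_1,\Phi_2)$, that $(D\Phi_1-D\Phi_2)\bigl(T\mmm\oplus T_V\nu\bigr)=\rr^{2d}$. Varying $(\Uu,\tau_-)$ and $(\Us,\tau_+)$ at fixed $(c,V)$ generates, respectively, the tangent spaces $T_{U(\tau_-)}\Wu_{c,V}(E_-(V))$ and $T_{U(\tau_-)}\Ws_{c,V}(E_+(V))$; varying $c$ adds the direction coming from $\partial_c$ of both manifolds. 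The remaining directions must be produced by perturbations $W\in T_V\nu$, which consists of $\ccc^{k+1}$ functions supported in $\widebar B_{\rr^d}(0,R)$. This is the core analytic point. The tool is \cref{prop:values_reached_only_once}\cref{item_prop:values_reached_only_once_travelling_front}: there exists a time $\xiOnce$ such that $u$ is injective on any $\xi^*\le\xiOnce$. Fixing such a $\xi^*$ (and choosing it far to the left of $U(\tau_-)$), I localize $W$ in a small neighbourhood of the single point $u(\xi^*)$. Duhamel's formula for the variation of the flow with respect to the vector field $F_{c,V}$ then expresses $(D_V\Phi_1)\cdot W$ at $(c,\Uu,\tau_-,V)$ as
\[
\int_{-\infty}^{\tau_-}\mathrm{d}_{U(\eta)}S_{c,V}(\tau_--\eta,\cdot)\begin{pmatrix}0\\ -\nabla W(u(\eta))\end{pmatrix}\,d\eta,
\]
which — thanks to the local support of $W$ and the one-to-one property — reduces to an impulse at the single time $\xi^*$ applied only to the $v$-component; meanwhile $(D_V\Phi_2)\cdot W$ vanishes. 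Choosing bump-type $W$ of the form $W(u)=\zeta(u)\,w\cdot(u-u(\xi^*))$ for $w$ arbitrary in $\rr^d$, the kick in the $v$-component spans all of $\rr^d$; propagated by the linearized flow $\mathrm{d}_{U(\xi^*)}S_{c,V}(\tau_--\xi^*,\cdot)$ (a linear isomorphism of $\rr^{2d}$), this gives a $d$-dimensional subspace of $\rr^{2d}$ in the image of $D\Phi_1-D\Phi_2$. Choosing a second admissible time $\xi^{*\prime}<\xi^*$ and iterating the same construction produces an independent $d$-dimensional subspace, so that altogether the image spans $\rr^{2d}$. (The linear independence of the two contributions follows from the fact that the linearized flow is a linear isomorphism and the two impulses live at distinct times along an orbit that is not contained in any proper invariant subspace.)

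\medskip

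\noindent\emph{Step~3: applying Sard--Smale and concluding.} The regularity condition of \cref{thm:Sard_Smale} reads $k>\dim\mmm-\codim\Delta=1-m(e_{-,0})$, which is satisfied by $k\ge1$ in all relevant cases (possibly after noting that $\Bu_{E_{-,0}}(r)$ and $\Bs_{E_{+,0}}(r)$ have the right dimensions from \cref{table:dim_stable_unstable_centre}; the mild loss when $m(e_{-,0})=0$ can be absorbed since the regularity hypothesis of \cref{thm:Sard_Smale} is on $\Phi$, and $\Phi$ inherits an extra derivative from $V\in\ccc^{k+1}$ being the vector field, hence the flow is $\ccc^{k+1}$ in space and $\ccc^k$ in $V$). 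Applying \cref{thm:Sard_Smale} produces a generic subset $\nuThingGen{\VZeroeMinusZeroePlusZerocZero}\subset\nu$ such that, for every $V$ in it, $\Phi_V$ is transverse to $\Delta$, which by construction translates into the transversality (in $\rr^{2d+1}$) of
\[
\bigcup_{c'\in\ccc_{\VZeroeMinusZeroePlusZerocZero}}\!\!\{c'\}\times\Wu_{c',V}(E_-(V))\quad\text{and}\quad\bigcup_{c'\in\ccc_{\VZeroeMinusZeroePlusZerocZero}}\!\!\{c'\}\times\Ws_{c',V}(E_+(V)),
\]
along every travelling front profile connecting $e_-(V)$ to $e_+(V)$ at a speed in $\ccc_{\VZeroeMinusZeroePlusZerocZero}$. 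This is exactly the conclusion sought. Shrinking $\nu$ and $\ccc$ if necessary ensures compatibility with \cref{prop:loc_stab_unstab_man_c_positive} and with $\nuRobust(V_0,e_{-,0},e_{+,0})$.

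\medskip

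\noindent The main obstacle is clearly Step~2: arranging that \emph{localized} perturbations of $V$ generate a full $\rr^{2d}$ of displacements of the flowed unstable manifold at $U(\tau_-)$. The crucial ingredient making this possible is \cref{prop:values_reached_only_once}\cref{item_prop:values_reached_only_once_travelling_front}, which guarantees the existence of times at which $u$ takes an unrepeated value, so that bump-perturbations supported near such values act surgically on a single segment of the orbit rather than smearing across it.
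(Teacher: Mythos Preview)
Your framework parallels the paper's: a $\ccc^k$ map $\Phi$ from boundary-of-local-manifold data into $(\rr^{2d})^2$, preimage of the diagonal encoding fronts, then Sard--Smale. Two genuine gaps prevent the argument from going through as written.

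\textbf{The linear independence claim in Step~2 is unjustified.} Your parenthetical about ``not contained in any proper invariant subspace'' does not establish that impulses at two times $\xi^*,\xi^{*\prime}$ span $\rr^{2d}$. Dually, a covector $(\phi_1,\psi_1)$ annihilates all $\xi^*$-impulses iff $\psi(\xi^*)=0$, where $(\phi,\psi)=T^*(\cdot,\tau_-)(\phi_1,\psi_1)$; since $\psi$ solves the second-order equation $\ddot\psi-c\dot\psi-D^2V(u_1)\psi=0$, nothing forbids a nontrivial $\psi$ with $\psi(\xi^*)=\psi(\xi^{*\prime})=0$ for an arbitrary pair. This \emph{is} fixable by choosing $\xi^{*\prime}$ close enough to $\xi^*$ (then $\psi(\xi^*)=\psi(\xi^{*\prime})=0$ forces $\dot\psi(\xi^*)=0$ hence $\psi\equiv0$), but you do not supply that argument. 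The paper sidesteps this entirely: rather than span $\rr^{2d}$ by a fixed family of bumps, it fixes one nonzero $(\phi_1,\psi_1)\perp\dot U_1(\xi_1)$ and exhibits a single tailored $W$ via a three-case analysis on whether $\psi$ is collinear to $\dot u_1$ on $\IOnce$ (the third case being precluded by the identity $c\alpha|\dot u_1|^2=0$ coming from $c>0$).

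\textbf{The location of the perturbation is problematic.} Taking $\xi^*$ ``far to the left'' in the sense of \cref{prop:values_reached_only_once}\cref{item_prop:values_reached_only_once_travelling_front} lands $u(\xi^*)$ near $e_-$, typically inside $\piPos\bigl(\Wuloc{c}{V}(E_-(V))\bigr)$. A perturbation $W$ supported there moves $\hatwuloc{c}{V}$ itself, so your Duhamel integral (which assumes the starting point is fixed) is incomplete, and the claim that $D_V\Phi_2\cdot W=0$ is also unprotected. The paper handles this with \cref{lem:additional_condition_nu_and_r}: after shrinking $r$, there is an interval $\IOnce\subset(0,\xi_1)$ on which values are reached only once \emph{and} $u_1(\IOnce)$ avoids the projections of both local manifolds; the perturbation is supported there.

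A minor point: flowing both sides gives $\dim\mmm-\codim\Delta=1-m(e_{-,0})$, and your claim that $\Phi$ is $\ccc^{k+1}$ is not correct (the vector field is $\ccc^k$ so the flow is only $\ccc^k$). The paper drops the backward time $\tau_+$ and uses $\partial\Wsloc{c}{V}$ directly, yielding $\dim\mmm-\codim\www=-m(e_{-,0})\le0<k$ with no fuss.
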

\begin{proof}[Proof that \cref{prop:local_generic_transversality_travelling_fronts_given_critical_points_speed} yields \cref{prop:global_generic_transversality_travelling_fronts}]
Let us denote by $\vvvQuadMorse{R}$ the\break
dense open subset of $\vvvQuad{R}$ defined by the Morse property (see \cite{Hirsch_differentialTopology_1976}):
\begin{equation}
\label{def_vvv_Quad_R_Morse}
\vvvQuadMorse{R} = \left\{ V\in\vvvQuad{R} : \text{all critical points of $V$ are non-degenerate} \right\}
\,.
\end{equation}
Let $V_0$ denote a potential function in $\vvvQuadMorse{R}$.  Its critical points are non-degenerate and thus isolated and, since $V_0$ is in $\vvvQuad{R}$, they belong to the open ball $B_{\rr^d}(0,R)$, so that those critical points are in finite number. Assume that \cref{prop:local_generic_transversality_travelling_fronts_given_critical_points_speed} holds. With the notation of this proposition, let us consider the following three intersections, at each time over all couples $(e_{-,0},e_{+,0})$ with $e_{-,0}$ a non-degenerate critical point and $e_{+,0}$ a non-degenerate minimum point of $V_0$:
\begin{equation}
\label{three_sets_ccc_nu_nuGen}
\begin{aligned}
\nu_{\VZerocZero} &= \nuRobust(V_0)\cap\left(\bigcap\nu_{\VZeroeMinusZeroePlusZerocZero}\right)\,, \\
\ccc_{\VZerocZero} &= \bigcap\ccc_{\VZeroeMinusZeroePlusZerocZero} \\
\text{and}\quad
\nuThingGen{\VZerocZero} &= \nuRobust(V_0)\cap\left(\bigcap\nuThingGen{\VZeroeMinusZeroePlusZerocZero}\right)
\,.
\end{aligned}
\end{equation}
Those are finite intersections, so that $\nu_{\VZerocZero}$ is still a neighbourhood of $V_0$ in $\vvvQuad{R}$, $\ccc_{\VZerocZero}$ is still a neighbourhood of $c_0$ in $(0,+\infty)$ and the set $\nuThingGen{\VZerocZero}$ is still a generic subset of $\nu_{\VZerocZero}$. Let $I$ denote a compact sub-interval of $(0,+\infty)$; the three sets defined above in \cref{three_sets_ccc_nu_nuGen} can be constructed likewise for every $c_0$ in $I$. Since $I$ is compact, it can be covered by a finite union of  sets $\ccc_{V_0,c_{0,i}}$, corresponding to a finite set $\{c_{0,1},\dots,c_{0,p}\}$ of speeds. Again the intersections
\[
\nu_{\VZeroI} = \bigcap_{1\le i\le p} \nu_{V_0,\, c_{0,i}}
\quad\text{and}\quad
\nuThingGen{\VZeroI} = \bigcap_{1\le i\le p} \nuThingGen{V_0,\, c_{0,i}}
\,.
\]
are finite and thus $\nuThingGen{\VZeroI}$ is still a generic subset of $\nu_{\VZeroI}$, which is a neighbourhood of $V_0$ in $\vvvQuadMorse{R}$. By construction, for every potential function $V$ in $\nuThingGen{\VZeroI}$, all fronts travelling at a speed belonging to $I$ and connecting a critical point of $V$ to a minimum point of $V$ are transverse. 
In other words, the set
\[
\begin{aligned}
\vvvQuadMorseTransv{R}{I} = \bigl\{&V\in \vvvQuadMorse{R}: \text{ for every travelling front $(c,u)$ in $\fff_V$,} \\
&\text{if $c$ is in $I$ then $(c,u)$ is transverse}\bigr\}
\,,
\end{aligned}
\]
is locally generic in the sense that  $\vvvQuadMorseTransv{R}{I}\cap\nu_{\VZeroI}$ is generic in $\nu_{\VZeroI}$.
Since $\vvvQuad{R}$ is separable, applying \cref{lem:local_genericity_implies_global_genericity} with $\vvv =\vvvQuad{R}$, 
$\vvvDense= \vvvQuadMorse{R}$, $\vvvGen=\vvvQuadMorseTransv{R}{I}$ and $\nu=\nu_{\VZeroI}$ shows that the set $\vvvQuadMorseTransv{R}{I}$ is generic in the whole set $\vvvQuad{R}$. 
As a consequence, the set
\[
\bigcap_{q\in\nn^*} \vvvQuadMorseTransv{R}{[-1/q,q]}
\]
is still generic in $\vvvQuad{R}$. For every potential $V$ in this set, all travelling fronts belonging to $\fff_V$ are transverse, so that this set fulfils the conclusions of \cref{prop:global_generic_transversality_travelling_fronts}.
\end{proof}
The remaining part of \cref{sec:generic_transversality_travelling_fronts} will thus be devoted to the proof of \cref{prop:local_generic_transversality_travelling_fronts_given_critical_points_speed}. 
\subsection{Proof of the local statement (\texorpdfstring{\cref{prop:local_generic_transversality_travelling_fronts_given_critical_points_speed}}{Proposition \ref{prop:local_generic_transversality_travelling_fronts_given_critical_points_speed}})}
\label{subsec:proof_local_gen}
\subsubsection{Setting}
For the remaining part of this section, let us fix a potential function $V_0$ in $\vvvQuad{R}$, a non-degenerate critical point $e_{-,0}$ of $V_0$ and a non-degenerate minimum point $e_{+,0}$ of $V_0$, differing from $e_{-,0}$. According to \cref{prop:loc_stab_unstab_man_c_positive}, there exist a neighbourhood $\nu$ of $V_0$ in $\vvvQuad{R}$, included in $\nuRobust(V_0,e_{-,0},e_{+,0})$, a neighbourhood $\ccc$ of $c_0$ in $(0,+\infty)$, and a positive quantity $r$ such that, for every $(c,V)$ in $\ccc\times\nu$, there exist $C^{k+1}$-functions
\[
\hatwuloc{c}{V}:\Bu_{E_{-,0}}(r)\to\rr^{2d}
\quad\text{and}\quad
\hatwsloc{c}{V}:\Bs_{E_{+,0}}(r)\to\rr^{2d}
\]
such that the sets
\[
\Wuloc{c}{V}\bigl(E_-(V)\bigr) = \hatwuloc{c}{V}\bigl(\Bu_{E_{-,0}}(r)\bigr) 
\quad\text{and}\quad
\Wsloc{c}{V}\bigl(E_+(V)\bigr) = \hatwsloc{c}{V}\bigl(\Bs_{E_{+,0}}(r)\bigr)
\]
define a local unstable manifold of $E_-(V)$ and a local stable manifold of $E_+(V)$, respectively (see the conclusions of \cref{prop:loc_stab_unstab_man_c_positive} and equalities \cref{def_local_stab_unstab_man_as_images_non_zero_speed}). 

Here is the setting where Sard--Smale theorem (\cref{thm:Sard_Smale}) will be applied (see \cref{fig:phi_map}). 
\begin{figure}[htbp]
\centering
\resizebox{\textwidth}{!}{\input{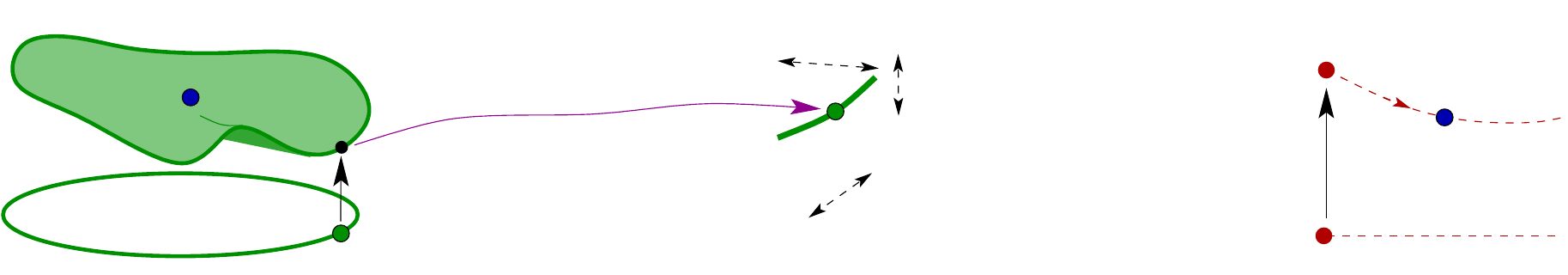_t}}
\caption{The function $\hatwuloc{c}{V}(\cdot)$ maps $\bbbu$ onto the boundary of the local unstable manifold $\Wuloc{c}{V}\bigl(E_-(V)\bigr)$. A point $\hatwuloc{c}{V}(\bu)$ of this boundary is pushed forward during a time $\xi$ by the flow $S_{c,V}(\xi,\cdot)$ to give the image $\Phiu(\bu)$, which still belongs to the global unstable manifold of $E_-(V)$. On the other hand, $\Phis$ maps $\bbbs$ onto the boundary of the local stable manifold $\Wsloc{c}{V}(E_+)$. The dependence of $\Phiu$ on the time $\xi$ and the point $\bu$ provides a number of degrees of freedom equal to the dimension of the unstable manifold, while an additional degree of freedom is provided by the speed $c$. This additional dependence makes the difference between the transversality of a travelling front as defined in \cref{def:transverse_travelling_front} and the classical transversality of stable and unstable manifolds.}
\label{fig:phi_map}
\end{figure}
Let
\[
\begin{aligned}
\bbbu &= \partial\Bu_{E_{-,0}}(r)\,,
&\ &\text{ }\ &
\bbbs &= \partial\Bs_{E_{+,0}}(r)\,,
&\ &\text{ }\ &
\mmm &= \bbbu\times\bbbs\times\rr\times\ccc
\,, \\
\text{ }\quad
\Lambda &= \nu\,,
&\ &\text{ }\ &
\nnn &= (\rr^{2d})^2\,,
&\ &\text{and}\ &
\www &= \{(A,B)\in\nnn : A = B\}
\,.
\end{aligned}
\]
Notice that $\www$ is the diagonal of $\nnn$. Let us consider the functions
\[
\begin{aligned}
\Phiu &: \bbbu\times\rr\times\ccc \times\Lambda&\to\rr^{2d}&
\,, \quad
(\bu,\xi,c,V)&~\longmapsto~& S_{c,V}\bigl(\xi,\hatwuloc{c}{V}(\bu)\bigr) \\
\text{and}\quad 
\Phis &: 
\bbbs\times\ccc \times\Lambda&\to\rr^{2d} &
\,, \quad
(\bs,c,V)&~\longmapsto~& \hatwsloc{c}{V}(\bs)\,.
\end{aligned}
\]
For every $V$ in $\Lambda$ and $c$ in $\ccc$, the image of $\Phiu(\cdot,\cdot,c,V)$ is the global unstable manifold $\Wu_{c,V}\bigl(E_-(V)\bigr)$ (except the point $E_-(V)$ itself), whereas the image of $\Phis(\cdot,c,V)$ is the boundary of the local stable manifold $\Wsloc{c}{V}\bigl(E_+(V)\bigr)$. Finally, let
\begin{equation}
\label{def_Phi_trav_front}
\Phi : \mmm\times\Lambda \to\nnn 
\,, \quad
(m,V)=\bigl((\bu,\bs,\xi,c),V\bigr) \mapsto \bigl(\Phiu(\bu,\xi,c, V),\Phis(\bs,c,V)\bigr)
\,.
\end{equation}
\subsubsection{Additional conditions on \texorpdfstring{$\nu$}{nu} and \texorpdfstring{$r$}{r}}
The main step in the proof of \cref{prop:local_generic_transversality_travelling_fronts_given_critical_points_speed} is the construction of a suitable perturbation $W$ of $V$ (carried out in \cref{subsubsec:checking_hyp_ii_Sard_Smale_thm} below). This construction requires more accurate conditions on the setting above. 

First, since $e_{-,0}$ and $e_{+,0}$ differ, we may assume that $\nu$ and $\ccc$ and $r$ are small enough so that, for every $V$ in $\nu$,
\begin{equation}
\label{pi_pos_of_local_unstable_stable_man_do_not_intersect}
\piPos\Bigl(\Wuloc{c}{V}\bigl(E_-(V)\bigr)\Bigr)\cap\piPos\Bigl(\Wsloc{c}{V}\bigl(E_+(V)\bigr)\Bigr)=\emptyset
\,,
\end{equation}
where $\piPos$ is the projection on the first component defined in \cref{def_pi_pos}. 

Next, the following lemma is a more uniform version of assertion \cref{item_prop:values_reached_only_once_travelling_front} of \cref{prop:values_reached_only_once}, the key difference being that $r$ can be chosen small enough such that $\IOnce$ contains positive times.
\begin{lemma}
\label{lem:additional_condition_nu_and_r}
Up to replacing $\nu$ by a smaller neighbourhood of $V_0$ in $\vvvQuad{R}$, and $\ccc$ by a smaller neighbourhood of $c_0$ in $(0,+\infty)$, and $r$ by a smaller radius, we may assume that the following conclusions hold. For every $V$ in $\nu$, every $c$ in $\ccc$, and every solution $\xi\mapsto U(\xi) = \bigl(u(\xi),\dot u(\xi)\bigr)$ of system \cref{trav_wave_system_order_1} such that $U(0)$ belongs to the boundary of $\Wuloc{c}{V}\bigl(E_-(V)\bigr)$ (in other words there exists $\bu$ in $\bbbu$ such that $U(0)$ equals $\hatwuloc{c}{V}(\bu)$), there exists a a compact interval with nonempty interior $\IOnce$, included in $(0,+\infty)$, such that:  
\begin{enumerate}
\item the function $\xi\mapsto \abs{u(\xi)-e_-(V)}$ is increasing on $\IOnce$ (so that $u_{|\IOnce}$ is a diffeomorphism onto its image),
\label{item:lem_additional_condition_nu_and_r_diffeo}
\item and for all $\xi^*$ in $\IOnce$ and $\xi$ in $\rr$, $u(\xi) = u(\xi^*)$ implies $\xi = \xi^*$,
\label{item:lem_additional_condition_nu_and_r_only_once}
\item and condition \cref{pi_pos_of_local_unstable_stable_man_do_not_intersect} holds, and in addition,
\label{item:lem_additional_condition_nu_and_r_3}
\end{enumerate}
\[
u(\IOnce) \cap \piPos\Bigl[\Wuloc{c}{V}\bigl(E_-(V)\bigr) \cup \Wsloc{c}{V}\bigl(E_+(V)\bigr)\Bigr]=\emptyset
\,.
\]
\end{lemma}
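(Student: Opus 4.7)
The plan is to turn the three ingredients of the proof of statement~\cref{item_prop:values_reached_only_once_travelling_front} of \cref{prop:values_reached_only_once} --- local monotonicity of $|u(\xi)-e_-(V)|$ near the equilibrium, strict decrease of the Hamiltonian, and a size separation between the trajectory and the $r$-neighbourhood of $e_-(V)$ --- into uniform versions in $(c,V,\bu)\in\ccc\times\nu\times\bbbu$. The key feature making this possible is that, because $c_0>0$, every unstable eigenvalue of $DF_{c_0,V_0}(E_-(V_0))$ is real and strictly positive (see \cref{notation_eigenvalues_Rd_times_Rd}). By continuous dependence of the spectrum and of the local unstable manifold on $(c,V)$ (\cref{prop:loc_stab_unstab_man_c_positive}), upon shrinking $\nu$, $\ccc$ and $r$, I can secure, uniformly in $\ccc\times\nu$: a positive lower bound $\beta$ on the unstable eigenvalues of $DF_{c,V}(E_-(V))$; the inequality $(u-e_-(V))\cdot v>0$ on the punctured local unstable manifold $\Wuloc{c}{V}(E_-(V))\setminus\{E_-(V)\}$ (since its tangent space at $E_-(V)$ is spanned by vectors of the form $(u_j,\lambda_{j,+}u_j)$ with $\mu_j>0$); an inclusion $\piPos\bigl(\Wuloc{c}{V}(E_-(V))\bigr)\subset\widebar{B}_{\rr^d}(e_-(V),Cr)$ with a uniform constant $C$; and the disjointness \cref{pi_pos_of_local_unstable_stable_man_do_not_intersect}.

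Next I would fix a uniform cut-off time $\xi_1$ larger than both $\log(2C+2)/\beta$ and $1/(2\inf\ccc)$, and shrink $r$ one more time so that the orbit $\xi\mapsto U_{c,V,\bu}(\xi)$ starting at $\hatwuloc{c}{V}(\bu)$ stays in the linearisation zone of $E_-(V)$ on $[0,\xi_1]$ (its distance to $E_-(V)$ is then of order $r$ times a $\xi_1$-dependent constant, which can be made arbitrarily small by further decreasing $r$). Combining the previous uniform inequality with continuity of the flow propagates $(u(\xi)-e_-(V))\cdot\dot u(\xi)>0$ to the whole interval $(-\infty,\xi_1]$; so $|u(\xi)-e_-(V)|$ is strictly increasing there and $u$ is a diffeomorphism onto its image on this interval. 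The exponential growth at rate at least $\beta$ then forces $|u(\xi)-e_-(V)|\ge(C+1)r$ on some compact subinterval $[a,b]$ with nonempty interior contained in $(\log(2C+2)/\beta,\,\xi_1-1/(2\inf\ccc))$; take $\IOnce=[a,b]$.

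Conclusion \cref{item:lem_additional_condition_nu_and_r_diffeo} is then immediate from the monotonicity. Conclusion \cref{item:lem_additional_condition_nu_and_r_3} follows from a twofold size argument: $u(\IOnce)$ sits outside $\widebar{B}_{\rr^d}(e_-(V),Cr)$ and therefore outside $\piPos(\Wuloc{c}{V}(E_-(V)))$, while it remains close enough to $e_-(V)$ (and $e_{-,0}\neq e_{+,0}$) to stay away from $\piPos(\Wsloc{c}{V}(E_+(V)))$. For conclusion \cref{item:lem_additional_condition_nu_and_r_only_once}, given $\xi^*\in\IOnce$ and $\xi\in\rr$ with $u(\xi)=u(\xi^*)$, the case $\xi\le\xi_1$ is ruled out by strict monotonicity on $(-\infty,\xi_1]$; for $\xi>\xi_1$ I would replicate the Hamiltonian argument of the proof of statement~\cref{item_prop:values_reached_only_once_travelling_front} of \cref{prop:values_reached_only_once}: the gap $\xi_1-\xi^*>1/(2\inf\ccc)$, together with \cref{time_derivative_Hamiltonian} and the increase of $|\dot u|$ on $[\xi^*,\xi_1]$, yields $H_V(U(\xi_1))<-V(u(\xi^*))$, whence the chain $-V(u(\xi))\le H_V(U(\xi))\le H_V(U(\xi_1))<-V(u(\xi^*))$ contradicts $u(\xi)=u(\xi^*)$.

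The main obstacle will be the simultaneous calibration of the three small parameters (the nonlinear radius around $E_-(V_0)$, the cut-off time $\xi_1$, and the boundary radius $r$) so that the linear regime extends throughout $[0,\xi_1]$ while the geometric thresholds $\log(2C+2)/\beta$ and $1/(2\inf\ccc)$ are both comfortably exceeded; compactness of $\bbbu$ and $\ccc$ combined with the smooth dependence of the local unstable manifold on the parameters provided by \cref{prop:loc_stab_unstab_man_c_positive} make all the bounds uniform, so that these constraints can be met jointly.
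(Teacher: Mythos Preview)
Your argument is correct, but it proceeds by a genuinely different route from the paper's. The paper does not attempt to work directly at positive times; instead it fixes $(c_0,V_0)$ and a single $\bu\in\bbbu$, reruns the proof of statement~\cref{item_prop:values_reached_only_once_travelling_front} of \cref{prop:values_reached_only_once} verbatim to obtain a time $\xiOnce(\bu)$ (large negative) and sets $\IOnce=[\xiOnce(\bu)-2,\xiOnce(\bu)-1]$, then makes $\xiOnce$ uniform over $\bbbu$ by covering this compact sphere with finitely many neighbourhoods (and over $(c,V)$ by continuity). Only at the very end is $r$ shrunk: since decreasing $r$ moves the boundary crossing to an earlier point of the same orbit, the fixed interval $\IOnce$ is thereby shifted into $(0,+\infty)$. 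Conclusion~\cref{item:lem_additional_condition_nu_and_r_3} is handled by one further shrinkage of $r$.

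Your approach fixes $\xi_1$ first from explicit spectral and speed data and then shrinks $r$ so that the orbit remains in the linearisation regime on the whole window $[0,\xi_1]$; the interval $\IOnce$ is then chosen inside that window. This is more quantitative and self-contained, whereas the paper's argument is more modular (it simply recycles the earlier proposition and a compactness cover). One step in your version that deserves an explicit line is the claim that $|\dot u|$ is nondecreasing on $[\xi^*,\xi_1]$, on which your Hamiltonian gap estimate rests: in the linearised dynamics on $\Eu$ one has $u(\xi)-e_-=\sum_j a_j e^{\lambda_{j,+}\xi}u_j$ with the $u_j$ orthonormal, hence $\tfrac{d}{d\xi}|\dot u|^2=2\sum_j a_j^2\lambda_{j,+}^3 e^{2\lambda_{j,+}\xi}>0$, and this persists for $r$ small since the nonlinear remainder on $\Wuloc{c}{V}$ is $o(|U-E_-|)$. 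With that noted, both routes reach the same conclusion.
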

\begin{proof}
Consider for now that $\nu$ and $\ccc$ and $r$ are as in the previous subsection, and, for some $\bu$ in $\bbbu$, let us consider the solution $\xi\mapsto U(\xi) = \bigl(u(\xi),\dot u(\xi)\bigr)$ of system \cref{trav_wave_system_order_1} defined as:
\[
U(\xi) = S_{c_0,V_0}\bigl(\xi,\hatwuloc{c_0}{V_0}(\bu)\bigr)
\quad\text{(so that}\quad 
U(0) = \hatwuloc{c_0}{V_0}(\bu)
\text{).}
\]
The same arguments as in the proof of statement \cref{item_prop:values_reached_only_once_travelling_front} of \cref{prop:values_reached_only_once} yield the following conclusions. First, there exists a (large, negative) time $\xi_0(\bu)$ such that the function $\xi\mapsto \abs{u(\xi)-e_{-,0}}$ is increasing on $\bigl(-\infty,\xi_0(\bu)\bigr]$. Then, there exists $\xiOnce(\bu)$ in $\bigl(-\infty,\xi_0(\bu)\bigr)$ such that, for every $\xi^*$ in $\bigl(-\infty,\xiOnce(\bu)\bigr]$, 
\[
H_V\Bigl(U\bigl(\xi_0(\bu)\bigr)\Bigr) < - V\bigl(u(\xi^*)\bigr)
\]
(which is nothing but inequality \cref{minus_V_larger_than_H_at_xiExit_for_large_negative_xi}). Then, it follows from statement \cref{item_prop:values_reached_only_once_travelling_front} of \cref{prop:values_reached_only_once} that, for the interval $\IOnce$ equal to $\bigl[\xiOnce(\bu)-2,\xiOnce(\bu)-1\bigr]$, conclusions \cref{item:lem_additional_condition_nu_and_r_diffeo,item:lem_additional_condition_nu_and_r_only_once} of \cref{lem:additional_condition_nu_and_r} hold for the solution $U$ (and they still hold if $\xiOnce(\bu)$ is replaced by a smaller quantity). 

Now, observe that, due to the smooth dependence of the map $(-\infty,0]\to\rr^{2d}$, $\xi\mapsto S_{c,V}\bigl(\xi,\hatwuloc{c}{V}(\bu)\bigr)$ on $V$ and $c$ and $\bu$, this construction can be made uniform with respect to $\bu$ in a (small) open subset $\Omega$ of $\bbbu$ and $V$ in a (small) neighbourhood $\nu_\Omega$ (included in $\nu$) of $V_0$, and to $c$ in a (small) neighbourhood $\ccc_\Omega$ (included in $\ccc$) of $c_0$; in other words, there exists a (sufficiently large negative) quantity $\xiOnce(\Omega)$ such that the conclusions above hold for all such $V$ and $c$ and $\bu$. Since $\bbbu$ is compact, it can be covered by a finite number $\Omega_1\dots\Omega_n$ of such open subsets. Thus, replacing 
\[
\nu
\text{ by }
\bigcap_{i=1}^n \nu_{\Omega_i}
\quad\text{and}\quad
\ccc
\text{ by }
\bigcap_{i=1}^n \ccc_{\Omega_i}
\,,
\]
and choosing
\[
\xiOnce = \min_{i\in\{1,\dots,n\}}\xiOnce(\Omega_i)
\quad\text{and}\quad
\IOnce = [\xiOnce-2,\xiOnce-1]
\,,
\]
conclusions \cref{item:lem_additional_condition_nu_and_r_diffeo,item:lem_additional_condition_nu_and_r_only_once} of \cref{lem:additional_condition_nu_and_r} hold. Up to replacing $r$ by a smaller positive quantity, we may assume in addition that $\IOnce$ belongs to $(0,+\infty)$. Finally, again up to replacing $r$ by a smaller positive quantity, we may assume that conclusion \cref{item:lem_additional_condition_nu_and_r_3} also holds. 
\end{proof}

\subsubsection{Equivalent characterizations of transversality}
\label{subsubsec:equivalence_transverse}
Let us consider the set
\[
\begin{aligned}
\fff_{\LambdaC} = \bigl\{ &(V,c,u): \ V\in\Lambda \text{ and } c\in\ccc \text{ and $u$ is the profile of a front travelling} \\ 
&\text{at speed $c$ and connecting $e_-(V)$ to $e_+(V)$, for the potential $V$}\bigr\}
\,,
\end{aligned}
\]
and let us denote by $\widetilde{\fff}_{\LambdaC}$ the set of equivalence classes of $\fff_{\LambdaC}$ for the equivalence relation: $(V,c,u)\sim(V^\dag,c^\dag,u^\dag)$ if and only if $V=V^\dag$ and $c=c^\dag$ and $u=u^\dag$ up to a translation of the time. The aim of this subsection is to prove \cref{prop:equivalence_transversality} below, relating the transversality of the intersection $\Phi(\mmm\times\Lambda)\cap\www$ to the transversality of travelling fronts belonging to $\fff_{\LambdaC}$. To begin with, the next \cref{prop:one_to_one_correspondence_between_fronts_and_diagonal_intersection} formalizes the correspondence between the intersection of the image of $\Phi$ with the diagonal $\www$ and the profiles of such travelling fronts.  
\begin{proposition}
\label{prop:one_to_one_correspondence_between_fronts_and_diagonal_intersection}
The map 
\begin{equation}
\label{correspondence_im_of_Phi_cap_www_travelling_front}
\Phi^{-1}(\www) \to \fff_{\LambdaC}
\,,\quad
(\bu,\bs,\xi,c,V)\mapsto \biggl(V,c,\xi'\mapsto \piPos\Bigl(S_{c,V}\bigl(\xi',\hatwuloc{c}{V}(\bu)\bigr)\Bigr)\biggr)
\end{equation}
defines a a one-to-one correspondence between $\Phi^{-1}(\www)$ and the quotient set $\widetilde{\fff}_{\LambdaC}$.
\end{proposition}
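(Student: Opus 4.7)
}
My plan is to verify in turn that the map \cref{correspondence_im_of_Phi_cap_www_travelling_front} is (a)~well-defined into $\widetilde{\fff}_{\LambdaC}$, (b)~surjective, and (c)~injective. The three ingredients I will rely on are: the characterization of points of $\Wuloc{c}{V}$ and $\Wsloc{c}{V}$ provided by \cref{prop:loc_stab_unstab_man_c_positive}; the identification of front profiles with heteroclinic orbits given by \cref{prop:profiles_of_fronts_pulses_as_intersections_between_Wu_Ws}; and the remark after \cref{prop:loc_stab_unstab_man_c_positive} stating that any solution belonging to $\Wu_{c,V}\bigl(E_-(V)\bigr)$ crosses the boundary $\partial\Wuloc{c}{V}\bigl(E_-(V)\bigr)$ exactly once, and similarly on the stable side.

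For \textbf{well-definedness}, I would fix $(\bu,\bs,\xi,c,V)$ in $\Phi^{-1}(\www)$ and consider the solution $\xi'\mapsto U(\xi')=S_{c,V}\bigl(\xi',\hatwuloc{c}{V}(\bu)\bigr)$. Since $\hatwuloc{c}{V}(\bu)$ belongs to $\Wuloc{c}{V}\bigl(E_-(V)\bigr)$, the equivalence between assertions \cref{item:U_is_in_Wuloc} and \cref{item:U_goes_to_E_at_minus_infty} of \cref{prop:loc_stab_unstab_man_c_positive} yields $U(\xi')\to E_-(V)$ as $\xi'\to-\infty$. The condition $\Phi(\bu,\bs,\xi,c,V)\in\www$ means that $U(\xi)=\hatwsloc{c}{V}(\bs)\in\Wsloc{c}{V}\bigl(E_+(V)\bigr)$, so that the equivalence between \cref{item:U_is_in_Wsloc} and \cref{item:U_goes_to_E_at_plus_infty} gives $U(\xi')\to E_+(V)$ as $\xi'\to+\infty$. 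Projecting by $\piPos$, the function $\xi'\mapsto \piPos\bigl(U(\xi')\bigr)$ is a profile of a front travelling at speed $c$ and connecting $e_-(V)$ to $e_+(V)$, hence an element of $\fff_{\LambdaC}$ and, a fortiori, a representative of a class in $\widetilde{\fff}_{\LambdaC}$.

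For \textbf{surjectivity}, given $(V,c,u)\in\fff_{\LambdaC}$, I set $U(\xi')=\bigl(u(\xi'),\dot u(\xi')\bigr)$. By \cref{prop:profiles_of_fronts_pulses_as_intersections_between_Wu_Ws}, $U(\rr)\subset \Wu_{c,V}\bigl(E_-(V)\bigr)\cap\Ws_{c,V}\bigl(E_+(V)\bigr)$. The remark after \cref{prop:loc_stab_unstab_man_c_positive} shows that $U(\cdot)$ meets $\partial\Wuloc{c}{V}\bigl(E_-(V)\bigr)$ at a unique time $\xi_u$ and $\partial\Wsloc{c}{V}\bigl(E_+(V)\bigr)$ at a unique time $\xi_s$; let $\bu\in\bbbu$ and $\bs\in\bbbs$ be the elements such that $U(\xi_u)=\hatwuloc{c}{V}(\bu)$ and $U(\xi_s)=\hatwsloc{c}{V}(\bs)$. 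Replacing the representative $u$ by the translate $\xi'\mapsto u(\xi'+\xi_u)$ (which lies in the same equivalence class), and setting $\xi=\xi_s-\xi_u$, the tuple $(\bu,\bs,\xi,c,V)$ lies in $\Phi^{-1}(\www)$ and is mapped to the class of $(V,c,u)$ by \cref{correspondence_im_of_Phi_cap_www_travelling_front}.

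For \textbf{injectivity}, assume that $(\bu,\bs,\xi,c,V)$ and $(\bu',\bs',\xi',c',V')$ in $\Phi^{-1}(\www)$ are sent to the same class in $\widetilde{\fff}_{\LambdaC}$. Then $V=V'$, $c=c'$, and the two profiles differ by a time translation $\tau$; differentiating, the corresponding solutions $U$ and $U'$ of \cref{trav_wave_system_order_1} satisfy $U(\xi'')=U'(\xi''+\tau)$ in $\rr^{2d}$ for all $\xi''$. Since $U'(0)=\hatwuloc{c}{V}(\bu')$ lies on $\partial\Wuloc{c}{V}\bigl(E_-(V)\bigr)$, and since (by the remark after \cref{prop:loc_stab_unstab_man_c_positive}) this boundary is met only once by the trajectory of $U'$, the fact that $U(0)=U'(\tau)$ also lies on $\partial\Wuloc{c}{V}\bigl(E_-(V)\bigr)$ forces $\tau=0$, hence $U=U'$. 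The map $\hatwuloc{c}{V}:\bu\mapsto E(V)+\bu+\wuloc{c}{V}(\bu)$ is injective because $\wuloc{c}{V}$ takes values in $\Es_{c_0,V_0}(E_0)$, which is complementary to $\Eu_{c_0,V_0}(E_0)$ in $\rr^{2d}$ (when $c>0$ there is no centre subspace); likewise for $\hatwsloc{c}{V}$. This gives $\bu=\bu'$ and $\bs=\bs'$, and the uniqueness of the crossing of $\partial\Wsloc{c}{V}\bigl(E_+(V)\bigr)$ by the common trajectory yields $\xi=\xi'$.

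The only substantive point to be careful about is the step in the injectivity argument where equality of the two profiles up to a translation of the argument is lifted to equality of the corresponding solutions in $\rr^{2d}$ up to the same translation; this follows immediately from $\dot u$ being the time derivative of $u$, but deserves to be spelled out. All other steps are direct applications of \cref{prop:loc_stab_unstab_man_c_positive,prop:profiles_of_fronts_pulses_as_intersections_between_Wu_Ws} and the remark after \cref{prop:loc_stab_unstab_man_c_positive}.
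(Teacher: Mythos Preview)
Your proof is correct and follows essentially the same approach as the paper's own proof: both establish well-definedness from the characterization of $\Wuloc{c}{V}$ and $\Wsloc{c}{V}$ in \cref{prop:loc_stab_unstab_man_c_positive}, surjectivity via \cref{prop:profiles_of_fronts_pulses_as_intersections_between_Wu_Ws} and the existence of boundary crossings, and injectivity via the uniqueness of these crossings stated in the remark after \cref{prop:loc_stab_unstab_man_c_positive}. Your injectivity argument is slightly more explicit than the paper's (which simply observes that the crossing times $\xi_-$, $\xi_+$ and hence $\bu$, $\bs$, $\xi_+-\xi_-$ are uniquely determined by the profile), but the substance is identical.
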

\begin{proof}
The image by $\Phi$ of a point $(\bu,\bs,\xi,c,V)$ of $\mmm\times\Lambda$ belongs to the diagonal $\www$ of $\nnn$ if and only if $\Phiu(\bu,\xi,c, V)=\Phis(\bs,c,V)$. If this last equality holds, the function $u:\xi'\mapsto \Phiu(\bu,\xi',c, V)$ is a solution belonging to the unstable manifold $\Wu_{c,V}\bigl(E_-(V)\bigr)$ such that $u(\xi)=\Phis(\bs,c,V)$ belongs to the local stable manifold of $E_+(V)$. Thus $u$ defines the profile of a front travelling at speed $c$ and connecting $e_-(V)$ to $e_+(V)$. The map \cref{correspondence_im_of_Phi_cap_www_travelling_front} is thus well defined. 

Now, if $\xi\mapsto u(\xi)$ is the profile of a front travelling at a speed $c$ in $\ccc$ for the potential $V$ and connecting $e_-(V)$ to $e_+(V)$, then, according to \cref{prop:profiles_of_fronts_pulses_as_intersections_between_Wu_Ws}, the image of $\xi\mapsto\bigl(u(\xi),\dot u(\xi)\bigr)$ belongs to the intersection $\Wu_{c,V}\bigl(E_-(V)\bigr)\cap\Ws_{c,V}\bigl(E_+(V)\bigr)$. As a consequence, this image must cross the boundary of $\Wuloc{c}{V}\bigl(E_-(V)\bigr)$ at a time $\xi_-$ and the boundary of $\Wsloc{c}{V}\bigl(E_+(V)\bigr)$ at a time $\xi_+$:  there exists $\bu$ in $\bbbu$ and $\bs$ in $\bbbs$ such that $\bigl(u(\xi_-),\dot u(\xi_-)\bigr) = \hatwuloc{c}{V}(\bu)$ and $\bigl(u(\xi_+),\dot u(\xi_+)\bigr) = \hatwsloc{c}{V}(\bs)$. By construction, $\Phiu(\bu,\xi_+ - \xi_-,c, V) = \Phis(\bs,c,V)$ and thus $\Phi(\bu,\bs,\xi_+ - \xi_-,c, V))$ is in $\www$.
In addition, according to the remark at the end of \cref{subsec:local_stable_unstable_manifolds_positive_speed}, the times $\xi_-$ and $\xi_+$ at which these intersections occur are unique (for a given profile $\xi\mapsto u(\xi)$), thus so are the points $\bu$ in $\bbbu$ and $\bs$ in $\bbbs$ and the time lag $\xi_+-\xi_-$. This completes the proof of this one-to-one correspondence. 
\end{proof}
Both corresponding notions of transversality are related as follows.
\begin{proposition}
\label{prop:equivalence_transversality}
For every potential function $V$ in $\Lambda$, the following two statements are equivalent.
\begin{enumerate}
\item The image of the function $\mmm\to\nnn$, $m\mapsto\Phi(m,V)$ is transverse to $\www$.
\label{item:lem_equivalence_transversality_Phi}
\item Every profile $\xi\mapsto u(\xi)$ of a front travelling at a speed $c$ in $\ccc$ and connecting $e_-(V)$ to $e_+(V)$, for the potential $V$, is transverse. 
\label{item:lem_equivalence_transversality_fronts}
\end{enumerate}
\end{proposition}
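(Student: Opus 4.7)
The plan is to show that at any point $m = (\bu, \bs, \xi, c) \in \mmm$ of $\Phi^{-1}(\www)$, the two transversality conditions translate into the \emph{same} linear equation in $\rr^{2d}$. By \cref{prop:one_to_one_correspondence_between_fronts_and_diagonal_intersection}, such an $m$ corresponds to a front at speed $c$ with profile $u$, so that running over all points of $\Phi^{-1}(\www)$ is the same as running over all travelling fronts; fix such an $m$ and write $A := \Phiu(\bu, \xi, c, V) = \Phis(\bs, c, V)$, a point of the trajectory $U(\rr)$. Since $T_{(A,A)} \www$ is the diagonal of $\nnn = \rr^{2d}\times\rr^{2d}$, the quotient $T_{(A,A)}\nnn/T_{(A,A)}\www$ identifies with $\rr^{2d}$ via $(X_1, X_2) \mapsto X_1 - X_2$, so the transversality of the image of $\Phi(\cdot, V)$ to $\www$ at $m$ amounts to
\begin{equation}
\label{eq:plan_star}
\partial_{\bu}\Phiu(T_{\bu}\bbbu) + \rr\,\partial_{\xi}\Phiu + \partial_{\bs}\Phis(T_{\bs}\bbbs) + \rr\bigl(\partial_c\Phiu - \partial_c\Phis\bigr) = \rr^{2d}.
\end{equation}

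Next I identify these subspaces geometrically. The derivative $\partial_{\xi}\Phiu$ equals $F_{c,V}(A)$, tangent both to $\Wu_{c,V}(E_-(V))$ and to $\Ws_{c,V}(E_+(V))$ since $A$ lies on the trajectory of the front. The subspace $\partial_{\bu}\Phiu(T_{\bu}\bbbu)$ is the push-forward of $D\hatwuloc{c}{V}(\bu)(T_{\bu}\bbbu)$ by the flow, and it sits as a hyperplane in $T_A\Wu_{c,V}(E_-(V))$; up to choosing $r$ small enough, the remark ending \cref{subsec:local_stable_unstable_manifolds_positive_speed} guarantees that the flow crosses $\partial\Wuloc{c}{V}(E_-(V))$ transversally, so that the direction complementing this hyperplane in $T_A\Wu_{c,V}(E_-(V))$ is precisely $F_{c,V}(A)$. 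The analogous observation on the stable side yields $\partial_{\bs}\Phis(T_{\bs}\bbbs) + \rr F_{c,V}(A) = T_A\Ws_{c,V}(E_+(V))$. Combining these equalities, \cref{eq:plan_star} rewrites as
\begin{equation}
\label{eq:plan_starstar}
T_A\Wu_{c,V}(E_-(V)) + T_A\Ws_{c,V}(E_+(V)) + \rr(\partial_c\Phiu - \partial_c\Phis) = \rr^{2d}.
\end{equation}

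Finally, I show that \cref{eq:plan_starstar} is equivalent to the transversality of the travelling front in the sense of \cref{def:transverse_travelling_front}. At the point $(c, A)$ in $\rr^{2d+1}$, the tangent spaces of the two extended manifolds split as $\bigl(\{0\}\times T_A\Wu_{c,V}(E_-(V))\bigr) + \rr(1, X_u)$ and $\bigl(\{0\}\times T_A\Ws_{c,V}(E_+(V))\bigr) + \rr(1, X_s)$ for any $c$-lifts $X_u$, $X_s$; convenient choices are the sections $c'\mapsto\bigl(c',\Phiu(\bu,\xi,c',V)\bigr)$ and $c'\mapsto\bigl(c',\Phis(\bs,c',V)\bigr)$, which give $X_u = \partial_c\Phiu$ and $X_s = \partial_c\Phis$. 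A direct linear-algebra check then shows that the sum of these two tangent spaces exhausts $\rr^{2d+1}$ if and only if its projection onto the last $\rr^{2d}$ factor fills $\rr^{2d}$, which is precisely \cref{eq:plan_starstar}. Since $\Wu$, $\Ws$ and the flow are all time-translation invariant, transversality at the single point $(c,A)$ propagates along the whole trajectory $\{c\}\times U(\rr)$ required by \cref{def:transverse_travelling_front}.

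The main obstacle is the codimension-one gap appearing in the second paragraph: $\partial_{\bu}\Phiu(T_{\bu}\bbbu)$ is not quite the full tangent to $\Wu_{c,V}(E_-(V))$, and the missing direction must be supplied by the time parameter through $\partial_{\xi}\Phiu = F_{c,V}(A)$ (and symmetrically on the stable side). This is precisely why $\mmm$ carries the extra $\rr$-factor for $\xi$, and it is what allows the $c$-freedom appearing in \cref{def:transverse_travelling_front} to be captured cleanly by the transversality of $\Phi_V$ to the diagonal $\www$.
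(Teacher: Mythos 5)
Your proof reaches the correct conclusion by a route that is genuinely more geometric than the paper's, but the final linear-algebra step is misstated.

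The paper packages the computation into \cref{lem:equivalence_between_transv_ABC}, introducing the auxiliary maps $\Delta\Phi$ and $\Gamma\Phi$ and showing by direct juggling of linear equations that the surjectivity of $D(\Delta\Phi)$ (which is your equation $(\star)$ once the quotient by the diagonal is identified with $\rr^{2d}$) is equivalent both to ``$\operatorname{im} D\Phi$ supplements the diagonal'' and to the surjectivity of $D(\Gamma\Phi)$ (which unravels \cref{def:transverse_travelling_front}). You instead reinterpret the differentials as tangent spaces to the global manifolds, i.e.\ $\partial_{\bu}\Phiu(T_{\bu}\bbbu)+\rr F_{c,V}(A)=T_A\Wu_{c,V}\bigl(E_-(V)\bigr)$ and its stable analogue, and then compare directly with the tangent spaces of the extended manifolds. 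This is cleaner to visualize, and the extra hypothesis it uses --- the transversal crossing of the flow with the boundaries of the local manifolds --- is the one supplied (after shrinking $r$) by the remark ending \cref{subsec:local_stable_unstable_manifolds_positive_speed}, which you correctly invoke; note the paper's own argument also relies implicitly on the immersion property of the parametrizations when it identifies $\operatorname{im}D(\Gamma\Phi)$ with the sum of the two extended tangent spaces, so the two proofs rest on the same ingredients.

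The misstep is in the last paragraph. Write $W := T_A\Wu_{c,V}\bigl(E_-(V)\bigr) + T_A\Ws_{c,V}\bigl(E_+(V)\bigr)$ and $Z := \rr(1,X_u) + \rr(1,X_s) + \{0\}\times W$ for the sum of the two extended tangent spaces. The projection of $Z$ onto the last $\rr^{2d}$ factor is $W + \rr X_u + \rr X_s$, which is \emph{not} the same as the subspace $W + \rr(X_u - X_s)$ of your equation $(\star\star)$ (the latter is in general a strict subspace of the former); moreover $Z = \rr^{2d+1}$ is \emph{not} equivalent to that projection filling $\rr^{2d}$ --- only the forward implication holds, since the projection can fill $\rr^{2d}$ while $\dim Z = 2d$. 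The correct argument goes through the intersection with the horizontal hyperplane, not the projection onto it: since $(1,X_u)\in Z$ makes the first coordinate of $Z$ surjective, $Z=\rr^{2d+1}$ if and only if $Z\cap\bigl(\{0\}\times\rr^{2d}\bigr) = \{0\}\times\rr^{2d}$, and a short computation gives $Z\cap\bigl(\{0\}\times\rr^{2d}\bigr) = \{0\}\times\bigl(W + \rr(X_u - X_s)\bigr)$, which is indeed $(\star\star)$. So the equivalence you assert is true, but the stated intermediate ``projection'' claims are false, and should be replaced by the intersection argument above.
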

\begin{proof}
Let us take $(m_1,V_1)$ in $\mmm\times\Lambda$ such that $\Phi(m_1,V_1)$ is in $\www$, let $(\bu_1,\bs_1,\xi_1,c_1)$ denote the point $m_1$ and let $\xi\mapsto u_1(\xi)$ denote the profile of the corresponding travelling front. In other words, 
\[
\text{for all }\xi\text{ in }\rr\,,\quad
U_1(\xi) = \Phiu(\bu_1,\xi,c_1, V_1)\,,
\quad\text{where}\quad 
U_1(\xi) = \bigl(u_1(\xi),\dot u_1(\xi)\bigr)
\,.
\]
Let us consider the maps
\[
\begin{aligned}
{\Gamma\Phi}:(\bbbu\times\rr\times\ccc)\times(\bbbs\times\ccc)&\to\rr\times\rr^{2d} \\
\bigl((\bu,\xi,\cu),(\bs,\cs)\bigr)&\mapsto \bigl(\cu,\Phiu(\bu,\xi,\cu,V_1)\bigr)+ \bigl(\cs,\Phis(\bs,\cs,V_1)\bigr)
\,,
\end{aligned}
\]
and
\[
\Delta\Phi:\mmm\to\rr^{2d} 
\,,\quad
(\bu,\bs,\xi,c)\mapsto \Phiu(\bu,\xi,c,V_1) - \Phis(\bs,c,V_1)
\,.
\]
and let us write, only for this proof, $D\Phi$ for $D_{T_{m_1}\mmm}\Phi$, and similarly $D\Phiu$ and $D\Phis$ and $D({\Gamma\Phi})$ and $D(\Delta\Phi)$ for the differentials of $\Phiu$ and $\Phis$ and ${\Gamma\Phi}$ and $\Delta\Phi$ at $(m_1,V_1)$ and with respect to all variables but $V$. 
\renewcommand{\qedsymbol}{}
\end{proof}
\begin{lemma}
\label{lem:equivalence_between_transv_ABC}
The following three statements are equivalent. 
\begin{enumerate}[(A)]
\item The image of $D\Phi$ contains a supplementary subspace of the diagonal $\www$ of $(\rr^{2d})^2$. 
\label{item:transversality_Phi}
\item The map $D({\Gamma\Phi})$ is surjective. 
\label{item:transversality_widebar_Phi}
\item The map  $D(\Delta\Phi)$ is surjective. 
\label{item:transversality_Delta_Phi}
\end{enumerate}
\end{lemma}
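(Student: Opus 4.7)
The plan is to prove the equivalences (A) $\iff$ (C) and (B) $\iff$ (C) by purely linear algebraic manipulations, after writing out the differentials in coordinates. The underlying geometric intuition is that (A), (B), (C) are three equivalent ways of expressing that the ``defect to transversality'' of the intersection of the two parametrized families (of unstable and stable manifolds, depending on $c$) is zero, and the equivalences mostly amount to identifying the right quotient spaces.

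For (A) $\iff$ (C), the diagonal $\www$ of $(\rr^{2d})^2$ has codimension $2d$, and the quotient projection $q\colon (A,B)\mapsto A-B$ canonically identifies $(\rr^{2d})^2/\www$ with $\rr^{2d}$. A standard fact about linear maps into a finite-dimensional target is that the image of a linear map $L$ contains a supplementary of a subspace $\www$ if and only if $q\circ L$ is surjective. Since $\Delta\Phi = \Phiu - \Phis$ is precisely the composition of $\Phi$ with this quotient, we have $q\circ D\Phi = D(\Delta\Phi)$, and the equivalence follows immediately.

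For (B) $\iff$ (C), I would write both differentials using the partial differentials $D_\bu\Phiu$, $D_\xi\Phiu$, $D_c\Phiu$, $D_\bs\Phis$, $D_c\Phis$ at $(m_1,V_1)$, namely
\[
D(\Delta\Phi)(\delta\bu,\delta\bs,\delta\xi,\delta c) = D_\bu\Phiu\,\delta\bu + D_\xi\Phiu\,\delta\xi + D_c\Phiu\,\delta c - D_\bs\Phis\,\delta\bs - D_c\Phis\,\delta c
\]
and
\[
D({\Gamma\Phi})(\delta\bu,\delta\xi,\delta\cu,\delta\bs,\delta\cs) = \bigl(\delta\cu+\delta\cs,\; D_\bu\Phiu\,\delta\bu + D_\xi\Phiu\,\delta\xi + D_c\Phiu\,\delta\cu + D_\bs\Phis\,\delta\bs + D_c\Phis\,\delta\cs\bigr).
\]
For (B) $\Rightarrow$ (C): given $\gamma\in\rr^{2d}$, I would apply (B) with target $(0,\gamma)$; the constraint $\delta\cu+\delta\cs=0$ lets me set $\delta c := \delta\cu = -\delta\cs$, and after flipping the sign of $\delta\bs$ the second component becomes $D(\Delta\Phi)(\delta\bu,-\delta\bs,\delta\xi,\delta c) = \gamma$. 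For (C) $\Rightarrow$ (B): given $(\alpha,\beta)\in\rr\times\rr^{2d}$, I would first use $(\delta\cu,\delta\cs) = (\alpha,0)$ with all other variables zero to hit $(\alpha, D_c\Phiu\,\alpha)$; it then suffices to hit $(0,\beta - D_c\Phiu\,\alpha)$ with additional freedom. On the antidiagonal slice $\delta\cu = -\delta\cs$ the first component of $D({\Gamma\Phi})$ vanishes, and after a sign flip on $\delta\bs$ the second component restricts exactly to $D(\Delta\Phi)$, so (C) provides the required correction.

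No serious obstacle is expected: the statement is a bookkeeping reformulation once one sees that the diagonal in the two copies of $\rr^{2d}$ encodes ``both parametrizations reach the same point'' and that the extra $\rr$-factor in ${\Gamma\Phi}$ encodes the freedom of choosing two independent speeds, which exactly accounts for the extra dimension in the target. The two small points to track are the sign flip on $\delta\bs$ (because ${\Gamma\Phi}$ sums $\Phiu$ and $\Phis$ whereas $\Delta\Phi$ subtracts them) and the correction term $D_c\Phiu\,\alpha$ appearing in the (C) $\Rightarrow$ (B) direction, both of which are absorbed trivially thanks to the linearity of the differentials.
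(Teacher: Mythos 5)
Your proof is correct and follows essentially the same route as the paper's: the two small manipulations you flag (the sign flip on $\delta\bs$ and the correction term $D_c\Phiu\,\alpha$) are exactly the ones the paper performs. The only cosmetic difference is that your $(A)\iff(C)$ step invokes the abstract characterization of transversality via the quotient projection $q\colon (A,B)\mapsto A-B$, whereas the paper constructs the diagonal element $\gamma$ explicitly in both directions.
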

\begin{proof}[Proof of \cref{lem:equivalence_between_transv_ABC}]
If statement \cref{item:transversality_Phi} holds, then, for every $(\alpha,\beta)$ in $(\rr^{2d})^2$, there exist $\gamma$ in $\rr^{2d}$ and $\delta m$ in $T_{m_1}\mmm$ such that
\begin{equation}
\label{supplementary_of_diagonal}
(\gamma,\gamma) + D\Phi\cdot\delta m = (\alpha,\beta)
\,,
\end{equation}
so that
\begin{equation}
\label{DDelta_Phi_surjective}
D(\Delta\Phi)\cdot\delta m = \alpha-\beta
\,,
\end{equation}
and statement \cref{item:transversality_Delta_Phi} holds. Conversely, if statement \cref{item:transversality_Delta_Phi} holds, then, for every $(\alpha,\beta)$ in $(\rr^{2d})^2$, there exists $\delta m$ in $T_{m_1}\mmm$ such that \cref{DDelta_Phi_surjective} holds, and as a consequence, if $(\delta\bu,\delta\bs,\delta\xi,\delta c)$ denotes the components of $\delta m$, the vector $\alpha - D\Phiu(\delta\bu,\delta\xi,\delta c)$ is equal to $\beta-D\Phis(\delta\bs,\delta c)$, and if this vector is denoted by $\gamma$, then equality \cref{supplementary_of_diagonal} holds, and this shows that statement \cref{item:transversality_Phi} holds. Thus statements \cref{item:transversality_Phi} and \cref{item:transversality_Delta_Phi} are equivalent. 

Now, if statement \cref{item:transversality_widebar_Phi} holds, then, for every $(\delta c,\delta U)$ in $\rr\times\rr^{2d}$, there exist $(\delta\bu,\delta\xi,\delta\cu)$ in $T_{\bu_1}\bbbu\times\rr^2$ and $(\delta\bs,\delta\cs)$ in $T_{\bs_1}\bbbs\times\rr$ such that 
\begin{equation}
\label{surjectivity_of_widebarPhi}
(\delta c,\delta U) = \bigl(\delta \cu,D\Phiu\cdot(\delta\bu,\delta\xi,\delta \cu)\bigr) + \bigl(\delta \cs, D\Phis\cdot(\delta \bs ,\delta \cs)\bigr)
\,,
\end{equation}
so that $\delta c$ is equal to $\delta \cu+\delta \cs$ and so that
\[
\begin{aligned}
\delta U &= D\Phiu\cdot(\delta\bu,\delta\xi,\delta \cu) + D\Phis\cdot(\delta \bs ,\delta c -\delta \cu)\\
&= D\Phiu\cdot(\delta\bu,\delta\xi,\delta \cu)  + D\Phis\cdot(0,\delta c) - D\Phis\cdot(-\delta \bs ,\delta \cu)
\,,
\end{aligned}
\]
so that finally, if $(\delta\bu,-\delta \bs,\delta\xi,\delta \cu)$ is denoted by $\delta m$, then 
\begin{equation}
\label{deltaU_related_to_DDeltaPhi}
\delta U = D(\Delta\Phi)\cdot \delta m + D\Phis\cdot(0,\delta c) 
\,.
\end{equation}
By choosing $\delta c$ equal to $0$, this shows that every $\delta U$ in $\rr^{2d}$ is in the image of $D(\Delta\Phi)$, which is statement \cref{item:transversality_Delta_Phi}. Conversely, if statement \cref{item:transversality_Delta_Phi} holds, then for every $(\delta c,\delta U)$ in $\rr\times\rr^{2d}$, there exists $\delta m$ in $T_{m_1}\mmm$ such that \cref{deltaU_related_to_DDeltaPhi} holds, and if $\delta\cu$ denotes the last component of $\delta m$ and $\delta\cs$ is the difference $\delta c-\delta\cu$, then equality \cref{surjectivity_of_widebarPhi} holds, and this shows that statement \cref{item:transversality_widebar_Phi} holds. Thus statements \cref{item:transversality_widebar_Phi} and \cref{item:transversality_Delta_Phi} are equivalent. 
\end{proof}
\begin{proof}[Continuation of the proof of \cref{prop:equivalence_transversality}]
To conclude, let us see how both transversality statements \cref{item:lem_equivalence_transversality_Phi,item:lem_equivalence_transversality_fronts} can be expressed in terms of the ingredients of \cref{lem:equivalence_between_transv_ABC}. On the one hand, according to \cref{def:transverse_travelling_front}, the travelling front with profile $u_1(\cdot)$ and speed $c_1$ is transverse if and only if the intersection
\begin{equation}
\label{transverse_gammaphi}
\left(\bigcup_{\cu>0} \{\cu\}\times \Wu_{\cu,V}\bigl(E_-(V)\bigr) \right) 
\cap 
\left(\bigcup_{\cs>0} \{\cs\}\times \Ws_{\cs,V}\bigl(E_+(V)\bigr) \right) 
\end{equation}
is transverse, in $\rr^{2d+1}$, along the set $\{c_1\}\times U_1(\rr)$. This transversality can be considered at a single point, no matter which, of the trajectory $U_0(\rr)$, thus in particular at the point $\Phiu(\bu_1,\xi_1,c_1, V_1)$, which is equal to $\Phis(\bs_1,c_1,V_1)$. By definition, the sum of the tangent spaces associated to the manifolds intersected in \cref{transverse_gammaphi} is the image of $D({\Gamma\Phi})$ and the transversality stated in statement \cref{item:lem_equivalence_transversality_fronts} is therefore equivalent to the surjectivity of the map $D({\Gamma\Phi})$ (statement \cref{item:transversality_widebar_Phi} in \cref{lem:equivalence_between_transv_ABC}). 

On the other hand, the image of the function $\mmm\to(\rr^{2d})^2$, $m\mapsto\Phi(m,V_1)$ is transverse at $\Phi(m_1,V_1)$ to the diagonal $\www$ of $(\rr^{2d})^2$ as stated in \cref{item:lem_equivalence_transversality_Phi} if and only if the image of $D\Phi$ contains a supplementary subspace of the diagonal (statement \cref{item:transversality_Phi} in \cref{lem:equivalence_between_transv_ABC}). Thus \cref{prop:equivalence_transversality} follows from \cref{lem:equivalence_between_transv_ABC}. 
\end{proof}
According to \cref{prop:equivalence_transversality}, \cref{prop:local_generic_transversality_travelling_fronts_given_critical_points_speed} follows from the conclusion of \cref{thm:Sard_Smale} applied to the function $\Phi$ (see \cref{subsubsec:conclusion_loc_gen_transv}). The next two \namecrefs{subsubsec:checking_hyp_regularity_Sard_Smale_trav_fronts} are devoted to checking that this function $\Phi$ fulfils hypotheses \cref{item:thm_sard_smale_condition_regularity,item:thm_sard_smale_condition_transversality} of this theorem.
\subsubsection{Checking hypothesis \texorpdfstring{\cref{item:thm_sard_smale_condition_regularity}}{\ref{item:thm_sard_smale_condition_regularity}} of \texorpdfstring{\cref{thm:Sard_Smale}}{Theorem \ref{thm:Sard_Smale}}}
\label{subsubsec:checking_hyp_regularity_Sard_Smale_trav_fronts}
Since the vector field \cref{vector_field} defining the flow \cref{flow} is of class $C^k$, so is the function $\Phi$. 
It follows from \cref{subsec:eigenspaces_and_dimensions} that
\[
\dim(\bbbu) = d - m(e_{-,0})-1
\text{ and }
\dim(\bbbs) = d-1
\,,\text{ thus }
\dim(\mmm) = 2d-m(e_{-,0})
\,,
\]
and since the codimension of $\www$ in $\nnn$ is equal to $2d$, 
\[
\dim(\mmm) - \codim(\www) = - m(e_{-,0})\le 0
\,,\quad\text{thus}\quad
k>\dim(\mmm) - \codim(\www)
\,;
\]
in other words, hypothesis \cref{item:thm_sard_smale_condition_regularity} of \cref{thm:Sard_Smale} is fulfilled. 
\subsubsection{Checking hypothesis \texorpdfstring{\cref{item:thm_sard_smale_condition_transversality}}{conditionTransversalitySardSmale} of \texorpdfstring{\cref{thm:Sard_Smale}}{Theorem \ref{thm:Sard_Smale}}}
\label{subsubsec:checking_hyp_ii_Sard_Smale_thm}
Take $(m_1,V_1)$ in the set $\Phi^{-1}(\www)$. Let $(\bu_1,\bs_1,\xi_1,c_1)$ denote the components of $m_1$, and, for every real quantity $\xi$, let us write
\[
U_1(\xi) = \bigl(u_1(\xi),v_1(\xi)\bigr) = S_{c_1,V_1}\bigl(\xi,\hatwuloc{c_1}{V_1}(\bu_1)\bigr)
\,.
\]
The function $\xi\mapsto u_1(\xi)$ is the profile of a front travelling at speed $c_1$ and connecting $e_-(V_1)$ to $e_+(V_1)$ for the potential $V_1$; and, according to the empty inclusion \cref{pi_pos_of_local_unstable_stable_man_do_not_intersect}, the quantity $\xi_1$ is \emph{positive}. Let us write
\[
D\Phi\,, \quad
D\Phiu
\quad\text{and}\quad
D\Phis
\]
for the \emph{full} differentials (with respect to arguments $m$ in $\mmm$ \emph{and} $V$ in $\Lambda$) of the three functions $\Phi$ and $\Phiu$ and $\Phis$ respectively at the points $\bigl(\bu_1,\bs_1,\xi_1,c_1,V_1\bigr)$, 
$\bigl(\bu_1,\xi_1,c_1,V_1\bigr)$ and $\bigl(\bs_1,c_1,V_1\bigr)$. Checking hypothesis \cref{item:thm_sard_smale_condition_transversality} of \cref{thm:Sard_Smale} amounts to prove that 
\begin{equation}
\label{transversality_of_DPhi}
\img(D\Phi) + T\www = T\nnn
\,.
\end{equation}
To this end, since the subspace $\rr^{2d}\times\{0_{\rr^{2d}}\}$ of $\nnn$ is transverse to the diagonal  $\www$, it is sufficient to prove that, for every $\gamma$ in $\rr^{2d}$, the vector $\bigl(\gamma,0_{\rr^{2d}}\bigr)$ can be reached by $D\Phi$. Thus, it is sufficient to prove that, for every $\gamma$ in $\rr^{2d}$, there exist a real quantity $\zeta$ and a function $W$ in $\CkbFull$ with a compact support $\supp(W)$ satisfying
\begin{equation}
\label{supp_W_included_in_BR}
\supp(W)\subset B_{\rr^d}(0,R)
\,,
\end{equation}
such that
\begin{align}
\label{transversality_Phi_condition_dPhiu}
D\Phiu\cdot(0,\zeta,0,W) &= \gamma \,, \\
\text{and}\quad
D\Phis\cdot(0,0,W) &= 0_{\rr^{2d}}
\,.
\label{transversality_Phi_condition_dPhis}
\end{align}
To fulfil equality \cref{transversality_Phi_condition_dPhis}, it is sufficient to assume that $W$ satisfies the following additional condition:
\begin{equation}
\label{supp_W_does_not_meet_projection_Wsloc}
\supp(W)\cap\piPos\Bigl[\Wsloc{c}{V}\bigl(E_+(V_1)\bigr)\Bigr] = \emptyset
\,,
\end{equation}
where $\piPos:\rr^{2d}\to\rr^d$ is the projection on the first component defined in \cref{def_pi_pos} (this condition ensures that the local stable manifold of $E_+(V_1)$ is not changed by a perturbation of $V_1$ in the direction of $W$, see the second remark at the end of \cref{subsec:local_stable_unstable_manifolds_positive_speed}). For convenience, we will also ensure that the same is true for the local unstable manifold of $E_-(V_1)$, that is:
\begin{equation}
\label{supp_W_does_not_meet_projection_Wuloc}
\supp(W)\cap\piPos\Bigl[\Wuloc{c}{V}\bigl(E_-(V_1)\bigr)\Bigr] = \emptyset
\,.
\end{equation}
Fulfilling equality \cref{transversality_Phi_condition_dPhiu} amounts to prove that the orthogonal complement of the subspace of the directions of $\rr^{2d}$ that can be reached by $D\Phiu\cdot(0,\zeta,0,W)$ is trivial, i.e. reduced to $\{0_{\rr^{2d}}\}$. Observe that 
\[
D\Phiu\cdot(0,\zeta,0,0) = \zeta\,\dot U_1(\xi_1)
\,.
\]
Thus the transversality statement \cref{transversality_of_DPhi} is a consequence of the following lemma. 
\begin{lemma}[perturbation of the potential reaching a given direction]
\label{lem:reach_all_directions_orthogonal_to_UzeroPrime_of_xiZero}
For every nonzero vector $(\phi_1,\psi_1)$ in $\rr^{2d}$ which is orthogonal to $\dot U_1(\xi_1)$, there exists $W$ in $\CkbFull$ satisfying conditions \cref{supp_W_included_in_BR,supp_W_does_not_meet_projection_Wsloc,supp_W_does_not_meet_projection_Wuloc} and the inequality
\begin{equation}
\label{reach_all_directions_orthogonal_to_UzeroPrime_of_xiZero}
\langle D\Phiu\cdot(0,0,0,W)\,|\,(\phi_1,\psi_1)\rangle \not= 0
\,.
\end{equation}
\end{lemma}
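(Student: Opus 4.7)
The plan is to identify $D\Phiu\cdot(0,0,0,W)$ with the variation $\delta U(\xi_1)$ of $U_1$ caused by replacing $V_1$ by $V_1+\epsilon W$. The support conditions \cref{supp_W_included_in_BR,supp_W_does_not_meet_projection_Wuloc} together with the last remark of \cref{subsec:local_stable_unstable_manifolds_positive_speed} leave the local unstable manifold of $E_-(V_1)$ (and its boundary point $\hatwuloc{c_1}{V_1}(\bu_1)=U_1(0)$) unchanged, so $\delta U$ satisfies the variational equation along $U_1$ with $\delta U(0)=0$ and forcing $(0,\nabla W(u_1(\xi)))$. Duhamel's formula followed by a duality pairing gives
\[
\langle D\Phiu\cdot(0,0,0,W),(\phi_1,\psi_1)\rangle = \int_0^{\xi_1} \tilde\psi(s)\cdot \nabla W\bigl(u_1(s)\bigr)\, ds,
\]
where $(\tilde\phi,\tilde\psi)(s)$ solves the adjoint system $\dot{\tilde\phi}=-D^2V_1(u_1)\tilde\psi$, $\dot{\tilde\psi}=-\tilde\phi+c_1\tilde\psi$ backward from $(\phi_1,\psi_1)$ at $s=\xi_1$; by ODE uniqueness it is nowhere zero.

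\textbf{Contradiction by localized testing.}
I would argue by contradiction, assuming that the integral vanishes for every admissible $W$. Fix $\xi^*\in\IOnce$, where $\IOnce\subset(0,\xi_1)$ is the interval of \cref{lem:additional_condition_nu_and_r}; by the disjointness conclusion \cref{item:lem_additional_condition_nu_and_r_3} and the one-to-one property \cref{item:lem_additional_condition_nu_and_r_only_once}, a small enough $\eta>0$ makes every $W$ supported in $\widebar{B}_{\rr^d}(u_1(\xi^*),\eta)$ satisfy the support conditions \cref{supp_W_included_in_BR,supp_W_does_not_meet_projection_Wsloc,supp_W_does_not_meet_projection_Wuloc} and reduces the integral to a small neighbourhood of $\xi^*$ in $\IOnce$. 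Testing first with $W(u)=\chi\bigl((u-u_1(\xi^*))/\eta\bigr)\,\alpha\cdot(u-u_1(\xi^*))$ for a smooth radial bump $\chi$ and a free direction $\alpha\in\rr^d$, a rescaling $s=\xi^*+\eta\tau$ shows that the integral behaves like $\eta\,C\,\tilde\psi(\xi^*)\cdot\alpha^\perp$ as $\eta\to 0$, with $C>0$ and $\alpha^\perp$ the projection of $\alpha$ onto the orthogonal complement of $\dot u_1(\xi^*)$. Vanishing for every $\alpha$ therefore forces $\tilde\psi(\xi^*)$ to be parallel to $\dot u_1(\xi^*)$ for every $\xi^*\in\IOnce$. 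Writing $\tilde\psi=\mu\dot u_1$ on $\IOnce$, an integration by parts --- whose boundary terms vanish because $u_1(0)$ and $u_1(\xi_1)$ lie in the projections of the local unstable and stable manifolds, which are disjoint from $\supp W$ --- rewrites the integral as $-\int \dot\mu(s)\,W(u_1(s))\,ds$; testing this against positive radial bumps forces $\dot\mu\equiv 0$ on $\IOnce$, so $\mu\equiv\mu_0$ is a constant.

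\textbf{Closing the argument.}
Substituting $\tilde\psi=\mu_0\dot u_1$ into the adjoint system and using $\ddot u_1=\nabla V_1(u_1)-c_1\dot u_1$ yields, after one differentiation, the identity $2c_1\mu_0\ddot u_1\equiv 0$ on $\IOnce$. Up to shrinking $r$ in \cref{lem:additional_condition_nu_and_r} so that $\IOnce$ lies in the asymptotic regime where $u_1(\xi)-e_-(V_1)\sim\alpha(\xi)\ueig$ with $\alpha(\xi)>0$ (using the notation of the proof of statement \cref{item_prop:values_reached_only_once_travelling_front} of \cref{prop:values_reached_only_once}), one has $\ddot u_1(\xi)\sim\lambda^2\alpha(\xi)\ueig\ne 0$ on $\IOnce$; since $c_1>0$, this forces $\mu_0=0$. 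Hence $\tilde\psi\equiv 0$ on $\IOnce$ and, by ODE uniqueness of the adjoint, $(\phi_1,\psi_1)=0$, contradicting the standing hypothesis. The main obstacle of the proof is precisely this last step: the ``resonant'' configuration in which $\tilde\psi$ remains parallel to $\dot u_1$ throughout $\IOnce$ with constant coefficient escapes the purely directional test of the previous paragraph, and must be ruled out by combining the integration-by-parts argument with the asymptotic structure of the unstable manifold at $E_-(V_1)$, accessed by making the local radius $r$ small enough that $\IOnce$ sits in the linearised regime.
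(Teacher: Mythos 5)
Your proof is correct and follows the paper's overall architecture: Duhamel reduction to the adjoint pairing $\int_0^{\xi_1}\nabla W(u_1(\xi))\cdot\psi(\xi)\,d\xi$, localized bump testing on the interval $\IOnce$ to show (negating the paper's Case 1) that $\psi(\xi^*)$ must be collinear to $\dot u_1(\xi^*)$ on $\IOnce$, then (negating Case 2) that the coefficient $\mu$ must be constant, and finally excluding the residual configuration. It is this last step that diverges from the paper. You prove, from the compatibility of the two components of the adjoint system together with the profile equation $\ddot u_1=\nabla V_1(u_1)-c_1\dot u_1$, that $\psi=\mu_0\dot u_1$ with $\mu_0$ constant forces $2c_1\mu_0\ddot u_1\equiv 0$ on $\IOnce$, and then you need $\ddot u_1\not\equiv 0$ on $\IOnce$, which you obtain by invoking the asymptotics $\ddot u_1\sim\lambda^2\alpha(\xi)\ueig$ after a further shrinking of $r$. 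This is a valid argument, but observe that it never uses the hypothesis that $(\phi_1,\psi_1)$ is orthogonal to $\dot U_1(\xi_1)$ — a warning sign you should have heeded. The paper instead uses precisely this hypothesis: since the pairing $\langle(\phi(\xi),\psi(\xi)),\dot U_1(\xi)\rangle$ is constant along the adjoint flow (the adjoint system satisfies $\dot Y=-A^T Y$ while $\dot U_1$ satisfies $\tfrac{d}{d\xi}\dot U_1=A\dot U_1$), the orthogonality at $\xi_1$ propagates to every $\xi$; writing $\psi=\alpha\dot u_1$ with $\alpha$ constant and $\phi=c_1\psi-\dot\psi=c_1\alpha\dot u_1-\alpha\ddot u_1$ and taking the scalar product with $\dot U_1(\xi)=(\dot u_1,\ddot u_1)$ gives directly $c_1\alpha|\dot u_1(\xi)|^2=0$, whence $\alpha=0$ since $c_1>0$ and $\dot u_1\neq 0$. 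This is shorter, sharper, and independent of any asymptotic refinement of $\IOnce$, whereas your route requires a further shrinking of $r$ that interacts non-trivially with the construction of $\IOnce$ in \cref{lem:additional_condition_nu_and_r} (the interval already depends on $r$, so the additional constraint "$\IOnce$ lies in the linearised regime where $\ddot u_1$ does not vanish" must be folded into that lemma's proof, not imposed afterwards). In short: what you do works, and in fact establishes a slightly stronger statement, but the orthogonality hypothesis was put in the lemma precisely to render Case 3 immediate, and leaving it unused cost you both elegance and robustness.
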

\begin{proof}[Proof of \cref{lem:reach_all_directions_orthogonal_to_UzeroPrime_of_xiZero}]
Let $(\phi_1,\psi_1)$ denote a nonzero vector orthogonal to $U_1(\xi_1)$ in $\rr^{2d}$, and let $W$ be a function in $\CkbFull$ satisfying the conditions \cref{supp_W_included_in_BR,supp_W_does_not_meet_projection_Wsloc,supp_W_does_not_meet_projection_Wuloc}. Let us consider the linearization of the differential system \cref{trav_wave_system_order_1}, for the potential $V_1$ and the speed $c_1$, around the solution $\xi\mapsto U_1(\xi)$:
\begin{equation}
\label{linearized_differential_system}
\frac{d}{d\xi}
\begin{pmatrix}
\delta u(\xi) \\ \delta v(\xi) 
\end{pmatrix}
= \begin{pmatrix}
0 & \id \\ D^2 V_1 \bigl(u_1(\xi)\bigr) & -c_1
\end{pmatrix}
\begin{pmatrix}
\delta u(\xi) \\ \delta v(\xi) 
\end{pmatrix}
\,,
\end{equation}
and let $T(\xi,\xi')$ denote the family of evolution operators obtained by integrating this linearized differential system between times $\xi$ and $\xi'$. It follows from condition \cref{supp_W_does_not_meet_projection_Wuloc} that $W$ only affects the part of $\Phiu$ corresponding to the flow (not on the function $\hatwuloc{c_1}{V_1}$) and the variation of constants formula yields that 
\begin{equation}
\label{expression_DPhi_trav_front}
D\Phiu\cdot(0,0,0,W) = \int_{0}^{\xi_1} T(\xi,\xi_1) \Bigl(0,\nabla W\bigl(u_1(\xi)\bigr)\Bigr)\, d\xi
\,.
\end{equation}
For every time $\xi$, let $T^*(\xi,\xi_1)$ denote the adjoint operator of $T(\xi,\xi_1)$, and let 
\begin{equation}
\label{def_phi_psi}
\bigl(\phi(\xi),\psi(\xi)\bigr) = T^*(\xi,\xi_1)\cdot(\phi_1,\psi_1)\,.
\end{equation}
According to expression \cref{expression_DPhi_trav_front}, inequality \cref{reach_all_directions_orthogonal_to_UzeroPrime_of_xiZero} reads
\[
\int_0^{\xi_1}\Bigl\langle\Bigl(0,\nabla W\bigl(u_1(\xi)\bigr)\Bigr) \,\Bigm|\, T^*(\xi,\xi_1)\cdot(\phi_1,\psi_1) \Bigr\rangle\, d\xi \not= 0
\,,
\]
or equivalently
\begin{equation}
\label{reach_all_directions_condition_with_psi}
\int_0^{\xi_1}\nabla W\bigl(u_1(\xi)\bigr)\cdot \psi(\xi) \, d\xi \not= 0
\,.
\end{equation}
Notice that, due to the expression of the linearized differential system \cref{linearized_differential_system}, $(\phi,\psi)$ is a solution of the adjoint linearized system
\begin{equation}
\label{adjoint_linearized_system}
\begin{pmatrix}
\dot\phi(\xi) \\ \dot\psi(\xi) 
\end{pmatrix}
= - \begin{pmatrix}
0 & D^2 V_1 \bigl(u_1(\xi)\bigr) \\
\id & -c_1
\end{pmatrix}  
\begin{pmatrix}
\phi(\xi) \\ \psi(\xi) 
\end{pmatrix}
\,.
\end{equation}

Our task is thus to construct a function $W$ in $\CkbFull$ satisfying \cref{supp_W_included_in_BR,supp_W_does_not_meet_projection_Wsloc,supp_W_does_not_meet_projection_Wuloc,reach_all_directions_condition_with_psi}. 
There are two difficulties to overcome. 
\begin{enumerate}
\item First, as shown by \cref{fig:one-to-one}, the function $\xi\mapsto u_1(\xi)$ may reach the same value for different values of the argument $\xi$, making it difficult to handle the interactions of the contributions to the integral \cref{reach_all_directions_condition_with_psi} of the perturbation $W\bigl(u_1(\xi)\bigr)$ at these different values of $\xi$. 
\label{item:difficulty_values_reached_only_once}
\item Second, the integral \cref{reach_all_directions_condition_with_psi} depends on the gradient $\nabla W$ of the perturbation $W$ and not on $W$ itself, and this gradient cannot be \emph{any} function. 
\end{enumerate}
These difficulties have already been tackled in several contexts, see \cite{Rifford_closingGeodesicsC1Topology_2012,RiffordRuggiero_genPropClosedOrbHamFlowsManeViewpoint_2012,Robbin_algebraicKupkaSmaleTheory_1981} (ordinary differential equations) and \cite{BrunovskyJolyRaugel_genericTransversalityHeteroclinicHomoclinicOrbitsScalarParabolicEquations_2019,BrunovskyPolacik_MorseSmaleStructureGenericRDEquHigherSpaceDim_1997,BrunovskyRaugel_genericityMorseSmalePropertyDampedWaveEqu_2003,JolyRaugel_genericHypebolicityEquPerOrbParabEquCircle_2010,JolyRaugel_genericMorseSmalePropertyParabEquCircle_2010} (partial differential equations). Each time, some specific arguments have to be found, using the peculiarities and constraints of the considered system. 

In the present case, the following trick will do the job. According to \cref{lem:additional_condition_nu_and_r}, there exists a closed interval with nonempty interior $\IOnce$, included in $(0,+\infty)$, such that
\begin{equation}
\label{u_1_of_Ionce_not_in_pPos_of_Wuloc_cup_Wsloc}
u_1(\IOnce)\cap \piPos\Bigl[\Wuloc{c}{V}\bigl(E_-(V_1)\bigr)\cup\Wsloc{c}{V}\bigl(E_+(V_1)\bigr)\Bigr] = \emptyset
\,,
\end{equation}
such that $\dot u$ does not vanish on $\IOnce$, and such that
\begin{equation}
\label{values_reached_only_once_trav}
\text{for all } \xi^*\text{ in }\IOnce\text{ and } \xi\text{ in }\rr\,, \quad
u_1(\xi) = u_1 (\xi^*) \implies \xi = \xi^*
\,.
\end{equation}
According to the empty intersection \cref{u_1_of_Ionce_not_in_pPos_of_Wuloc_cup_Wsloc} and since $u_1(\xi)$ is in $\Wsloc{c}{V}\bigl(E_+(V_1)\bigr)$ for $\xi$ larger than $\xi_1$, the interval $\IOnce$ is actually included in $(0,\xi_1)$. 
In view of \cref{values_reached_only_once_trav}, the image $u_1(\IOnce)$ of this interval provides a suitable place where the trajectory can be perturbed without the inconvenience \cref{item:difficulty_values_reached_only_once} emphasized above. Two cases have to be considered (plus a third one that will turn out to be empty). 
\paragraph*{Case 1.} There exists a time $\xi^*$ in $\IOnce$ such that $\psi(\xi^*)$ is \emph{not} collinear to $\dot u_1(\xi^*)$. 

In this case, up to an affine conformal change of coordinate system in $\rr^d$, we may assume that 
\begin{equation}
\label{loc_coordinate_system}
u_1(\xi^*)=0
\quad\text{and}\quad
\dot u_1(\xi^*) = \epsilon_1
\quad\text{and}\quad
\epsilon_2 \cdot \psi(\xi^*) \not= 0
\,,
\end{equation}
where $\epsilon_1= (1,0,\dots,0)$ and $\epsilon_2= (0,1,0,\dots,0)$ are the two first vectors of the canonical basis of $\rr^d$. Let $\rho$ denote an even function in $\ccc^{k+1}\bigl(\rr,[0,1]\bigr)$ satisfying 
\[
\rho(0) = 1
\quad\text{and $\rho$ vanishes on $\rr\setminus(-1,1)$.}
\]
Let $\varepsilon$ denote a small positive quantity to be chosen later and let us consider the bump function
\[
\rho_\varepsilon:\rr^d\to[0,1], 
\quad 
u \mapsto \rho\left(\frac{\abs{u}}{\varepsilon}\right)
\,.
\]
It follows from this definition that
\begin{equation}
\label{properties_bump_function}
\rho_\varepsilon(0_{\rr^d}) = 1 
\quad\text{and}\quad
\supp(\rho_\varepsilon)\subset \widebar{B}_{\rr^d}(0,\varepsilon)
\quad\text{and}\quad
\norm{\nabla\rho_\varepsilon}_{L^\infty(\rr^d,\rr)^d} \in \mathcal{O}_{\varepsilon\to0}(\varepsilon^{-1})
\,.
\end{equation}
\begin{minipage}{.55\textwidth}

Let us define the perturbation $W$ as follows: for every $u$ in $\rr^d$, 
\[
W(u) = \rho_\varepsilon(u) (\epsilon_2 \cdot u)
\,,
\]
see \cref{fig:perturbation_potential_case_1}, so that
\begin{equation}
\label{expression_nabla_W_case_one}
\nabla W(u) = \rho_\varepsilon(u) \epsilon_2 + (\epsilon_2 \cdot u) \nabla\rho_\varepsilon(u)
\,.
\end{equation}

\end{minipage}
\hspace{.03\textwidth}
\begin{minipage}{.4\textwidth}
\begin{figure}[H]
\centering
\resizebox{0.95\textwidth}{!}{\input{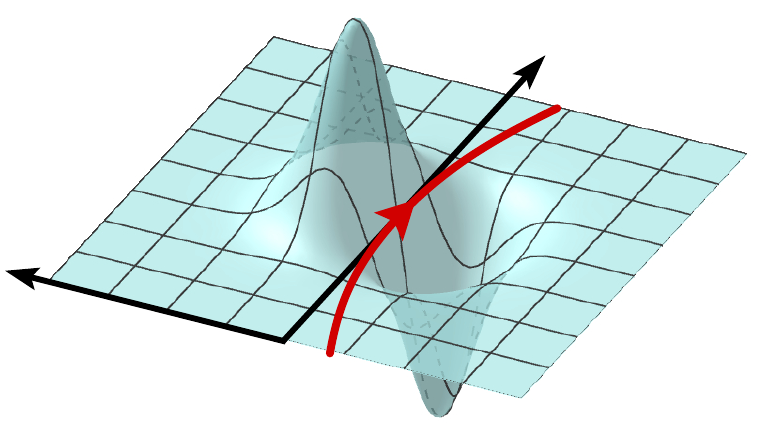_t}}
\caption{Graph of $W$.}
\label{fig:perturbation_potential_case_1}
\end{figure}
\vspace{.001\textheight}
\end{minipage}
It follows from this definition that, if $\varepsilon$ is small enough, then, on the one hand conditions \cref{supp_W_included_in_BR} (according to inequality \cref{a_priori_bound_on_profiles} of \cref{lem:global_flow_and_a_priori_bound_on_profiles}), and \cref{supp_W_does_not_meet_projection_Wsloc,supp_W_does_not_meet_projection_Wuloc} (according to the empty intersection \cref{u_1_of_Ionce_not_in_pPos_of_Wuloc_cup_Wsloc}) are fulfilled; and, on the other hand, according to \cref{values_reached_only_once_trav} and since $\dot u_1(\xi^*)$ is nonzero, there exists an open interval $I_\varepsilon^*$ of $\rr$ satisfying
\begin{equation}
\label{open_interval_I_epsilon_star}
\xi^*\in I_\varepsilon^*
\quad\text{and, for every $\xi$ in $\rr$,}\quad
u_1(\xi)\in B_{\rr^d}(0,\varepsilon)\iff \xi\in I_\varepsilon^*
\,.
\end{equation}
Let us assume that $\varepsilon$ is chosen as such. It follows from \cref{open_interval_I_epsilon_star} that the integral in \cref{reach_all_directions_condition_with_psi} reduces to:
\begin{equation}
\label{reach_all_directions_integral_on_Istar}
\int_{I_\varepsilon^*} \nabla W\bigl(u_1(\xi)\bigr)\cdot \psi(\xi) \, d\xi
\,.
\end{equation}
As a consequence, if $u_1(\xi)$ follows a straight line in the direction of $\epsilon_1$ inside the ball $B_{\rr^d}(0,\varepsilon)$, then, for every $\xi$ in $I_\varepsilon^*$,
\[
\nabla W\bigl(u_1(\xi)\bigr) = \rho_\varepsilon\bigl(u_1(\xi)\bigr)\epsilon_2
\,,
\]
so that the integral \cref{reach_all_directions_integral_on_Istar} reduces to 
\[
\int_{I_\varepsilon^*} \rho_\varepsilon\bigl(u_1(\xi)\bigr) \epsilon_2 \cdot \psi(\xi) \, d\xi 
\,,
\]
and according to the last property of \cref{loc_coordinate_system}, if $\varepsilon$ is sufficiently small then this integral does not vanish, fulfilling inequality \cref{reach_all_directions_condition_with_psi} --- and thus also \cref{reach_all_directions_orthogonal_to_UzeroPrime_of_xiZero}.

In the general situation where $u_1(\xi)$ does not necessarily follow a straight line in the direction of $\epsilon_1$ inside the ball $B_{\rr^d}(0,\varepsilon)$, the quantity $\epsilon_2\cdot u_1(\xi)$ is in $\mathcal{O}_{\varepsilon\to0}(\varepsilon^2)$ when $\xi$ is in $I_\varepsilon^*$, thus it follows from \cref{expression_nabla_W_case_one} and from the last property of \cref{properties_bump_function} that, still for $\xi$ in $I_\varepsilon^*$,
\[
\nabla W\bigl(u_1(\xi)\bigr) = \rho_\varepsilon\bigl(u_1(\xi)\bigr) \epsilon_2 + \mathcal{O}_{\varepsilon\to0}(\varepsilon)
\,,
\]
and since $\rho_\varepsilon(0_{\rr^d})$ equals $1$, it follows from the last property of \cref{loc_coordinate_system} that, if $\varepsilon$ is sufficiently small, then inequality \cref{reach_all_directions_condition_with_psi} is fulfilled again --- thus so is inequality \cref{reach_all_directions_orthogonal_to_UzeroPrime_of_xiZero}. 

If case 1 does \emph{not} occur, then $\psi(\xi)$ is collinear to $\dot u_1(\xi)$ for every $\xi$ in $\IOnce$, and since $\dot u_1(\cdot)$ does not vanish on $\IOnce$, there exists a $\ccc^1$-function $\alpha:\IOnce \to\rr$ such that, for every $\xi$ in $\IOnce$,
\begin{equation}
\label{psi_equals_alpha_times_derivative_of_uzero}
\psi(\xi) = \alpha(\xi) \dot u_1(\xi)
\,.
\end{equation}
The next cases 2 and 3 differ according to whether the function $\alpha(\cdot)$ is constant or not. 
\paragraph*{Case 2.} For every $\xi$ in $\IOnce$, equality \cref{psi_equals_alpha_times_derivative_of_uzero} holds for some \emph{nonconstant} function $\alpha(\cdot)$. 

For every perturbation $W$ of the potential, if the support of $W$ is localized enough around some point of $u(\IOnce)$ (so that expression \cref{psi_equals_alpha_times_derivative_of_uzero} holds as soon as $\nabla W\bigl(u(\xi)\bigr)$ is nonzero), then an integration by parts shows that the integral in inequality \cref{reach_all_directions_condition_with_psi} becomes
\begin{equation}
\label{reach_all_directions_condition_with_psi_case2}
\int\nabla W\bigl(u_1(\xi)\bigr)\cdot \psi(\xi) \, d\xi 
= \int \alpha(\xi) \nabla W\bigl(u_1(\xi)\bigr)\cdot  \dot u_1(\xi) \, d\xi 
= -\int \dot\alpha(\xi) W\bigl(u_1(\xi)\bigr)\, d\xi 
\end{equation}
(with integration domain $[0,\xi_1]$ for each of these integrals). 

The expression of this last integral shows why the assumption (made in the present case 2) that $\alpha(\cdot)$ is nonconstant matters. According to this assumption, there exists $\xi^*$ in $\IOnce$ such that $\dot \alpha(\xi^*)$ is nonzero. Let us assume (up to an affine change of variable in $\rr^{2d}$) that $u_1(\xi^*)$ is equal to $0_{\rr^d}$. Let us define $B_{\rr^d}(0,\varepsilon)$ and $\rho_\varepsilon$ and $I^*_\varepsilon$ as in case 1 above, and let us simply define the perturbation $W$ as
\[
W=\rho_\varepsilon
\,.
\]
As in case 1, for $\varepsilon$ sufficiently small, conditions \cref{supp_W_included_in_BR,supp_W_does_not_meet_projection_Wsloc,supp_W_does_not_meet_projection_Wuloc} are fulfilled, and the integral in inequality \cref{reach_all_directions_condition_with_psi} reduces to the expression \cref{reach_all_directions_integral_on_Istar}. In view of \cref{reach_all_directions_condition_with_psi_case2}, \cref{reach_all_directions_condition_with_psi} thus becomes
\begin{equation}\label{eq_case_2}
\int_{I_\varepsilon^*} \dot\alpha(\xi) W\bigl(u_1(\xi)\bigr)\, d\xi \neq 0 
\,,
\end{equation}
which is fulfilled if $\varepsilon$ sufficiently small. It follows that inequality \cref{reach_all_directions_condition_with_psi} is fulfilled, and thus so is inequality \cref{reach_all_directions_orthogonal_to_UzeroPrime_of_xiZero}. 
\paragraph*{Case 3.} For every $\xi$ in $\IOnce$, $\psi(\xi)=\alpha \dot u(\xi)$, for some real (constant) quantity $\alpha$. 

In this case, expression \cref{reach_all_directions_condition_with_psi_case2} shows that inequality \cref{reach_all_directions_condition_with_psi} cannot hold if the support of $W$ is localized around some point of $u(\IOnce)$. Fortunately, this third case will lead to a contradiction (and does therefore actually not happen). Recall that $(\phi,\psi)$ is a solution of the adjoint linearized system \cref{adjoint_linearized_system}. Thus, for every $\xi$ in $\IOnce$, it follows from the assumption made in this case 3 that
\begin{equation}
\label{expression_of_phi_when_psi_equals_alpha_uzeroprime}
\phi(\xi) = c \psi(\xi) - \dot\psi(\xi) = c\alpha \dot u_1(\xi) - \alpha \ddot u_1(\xi)
\,.
\end{equation}
Besides, recall that $(\phi_1,\psi_1)$ is orthogonal to $\dot U_1(\xi_1)=T(\xi,\xi_1)\, \dot U_1(\xi)$. Thus, $\bigl(\phi(\xi),\psi(\xi)\bigr)=T^*(\xi,\xi_1)\, (\phi_1,\psi_1)$ is orthogonal to $\dot U_1(\xi)$. According to the expression of $\psi$ and  expression \cref{expression_of_phi_when_psi_equals_alpha_uzeroprime}, this last property reads 
\begin{equation}\label{dissipation_nulle}
c\alpha |\dot u_1|^2(\xi) - \alpha \ddot u_1(\xi) \cdot \dot u_1(\xi)   + \alpha \dot u_1(\xi)\cdot  \ddot u_1(\xi) = 0 
\,,\quad\text{which yields}\quad c\alpha |\dot u_1|^2(\xi) = 0
\,.
\end{equation}
Since $\dot u_1$ does not vanish on $(-\infty,\xiOnce)$, the quantity $\alpha$ must be zero. This yields $\phi\equiv \psi\equiv 0$, and contradicts the assumptions of \cref{lem:reach_all_directions_orthogonal_to_UzeroPrime_of_xiZero}.

In short, case 3 cannot happen and, in both cases 1 and 2, a suitable construction provides a function $W$ in $\CkbFull$ fulfilling the conditions \cref{supp_W_included_in_BR,supp_W_does_not_meet_projection_Wsloc,supp_W_does_not_meet_projection_Wuloc,reach_all_directions_orthogonal_to_UzeroPrime_of_xiZero}. \Cref{lem:reach_all_directions_orthogonal_to_UzeroPrime_of_xiZero} is proved.
\end{proof}
\subsubsection{Conclusion}
\label{subsubsec:conclusion_loc_gen_transv}
\begin{proof}[Proof of \cref{prop:local_generic_transversality_travelling_fronts_given_critical_points_speed}]
As seen in \cref{subsubsec:checking_hyp_regularity_Sard_Smale_trav_fronts}, hypothesis \cref{item:thm_sard_smale_condition_regularity} of \cref{thm:Sard_Smale} is fulfilled for the function $\Phi$ defined in \cref{def_Phi_trav_front}. Since the conclusion of \cref{lem:reach_all_directions_orthogonal_to_UzeroPrime_of_xiZero} yields equality \cref{transversality_of_DPhi}, hypothesis \cref{item:thm_sard_smale_condition_transversality} of this theorem is also fulfilled. The conclusion of this theorem ensures that there exists a generic subset $\LambdaGen$ of $\Lambda$ such that, for every $V$ in $\LambdaGen$, the function $\Phi(\cdot,V)$ is transverse to the diagonal $\www$ of $\nnn$. According to \cref{prop:equivalence_transversality}, it follows that, for every $V$ in $\LambdaGen$, every profile $\xi\mapsto u(\xi)$ of a front travelling at a speed $c$ in $\ccc$ and connecting $e_-(V)$ to $e_+(V)$, for the potential $V$, is transverse. In other words the conclusions of \cref{prop:local_generic_transversality_travelling_fronts_given_critical_points_speed} hold with $\ccc_{\VZeroeMinusZeroePlusZerocZero} = \ccc$, $\nu_{\VZeroeMinusZeroePlusZerocZero} = \nu = \Lambda$ and $\nuThingGen{\VZeroeMinusZeroePlusZerocZero} = \LambdaGen$.
\end{proof}
As shown in \cref{subsec:reduction}, \cref{prop:global_generic_transversality_travelling_fronts} follows from \cref{prop:local_generic_transversality_travelling_fronts_given_critical_points_speed}.

\section{Generic elementarity of symmetric standing pulses}
\label{sec:generic_elementarity_sym_stand_pulses}
This \namecref{sec:generic_elementarity_sym_stand_pulses} presents strong similarities with the previous \cref{sec:generic_transversality_travelling_fronts}. For that reason, the presentation aims at emphasizing the main differences, while some details or comments are omitted when they are identical to some already provided in \cref{sec:generic_transversality_travelling_fronts}. 
\subsection{Notation and statements}\label{sec:not_and_stat_sym}
\label{subsec:notation_and_statements_sym_stand_pulses}
\begin{notation}
For every potential function $V$ in $\vvvFull$, let us recall (\cref{subsec:notation_and_statement_gen_transv_trav_fronts}) that $\SigmaCrit(V)$ denotes the set of non-degenerate critical points of $V$, and let us consider the set
\begin{equation}
\label{notation_ppp_V}
\begin{aligned}
\ppp_V = \bigl\{ & u\in \ccc^{k+1}(\rr,\rr^d): \xi\mapsto u(\xi) \text{ is a global solution of the system } \ddot  u = \nabla V(u)\,,\\
&\text{and there exists $e$ in $\SigmaCrit(V)$ such that } u(\xi)\to e\text{ as }\xi\to\pm\infty
\bigr\}
\,.
\end{aligned}
\end{equation}
In other words, $u$ is in $\ppp_V$ if and only if $\xi\mapsto u(\xi)$ is the profile of a standing pulse connecting a non-degenerate critical point $e$ to itself, for the potential $V$. 
\end{notation}
Let us take and fix a positive quantity $R$. Let us recall that the elementarity of a symmetric standing pulse was defined in \cref{def:elementary_symm_stand_pulse}. The goal of this section is to prove the following proposition.
\begin{proposition}
\label{prop:global_generic_elementarity_sym_stand_pulses}
There exists a generic subset of $\vvvQuad{R}$ such that, for every potential $V$ in this subset, every symmetric standing pulse in $\ppp_V$ is elementary. 
\end{proposition}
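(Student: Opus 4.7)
My proof closely follows the template of \cref{sec:generic_transversality_travelling_fronts}, adapted to symmetric homoclinic orbits of the Hamiltonian flow. First, I reduce \cref{prop:global_generic_elementarity_sym_stand_pulses} to a local statement: for every $V_0$ in the dense open subset $\vvvQuadMorse{R}$ of $\vvvQuad{R}$ and every non-degenerate critical point $e_0$ of $V_0$, there exist a neighbourhood of $V_0$ in $\vvvQuad{R}$ and a generic subset thereof on which every symmetric standing pulse connecting $e(V)$ to itself is elementary, where $e(V)$ is the unique critical point of $V$ near $e_0$ provided by \cref{prop:loc_stab_unstab_man_c_equals_zero}. Since $V_0$ is quadratic past radius $R$, its critical points are finite in number, so intersecting the finitely many local generic sets gives a local generic set on which elementarity holds for every symmetric pulse of $V$; \cref{lem:local_genericity_implies_global_genericity} then upgrades this to global genericity in the separable space $\vvvQuad{R}$.

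To carry out the local step, I apply \cref{thm:Sard_Smale} to
\[
\Phi: \bbbu \times (0,+\infty) \times \nu \to \rr^{2d}, \quad (\bu, \xi, V) \mapsto S_V\bigl(\xi, \hatwulocZero{V}(\bu)\bigr),
\]
with target submanifold $\www = \sssSym = \rr^d \times \{0_{\rr^d}\}$, where $\bbbu = \partial \Bu_{E_0}(r)$ and $\nu$, $r$ come from \cref{prop:loc_stab_unstab_man_c_equals_zero} (which also ensures that $\Phi$ is of class $\ccc^k$). By \cref{prop:profiles_of_fronts_pulses_as_intersections_between_Wu_Ws} and the time reversibility of the zero-speed flow, any element of $\Phi^{-1}(\sssSym)$ corresponds to a symmetric standing pulse through $e(V)$: the forward orbit from $\hatwulocZero{V}(\bu)$ reaches $\sssSym$ at time $\xi$, which is then the turning time of the full profile recovered by reversibility; conversely, every symmetric pulse in $\ppp_V$ with endpoint $e(V)$ corresponds to a unique point of $\Phi^{-1}(\sssSym)$ modulo time translation, as in \cref{prop:one_to_one_correspondence_between_fronts_and_diagonal_intersection}. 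Under this correspondence, the image of $D\Phi$ restricted to $T\mmm$ at a point of $\Phi^{-1}(\sssSym)$ is exactly the tangent space to $\Wu_V(E(V))$ at the turning point, so transversality of $\Phi$ to $\sssSym$ coincides with elementarity in the sense of \cref{def:elementary_symm_stand_pulse}. Hypothesis \cref{item:thm_sard_smale_condition_regularity} of \cref{thm:Sard_Smale} is immediate since $\dim \mmm - \codim \sssSym = (d - m(e_0)) - d = -m(e_0) \le 0 < k$.

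The main obstacle is verifying the transversality hypothesis \cref{item:thm_sard_smale_condition_transversality}. Following \cref{subsubsec:checking_hyp_ii_Sard_Smale_thm}, for every $(\bu_1, \xi_1, V_1) \in \Phi^{-1}(\sssSym)$ (so that $\xi_1$ is the turning time and $\dot u_1(\xi_1) = 0$), the task reduces to producing, for every nonzero $(\phi_1, \psi_1) \in \sssSym^\perp = \{0\} \times \rr^d$ also orthogonal to the image of $D\Phi$ restricted to $T\mmm$, a perturbation $W \in \CkbFull$ supported in $B_{\rr^d}(0,R)$ and disjoint from $\piPos\bigl(\WulocZero{V_1}(E(V_1)) \cup \WslocZero{V_1}(E(V_1))\bigr)$ such that
\[
\int_0^{\xi_1} \nabla W\bigl(u_1(\xi)\bigr) \cdot \psi(\xi)\, d\xi \neq 0,
\]
where $(\phi,\psi)(\xi) = T^*(\xi,\xi_1)(\phi_1,\psi_1)$ solves the adjoint linearized system associated with $c=0$. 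Statement \cref{item_prop:values_reached_only_once_symmetric_standing_pulse} of \cref{prop:values_reached_only_once}, applied to an open subinterval of $(0, \xi_1) \subset (-\infty, \xi_1]$, together with a uniform-in-$V$ refinement analogous to \cref{lem:additional_condition_nu_and_r}, yields an interval $\IOnce$ on which $u_1$ is injective even when compared with all of $(-\infty, \xi_1]$ and disjoint from the projections of the local manifolds, enabling the same three-case analysis as in \cref{lem:reach_all_directions_orthogonal_to_UzeroPrime_of_xiZero}: cases (i) $\psi(\xi^*) \not\parallel \dot u_1(\xi^*)$ for some $\xi^* \in \IOnce$ and (ii) $\psi = \alpha(\xi) \dot u_1$ on $\IOnce$ with $\alpha$ nonconstant are handled verbatim by bump and integration-by-parts constructions.

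The essential novelty lies in case (iii), where $\psi(\xi) = \alpha \dot u_1(\xi)$ with $\alpha$ constant. The damping identity $c \alpha |\dot u_1|^2 = 0$ invoked in \cref{dissipation_nulle} is now vacuous, but the turning condition provides a cleaner direct contradiction: since $\dot u_1(\xi_1) = 0$, we have $\psi_1 = \psi(\xi_1) = \alpha \dot u_1(\xi_1) = 0$, while $\phi_1 = 0$ follows from $(\phi_1, \psi_1) \in \sssSym^\perp$, forcing $(\phi_1, \psi_1) = (0, 0)$ and contradicting the assumption that the obstruction is nonzero. Hence case (iii) is impossible, cases (i)--(ii) yield the required perturbation, and \cref{thm:Sard_Smale} delivers the desired local generic set. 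I expect the most delicate bookkeeping to be the careful verification of the correspondence between $\Phi^{-1}(\sssSym)$ and the set of symmetric pulses (together with the identification of $\Phi$-transversality with elementarity), but this is essentially analogous to \cref{prop:one_to_one_correspondence_between_fronts_and_diagonal_intersection,prop:equivalence_transversality}.
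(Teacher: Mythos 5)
Your proof mirrors the paper's own treatment closely: the same reduction to a local statement via \cref{lem:local_genericity_implies_global_genericity}, the same setting for \cref{thm:Sard_Smale} with $\Phi(\bu,\xi,V)=S_V\bigl(\xi,\hatwulocZero{V}(\bu)\bigr)$ and $\www=\sssSym$, the same dimension count, and the same three-case analysis culminating in the turning-time identity $\dot u_1(\xiTurn)=0$ forcing $\psi_1=0$ in the degenerate case.

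There is, however, one step that as written leaves a small gap. In your case (iii) you assume the constant collinearity $\psi=\alpha\,\dot u_1$ only on $\IOnce\subset(0,\xi_1)$, yet you then evaluate it at $\xi_1$ to conclude $\psi_1=\psi(\xi_1)=\alpha\,\dot u_1(\xi_1)=0$. Since $\IOnce$ is by construction bounded away from $\xi_1$, this evaluation is not immediate. The fill is easy but should be stated: when $c=0$, both $\psi$ (from the adjoint system \cref{adjoint_linearized_system}) and $\dot u_1$ (differentiating $\ddot u_1=\nabla V(u_1)$) satisfy the \emph{same} second-order linear equation $\ddot w = D^2V_1\bigl(u_1(\xi)\bigr)w$, so a collinearity $\psi=\alpha\,\dot u_1$ with constant $\alpha$ holding on an open interval propagates to all of $\rr$ by uniqueness of solutions, in particular to $\xi_1$. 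The paper avoids this issue by carrying out the case analysis on an interval $(\xi_-,\xiTurn)$ whose closure contains the turning time, so that $\psi(\xiTurn)=\lim_{\xi\to\xiTurn^-}\alpha\,\dot u_1(\xi)=0$ follows directly from continuity; $\IOnce$ is then chosen adaptively inside it, depending on the case. Either route is fine, but the propagation step (or the choice of an interval touching $\xiTurn$) must appear explicitly. A minor further remark: you impose disjointness of $\supp(W)$ from $\piPos\bigl(\WslocZero{V_1}(E(V_1))\bigr)$, which is harmless but unnecessary here, since $\Phi$ only involves the local unstable manifold and $\www=\sssSym$ does not depend on $V$.
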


Let $V_0$ denote a potential function in $\vvvQuad{R}$, and let $e_0$ denote a non-degenerate critical point of $V_0$. According to \cref{prop:loc_stab_unstab_man_c_equals_zero} (or simply to the implicit function theorem), there exists a small neighbourhood $\nuRobust(V_0,e_0)$ of $V_0$ in $\vvvQuad{R}$ and a $\ccc^{k+1}$-function $e(\cdot)$ defined on $\nuRobust(V_0,e_0)$ and with values in $\rr^d$, such that $e(V_0)$ equals $e_0$ and, for every $V$ in $\nuRobust(V_0,e_0)$, $e(V)$ is a critical point of $V_0$ close to $e_0$.

Exactly the same arguments as in \cref{subsec:reduction} show that \cref{prop:global_generic_elementarity_sym_stand_pulses} is a consequence of the following local statement.
\begin{proposition}
\label{prop:local_generic_transversality_standing_pulse}
There exists a neighbourhood $\nu_{\VZeroeZero}$ of $V_0$ in $\vvvQuad{R}$, included in $\nuRobust(V_0,e_0)$, and a generic subset $\nuThingGen{\VZeroeZero}$ of $\nu_{\VZeroeZero}$ such that, for every $V$ in $\nuThingGen{\VZeroeZero}$, every symmetric standing front connecting $e(V)$ to itself is elementary.
\end{proposition}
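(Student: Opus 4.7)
The plan is to mirror the structure of the proof of \cref{prop:local_generic_transversality_travelling_fronts_given_critical_points_speed} in \cref{subsec:proof_local_gen}, invoking the Sard--Smale transversality theorem (\cref{thm:Sard_Smale}) in a setting that is actually simpler, since the target submanifold is a fixed linear subspace rather than a diagonal and no speed parameter is present. With the notation of \cref{prop:loc_stab_unstab_man_c_equals_zero}, after shrinking the neighbourhood $\nu$ of $V_0$ in $\vvvQuad{R}$ and the radius $r > 0$ if needed, set $\bbbu = \partial \Bu_{E_0}(r)$ and consider
\[
\mmm = \bbbu \times (0, +\infty), \quad \Lambda = \nu, \quad \nnn = \rr^{2d}, \quad \www = \sssSym,
\]
together with
\[
\Phi : \mmm \times \Lambda \to \nnn, \quad (\bu, \xi, V) \mapsto S_{V}\bigl(\xi, \hatwulocZero{V}(\bu)\bigr).
\]
The first step, paralleling \cref{prop:one_to_one_correspondence_between_fronts_and_diagonal_intersection}, is to verify that $\Phi^{-1}(\sssSym)$ is in bijection with the set of pairs (potential $V \in \nu$, symmetric standing pulse for $V$ connecting $e(V)$ to itself) together with a turning time: by \cref{lem:equivalent_def_symm_hom_orbit}, any trajectory lying on the global unstable manifold that meets $\sssSym$ is automatically a symmetric pulse returning to $E(V)$, the turning time is unique, and it must lie strictly after the exit through $\partial \WulocZero{V}(E(V))$ because the turning point (being a nontrivial critical point for the kinetic part of the Hamiltonian) cannot coincide with the equilibrium. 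The transversality of $\Phi(\cdot, V)$ to $\sssSym$ at a preimage point then amounts to the elementarity (\cref{def:elementary_symm_stand_pulse}) of the associated pulse, since the image of $\Phi(\cdot, V)$ sweeps $\Wu_V(E(V))$ outside $\WulocZero{V}(E(V)) \cup \{E(V)\}$, and since $\dot U_1(\xi_1)$ is nonzero (as $u_1(\xi_1) \ne e(V)$ forces $\nabla V(u_1(\xi_1)) \ne 0$). The dimensional hypothesis \cref{item:thm_sard_smale_condition_regularity} of \cref{thm:Sard_Smale} holds trivially: $\dim \mmm - \codim \www = (d - m(e_0)) - d = -m(e_0) \le 0 < k$.

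The main work is the verification of hypothesis \cref{item:thm_sard_smale_condition_transversality}, namely $\img D\Phi + \sssSym = \rr^{2d}$ at every $(\bu_1, \xi_1, V_1) \in \Phi^{-1}(\sssSym)$. Writing $U_1(\xi) = S_{V_1}\bigl(\xi, \hatwulocZero{V_1}(\bu_1)\bigr)$, the turning condition reads $\dot u_1(\xi_1) = 0$, and since $\sssSym^{\perp} = \{0\} \times \rr^d$, it suffices to show that for every nonzero $\psi_1 \in \rr^d$ there exists $W \in \CkbFull$ with support contained in $B_{\rr^d}(0, R)$ and disjoint from $\piPos\bigl[\WulocZero{V_1}(E(V_1))\bigr]$ (which, by time reversibility at $c = 0$, also equals $\piPos\bigl[\WslocZero{V_1}(E(V_1))\bigr]$) such that
\[
\int_0^{\xi_1} \nabla W\bigl(u_1(\xi)\bigr) \cdot \psi(\xi) \, d\xi \neq 0,
\]
where $(\phi, \psi)$ solves the adjoint linearized system (with $c = 0$) with terminal condition $\bigl(\phi(\xi_1), \psi(\xi_1)\bigr) = (0, \psi_1)$, the integral formula coming from the variation-of-constants formula exactly as in \cref{subsubsec:checking_hyp_ii_Sard_Smale_thm}. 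Applying statement \cref{item_prop:values_reached_only_once_symmetric_standing_pulse} of \cref{prop:values_reached_only_once} yields (possibly after further shrinking of $r$, as in the analogue of \cref{lem:additional_condition_nu_and_r}) an interval $\IOnce \subset (0, \xi_1)$ lying outside $\piPos\bigl[\WulocZero{V_1}(E(V_1))\bigr]$ on which $u_1$ is one-to-one when viewed as a map on $(-\infty, \xi_1]$; the symmetric image $u_1(2\xi_1 - \xi^*)$ for $\xi^* \in \IOnce$ lies outside the integration window $[0, \xi_1]$ and therefore makes no contribution. The argument then proceeds via the three cases of \cref{subsubsec:checking_hyp_ii_Sard_Smale_thm}: a bump function directly, or after one integration by parts, handles the cases where $\psi$ is not collinear to $\dot u_1$ on $\IOnce$, or is collinear with a nonconstant coefficient.

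The step to anticipate as the main obstacle is the third case, in which $\psi = \alpha \dot u_1$ on $\IOnce$ for a nonzero constant $\alpha$: there the perturbation $W$ alone cannot realize the nonvanishing of the integral, and a contradiction has to be extracted. Remarkably, the symmetric pulse setting is cleaner than the travelling front one: the turning condition $\dot u_1(\xi_1) = 0$ immediately forces $\psi(\xi_1) = \alpha \dot u_1(\xi_1) = 0$, contradicting $\psi_1 \neq 0$, with no need for an analogue of the damping-based cancellation \cref{dissipation_nulle}. With both hypotheses of \cref{thm:Sard_Smale} established, one obtains a generic subset $\nuThingGen{\VZeroeZero}$ of $\Lambda$ on which $\Phi(\cdot, V)$ is transverse to $\sssSym$, and the bijective correspondence of the first paragraph translates this into the elementarity of every symmetric standing pulse for $V$ connecting $e(V)$ to itself, which is the content of \cref{prop:local_generic_transversality_standing_pulse}.
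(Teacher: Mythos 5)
Your proposal mirrors the paper's proof essentially step for step: the same Sard--Smale setting with $\mmm = \bbbu\times\rr$ (or $\bbbu\times(0,+\infty)$), $\www=\sssSym$, $\Phi(\bu,\xi,V)=S_V\bigl(\xi,\hatwulocZero{V}(\bu)\bigr)$; the same dimension count $\dim\mmm-\codim\www=-m(e_0)$; the same reduction of the transversality hypothesis to the perturbation lemma and the same three-case analysis, including the exclusion of case 3 via the turning condition $\dot u_1(\xiTurn)=0$ (which replaces the damping-based argument \cref{dissipation_nulle} used for travelling fronts). The one nuance to tighten is case 3: you posit $\psi=\alpha\dot u_1$ only on $\IOnce\subset(0,\xi_1)$, so evaluating at $\xi_1$ is not literally immediate; you should either state the case dichotomy on an interval $(\xi_-,\xiTurn)$ having $\xiTurn$ as endpoint so that continuity finishes the argument (this is what the paper does), or observe that $\psi$ and $\dot u_1$ both solve the linear second-order equation $\ddot w = D^2V_1\bigl(u_1(\xi)\bigr)w$, whence the identity $\psi=\alpha\dot u_1$ propagates from $\IOnce$ to all of $\rr$ by uniqueness. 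Also, the avoidance condition on $\piPos\bigl[\WslocZero{V_1}\bigl(E(V_1)\bigr)\bigr]$ you add is harmless but superfluous here, since the target $\www=\sssSym$ is fixed and only the local unstable manifold enters $\Phi$.
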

The remaining part of \cref{sec:generic_elementarity_sym_stand_pulses} will thus be devoted to the proof of \cref{prop:local_generic_transversality_standing_pulse}. Let us keep the notation $V_0$ and $e_0$ and $\nuRobust(V_0,e_0)$ introduced above. According to \cref{prop:loc_stab_unstab_man_c_equals_zero}, there exist a neighbourhood $\nu$ of $V_0$ in $\vvvQuad{R}$, included in $\nuRobust(V_0,e_0)$, and a positive quantity $r$ such that, for every $V$ in $\nu$, there exist $C^{k}$-functions
\[
\hatwulocZero{V}:\Bu_{E_0}(r)\to\rr^{2d}
\quad\text{and}\quad
\hatwslocZero{V}:\Bs_{E_0}(r)\to\rr^{2d}
\]
such that the sets
\[
\WulocZero{V}\bigl(E(V)\bigr) = \hatwulocZero{V}\bigl(\Bu_{E_0}(r)\bigr) 
\quad \text{and}\quad
\WslocZero{V}\bigl(E(V)\bigr) = \hatwslocZero{V}\bigl(\Bs_{E_0}(r)\bigr)
\]
define a local unstable manifold and a local stable manifold of $E(V)$, respectively (see the conclusions of \cref{prop:loc_stab_unstab_man_c_equals_zero} and equalities \cref{def_local_stab_unstab_man_as_images_zero_speed}). Observe that the departure sets $\Bu_{E_0}(r)$ of $\hatwulocZero{V}$ and $\Bs_{E_0}(r)$ of $\hatwslocZero{V}$ do not depend on $V$. Let 
\[
\bbbu = \partial\Bu_{E_0}(r)
\quad\text{and}\quad
\bbbs = \partial\Bs_{E_0}(r)
\,.
\]
According to the expression \cref{notation_eigenvectors_in_dim_2d} of the eigenvectors of the linear system \cref{trav_wave_system_order_1_linearized}, 
\[
\Eu_{V_0}(E_0) \cap \sssSym = \{0_{\rr^{2d}}\}
\quad\text{and}\quad
\Es_{V_0}(E_0) \cap \sssSym = \{0_{\rr^{2d}}\}
\,.
\]
It follows that, up to replacing $\nu$ by a smaller neighbourhood of $V_0$ in $\vvvQuad{R}$ and $r$ by a smaller positive quantity, for every $V$ in $\nu$, 
\begin{equation}
\label{stable_unstable_man_transverse_reversibility_subspace}
\WulocZero{V}\bigl(E(V)\bigr)\cap\sssSym = \{E(V)\}
\quad\text{and}\quad
\WslocZero{V}\bigl(E(V)\bigr)\cap\sssSym = \{E(V)\}
\,.
\end{equation}
\subsection{Proof of \texorpdfstring{\cref{prop:local_generic_transversality_standing_pulse}}{Proposition \ref{prop:local_generic_transversality_standing_pulse}}}
\label{subsec:proof_local_gen_elementary_sym_standing_pulses}
\subsubsection{Application of \texorpdfstring{\cref{thm:Sard_Smale}}{Theorem \ref{thm:Sard_Smale}}}
The setting to which \cref{thm:Sard_Smale} will be applied is as follows. 
Let 
\[
\mmm = \bbbu\times\rr \,,\quad
\Lambda = \nu \,,\quad
\nnn = \rr^{2d}
\quad\text{and}\quad
\www  = \sssSym
\,,
\]
and let us consider the function
\begin{equation}
\label{def_Phi_sym_stand_pulse}
\Phi:\mmm\times\Lambda\to\nnn, 
\quad
(\bu,\xi,V)\mapsto S_V\bigl(\xi,\hatwulocZero{V}(\bu)\bigr)
\,.
\end{equation}
If the conclusion of \cref{thm:Sard_Smale} holds within this setting, then there exists a generic subset $\LambdaGen$ of $\Lambda$ such that, for every $V$ in $\LambdaGen$, the image of the function $\mmm\to\nnn$, $m\mapsto\Phi(m,V)$ is transverse to $\www$.

For a given potential $V$, the image of $m\mapsto\Phi(m,V)$ is nothing but the unstable manifold of $E(V)$ (deprived of $E$), see the proof of \cref{prop:one_to_one_correspondence_between_fronts_and_diagonal_intersection}. According to the characterizations of the symmetric standing pulses stated in \cref{lem:equivalent_def_symm_hom_orbit}, the intersection of $\Phi(\mmm,V)$ with $\www=\sssSym$ actually corresponds to the set of symmetric standing pulses. Moreover, by definition (see \cref{def:elementary_symm_stand_pulse}), the elementarity of the symmetric standing pulses for $V$ is equivalent to the transversality of the intersection of $\Phi(\mmm,V)$ with $\www=\sssSym$. Thus, the conclusion of \cref{thm:Sard_Smale} directly implies \cref{prop:local_generic_transversality_standing_pulse} with $\nu_{\VZeroeZero} = \nu = \Lambda$ and $\nuThingGen{\VZeroeZero} = \LambdaGen$.

It remains to show that, in the setting above, the hypotheses of \cref{thm:Sard_Smale} are fulfilled.
\subsubsection{Checking hypothesis \texorpdfstring{\cref{item:thm_sard_smale_condition_regularity}}{\ref{item:thm_sard_smale_condition_regularity}} of \texorpdfstring{\cref{thm:Sard_Smale}}{Theorem \ref{thm:Sard_Smale}}}
\label{subsubsec:check_hyp_regularity_sym_stand_pulse}
It follows from \cref{subsec:eigenspaces_and_dimensions} that $\dim(\bbbu) = d-m(e_0)-1$. Hence, 
\[
\dim(\mmm)-\codim(\www) = \bigl(d-m(e_0)\bigr) - d = -m(e_0)
\,,
\]
which is less than the positive integer $k$ (the regularity of $\Phi$). Hypothesis \cref{item:thm_sard_smale_condition_regularity} of \cref{thm:Sard_Smale} is thus fulfilled. 
\subsubsection{Checking hypothesis \texorpdfstring{\cref{item:thm_sard_smale_condition_transversality}}{conditionTransversalitySardSmale} of \texorpdfstring{\cref{thm:Sard_Smale}}{Theorem \ref{thm:Sard_Smale}}}
\label{subsubsec:checking_hyp_ii_Sard_Smale_thm_sym_stand_pulse}
Take $(m_1,V_1)$ in the set $\Phi^{-1}(\www)$. Let $(\bu_1,\xiTurn)$ denote the components of $m_1$, and, for every real quantity $\xi$, let us write
\[
U_1(\xi) = \bigl(u_1(\xi),v_1(\xi)\bigr) = S_{V_1}\bigl(\xi,\hatwulocZero{V_1}(\bu_1)\bigr)
\,.
\]
The function $\xi\mapsto u_1(\xi)$ is the profile of a symmetric standing pulse, connecting $e(V_1)$ to itself for the potential $V_1$, and the quantity $\xiTurn$ is the turning time of this standing pulse (see \cref{def:symmetric_pulse}). Observe that, according to the first equality of \cref{stable_unstable_man_transverse_reversibility_subspace}, this turning time $\xiTurn$ must be \emph{positive}. Let $D\Phi$ denote the \emph{full} differential (with respect to $m$ and $V$) of $\Phi$ at the point $\bigl((\bu_1,\xiTurn),V_1\bigr)$. 
Hypothesis \cref{item:thm_sard_smale_condition_transversality} of \cref{thm:Sard_Smale} follows from the next \cref{lem:reach_all_directions_symm_stand_pulse}. 
\begin{lemma}[perturbation of the potential reaching a given direction]
\label{lem:reach_all_directions_symm_stand_pulse}
For every nonzero vector $\psi_1$ in $\rr^d$, there exists $W$ in $\CkbFull$ such that 
\begin{equation}
\label{reach_all_directions_symm_stand_pulse}
\langle D\Phi\cdot(0,W)\,|\,(0,\psi_1)\rangle \not = 0
\,,
\end{equation}
and 
\begin{equation}
\label{W_admissible_perturbation_sym_stand_pulses}
\supp(W)\subset B_{\rr^d}(0,R) \,.
\end{equation}
\end{lemma}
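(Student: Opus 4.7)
The strategy mirrors that of \cref{lem:reach_all_directions_orthogonal_to_UzeroPrime_of_xiZero}, and the plan splits naturally into three steps, the last of which is where the argument must genuinely depart from the travelling front case.

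First, I would express $D\Phi\cdot(0,W)$ via the variation of constants formula. Provided $W$ vanishes on a neighbourhood of $\piPos\bigl[\WulocZero{V_1}\bigl(E(V_1)\bigr)\bigr]$, the perturbation leaves $\hatwulocZero{V_1}(\bu_1)$ unchanged, so
\[
D\Phi\cdot(0,W) = \int_{0}^{\xiTurn} T(\xi,\xiTurn)\Bigl(0,\nabla W\bigl(u_1(\xi)\bigr)\Bigr)\, d\xi,
\]
where $T(\xi,\xi')$ is the family of evolution operators of the linearization of \cref{trav_wave_system_order_1} (with $c=0$) around $u_1$. Introducing the adjoint trajectory $\bigl(\phi(\xi),\psi(\xi)\bigr)=T^*(\xi,\xiTurn)\cdot(0,\psi_1)$, which solves $\dot\phi=-D^2V_1(u_1)\psi$ and $\dot\psi=-\phi$, the target inequality \cref{reach_all_directions_symm_stand_pulse} becomes
\[
\int_{0}^{\xiTurn}\nabla W\bigl(u_1(\xi)\bigr)\cdot\psi(\xi)\, d\xi \;\neq\; 0.
\]

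Second, I would locate the support of $W$. By \cref{lem:equivalent_def_symm_hom_orbit} the turning time is unique, so $\dot u_1$ vanishes only at $\xiTurn$; combined with statement \cref{item_prop:values_reached_only_once_symmetric_standing_pulse} of \cref{prop:values_reached_only_once}, this yields a small closed subinterval $\IOnce$ of $(0,\xiTurn)$ on which $\dot u_1$ does not vanish and such that $u_1(\xi)=u_1(\xi^*)$ with $\xi\in(-\infty,\xiTurn]$ and $\xi^*\in\IOnce$ forces $\xi=\xi^*$. Up to shrinking $r$, I may also ensure that $u_1(\IOnce)$ is disjoint from $\piPos\bigl[\WulocZero{V_1}\bigl(E(V_1)\bigr)\bigr]$; the inclusion $u_1(\IOnce)\subset B_{\rr^d}(0,R)$ comes for free from \cref{lem:global_flow_and_a_priori_bound_on_profiles}, taking care of condition \cref{W_admissible_perturbation_sym_stand_pulses}.

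Third, I would run the same trichotomy on $\psi$ along $\IOnce$ as in Cases 1--2 of the proof of \cref{lem:reach_all_directions_orthogonal_to_UzeroPrime_of_xiZero}. If $\psi(\xi^*)$ fails to be collinear to $\dot u_1(\xi^*)$ at some $\xi^*\in\IOnce$, a linear bump $W(u)=\rho_\varepsilon\bigl(u-u_1(\xi^*)\bigr)\,\bigl(\epsilon_2\cdot(u-u_1(\xi^*))\bigr)$ in adapted local coordinates does the job. If instead $\psi(\xi)=\alpha(\xi)\,\dot u_1(\xi)$ on $\IOnce$ with $\alpha$ nonconstant, an integration by parts converts the integral into $-\int\dot\alpha(\xi)\,W(u_1(\xi))\,d\xi$, and the pure bump $W=\rho_\varepsilon$ centred at a point $u_1(\xi^*)$ with $\dot\alpha(\xi^*)\neq 0$ suffices.

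The main obstacle, and the one place where the argument genuinely leaves the travelling front blueprint, is the residual case where $\psi(\xi)=\alpha\,\dot u_1(\xi)$ on $\IOnce$ for some \emph{constant} $\alpha$. In the travelling front proof this was killed by the dissipation identity $c\alpha|\dot u_1|^2=0$, which required $c>0$. With $c=0$ that route is unavailable; fortunately the turning time replaces it. I would observe that both $(\phi,\psi)$ and $\bigl(-\alpha\,\nabla V_1(u_1),\,\alpha\,\dot u_1\bigr)$ satisfy the adjoint linearized system on all of $\rr$ (a direct check using $\ddot u_1=\nabla V_1(u_1)$) and agree on $\IOnce$, so by uniqueness they coincide on $\rr$. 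Evaluating $\psi$ at the turning time then gives $\psi(\xiTurn)=\alpha\,\dot u_1(\xiTurn)=0$, contradicting $\psi(\xiTurn)=\psi_1\neq 0$. Hence this case is empty, and combining it with Cases 1 and 2 delivers the desired $W$.
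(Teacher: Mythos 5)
Your proposal follows the same architecture as the paper's proof: same adjoint linearization and variation-of-constants reduction, same trichotomy on $\psi$ along a ``values reached only once'' interval, same bump-function constructions for Cases 1 and 2. Case 3 is where you genuinely depart from the travelling-front blueprint, as you note, and your argument there is correct and elegant: identifying $(\phi,\psi)$ with $(-\alpha\nabla V_1(u_1),\alpha\dot u_1)$ by uniqueness of solutions of the adjoint linear ODE and evaluating at $\xiTurn$. The paper takes a more direct route here: it sets up the trichotomy on an interval $(\xi_-,\xiTurn)$ adjacent to $\xiTurn$ and simply lets $\xi\to\xiTurn$ in $\psi(\xi)=\alpha\dot u_1(\xi)$ to conclude $\psi_1=0$ by continuity. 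Both routes are valid, and yours is perhaps conceptually cleaner in exhibiting the exact adjoint solution, though it requires one additional step (propagation via uniqueness).

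There is, however, one technically imprecise step that you should fix. You claim that ``up to shrinking $r$'' you may ensure $u_1(\IOnce)\cap\piPos\bigl[\WulocZero{V_1}\bigl(E(V_1)\bigr)\bigr]=\emptyset$. The radius $r$ is fixed once and for all in the setup of \cref{sec:not_and_stat_sym}, before the element $(\bu_1,\xiTurn,V_1)$ of $\Phi^{-1}(\www)$ is chosen; shrinking it mid-argument also reparametrizes the trajectory and moves the entry point $\hatwulocZero{V_1}(\bu_1)$, hence $\xiTurn$ and $\IOnce$ themselves, so the justification is circular. The paper's fix is to observe directly, via invariance of $H_{V_1}$ along $U_1$, that $V_1\bigl(u_1(\xiTurn)\bigr)=V_1\bigl(e(V_1)\bigr)$ (because $\dot u_1(\xiTurn)=0$), whereas $V_1>V_1\bigl(e(V_1)\bigr)$ on $\piPos\bigl(\WulocZero{V_1}\bigr)\setminus\{e(V_1)\}$ by \cref{stable_unstable_man_transverse_reversibility_subspace}; hence $u_1(\xiTurn)$ lies outside the closed set $\piPos\bigl(\WulocZero{V_1}\bigr)$, and one then chooses $\IOnce$ inside a subinterval $[\xi_-,\xiTurn)$ close enough to $\xiTurn$ for the disjointness to persist. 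Replacing your ``shrink $r$'' step by this Hamiltonian argument closes the gap, after which the rest of your proof is sound.
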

\begin{proof}
The proof is similar to that of \cref{lem:reach_all_directions_orthogonal_to_UzeroPrime_of_xiZero}. Let $\psi_1$ be a nonzero vector in $\rr^d$. , and let $W$ denote a function in $\CkbFull$ with a support satisfying the condition
\begin{equation}
\label{support_of_W_cap_Wuloc_of_V1_sym_stand_pulses_is_empty}
\supp(W)\cap\piPos\Bigl(\WulocZero{V}\bigl(E(V_1)\bigr)\Bigr) = \emptyset
\,.
\end{equation}
Let us again use the notation $T(\xi,\xi')$ to denote the family of evolution operators obtained by integrating the linearized differential system \cref{linearized_differential_system} (for $c_1$ equal to $0$) between the times $\xi$ and $\xi'$. It follows from the empty intersection \cref{support_of_W_cap_Wuloc_of_V1_sym_stand_pulses_is_empty} that
\begin{equation}
\label{expression_DPhi_sym_stand_pulse}
D\Phi\cdot(0,W) = \int_{0}^{\xiTurn} T(\xi,\xiTurn) \Bigl(0,\nabla W\bigl(u_1(\xi)\bigr)\Bigr)\, d\xi
\,.
\end{equation}
For every time $\xi$, let $T^*(\xi,\xiTurn)$ denote the adjoint operator of $T(\xi,\xiTurn)$, and let 
\[
\bigl(\phi(\xi),\psi(\xi)\bigr) = T^*(\xi,\xiTurn)\cdot(0,\psi_1)
\,.
\]
According to \cref{expression_DPhi_sym_stand_pulse}, condition \cref{reach_all_directions_symm_stand_pulse} reads
\[
\int_{0}^{\xiTurn}\Bigl\langle\Bigl(0,\nabla W\bigl(u_1(\xi)\bigr)\Bigr) \,\Bigm|\, T^*(\xi,\xiTurn)\cdot(0,\psi_1) \Bigr\rangle\, d\xi \not= 0
\,,
\]
or equivalently
\begin{equation}
\label{reach_all_directions_condition_with_psi_symm_stand_pulse}
\int_{0}^{\xiTurn}\nabla W\bigl(u_1(\xi)\bigr)\cdot \psi(\xi) \, d\xi \not= 0
\,.
\end{equation}
According to the first equality of \cref{stable_unstable_man_transverse_reversibility_subspace} and due to the Hamiltonian invariance \cref{time_derivative_Hamiltonian}, for every $(u,v)$ in $\WulocZero{V_1}\bigl(E(V_1)\bigr)$ and differing from $E(V_1)$, the quantity $V_1(u)$ is larger than $V_1\bigl(e(V_1)\bigr)$. On the other hand, since $\dot u_1(\xiTurn)$ vanishes the quantity $V_1\bigl(u_1(\xiTurn)\bigr)$ is equal to $V_1\bigl(e(V_1)\bigr)$, so that $u_1(\xiTurn)$ does not belong to the (closed) set $\piPos\Bigl(\WulocZero{V_1}\bigl(E(V_1)\bigr)\Bigr)$. As a consequence, there exists a time $\xi_-$, smaller than (and sufficiently close to) $\xiTurn$, such that 
\begin{equation}
\label{uOne_away_from_loc_unst_man_close_to_xiTurnOne}
u_1\bigl([\xi_-,\xiTurn]\bigr)\cap\piPos\Bigl(\WulocZero{V_1}\bigl(E(V_1)\bigr)\Bigr) = \emptyset
\,.
\end{equation}
Observe that, according to \cref{lem:equivalent_def_symm_hom_orbit}, the function $\xi\mapsto \dot u_1(\xi)$ does not vanish on $(-\infty,\xiTurn)$. As in \cref{subsubsec:checking_hyp_ii_Sard_Smale_thm}, three cases have to be considered for the construction of the perturbation $W$. 

\paragraph*{Case 1.} There exists a time $\xi^\dag$ in $(\xi_-,\xiTurn)$ such that $\psi(\xi^\dag)$ is not collinear to $\dot u_1(\xi^\dag)$.

In this case, conclusion \cref{item_prop:values_reached_only_once_symmetric_standing_pulse} of \cref{prop:values_reached_only_once} provides an open interval $\IOnce$ included in $(\xi_-,\xiTurn)$ and small enough so that, for every $\xi^*$ in $\IOnce$,
\begin{itemize}
\item the vector $\psi(\xi^*)$ is not collinear to $\dot u_1(\xi^*)$,
\item and for every $\xi$ in $(-\infty,\xiTurn)$, if $u_1(\xi)$ equals $u_1(\xi^*)$ then $\xi$ equals $\xi^*$.
\end{itemize}
The same construction as in case 1 of the proof of \cref{lem:reach_all_directions_orthogonal_to_UzeroPrime_of_xiZero} can then be carried out.  It leads to a perturbation $W$ such that $\supp(W)$ is localized around a point of $u(\IOnce)$ (so that, according to inequality \cref{a_priori_bound_on_profiles}, inclusion \cref{W_admissible_perturbation_sym_stand_pulses} holds and according to the empty intersection \cref{uOne_away_from_loc_unst_man_close_to_xiTurnOne} the empty intersection \cref{support_of_W_cap_Wuloc_of_V1_sym_stand_pulses_is_empty} holds) and such that inequality \cref{reach_all_directions_condition_with_psi_symm_stand_pulse} holds --- thus so does inequality \cref{reach_all_directions_symm_stand_pulse}.

\paragraph*{Case 2.} For every $\xi$ in $(\xi_-,\xiTurn)$, $\psi(\xi)=\alpha(\xi) \dot u_1(\xi)$ with $\alpha(\cdot)$ not constant.

Again, conclusion \cref{item_prop:values_reached_only_once_symmetric_standing_pulse} of \cref{prop:values_reached_only_once} provides an open interval $\IOnce$ included in $(\xi_-,\xiTurn)$, small enough so that, for every $\xi^*$ in $\IOnce$,
\begin{itemize}
\item $\psi(\xi^*)=\alpha(\xi^*)\dot u_1(\xi^*)$,
\item and $\dot\alpha(\xi^*)\neq 0$,
\item and for every $\xi$ in $(-\infty,\xiTurn)$, if $u_1(\xi)$ equals $u_1(\xi^*)$ then $\xi$ equals $\xi^*$.
\end{itemize}
The same construction as in case 2 of the proof of \cref{lem:reach_all_directions_orthogonal_to_UzeroPrime_of_xiZero} can then be carried out.

\paragraph*{Case 3.} For every $\xi$ in $(\xi_-,\xiTurn)$, $\psi(\xi)=\alpha \dot u_1(\xi)$, for some real (constant) quantity $\alpha$. 

In case 3 of the proof of \cref{lem:reach_all_directions_orthogonal_to_UzeroPrime_of_xiZero}, the non-nullity of $c$ was mandatory to take advantage of \cref{dissipation_nulle}. Thus, a new ad hoc argument is now required to preclude the possibility of the present case 3. Here it is: since $\dot u_1(\xiTurn)=0$, it follows from the assumption made in this case that $\psi(\xi)$ goes to $0$ as $\xi$ goes to $\xiTurn$, so that $\psi_1$ vanishes, contradicting the assumptions of \cref{lem:reach_all_directions_orthogonal_to_UzeroPrime_of_xiZero}.

In short, case 3 cannot occur and in both other cases, a suitable perturbation $W$ of the potential can be constructed by following the constructions introduced in the proof of \cref{lem:reach_all_directions_orthogonal_to_UzeroPrime_of_xiZero}. \Cref{lem:reach_all_directions_symm_stand_pulse} is proved.
\end{proof}

\section{Generic transversality of asymmetric standing pulses}
\label{sec:generic_tranversality_asym_stand_pulses}
As in the previous section, the proofs of this \namecref{sec:generic_tranversality_asym_stand_pulses} present strong similarities with the ones which have been already detailed and the presentation will only emphasize the main differences.
\subsection{Notation and statements}
\label{subsec:notation_and_statements_asym_stand_pulses}
The same notation as in the previous \cref{sec:generic_elementarity_sym_stand_pulses} will be used all along the present \cref{sec:generic_tranversality_asym_stand_pulses}. 
Let us take and fix a positive quantity $R$. The goal of this section is to prove the following proposition (the transversality of a standing pulse was defined in \cref{def:transverse_stand_pulse}).
\begin{proposition}
\label{prop:global_generic_transversality_asym_stand_pulses}
There exists a generic subset of $\vvvQuad{R}$ such that, for every potential $V$ in this subset, every asymmetric standing pulse in $\ppp_V$ is transverse. 
\end{proposition}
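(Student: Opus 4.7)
The plan is to follow the structures of \cref{sec:generic_transversality_travelling_fronts,sec:generic_elementarity_sym_stand_pulses}, with the crucial modification that the Hamiltonian $H_V$ is conserved along orbits of \cref{trav_wave_system_order_1} at zero speed, which constrains the Sard--Smale setup. First I would reduce \cref{prop:global_generic_transversality_asym_stand_pulses} to a local statement exactly as in \cref{subsec:reduction,sec:not_and_stat_sym}: fix $V_0\in\vvvQuad{R}$ and a non-degenerate critical point $e_0$ of $V_0$, and produce a neighbourhood $\nu$ of $V_0$ in $\vvvQuad{R}$ together with a generic subset for which every asymmetric standing pulse at $e(V)$ is transverse; global genericity then follows by finite covering over critical points and \cref{lem:local_genericity_implies_global_genericity}. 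On $\nu$ I would apply \cref{thm:Sard_Smale} with $\mmm=\bbbu\times\bbbs\times\rr$, $\Lambda=\nu$, $\nnn=\rr^{2d}$, and
\[
\Phi:\mmm\times\Lambda\to\rr^{2d}
\,,\quad
(\bu,\bs,\xi,V)\mapsto S_V\bigl(\xi,\hatwulocZero{V}(\bu)\bigr) - \hatwslocZero{V}(\bs)
\,.
\]
Zeros of $\Phi$ correspond to standing pulses at $e(V)$, as in \cref{prop:one_to_one_correspondence_between_fronts_and_diagonal_intersection}; asymmetric ones form an open subset of $\Phi^{-1}(0)$, so one can localize the analysis around a reference asymmetric zero $(m_1,V_1)$ with corresponding orbit $U_1=(u_1,\dot u_1)$.

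The new feature, by contrast with \cref{sec:generic_transversality_travelling_fronts}, is that the conservation law $H_V\bigl(S_V(\xi,\hatwulocZero{V}(\bu))\bigr)=H_V\bigl(\hatwslocZero{V}(\bs)\bigr)=-V(e(V))$ constrains the image of $D\Phi$ at $(m_1,V_1)$ to lie in $N_1^\perp$, where $N_1=\nabla H_{V_1}\bigl(U_1(\xi_1)\bigr)=\bigl(-\nabla V_1(u_1(\xi_1)),\dot u_1(\xi_1)\bigr)$: differentiating this identity in the direction of a potential perturbation $W$ yields equal $N_1$-projections $W(u_1(\xi_1))-W(e_1)$ on both sides of $\Phi$, which cancel in the difference, while the $m$-derivatives are automatically in $N_1^\perp$ since they are tangent to the local unstable and stable manifolds. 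Hence \cref{thm:Sard_Smale} cannot be applied with $\www=\{0\}$. Instead, I would take $\www=\rr N_1$, a one-dimensional subspace of codimension $2d-1$ in $\nnn$. Because $\rr N_1\cap N_1^\perp=\{0\}$, the preimage $\Phi^{-1}(\rr N_1)$ coincides locally near $(m_1,V_1)$ with $\Phi^{-1}(0)$; and transversality of $\Phi(\cdot,V)$ to $\www$ at such a zero amounts exactly to surjectivity of the $m$-differential onto $N_1^\perp$, which is the transversality of the pulse in the sense of \cref{def:transverse_stand_pulse}. Hypothesis \cref{item:thm_sard_smale_condition_regularity} of \cref{thm:Sard_Smale} is immediate, since $\dim\mmm-\codim\www=-2m(e_0)\le 0<k$.

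The heart of the proof is hypothesis \cref{item:thm_sard_smale_condition_transversality} of \cref{thm:Sard_Smale}, namely $\img D\Phi+\rr N_1=\rr^{2d}$. By duality this reduces to showing that every $(\phi_1,\psi_1)\in\rr^{2d}$ orthogonal to $\img D\Phi$ is proportional to $N_1$. Orthogonality to the $m$-part forces the adjoint-linearized solution $(\phi(\xi),\psi(\xi))=T^*(\xi,\xi_1)(\phi_1,\psi_1)$ to decay at both $\pm\infty$, while orthogonality to $\partial_V\Phi[W]$, for $W$ supported near some point of $u_1(\IOnce)$ and away from $\piPos\bigl[\WulocZero{V_1}(E(V_1))\cup\WslocZero{V_1}(E(V_1))\bigr]$, yields the Melnikov identity $\int\nabla W(u_1(\xi))\cdot\psi(\xi)\,d\xi=0$. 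Using \cref{item_prop:values_reached_only_once_asymmetric_standing_pulse} of \cref{prop:values_reached_only_once} --- this is where asymmetry is essential --- an interval $\IOnce$ on which $u_1$ is one-to-one is available inside any prescribed open subset of $\rr$. The trichotomy of \cref{lem:reach_all_directions_orthogonal_to_UzeroPrime_of_xiZero} then applies essentially verbatim: Cases 1 and 2 are excluded by suitable bump perturbations localized at a point of $u_1(\IOnce)$. In Case 3 ($\psi\equiv\alpha\dot u_1$ on $\IOnce$ for some constant $\alpha$), the adjoint equations $\dot\phi=-D^2V_1(u_1)\psi$ and $\dot\psi=-\phi$ give $\phi\equiv-\alpha\ddot u_1=-\alpha\nabla V_1(u_1)$, so $(\phi,\psi)\equiv\alpha\nabla H_{V_1}(U_1)$ on $\IOnce$; by ODE uniqueness this identity propagates to all $\xi$, whence $(\phi_1,\psi_1)=\alpha N_1\in\rr N_1$ --- exactly what is needed. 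Note that, in contrast with \cref{sec:generic_transversality_travelling_fronts} (where Case 3 was ruled out by the dissipation identity $c\alpha|\dot u_1|^2=0$ with $c>0$), at zero speed Case 3 is no longer contradictory but instead produces the sought membership in $\rr N_1$.

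The main obstacle is thus the reformulation of transversality to accommodate conservation of $H_V$: by choosing $\www=\rr N_1$ rather than $\{0\}$, the one-dimensional ``forbidden'' direction is absorbed into $\www$ and \cref{thm:Sard_Smale} applies. Since $\www$ depends on the reference pulse, the argument is genuinely local, and global genericity must be recovered by covering the asymmetric part of $\Phi^{-1}(0)$ by countably many such local charts (possible thanks to the separability of $\vvvQuad{R}$ and to the discreteness of zeros of $\Phi$ modulo time-translation). Asymmetry is used crucially through \cref{item_prop:values_reached_only_once_asymmetric_standing_pulse} of \cref{prop:values_reached_only_once}: for a symmetric pulse, a bump perturbation at $u_1(\xi^*)$ would simultaneously affect the trajectory at the reversed time $2\xiTurn-\xi^*$, invalidating Cases 1 and 2 --- which is precisely why symmetric pulses must be treated separately via elementarity in \cref{sec:generic_elementarity_sym_stand_pulses}.
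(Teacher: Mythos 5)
Your local Sard--Smale setup is a genuine and rather elegant alternative to the paper's: instead of fixing the level set of the Hamiltonian by freezing $V$ on a ball around $e_0$ (which costs openness of the parameter space $\Lambda_1$, forcing the paper to recover genericity through the auxiliary open-dense sets $\nuTransvAsymmStandPulses(\bar{\xi},\varepsilon)$ of \cref{condition_staying_at_min_dist_of_sssSym}), you keep $\nnn=\rr^{2d}$ and absorb the ``forbidden'' Hamiltonian direction into a one-dimensional $\www=\rr N_1$, so that transversality of the difference map $\Phi$ to $\www$ is locally equivalent to the transversality of the pulse inside the level set. Your computations check out: the identity $\img D\Phi\subset N_1^\perp$ at a zero, the local coincidence $\Phi^{-1}(\rr N_1)=\Phi^{-1}(0)$ near $(m_1,V_1)$, the count $\dim\mmm-\codim\www=-2m(e_0)$, and the reinterpretation of Case~3 (the residual direction $(\phi_1,\psi_1)=\alpha N_1$ is exactly what $\www=\rr N_1$ absorbs, whereas the paper rules Case~3 out by requiring $(\phi_0,\psi_0)\in T\nnn$ from the start in \cref{lem:reach_all_directions_asymm_stand_pulse}). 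These two treatments of Case~3 are dual and equally valid.

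The local-to-global step, however, has a real gap. You claim global genericity is recovered by covering ``the asymmetric part of $\Phi^{-1}(0)$'' by countably many local charts, ``possible thanks to the separability of $\vvvQuad{R}$ and to the discreteness of zeros of $\Phi$ modulo time-translation.'' That discreteness is not available a priori: it is a \emph{consequence} of the transversality you are trying to establish (see the derivation of conclusion \cref{item:cor_main_countable} of \cref{cor:main} in \cref{subsec:proof_thm_travelling}), so invoking it is circular. More concretely, each local Sard--Smale chart controls only pulses with parameters $(\bu,\bs,\xi)$ inside a small neighbourhood of $m_1$; for a fixed $V$ near $V_1$ there may a priori exist pulses with parameters anywhere in the non-compact $\bbbu\times\bbbs\times\rr$, and you must guarantee the countable chart family covers all of them, which requires a compactness argument. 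Moreover, once a cover is produced, each chart yields a generic subset of a small $\Lambda'$, and combining these into a generic subset of $\nu$ requires separately knowing that the set of $V$ for which all relevant pulses are transverse is \emph{open}. The paper's mechanism handles both difficulties simultaneously: the bound $\xi\le\bar\xi$ in $\Wu_V\bigl(E(V),\bar\xi\bigr)$ gives compactness, the distance $\ge\varepsilon$ from $\sssSym$ prevents degeneration onto symmetric pulses (which can be elementary yet non-transverse), and $\nuTransvAsymmStandPulses(\bar\xi,\varepsilon)$ is shown open by compactness (\cref{subsec:openness_neighb_transv_asym_stand_pulses}) and dense by Sard--Smale (\cref{subsec:density_asym_stand_pulse}), with genericity coming from the countable intersection over $(\bar\xi,\varepsilon)=(N,1/N)$. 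Your $\www=\rr N_1$ trick could likely be grafted onto that framework and would streamline the density proof by removing the need to restrict $\Lambda_1$ to potentials equal to $V_1$ near $e_0$, but as written the global patching is not carried out and rests on an unproved discreteness claim.
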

Let $V_0$ denote a potential function in $\vvvQuad{R}$, and let $e_0$ denote a non-degenerate critical point of $V_0$. As already stated in \cref{subsec:notation_and_statements_sym_stand_pulses}, there exists a small neighbourhood $\nuRobust(V_0,e_0)$ of $V_0$ in $\vvvQuad{R}$ and a $\ccc^{k+1}$-function $e(\cdot)$ defined on $\nuRobust(V_0,e_0)$ and with values in $\rr^d$, such that $e(V_0)$ equals $e_0$ and, for every $V$ in $\nuRobust(V_0,e_0)$, $e(V)$ is a critical point of $V_0$ close to $e_0$.

Exactly the same arguments as in \cref{subsec:reduction} show that \cref{prop:global_generic_transversality_asym_stand_pulses} is a consequence of the following local statement.
\begin{proposition}
\label{prop:local_generic_transversality_standing_pulse2}
There exists a neighbourhood $\nu_{\VZeroeZero}$ of $V_0$ in $\vvvQuad{R}$, included in $\nuRobust(V_0,e_0)$, and a generic subset $\nuThingGen{\VZeroeZero}$ of $\nu_{\VZeroeZero}$ such that, for every $V$ in $\nuThingGen{\VZeroeZero}$, every asymmetric standing front connecting $e(V)$ to itself is transverse.
\end{proposition}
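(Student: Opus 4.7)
The approach parallels that of \cref{prop:local_generic_transversality_standing_pulse} in \cref{sec:generic_elementarity_sym_stand_pulses}, but with an intersection target that captures the full pair of invariant manifolds rather than only $\sssSym$. Keeping the notation of \cref{subsec:notation_and_statements_sym_stand_pulses}, I would set $\mmm=\bbbu\times\bbbs\times\rr$, $\Lambda=\nu$, $\nnn=(\rr^{2d})^2$, $\www=\bigl\{(A,A):A\in\rr^{2d}\bigr\}$, and consider
\[
\Phi:\mmm\times\Lambda\to\nnn, \quad (\bu,\bs,\xi,V)\mapsto\bigl(S_V(\xi,\hatwulocZero{V}(\bu)),\,\hatwslocZero{V}(\bs)\bigr).
\]
As in the proof of \cref{prop:one_to_one_correspondence_between_fronts_and_diagonal_intersection}, the set $\Phi^{-1}(\www)$ is in one-to-one correspondence, modulo time translation, with the pairs $(V,u)$ where $u$ is a standing pulse connecting $e(V)$ to itself; the transversality (inside the level set of $H_V$) of such a pulse, as defined in \cref{def:transverse_stand_pulse}, corresponds exactly to the transversality, at the associated point, of $\Phi(\cdot,V)$ to $\www$.

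The main structural subtlety is the Hamiltonian conservation. Writing $\mathcal{L}_V:=H_V^{-1}(-V(e(V)))$, both components of $\Phi(\cdot,V)$ take values in $\mathcal{L}_V$, so the image of $\Phi$ lies in the codimension-$2$ submanifold $\{(A,B):H_V(A)=H_V(B)=-V(e(V))\}$ of $(\rr^{2d})^2$. A direct computation using $DV(e(V))=0$ shows that at any $(m_1,V_1)\in\Phi^{-1}(\www)$ with common value $U_1=\Phiu(m_1,V_1)=\Phis(m_1,V_1)$, one has $DH_{V_1}(U_1)\cdot D(\Phiu-\Phis)\equiv 0$ for every perturbation direction, including perturbations of $V$. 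Strict transversality of $\Phi(\cdot,V)$ to the full diagonal of $(\rr^{2d})^2$ is therefore impossible, which is precisely the reason why \cref{def:transverse_stand_pulse} asks for transversality inside the level set. The correct application of \cref{thm:Sard_Smale} must use the codimension-$1$ subspace $T_{U_1}\mathcal{L}_{V_1}$ as effective target; this adaptation can be carried out by working in Hamiltonian-adapted local coordinates or by restricting $\Phi$ to the fibered product over the level sets of $H_V$. The regularity assumption \cref{item:thm_sard_smale_condition_regularity} of \cref{thm:Sard_Smale} is then trivially satisfied, since $\dim(\mmm)-(2d-1)=-2m(e_0)\le 0$.

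The key perturbation lemma is the perfect analogue of \cref{lem:reach_all_directions_orthogonal_to_UzeroPrime_of_xiZero} and \cref{lem:reach_all_directions_symm_stand_pulse}: for every nonzero $(\phi_1,\psi_1)\in T_{U_1(\xi_1)}\mathcal{L}_{V_1}$ orthogonal to $\dot U_1(\xi_1)$, one constructs $W\in\CkbFull$, with support in $B_{\rr^d}(0,R)$ disjoint from $\piPos\bigl[\WulocZero{V_1}(E(V_1))\cup\WslocZero{V_1}(E(V_1))\bigr]$, such that
\[
\int_0^{\xi_1}\nabla W\bigl(u_1(\xi)\bigr)\cdot\psi(\xi)\,d\xi \neq 0,
\]
where $(\phi,\psi)$ is the backward solution of the adjoint linearized system \cref{adjoint_linearized_system} (with $c_1=0$) with terminal condition $(\phi_1,\psi_1)$. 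Two facts specific to asymmetric pulses are decisive: by \cref{lem:equivalent_def_symm_hom_orbit} the derivative $\dot u_1$ does not vanish anywhere (otherwise the pulse would be symmetric), and \cref{prop:values_reached_only_once}, statement \cref{item_prop:values_reached_only_once_asymmetric_standing_pulse}, provides inside any prescribed sub-interval of $(0,\xi_1)$ a non-trivial sub-interval $\IOnce$ on which $u_1$ is one-to-one with respect to the whole of $\rr$. The three-case analysis of the two previous sections then carries over verbatim for cases~$1$ and~$2$ (respectively $\psi\not\parallel\dot u_1$ somewhere in $\IOnce$, handled by a localized bump around that point; and $\psi=\alpha\dot u_1$ with $\alpha$ non-constant, handled by integration by parts).

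The genuinely new ingredient, and the main obstacle, is case~$3$, in which $\psi\equiv\alpha\dot u_1$ on $\IOnce$ for some \emph{real constant} $\alpha$. The contradiction argument of \cref{sec:generic_transversality_travelling_fronts} relied on $c_1\neq 0$, and that of \cref{sec:generic_elementarity_sym_stand_pulses} on the vanishing of $\dot u_1$ at the turning time; neither is available here, and the resolution uses instead the reduced target itself. From $\psi=\alpha\dot u_1$ and $\dot\psi=-\phi$ one obtains $\phi=-\alpha\ddot u_1$ on $\IOnce$, so the solution $(\phi+\alpha\ddot u_1,\psi-\alpha\dot u_1)$ of the linear adjoint system vanishes on $\IOnce$ and hence on $\rr$ by uniqueness; evaluating at $\xi=\xi_1$ gives
\[
(\phi_1,\psi_1)=\alpha\bigl(-\ddot u_1(\xi_1),\dot u_1(\xi_1)\bigr) = \alpha\,DH_{V_1}\bigl(U_1(\xi_1)\bigr).
\]
But $(\phi_1,\psi_1)\in T_{U_1(\xi_1)}\mathcal{L}_{V_1}=\ker DH_{V_1}(U_1(\xi_1))$, and $DH_{V_1}(U_1(\xi_1))\neq 0$ because $\dot u_1(\xi_1)\neq 0$; therefore $\alpha=0$, hence $(\phi_1,\psi_1)=0$, contradicting the assumption. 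This is precisely the place where the reduction of the effective target from $\rr^{2d}$ to $T\mathcal{L}_V$ pays off. Once the perturbation lemma is established, the adapted \cref{thm:Sard_Smale} produces a generic subset $\LambdaGen\subset\Lambda$ for which every asymmetric standing pulse connecting $e(V)$ to itself is transverse in the sense of \cref{def:transverse_stand_pulse}; taking $\nu_{\VZeroeZero}=\nu$ and $\nuThingGen{\VZeroeZero}=\LambdaGen$ concludes the proof.
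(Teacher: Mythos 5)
Your high-level strategy is correct, and your treatment of Case~3 of the perturbation lemma (showing that $\psi=\alpha\dot u_1$ with $\alpha$ constant forces $(\phi_1,\psi_1)=\alpha\,DH_{V_1}(U_1(\xi_1))$, which lies in the normal line to $T_{U_1(\xi_1)}\mathcal L_{V_1}$ and hence must vanish) is essentially the paper's argument in \cref{lem:reach_all_directions_asymm_stand_pulse}, just reached via uniqueness of solutions rather than a pointwise dot-product computation. But there are two structural gaps that the paper must work hard to overcome and that you dismiss.

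First, you correctly notice that the relevant target is not the full diagonal of $(\rr^{2d})^2$ but its intersection with the level set $\mathcal L_V$, and you say this ``adaptation can be carried out by working in Hamiltonian-adapted local coordinates or by restricting $\Phi$ to the fibered product over the level sets of $H_V$''. This is exactly the point where the proof gets hard, and the hand-wave doesn't close it: \cref{thm:Sard_Smale} requires fixed manifolds $\mmm$, $\nnn$, $\www$ and a single map $\Phi:\mmm\times\Lambda\to\nnn$; here $\mathcal L_V$ (and hence any candidate $\nnn$ built from it) depends on $V$, so the fibered product is not a product and the theorem does not apply as stated. The paper escapes this by restricting $\Lambda$ to a slice of potentials $\Lambda_1$ that all coincide with a fixed $V_1$ on the ball $\widebar B_{\rr^d}(e(V_1),\rExit)$ (definition \cref{V_equiv_Vone_around_e}), so that $\nnn$ and $\www$ become genuinely $V$-independent. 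The price is that $\Lambda_1$ is not open in $\nu$, so Sard--Smale yields only \emph{density}, not genericity; genericity is then recovered by explicitly constructing the double-indexed family $\nuTransvAsymmStandPulses(\bar\xi,\varepsilon)$ (definition \cref{condition_staying_at_min_dist_of_sssSym}) and proving openness and density separately for each $(\bar\xi,\varepsilon)$ before intersecting over a countable family (equation \cref{intersection_transverse_asym_stand_pulses}). Your sketch, by contrast, ends with ``the adapted \cref{thm:Sard_Smale} produces a generic subset $\LambdaGen\subset\Lambda$'', which would require $\Lambda$ to be the whole neighbourhood $\nu$ — precisely what the $V$-dependence of $\mathcal L_V$ forbids.

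Second, you never exclude the symmetric pulses, and this is not an omission you can afford. The statement concerns \emph{asymmetric} pulses, and the only property proved for symmetric ones is elementarity, not transversality (see \cref{subsec:failure_generic_transversality_symm_stand_pulses}). In the openness step of the argument one must pass to limits of sequences of non-transverse asymmetric pulses, and in general such a sequence can accumulate on a \emph{symmetric} pulse whose non-transversality is perfectly compatible with elementarity and hence with genericity. The paper's parameter $\varepsilon$ in $\nuTransvAsymmStandPulses(\bar\xi,\varepsilon)$ — requiring the trajectory to stay at distance at least $\varepsilon$ from the symmetry subspace $\sssSym$ away from the local stable and unstable manifolds — is exactly what makes the limit pulse remain asymmetric and the set $\nuTransvAsymmStandPulses(\bar\xi,\varepsilon)$ open. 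Without some such quantitative separation from $\sssSym$, your claimed generic set has no reason to be a $G_\delta$ at all.

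Finally, a smaller remark: your $\mmm=\bbbu\times\bbbs\times\rr$ packs $\bbbs$ into the source, whereas the paper puts $\partial\WslocZero{V}(E(V))$ into the target $\www$ (definition \cref{def_nnn_and_www_asymmetric_standing_pulses}). The dimension count $\dim(\mmm)-\codim(\www)=-2m(e_0)$ comes out the same, so this is cosmetic; but the paper's choice is what makes $\www$ a genuine submanifold of the fixed level set $\nnn$, which you need for the $V$-independence trick above.
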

The remaining part of \cref{sec:generic_tranversality_asym_stand_pulses} will thus be devoted to the proof of \cref{prop:local_generic_transversality_standing_pulse2}.
Let us consider the same setting as in \cref{sec:not_and_stat_sym} for local stable and unstable manifolds of $E(V)$, for $V$ in a small enough neighbourhood $\nu$ of $V_0$. In particular, let us assume that local stable and unstable manifolds are small enough so that equalities \cref{stable_unstable_man_transverse_reversibility_subspace} hold. In addition, according to the expression \cref{notation_eigenvectors_in_dim_2d} of the eigenvectors of the linear system \cref{trav_wave_system_order_1_linearized}, 
\[
\Eu_{V_0}(E_0) \cap \bigl(\{0_{\rr^d}\times\rr^d\}\bigr) = \{0_{\rr^{2d}}\}
\quad\text{and}\quad
\Es_{V_0}(E_0) \cap \bigl(\{0_{\rr^d}\times\rr^d\}\bigr) = \{0_{\rr^{2d}}\}
\,.
\]
It follows that there exists a positive quantity $\rExit$ such that, for every $U$ in $\Wu_{V_0}(E_0)$ differing from $E_0$, 
\[
\sup_{\xi\in\rr}\abs{\piPos\bigl(S_{V_0}(\xi,U)\bigr)-e_0} > \rExit
\,;
\]
in other words, if a solution $\xi\mapsto U(\xi) = \bigl(u(\xi),\dot u(\xi)\bigr)$ (for the potential $V_0$) is homoclinic to $E_0$ then $u(\xi)$ must leave the ball $\widebar B_{\rr^d}(e_0,\rExit)$ before eventually returning into it. Up to replacing $\nu$ by a smaller neighbourhood of $V_0$ in $\vvvQuad{R}$ and $\rExit$ by a smaller positive quantity, we may assume that, for every $V$ in $\nu$ and for every $U$ in $\Wu_{V}\bigl(E(V)\bigr)$ differing from $E(V)$,
\begin{equation}
\label{exit_from_e_on_unstable_manifold}
\sup_{\xi\in\rr}\abs{\piPos\bigl(S_V(\xi,U)\bigr)-e(V)} > \rExit
\,.
\end{equation}
Finally, up to replacing $\nu$ by a smaller neighbourhood of $V_0$ in $\vvvQuad{R}$ and $r$ by a smaller positive quantity, we may assume that, for every $V$ in $\nu$, 
\begin{equation}
\label{local_unstable_stable_manifold_included_in_exit_ball}
\piPos\Bigl(\WulocZero{V}\bigl(E(V)\bigr)\cup\WslocZero{V}\bigl(E(V)\bigr)\Bigr) \subset B_{\rr^d}\bigl(e(V),\rExit/4\bigr)
\,.
\end{equation}
\subsection{Asymmetric standing pulses of bounded length and away from \texorpdfstring{$\sssSym$}{Ssym}}
\label{subsec:away}
By comparison with symmetric standing pulses considered in \cref{sec:generic_elementarity_sym_stand_pulses}, dealing with asymmetric standing pulses is less straightforward for the following reasons.
\begin{enumerate}
\item Symmetric and asymmetric standing pulses connecting a given critical point to itself may coexist for some potentials, and while symmetric standing pulses will be proved to be generically \emph{elementary} (\cref{def:elementary_symm_stand_pulse}), only asymmetric standing pulses will be proved to be generically transverse, see \cref{subsec:failure_generic_transversality_symm_stand_pulses}). As a consequence, applying \cref{thm:Sard_Smale} to prove the generic transversality of asymmetric standing pulses requires to exclude, by a way or another, symmetric ones. 
\item The transversality of a standing pulse stated in \cref{def:transverse_stand_pulse} is a transversality inside the submanifold corresponding to the level set of the Hamiltonian for the energy $-V(e)$. This submanifold depends on $V$ and a direct application of \cref{thm:Sard_Smale} is not possible because its transversality is stated inside a fixed manifold $\nnn$. A simple solution to skip this dependence is to fix $V$ close to $e_0$, but with the consequence that the considered set of potentials $V$ will not be open, so that applying \cref{thm:Sard_Smale} in this framework will provide local density but not local genericity of the potentials for which asymmetric pulses are transverse. Local genericity will actually be obtained through a countable intersection of open and dense sets, with separate proofs for their openness and their density. 
\end{enumerate} 

For every $V$ in $\nu$ and for every non negative quantity $\bar{\xi}$, let us consider the set
\begin{equation}
\label{def_local_unst_man_extended_up_to_time_xi}
\begin{aligned}
\Wu_V\bigl(E(V),\bar{\xi}\bigr) &= S_V\Bigl(\bar{\xi},\WulocZero{V}\bigl(E(V)\bigr)\Bigr) = \bigcup_{U\in\WulocZero{V}\bigl(E(V)\bigr)} S_V(\bar{\xi},U) \\
&= \{E(V)\}\cup \bigcup_{\bu\in\bbbu,\,\xi\in(-\infty,\bar{\xi}]}S_V\bigl(\xi,\hatwulocZero{V}(\bu)\bigr)
\,.
\end{aligned}
\end{equation}
According to this notation, the set $\Wu_V\bigl(E(V),0\bigr)$ reduces to $\WulocZero{V}\bigl(E(V)\bigr)$ and the set $\Wu_V\bigl(E(V),\bar{\xi}\bigr)$ increases (for inclusion) with $\bar{\xi}$ and represents (in some sense) the unstable manifold of the equilibrium $E(V)$ ``until time $\bar{\xi}$''. For all positive quantities $\bar{\xi}$ and $\varepsilon$, let us consider the set 
\begin{align}
&\nuTransvAsymmStandPulses(\bar{\xi},\varepsilon) =  \Bigl\{ V\in\nu : \text{if $U_0\in\Wu_V\bigl(E(V),\bar{\xi}\bigr)\cap\partial\WslocZero{V}\bigl(E(V)\bigr)$ and if} \nonumber\\
&\qquad\dist\biggl(S_V(\rr,U_0)\setminus \Bigl[\WulocZero{V}\bigl(E(V)\bigr)\cup\WslocZero{V}\bigl(E(V)\bigr)\Bigr],\sssSym\biggr)\ge\varepsilon \,, \text{ then the}
\label{condition_staying_at_min_dist_of_sssSym} \\
&\qquad\text{corresponding standing pulse: $\rr\to\rr^d$, $\xi\mapsto \piPos\bigl(S_V(\xi,U_0)\bigr)$ is transverse} \Bigr\}
\,. 
\nonumber
\end{align}
In other words, a potential function $V$ belonging to $\nu$ is in $\nuTransvAsymmStandPulses(\bar{\xi},\varepsilon)$ if every standing pulse connecting $\WulocZero{V}\bigl(E(V)\bigr)$ to $\WslocZero{V}\bigl(E(V)\bigr)$ in a time not larger than $\bar{\xi}$ while remaining at a distance not smaller than $\varepsilon$ from $\sssSym$, is transverse. Observe that, according to equalities \cref{stable_unstable_man_transverse_reversibility_subspace}, such a standing pulse is necessarily \emph{asymmetric}. \Cref{prop:local_generic_transversality_standing_pulse2} follows from the next proposition. 
\begin{proposition}
\label{prop:transv_asym_stand_pulses_bded_length_away_symmetry_subspace}
For all positive quantities $\bar{\xi}$ and $\varepsilon$, the set $\nuTransvAsymmStandPulses(\bar{\xi},\varepsilon)$ is open and dense in $\nu$.
\end{proposition}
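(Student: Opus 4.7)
Openness of $\nuTransvAsymmStandPulses(\bar\xi,\varepsilon)$ in $\nu$ is a compactness argument. Suppose $V_n\in\nu\setminus\nuTransvAsymmStandPulses(\bar\xi,\varepsilon)$ converges to $V_\infty\in\nu$. For each $n$ pick a ``bad'' point $U_n\in\Wu_{V_n}\bigl(E(V_n),\bar\xi\bigr)\cap\partial\WslocZero{V_n}(E(V_n))$, parametrized by some $(\bu_n,\xi_n,\bs_n)\in\bbbu\times[0,\bar\xi]\times\bbbs$, whose orbit satisfies the distance condition \cref{condition_staying_at_min_dist_of_sssSym} and yields a non-transverse standing pulse. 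Since $\bbbu\times[0,\bar\xi]\times\bbbs$ is compact, and since $S_V$, $\hatwulocZero{V}$, $\hatwslocZero{V}$ and the tangent data entering the transversality condition all depend continuously on $V$ (\cref{prop:loc_stab_unstab_man_c_equals_zero}), I would extract a subsequence converging to a bad configuration for $V_\infty$. Thus $V_\infty\notin\nuTransvAsymmStandPulses(\bar\xi,\varepsilon)$, and its complement is closed.

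\paragraph{Density --- setup.} For density I would apply \cref{thm:Sard_Smale} along the lines of \cref{sec:generic_transversality_travelling_fronts,sec:generic_elementarity_sym_stand_pulses}. Fix $V_0\in\nu$ and set $\mmm=\bbbu\times\bbbs\times[0,\bar\xi]$, $\Lambda=\nu$, and
\[
\Phi(\bu,\bs,\xi,V)=S_V\bigl(\xi,\hatwulocZero{V}(\bu)\bigr)-\hatwslocZero{V}(\bs)\,,
\]
so that zeros of $\Phi$ parametrize standing pulses joining $\partial\WulocZero{V}(E(V))$ to $\partial\WslocZero{V}(E(V))$ in time at most $\bar\xi$. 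Restricting parameters to the compact subset $\mmm(\varepsilon)\subset\mmm$ where the associated orbit stays at distance $\ge\varepsilon$ from $\sssSym$ forces the pulse to be asymmetric (via \cref{stable_unstable_man_transverse_reversibility_subspace} and \cref{lem:equivalent_def_symm_hom_orbit}). The transversality of such a pulse (\cref{def:transverse_stand_pulse}) becomes, at a zero $p_0$, the identity $\img D_{(\bu,\bs,\xi)}\Phi=T_{p_0}L_V$ with $L_V=H_V^{-1}\bigl(-V(e(V))\bigr)$.

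\paragraph{The level set obstacle.} The main novelty with respect to the previous sections is that transversality is required inside the $V$-dependent hypersurface $L_V$, whereas \cref{thm:Sard_Smale} needs a fixed ambient manifold. Differentiating the identity $H_V\bigl(S_V(\xi,\hatwulocZero{V}(\bu))\bigr)=H_V\bigl(\hatwslocZero{V}(\bs)\bigr)$ in all its arguments shows that the \emph{full} differential $D\Phi$ at a zero is trapped in $T_{p_0}L_{V_0}$; in particular $\Phi$ is never transverse to $\{0\}$ in $\rr^{2d}$, so the theorem cannot be applied to $\Phi$ directly. To sidestep this I would cover $\mmm(\varepsilon)$ by finitely many patches and, on each, fix a hyperplane $\Sigma\subset\rr^{2d}$ transverse to $F_{V_0}$ at a reference orbit point. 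Sliding both $S_V(\xi,\hatwulocZero{V}(\bu))$ and $\hatwslocZero{V}(\bs)$ to $\Sigma$ along the flow yields a reduced map $\widetilde\Phi$ with fixed target $T_{p_0}\Sigma\cong\rr^{2d-1}$; transversality of the pulse is equivalent to surjectivity of $D_{(\bu,\bs)}\widetilde\Phi$ onto $T_{p_0}(L_V\cap\Sigma)$, a clean input for \cref{thm:Sard_Smale}. Applying the theorem on each patch and intersecting the finitely many resulting generic subsets yields density. I expect this Poincar\'e-section reduction and the ensuing gluing to be the main technical obstacle.

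\paragraph{The perturbation lemma.} On each patch, hypothesis \cref{item:thm_sard_smale_condition_transversality} of \cref{thm:Sard_Smale} reduces, via variation of constants and the adjoint linearized system as in \cref{lem:reach_all_directions_orthogonal_to_UzeroPrime_of_xiZero,lem:reach_all_directions_symm_stand_pulse}, to finding, for every nonzero covector $(\phi_1,\psi_1)$ representing a missing direction, a perturbation $W\in\CkbFull$ supported in $B_{\rr^d}(0,R)$ and disjoint from $\piPos\bigl(\WulocZero{V_0}(E(V_0))\cup\WslocZero{V_0}(E(V_0))\bigr)$ such that $\int\nabla W(u_1(\xi))\cdot\psi(\xi)\,d\xi\ne 0$. \Cref{prop:values_reached_only_once}\cref{item_prop:values_reached_only_once_asymmetric_standing_pulse}, applied to an open interval along which the orbit stays $\varepsilon$-away from $\sssSym$ and (by \cref{exit_from_e_on_unstable_manifold,local_unstable_stable_manifold_included_in_exit_ball}) outside the local invariant manifolds, furnishes a subinterval on which $\xi\mapsto u_1(\xi)$ is one-to-one, so that the bump constructions of cases~1 and~2 of \cref{lem:reach_all_directions_orthogonal_to_UzeroPrime_of_xiZero} carry over verbatim. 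The degenerate case $\psi\equiv\alpha\dot u_1$ with $\alpha$ constant --- which blocked the argument for $c=0$ in the symmetric setting --- is now ruled out because, for an \emph{asymmetric} pulse, $\dot u_1(\xi)$ vanishes at $\pm\infty$, forcing $\psi\to 0$ and hence $\psi_1=0$, contrary to assumption.
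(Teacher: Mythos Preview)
Your openness argument is essentially the paper's. The density argument, however, contains two genuine gaps.

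\textbf{The Poincar\'e section does not fix the level-set dependence.} You correctly observe that the full differential of your difference map $\Phi$ is trapped in $T_{p_0}L_{V}$, so Sard--Smale cannot be applied in $\rr^{2d}$. But your proposed cure --- sliding both points to a fixed hyperplane $\Sigma$ --- only strips off the flow direction; it does not remove the $V$-dependence of $L_V$. After the reduction, both slid points still lie in $L_V\cap\Sigma$, so $D\widetilde\Phi$ is still trapped in the $V$-dependent $(2d-2)$-plane $T_{p_0}(L_V\cap\Sigma)$, and you are in exactly the same bind one dimension down. Saying ``surjectivity onto $T_{p_0}(L_V\cap\Sigma)$ is a clean input for Sard--Smale'' is not correct: the theorem requires a fixed target and a fixed submanifold. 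The paper sidesteps the issue by a different, simpler trick: it restricts the parameter space $\Lambda_1$ to potentials that \emph{coincide with $V_1$ on the ball $\widebar B_{\rr^d}(e(V_1),\rExit)$}. Since the local stable manifold and the relevant piece of the level set depend only on $V$ inside this ball, both $\nnn=H_V^{-1}(-V(e(V)))\cap(\text{ball}\times\rr^d)$ and $\www=\partial\WslocZero{V}(E(V))$ become literally independent of $V\in\Lambda_1$, and Sard--Smale applies directly with $\Phi(\bu,\xi,V)=S_V(\xi,\hatwulocZero{V}(\bu))\in\nnn$. Density near $V_1$ is all one needs, since openness has already been established.

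\textbf{Your case~3 exclusion is incorrect.} From $\psi=\alpha\dot u_1$ with $\alpha$ constant on an interval you cannot conclude $\psi_1=0$: the relation extends to all $\xi$ by ODE uniqueness, but $\dot u_1(\xi_1)\neq 0$ for an asymmetric pulse, so $\psi_1=\alpha\dot u_1(\xi_1)$ need not vanish. The limit $\psi(\xi)\to 0$ as $\xi\to\pm\infty$ says nothing about the finite-time value $\psi_1$. The paper's argument uses instead the constraint that the missing covector $(\phi_2,\psi_2)$ lies in $T_{U_2(\xi_2)}\nnn$, i.e.\ is orthogonal to $\nabla H_V=(-\nabla V(u_2),\dot u_2)$. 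Combined with $\psi=\alpha\dot u_2$ and $\phi=-\dot\psi=-\alpha\nabla V(u_2)$ (adjoint equation with $c=0$), this orthogonality reads $\alpha(|\nabla V(u_2)|^2+|\dot u_2|^2)=0$, forcing $\alpha=0$. This tangency constraint is exactly what the paper's setup (with $\nnn$ the level set) hands you for free, and it is not available in your formulation.
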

\begin{proof}[Proof that \cref{prop:transv_asym_stand_pulses_bded_length_away_symmetry_subspace} yields  \cref{prop:local_generic_transversality_standing_pulse2}]
It follows from \cref{prop:transv_asym_stand_pulses_bded_length_away_symmetry_subspace} that the set
\begin{equation}
\label{intersection_transverse_asym_stand_pulses}
\bigcap_{N\in\nn}\nuTransvAsymmStandPulses(N,1/N)
\end{equation}
is a generic subset of $\nu$. And, according to the definition of $\nuTransvAsymmStandPulses(\cdot,\cdot)$, for every potential $V$ in this set, every asymmetric standing pulse connecting $e(V)$ to itself is transverse.
\end{proof}%
The remaining of this section is devoted to the proof of \cref{prop:transv_asym_stand_pulses_bded_length_away_symmetry_subspace}. 
\subsection{Openness of \texorpdfstring{$\nuTransvAsymmStandPulses(\bar{\xi},\varepsilon)$}{nuTransvAsymmStandPulses(xi,epsilon)}}
\label{subsec:openness_neighb_transv_asym_stand_pulses}
For every potential $V$ in $\nu$ and for all positive quantities $\bar{\xi}$ and $\varepsilon$, the manifolds $\Wu_V\bigl(E(V),\bar{\xi}\bigr)$ and $\WslocZero{V}\bigl(E(V)\bigr)$ are compact, and those manifolds depend smoothly on $V$. Let $(V_n)_{n\in\nn}$ denotes a sequence of potentials belonging to $\nu\setminus\nuTransvAsymmStandPulses(\bar{\xi},\varepsilon)$ and converging to some potential $V_\infty$ of $\nu$, and let us prove that, in this case, $V_\infty$ is still outside of $\nuTransvAsymmStandPulses(\bar{\xi},\varepsilon)$ (this will prove that $\nuTransvAsymmStandPulses(\bar{\xi},\varepsilon)$ is open in $\nu$). For every integer $n$, there exists a non-transverse standing pulse connecting $\WulocZero{V_n}\bigl(E(V_n)\bigr)$ to $\WslocZero{V_n}\bigl(E(V_n)\bigr)$ in a time not larger than $\bar{\xi}$ while remaining at a distance not smaller than $\varepsilon$ from $\sssSym$. As emphasized in \cref{def_local_unst_man_extended_up_to_time_xi}, this pulse is characterized by a (unique) $\bu_n$ in $\bbbu$ such that its trajectory in $\rr^{2d}$ crosses the boundary of $\WulocZero{V_n}\bigl(E(V_n)\bigr)$ at the point $\hatwulocZero{V_n}(\bu_n)$, and a (unique) time $\xi_n$ in the interval $[0,\bar{\xi}]$ such that this trajectory crosses the boundary of $\WslocZero{V_n}\bigl(E(V_n)\bigr)$ at the point $S_{V_n}\bigl(\xi_n,\hatwulocZero{V_n}(\bu_n)\bigr)$. Then, 
\begin{enumerate}[(i)]
\item by compactness (up to considering a subsequence of $(V_n)_{n\in\nn}$), we may assume that $(\bu_n,\xi_n)$ converges to some couple $(\bu_\infty,\xi_\infty)$ of $\bbbu\times[0,\bar{\xi}]$, which in turn characterizes a standing pulse for $V_\infty$.  Notice here the importance of considering homoclinic orbits of bounded ``length'', otherwise the limit trajectory would not necessarily be homoclinic to $E(V_\infty)$. 
\item Moreover, both conditions in \cref{condition_staying_at_min_dist_of_sssSym} are closed conditions, so that the limit standing pulse also satisfies them.
\item Thanks to the ``margin'' $\varepsilon$ with respect to the symmetry subspace $\sssSym$, the limit standing pulse is necessarily asymmetric.
\item Last, the limit standing pulse is non-transverse since this property is closed.
\end{enumerate}
The limit potential $V_\infty$ is thus not in $\nuTransvAsymmStandPulses(\bar{\xi},\varepsilon)$, and this completes the proof that $\nuTransvAsymmStandPulses(\bar{\xi},\varepsilon)$ is open in $\nu$. 
\subsection{Density of \texorpdfstring{$\nuTransvAsymmStandPulses(\bar{\xi},\varepsilon)$}{nuTransvAsymmStandPulses(xi,epsilon)}}
\label{subsec:density_asym_stand_pulse}
\subsubsection{Application of \texorpdfstring{\cref{thm:Sard_Smale}}{Theorem \ref{thm:Sard_Smale}}}
\label{subsubsec:applying_Sard_Smale_asymm_stand_pulses_notation}
The proof of the density assertion of \cref{prop:transv_asym_stand_pulses_bded_length_away_symmetry_subspace} will again follow from applying \cref{thm:Sard_Smale} to the following appropriate setting.

Take positive quantities $\bar{\xi}$ and $\varepsilon$, and a potential $V_1$ in $\nu$. Our goal is to prove that there exist potentials in $\nuTransvAsymmStandPulses(\bar{\xi},\varepsilon)$ which are arbitrarily close to $V_1$. Let
\begin{equation}
\label{def_mmm_asym_pulse}
\begin{aligned}
\mmm &= \Bigl\{ (\bu,\xi)\in \bbbu\times(0,\bar{\xi}+1)  : \dist\Bigl(S_{V_1}\bigl([0,\xi],\hatwulocZero{V_1}(\bu)\bigr),\sssSym\Bigr) > \varepsilon/2 \\
&\text{and}\quad \piPos\Bigl(S_{V_1}\bigl(\xi,\hatwulocZero{V_1}(\bu)\bigr)\Bigr)\in B_{\rr^d}\bigl(e(V_1),\rExit/2\bigr)
\Bigr\}
\,.
\end{aligned}
\end{equation}
and let $\Lambda_1$ denote a neighbourhood of $V_1$ in the set
\begin{equation}
\label{V_equiv_Vone_around_e}
\left\{
V\in\nu : V\equiv V_1 \text{ on the closed ball } \widebar{B}_{\rr^d}\bigl(e(V_1),\rExit\bigr)
\right\}
\,.
\end{equation}
We may assume that this neighbourhood $\Lambda_1$ is small enough so that, for every $V$ in $\Lambda_1$ and $(\bu,\xi)$ in $\bbbu\times(0,\bar{\xi}+1)$, the following two conclusions hold:
\begin{enumerate}
\item if $(\bu,\xi)$ \emph{is not} in $\mmm$, then
\begin{equation}
\label{consequence_if_bu_xi_not_in_mmm}
\begin{aligned}
\text{either}\quad&
\dist\Bigl(S_V\bigl([0,\xi],\hatwulocZero{V}(\bu)\bigr),\sssSym\Bigr) < \varepsilon \\
\text{or}\quad&
\piPos\Bigl(S_V\bigl(\xi,\hatwulocZero{V}(\bu)\bigr)\Bigr)\not\in B_{\rr^d}\bigl(e(V_1),\rExit/4\bigr)
\,;
\end{aligned}
\end{equation}
\item if $(\bu,\xi)$ \emph{is} in $\mmm$, then
\begin{equation}
\label{consequence_if_bu_xi_in_mmm_asymm}
\begin{aligned}
&\dist\Bigl(S_V\bigl([0,\xi],\hatwulocZero{V}(\bu)\bigr),\sssSym\Bigr) >0 \,, \\ 
\text{and}\quad
&\piPos\Bigl(S_V\bigl(\xi,\hatwulocZero{V}(\bu)\bigr)\Bigr)\in B_{\rr^d}\bigl(e(V_1),\rExit\bigr)
\,.
\end{aligned}
\end{equation}
\end{enumerate}
For $V$ in $\Lambda_1$, let 
\begin{equation}
\label{def_nnn_and_www_asymmetric_standing_pulses}
\begin{aligned}
\nnn &= H_{V}^{-1}\Bigl(H_{V}\bigl(E(V)\bigr)\Bigr)\cap \bigl(B_{\rr^d}\bigl(e(V),\rExit\bigr)\times\rr^d\bigr) \setminus\{E(V)\}\\
\text{and}\quad
\www &= \partial\WslocZero{V}\bigl(E(V)\bigr) = \hatwslocZero{V}(\bbbs)
\,.
\end{aligned}
\end{equation}
Observe that $\mmm$, $\nnn$, and $\www$ are submanifolds of $\rr^{2d}$ and since $\Lambda_1$ is included in $\nu$, it follows from inclusion \cref{local_unstable_stable_manifold_included_in_exit_ball} that $\www$ is included in $\nnn$. 
In addition, according to the condition \cref{V_equiv_Vone_around_e} on $V$ and to the inclusion \cref{local_unstable_stable_manifold_included_in_exit_ball}, $\mmm$, $\nnn$ and $\www$ do actually \emph{not} depend on the potential $V$ in $\Lambda_1$. As already explained in the second remark of the beginning of \cref{subsec:away}, this is mandatory to provide a setting where \cref{thm:Sard_Smale} applies. It follows that, according to \cref{consequence_if_bu_xi_in_mmm_asymm}, we may consider the function
\begin{equation}
\label{def_Phi_asym_stand_pulse}
\Phi : \mmm\times\Lambda_1 \to \nnn
\,,\quad
(\bu,\xi,V)\mapsto S_V\bigl(\xi,\hatwulocZero{V}(\bu)\bigr)
\,,
\end{equation}
which is well defined. Notice that, even if $\mmm$ contains only couples $(\bu,\xi)$ for which, for $V$ in $\Lambda_1$, the position $u(\xi) = \piPos\Bigl(S_V\bigl(\xi,\hatwulocZero{V}(\bu)\bigr)$ of the corresponding solution is inside $B_{\rr^d}\bigl(e(V_1),\rExit\bigr)$ (second condition of \cref{consequence_if_bu_xi_in_mmm_asymm}), it follows from the property \cref{exit_from_e_on_unstable_manifold} defining $\rExit$ that this position $u(\cdot)$ exits $B_{\rr^d}\bigl(e(V_1),\rExit/2\bigr)$ at other times, and this will provide a suitable place to perturb the potential. In other words, it will be possible to modify $\Phi(\bu,\xi,V)$ by perturbing $V$ outside of $B_{\rr^d}\bigl(e(V_1),\rExit\bigr)$, even if the arrival set of $\Phi$ and its image are restricted to this ball. 
\begin{proposition}
\label{prop:equivalence_transversality_asym_stand_pulse}
For every potential function $V$ in $\Lambda_1$, if the image of the function $\mmm\to\nnn$, $V\mapsto \Phi(m,V)$ is transverse to $\www$, then $V$ belongs to the set $\nuTransvAsymmStandPulses(\bar{\xi},\varepsilon)$. 
\end{proposition}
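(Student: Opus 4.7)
The plan is to transfer the Sard--Smale transversality of the fixed-$V$ map $\Phi(\cdot,V):\mmm\to\nnn$ to $\www$ into the geometric transversality of each admissible standing pulse in the sense of \cref{def:transverse_stand_pulse}. Fix $V\in\Lambda_1$ for which $\Phi(\cdot,V)$ is transverse to $\www$, and pick a standing pulse satisfying the two conditions of \cref{condition_staying_at_min_dist_of_sssSym}. After possibly shrinking $r$ so that every such pulse spends a strictly positive time between the boundaries of the local unstable and local stable manifolds, I would represent it by a unique couple $(\bu,\xi)\in\bbbu\times(0,\bar\xi]$ such that $\hatwulocZero{V}(\bu)\in\partial\WulocZero{V}(E(V))$ and $U_0:=S_V(\xi,\hatwulocZero{V}(\bu))\in\partial\WslocZero{V}(E(V))$.

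The next step is to check that $(\bu,\xi)$ belongs to $\mmm$. The hypothesis in \cref{condition_staying_at_min_dist_of_sssSym}, together with \cref{stable_unstable_man_transverse_reversibility_subspace} (which removes the endpoints of the trajectory segment from $\sssSym$), forces $\dist(S_V([0,\xi],\hatwulocZero{V}(\bu)),\sssSym)\ge\varepsilon$. Since $V$ coincides with $V_1$ on $\widebar B_{\rr^d}(e(V_1),\rExit)$ we have $e(V)=e(V_1)$, so by \cref{local_unstable_stable_manifold_included_in_exit_ball} the position $\piPos(U_0)$ lies in $B_{\rr^d}(e(V_1),\rExit/4)$. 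Both alternatives of \cref{consequence_if_bu_xi_not_in_mmm} therefore fail, and its contrapositive gives $(\bu,\xi)\in\mmm$.

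The heart of the argument is then to translate the transversality of $\Phi(\cdot,V)$ to $\www$ at $(\bu,\xi)$ into transversality of the pulse itself. By construction, $D_m\Phi(\bu,\xi,V)\cdot T_{(\bu,\xi)}\mmm=T_{U_0}\Wu_V(E(V))$, since $\partial_\xi\Phi=\dot U_0$ and $\partial_\bu\Phi$ spans the remaining tangent directions of the global unstable manifold at $U_0$ obtained by varying the exit point on $\partial\WulocZero{V}(E(V))$ and flowing forward; moreover $T_{U_0}\www=T_{U_0}\partial\WslocZero{V}(E(V))$ and $T_{U_0}\nnn=\ker DH_V(U_0)=T_{U_0}\bigl(H_V^{-1}(-V(e(V)))\bigr)$. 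The transversal crossing of $\partial\WslocZero{V}(E(V))$ by the flow inside $\WslocZero{V}(E(V))$ (the remark at the end of \cref{subsec:local_stable_unstable_manifolds_zero_speed}) provides the splitting $T_{U_0}\Ws_V(E(V))=T_{U_0}\partial\WslocZero{V}(E(V))\oplus\langle\dot U_0\rangle$. Since $\dot U_0$ already lies in $T_{U_0}\Wu_V(E(V))$, adding $\langle\dot U_0\rangle$ to the assumed identity $T_{U_0}\Wu_V(E(V))+T_{U_0}\partial\WslocZero{V}(E(V))=T_{U_0}\nnn$ yields
\[
T_{U_0}\Wu_V(E(V))+T_{U_0}\Ws_V(E(V))=T_{U_0}\bigl(H_V^{-1}(-V(e(V)))\bigr),
\]
which is exactly the transversality of the standing pulse required by \cref{def:transverse_stand_pulse}.

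The main conceptual obstacle is reconciling the Sard--Smale transversality (inside the fixed manifold $\nnn$) with the geometric transversality of stable and unstable manifolds inside the energy level set; this reconciliation hinges on the simple observation that $\dot U_0$ is tangent both to $\Wu_V(E(V))$ and to $\Ws_V(E(V))$. A parallel technical point is the $V_1$-versus-$V$ bookkeeping in the definition of $\mmm$, which is precisely what the design of the neighbourhood $\Lambda_1$ through \cref{consequence_if_bu_xi_not_in_mmm} is made to handle.
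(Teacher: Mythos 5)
Your proof is correct and follows the same route as the paper: parametrize the pulse by a couple $(\bu,\xi)$, use the contrapositive of \cref{consequence_if_bu_xi_not_in_mmm} to place $(\bu,\xi)$ in $\mmm$, and then translate the Sard--Smale transversality of $\Phi(\cdot,V)$ against $\www$ into the transversality of the pulse in the sense of \cref{def:transverse_stand_pulse}. Your explicit derivation of this last translation --- via the splitting $T_{U_0}\Ws_V\bigl(E(V)\bigr)=T_{U_0}\partial\WslocZero{V}\bigl(E(V)\bigr)\oplus\langle\dot U_0\rangle$ and the observation that $\dot U_0$ is already tangent to $\Wu_V\bigl(E(V)\bigr)$ --- fills in a step the paper leaves as ``exactly corresponds'' and is correct, although for the one implication actually stated the inclusion $T_{U_0}\partial\WslocZero{V}\bigl(E(V)\bigr)\subseteq T_{U_0}\Ws_V\bigl(E(V)\bigr)$ alone already suffices, the reverse inclusion into the tangent space of the energy level set being automatic for $c=0$.
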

\begin{proof}
Let us consider $V$ in $\Lambda_1$ and $U_0$ in $\Wu_V\bigl(E(V),\bar{\xi}\bigr)\cap\partial\WslocZero{V}\bigl(E(V)\bigr)$ satisfying inequality \cref{condition_staying_at_min_dist_of_sssSym}. According to the definition \cref{def_local_unst_man_extended_up_to_time_xi} of $\Wu_V\bigl(E(V),\bar{\xi}\bigr)$, the point $U_0$ is of the form $(u,\dot u)(\xi)$ with $u$ a standing pulse such that $(u,\dot u)(0)=\hatwulocZero{V}(\bu)$ and $\xi$ in $[0,\bar{\xi}]$. According to the inclusion \cref{local_unstable_stable_manifold_included_in_exit_ball} satisfied by the local manifolds and the definition of $\nuTransvAsymmStandPulses(\bar{\xi},\varepsilon)$, the implication \cref{consequence_if_bu_xi_not_in_mmm} shows that $(\bu,\xi)$ belongs to $\mmm$. Thus, the image $\Phi\bigl((\bu,\xi),V\bigr)$ is well defined, and it remains to notice that the transversality of $\Phi$ with $\www$ exactly corresponds to the definition \cref{def:transverse_stand_pulse} of the transversality of a standing pulse. It thus follows from the definition of the set $\nuTransvAsymmStandPulses(\bar{\xi},\varepsilon)$ that $V$ belongs to this set. 
\end{proof}
The remaining part of the proof follows exactly the same arguments as in \cref{sec:generic_transversality_travelling_fronts,sec:generic_elementarity_sym_stand_pulses}, except for the exclusion of ``case 3'', which will require a slightly different ad hoc argument. 
\subsubsection{Checking hypothesis \texorpdfstring{\cref{item:thm_sard_smale_condition_regularity}}{\ref{item:thm_sard_smale_condition_regularity}} of \texorpdfstring{\cref{thm:Sard_Smale}}{Theorem \ref{thm:Sard_Smale}}}
By contrast with the previous sections, the ambient space $\nnn$ is now a level set of dimension $2d-1$ (instead of $\rr^{2d}$); however the computation is similar. Indeed, it follows from \cref{subsec:eigenspaces_and_dimensions} that, 
on the one hand, $\dim(\mmm) = \dim\bigl(\partial\Bu_{E_0}(r)\bigr) + 1 = d - m(e_0)$ and, on the other hand, $\dim(\www) = d-m(e_0)-1$ so that $\codim(\www) =d+ m(e_0)$. Thus hypothesis \cref{item:thm_sard_smale_condition_regularity} of \cref{thm:Sard_Smale} is fulfilled. 
\subsubsection{Checking hypothesis \texorpdfstring{\cref{item:thm_sard_smale_condition_transversality}}{\ref{item:thm_sard_smale_condition_transversality}} of \texorpdfstring{\cref{thm:Sard_Smale}}{Theorem \ref{thm:Sard_Smale}}}
Take $(m_2,V_2)$ in the set $\Phi^{-1}(\www)$. Let $(\bu_2,\xi_2)$ denote the components of $m_2$, and, for every real quantity $\xi$, let us write
\[
U_2(\xi) = \bigl(u_2(\xi),v_2(\xi)\bigr) = S_{V_2}\bigl(\xi,\hatwulocZero{V_2}(\bu_2)\bigr)
\,.
\]
The function $\xi\mapsto u_2(\xi)$ is the profile of a standing pulse, connecting $e(V_2)$ to itself, for the potential $V_2$, and, according to \cref{stable_unstable_man_transverse_reversibility_subspace,consequence_if_bu_xi_in_mmm_asymm}, this standing pulse is \emph{asymmetric}. In addition, according to \cref{exit_from_e_on_unstable_manifold,local_unstable_stable_manifold_included_in_exit_ball}, the quantity $\xi_2$ is \emph{positive}. Let $D\Phi$ denote the \emph{full} differential (with respect to $m$ and $V$) of $\Phi$ at the point $(m_2,V_2\bigr)$. 
Hypothesis \cref{item:thm_sard_smale_condition_transversality} of \cref{thm:Sard_Smale} follows from the next \cref{lem:reach_all_directions_asymm_stand_pulse}. 
\begin{lemma}[perturbation of the potential reaching a given direction]
\label{lem:reach_all_directions_asymm_stand_pulse}
For every nonzero vector $(\phi_0,\psi_0)\in\rr^d\times\rr^d$ belonging to $T_{U_2(\xi_2)}\nnn$, there exists $W$ in $\CkbFull$ such that 
\begin{equation}
\label{reach_all_directions_asymm_stand_pulse}
\langle D\Phi\cdot(0,W)\,|\,(\phi_0,\psi_0)\rangle \not = 0
\,,
\end{equation}
and such that $W$ satisfies the condition
\begin{equation}
\label{support_of_W_does_not_meet_stable_unstable_manifolds_asymm_stand_pulse}
\supp(W)\cap\widebar{B}_{\rr^d}\bigr(e(V_2),\rExit\bigr) = \emptyset
\,.
\end{equation}
\end{lemma}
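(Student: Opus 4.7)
The plan is to mirror the structure of the proofs of \cref{lem:reach_all_directions_orthogonal_to_UzeroPrime_of_xiZero,lem:reach_all_directions_symm_stand_pulse}, with the main new ingredient being the argument that rules out the degenerate ``case 3''. As in the previous two lemmas, I would first write $D\Phi\cdot(0,W)$ using the variation of constants formula along the trajectory $U_2(\cdot)$, so that inequality \cref{reach_all_directions_asymm_stand_pulse} is equivalent to
\[
\int_0^{\xi_2}\nabla W\bigl(u_2(\xi)\bigr)\cdot\psi(\xi)\,d\xi\neq 0\,,
\]
where $\bigl(\phi(\xi),\psi(\xi)\bigr)=T^*(\xi,\xi_2)\cdot(\phi_0,\psi_0)$ is the solution of the adjoint linearized system with value $(\phi_0,\psi_0)$ at time $\xi_2$.

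The next step is to produce an admissible interval on which to localize the perturbation. The constraint \cref{support_of_W_does_not_meet_stable_unstable_manifolds_asymm_stand_pulse} forces $\supp(W)$ to lie outside $\widebar B_{\rr^d}(e(V_2),\rExit)$; by the defining property \cref{exit_from_e_on_unstable_manifold} of $\rExit$, the trajectory $u_2$ must exit this ball, so there is a nonempty open interval of times on which $\abs{u_2(\xi)-e(V_2)}>\rExit$. Applying statement \cref{item_prop:values_reached_only_once_asymmetric_standing_pulse} of \cref{prop:values_reached_only_once} inside this interval yields a subinterval $\IOnce$ on which $u_2$ takes each value only once in $\rr$. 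I would then split into exactly the same three cases as in the symmetric case: (1) there exists $\xi^\ast\in\IOnce$ such that $\psi(\xi^\ast)$ is not collinear to $\dot u_2(\xi^\ast)$; (2) $\psi=\alpha\,\dot u_2$ on $\IOnce$ with $\alpha$ nonconstant; (3) $\psi=\alpha\,\dot u_2$ on $\IOnce$ with $\alpha$ a real constant. In cases 1 and 2, the bump-function constructions of \cref{lem:reach_all_directions_orthogonal_to_UzeroPrime_of_xiZero} transpose verbatim, localized around a point of $u_2(\IOnce)$; the injectivity provided by $\IOnce$ and the choice of $\IOnce$ outside $\widebar B_{\rr^d}(e(V_2),\rExit)$ ensure respectively that $W$ produces no spurious contributions and that condition \cref{support_of_W_does_not_meet_stable_unstable_manifolds_asymm_stand_pulse} holds.

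The main obstacle is case 3, which must be excluded by an argument specific to the present situation. With $c=0$, the adjoint system \cref{adjoint_linearized_system} reads $\dot\psi=-\phi$ and $\dot\phi=-D^2V_2(u_2)\psi$. If $\psi=\alpha\,\dot u_2$ holds on $\IOnce$ for some real constant $\alpha$, differentiating gives $\phi=-\dot\psi=-\alpha\,\ddot u_2=-\alpha\,\nabla V_2(u_2)$ on $\IOnce$. A direct check shows that $\bigl(-\alpha\,\nabla V_2(u_2(\cdot)),\alpha\,\dot u_2(\cdot)\bigr)$ is a solution of the adjoint system, so by ODE uniqueness it agrees with $(\phi,\psi)$ on all of $\rr$; evaluating at $\xi=\xi_2$ yields
\[
(\phi_0,\psi_0)=\alpha\bigl(-\nabla V_2(u_2(\xi_2)),\,\dot u_2(\xi_2)\bigr)\,.
\]
Now the hypothesis $(\phi_0,\psi_0)\in T_{U_2(\xi_2)}\nnn$ means that $(\phi_0,\psi_0)$ is orthogonal to the gradient $\nabla H_{V_2}\bigl(U_2(\xi_2)\bigr)=\bigl(-\nabla V_2(u_2(\xi_2)),\,\dot u_2(\xi_2)\bigr)$; this scalar product equals
\[
\alpha\Bigl(\abs{\nabla V_2\bigl(u_2(\xi_2)\bigr)}^2+\abs{\dot u_2(\xi_2)}^2\Bigr)=0\,.
\]
Up to shrinking the neighbourhoods, the only critical point of $V_2$ inside $B_{\rr^d}\bigl(e(V_2),\rExit\bigr)$ is $e(V_2)$ itself, so simultaneous vanishing of $\nabla V_2(u_2(\xi_2))$ and $\dot u_2(\xi_2)$ would force $U_2(\xi_2)=E(V_2)$, contradicting $U_2(\xi_2)\in\nnn$ (which excludes $E(V_2)$). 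Hence $\alpha=0$, forcing $(\phi_0,\psi_0)=0$ and contradicting the hypothesis. Case 3 is therefore impossible, and the combination with cases 1 and 2 yields a function $W\in\CkbFull$ satisfying both \cref{reach_all_directions_asymm_stand_pulse} and \cref{support_of_W_does_not_meet_stable_unstable_manifolds_asymm_stand_pulse}, completing the proof.
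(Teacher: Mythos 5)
Your proof is correct and follows the same overall architecture as the paper's: the variation-of-constants reduction of \cref{reach_all_directions_asymm_stand_pulse} to an integral inequality, the choice of an interval outside the exit ball $\widebar B_{\rr^d}(e(V_2),\rExit)$ to guarantee \cref{support_of_W_does_not_meet_stable_unstable_manifolds_asymm_stand_pulse}, the injectivity from \cref{prop:values_reached_only_once}, the three-case split, and the bump-function constructions for cases 1 and 2 imported from \cref{lem:reach_all_directions_orthogonal_to_UzeroPrime_of_xiZero}.

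Where you genuinely differ from the paper is the elimination of case 3, and your version is actually cleaner. The paper argues that $(\phi(\xi),\psi(\xi))$ remains tangent to the Hamiltonian level set for \emph{all} $\xi$, citing flow-invariance of the level set, and then tests orthogonality against $\nabla H_{V_2}(U_2(\xi))$ at $\xi$ in the working interval. That step is delicate, since it is the forward linearized flow --- not its adjoint $T^*$ --- that transports tangent spaces of invariant submanifolds; the quantity the adjoint flow conserves is the pairing with $\dot U_2$, not with $\nabla H_{V_2}(U_2)$. You instead observe that $\xi\mapsto\nabla H_{V_2}\bigl(U_2(\xi)\bigr)=\bigl(-\nabla V_2(u_2(\xi)),\dot u_2(\xi)\bigr)$ is itself an explicit solution of the adjoint system \cref{adjoint_linearized_system} (with $c=0$), so ODE uniqueness propagates the identity $(\phi,\psi)=\alpha\,\nabla H_{V_2}(U_2(\cdot))$ to $\xi=\xi_2$, where the tangency hypothesis $(\phi_0,\psi_0)\in T_{U_2(\xi_2)}\nnn$ applies directly and yields $\alpha\bigl(\abs{\nabla V_2(u_2(\xi_2))}^2+\abs{\dot u_2(\xi_2)}^2\bigr)=0$. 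This bypasses any claim about the adjoint flow preserving tangency. Your final step (the bracketed quantity is positive) is correct; a marginally more economical route than appealing to the absence of other critical points in the exit ball is to note that its vanishing would make $U_2(\xi_2)$ an equilibrium, forcing the pulse to be constant.

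One cosmetic remark: the statement uses $(\phi_0,\psi_0)$ while the paper's proof switches to $(\phi_2,\psi_2)$; you consistently stay with $(\phi_0,\psi_0)$, which is preferable.
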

\begin{proof}
The proof is similar to those of \cref{lem:reach_all_directions_orthogonal_to_UzeroPrime_of_xiZero,lem:reach_all_directions_symm_stand_pulse}. Let $(\phi_2,\psi_2)$ be a nonzero vector in $\rr^d\times\rr^d$ belonging to $T_{U_2(\xi_2)}\nnn$. Let $W$ be a function in $\CkbFull$, and let us assume that condition \cref{support_of_W_does_not_meet_stable_unstable_manifolds_asymm_stand_pulse} holds. Let us again use the notation $T(\xi,\xi')$ to denote the family of evolution operators obtained by integrating the linearized differential system \cref{linearized_differential_system} (for the potential function $V_2$, and for a speed equal to $0$) between the times $\xi$ and $\xi'$. For every time $\xi$, let $T^*(\xi,\xi_2)$ denote the adjoint operator of $T(\xi,\xi_2)$, and let 
\[
\bigl(\phi(\xi),\psi(\xi)\bigr) = T^*(\xi,\xi_2)\cdot(\phi_2,\psi_2)
\,.
\]
Using the same computations as in  \cref{lem:reach_all_directions_orthogonal_to_UzeroPrime_of_xiZero,lem:reach_all_directions_symm_stand_pulse},
it follows from the inclusion \cref{local_unstable_stable_manifold_included_in_exit_ball} and the empty intersection \cref{support_of_W_does_not_meet_stable_unstable_manifolds_asymm_stand_pulse} that inequality \cref{reach_all_directions_asymm_stand_pulse} reads
\begin{equation}
\label{reach_all_directions_condition_with_psi_asymm_stand_pulse}
\int_0^{\xi_2}\nabla W\bigl(u_2(\xi)\bigr)\cdot \psi(\xi) \, d\xi \not= 0
\,.
\end{equation}
Observe that, according to inequality \cref{exit_from_e_on_unstable_manifold}, there exists a (nonempty) open interval $I$ included in $(0,\xi_2)$ and such that, for every $\xi$ in $I$, $u_2(\xi)\not\in\widebar{B}_{\rr^d}\bigl(e(V_2),\rExit\bigr)$. 
According to \cref{lem:equivalent_def_symm_hom_orbit}, the function $\xi\mapsto \dot u_2(\xi)$ does not vanish on $\rr$, thus a fortiori neither on $I$. 
As in \cref{subsubsec:checking_hyp_ii_Sard_Smale_thm,subsubsec:checking_hyp_ii_Sard_Smale_thm_sym_stand_pulse}, three cases must be considered for the construction of the perturbation $W$. 
\paragraph*{Case 1.} There exists a time $\xi^\dag$ in $I$ such that $\psi(\xi^\dag)$ is not collinear to $\dot u_2(\xi^\dag)$.

The same construction as in the first case of the proof of \cref{lem:reach_all_directions_orthogonal_to_UzeroPrime_of_xiZero} (or as in the first case of the proof of \cref{lem:reach_all_directions_symm_stand_pulse}) can then be carried out. 

\paragraph*{Case 2.} For every $\xi$ in $I$, $\psi(\xi)=\alpha(\xi) \dot u_2(\xi)$ with $\alpha(\cdot)$ not constant.

Again, the same construction as in the second case of the proof of \cref{lem:reach_all_directions_orthogonal_to_UzeroPrime_of_xiZero} (or as in the first case of the proof of \cref{lem:reach_all_directions_symm_stand_pulse}) can then be carried out. 

\paragraph*{Case 3.} For every $\xi$ in $(\xi_-,\xiTurn)$, $\psi(\xi)=\alpha \dot u_2(\xi)$ for some real (constant) quantity $\alpha$. 

As in \cref{subsubsec:checking_hyp_ii_Sard_Smale_thm,subsubsec:checking_hyp_ii_Sard_Smale_thm_sym_stand_pulse}, this third case has to be precluded by a specific argument. It follows from the adjoint linearized system \cref{adjoint_linearized_system} satisfied by $\phi$ and $\psi$ (with $c_0$ equal to zero) that, for every $\xi$ in $I$,
\begin{equation}\label{eq_case_3}
\phi(\xi) = -\dot\psi(\xi) = - \alpha\ddot u_2(\xi) = - \alpha\nabla V_2(u_2(\xi))
\,.
\end{equation}
Besides, since $(\phi_2,\psi_2)$ was assumed to belong to $T_{U_2(\xi_2)}\nnn$, it follows that $\bigl(\phi(\xi),\psi(\xi)\bigr)$ belongs to $T_{U_2(\xi)}H_{V_2}^{-1}\Bigl(H_{V_2}\bigl(E(V_2)\bigr)\Bigr)$ for all $\xi$ in $\rr$ (the level set of the energy is invariant by the flow). The orthogonal space of the tangent space to the level set $\nnn$ is a line spanned by the gradient of the Hamiltonian $\nabla H_{V_2}(U_2)=(-\nabla V_2(u_2(\xi)),\dot u_2(\xi))$. Thus, 
the condition $(\phi_2,\psi_2)\in T_{U_2(\xi_2)}\nnn$ reads 
\[
(\phi_2,\psi_2)\perp (-\nabla V_2(u_2(\xi)),\dot u_2(\xi))\,,
\quad\text{that is}\quad
\alpha\bigl( \nabla V_2(u_2)^2 + \dot u_2^2 \bigr) = 0
\,.
\]
This implies $\alpha=0$ and thus $(\phi,\psi)\equiv(0,0)$, a contradiction with the assumptions of \cref{lem:reach_all_directions_asymm_stand_pulse}. 

In summary, the third case cannot occur and, in both other cases, the same constructions as in the proofs of \cref{lem:reach_all_directions_orthogonal_to_UzeroPrime_of_xiZero,lem:reach_all_directions_symm_stand_pulse} can be carried out, leading to a perturbation $W$ satisfying the empty intersection \cref{support_of_W_does_not_meet_stable_unstable_manifolds_asymm_stand_pulse} and inequality \cref{reach_all_directions_condition_with_psi_asymm_stand_pulse} (and therefore also inequality \cref{reach_all_directions_asymm_stand_pulse}). 
\end{proof}
\subsubsection{Conclusion}
\label{subsubsec:conclusion_proof_prop_local_generic_transversality_standing_pulse}
\begin{proof}[Proof of \cref{prop:transv_asym_stand_pulses_bded_length_away_symmetry_subspace}]
To complete the proof of \cref{prop:transv_asym_stand_pulses_bded_length_away_symmetry_subspace} amounts to prove that the set $\nuTransvAsymmStandPulses(\bar{\xi},\varepsilon)$ is dense in $\nu$. 
It follows from \cref{lem:reach_all_directions_asymm_stand_pulse} that both hypotheses \cref{item:thm_sard_smale_condition_regularity,item:thm_sard_smale_condition_transversality} of \cref{thm:Sard_Smale} are fulfilled for the function $\Phi$ defined in \cref{def_Phi_asym_stand_pulse}. The conclusion of this theorem ensures that there exists a generic subset $\LambdaGen$ of $\Lambda_1$ such that, for every $V$ in $\LambdaGen$, the function $\Phi(\cdot,V)$ is transverse to $\www$. According to \cref{prop:equivalence_transversality_asym_stand_pulse}, the set $\nuTransvAsymmStandPulses(\bar{\xi},\varepsilon)$ is a superset of $\LambdaGen$; in particular, there exists potentials in $\nuTransvAsymmStandPulses(\bar{\xi},\varepsilon)$ that are arbitrarily close to $V_1$. Since $V_1$ was any potential in $\nu$, this proves the intended density. \Cref{prop:transv_asym_stand_pulses_bded_length_away_symmetry_subspace} is proved.
\end{proof}
As shown at the end of \cref{subsec:away}, \cref{prop:transv_asym_stand_pulses_bded_length_away_symmetry_subspace} implies 
\cref{prop:local_generic_transversality_standing_pulse2}, which in turn implies \cref{prop:global_generic_transversality_asym_stand_pulses}. 
\subsection{Transversality of symmetric standing pulses?}
\label{subsec:failure_generic_transversality_symm_stand_pulses}
As it stands, the proof of the generic transversality of asymmetric standing pulses provided above does not directly apply to symmetric ones. Indeed, for a symmetric standing pulse $\xi\mapsto u(\xi)$, with (say) turning time $0$, the condition corresponding to \cref{reach_all_directions_condition_with_psi} or \cref{reach_all_directions_condition_with_psi_asymm_stand_pulse} reads
\[
\int_{-\bar{\xi}}^{\bar{\xi}} \nabla W\bigl(u(\xi)\bigr)\cdot \psi(\xi)\, d\xi \not= 0
\ \ \text{or equivalently}\ \ 
\int_{-\bar{\xi}}^0 \nabla W\bigl(u(\xi)\bigr)\cdot \bigl(\psi(\xi) + \psi(-\xi)\bigr) \, d\xi \not = 0
\,,
\]
where $\bar{\xi}$ is a large enough positive quantity. This condition cannot be fulfilled if the function $\xi\mapsto \psi(\xi)$ is odd and, due to the symmetry of the adjoint linear equation
\[
\ddot \psi(\xi) = D^2 V \bigl(u(\xi)\bigr)\cdot \psi(\xi)
\,,
\]
this happens as soon as $\psi(0)$ vanishes. This case, corresponding to the degeneracy of the first order derivative with respect to perturbations of the potential, can therefore not be excluded. Possibly, the second order derivative could be investigated but the computation goes beyond the scope of this paper. For that reason, the generic transversality of symmetric standing pulses is not established here and remains, to our best knowledge, an open question.
\section{Generic non-existence of standing fronts}
\label{sec:gen_non_existence_standing_fronts}
Let us take and fix a positive quantity $R$. Due to the Hamiltonian invariance, precluding the existence of standing fronts is a simple task. 
\begin{proposition}
\label{prop:gen_non_existence_standing_fronts}
There exists a dense open subset of $\vvvQuad{R}$ such that, for every potential $V$ in this subset, there is no standing front for this potential. 
\end{proposition}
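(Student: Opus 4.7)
The plan is to combine the Hamiltonian obstruction on standing fronts with a classical open-density property for potentials with pairwise distinct critical values. Specifically, I will consider the subset
\[
\mathcal{D}_R = \Bigl\{V\in\vvvQuadMorse{R} : V(e)\neq V(e')\text{ for every pair of distinct critical points } e,e'\text{ of }V\Bigr\}
\]
(with $\vvvQuadMorse{R}$ as in \cref{def_vvv_Quad_R_Morse}) and show both that $\mathcal{D}_R$ is open and dense in $\vvvQuad{R}$ and that it already satisfies the conclusion of the proposition.

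The last implication is the immediate one. For the profile $\xi\mapsto u(\xi)$ of any standing front connecting two distinct critical points $e_-$ and $e_+$, \cref{lem:derivative_goes_to_zero_at_both_ends} gives $\dot u(\xi)\to 0$ as $\xi\to\pm\infty$, and the conservation \cref{time_derivative_Hamiltonian} of the Hamiltonian $H_V$ (at $c=0$) then forces $V(e_-)=V(e_+)$, contradicting membership of $V$ in $\mathcal{D}_R$.

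Openness is standard. The critical points of any $V_0\in\mathcal{D}_R$ are finite in number (being isolated by the Morse property and contained in the compact ball $\widebar{B}_{\rr^d}(0,R)$, since outside of this ball $V_0$ coincides with $|u|^2/2$, whose only critical point is the origin). The implicit function theorem applied to $\nabla V=0$ at each of these non-degenerate points yields continuous local maps $V\mapsto e_i(V)$, while a positive lower bound on $|\nabla V_0|$ away from small neighbourhoods of the $e_i$ prevents any extra critical point from appearing for $V$ close enough to $V_0$. Both the Morse property and the pairwise distinctness of the values $V\bigl(e_i(V)\bigr)$ then follow by continuity.

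Density will follow from a finite-parameter perturbation. Given $V\in\vvvQuadMorse{R}$ (a dense subset of $\vvvQuad{R}$) with critical points $e_1,\dots,e_n\in B_{\rr^d}(0,R)$, I will pick disjoint smooth bump functions $\chi_1,\dots,\chi_n$ supported in small neighbourhoods of $e_1,\dots,e_n$ contained in $B_{\rr^d}(0,R)$, with $\chi_i(e_i)=1$ and $D\chi_i(e_i)=0$, and form $V_a=V+\sum_{i=1}^n a_i\chi_i$. Each $V_a$ still lies in $\vvvQuad{R}$, and for $a\in\rr^n$ small enough the implicit function theorem produces a unique critical point $e_i(a)$ of $V_a$ near each $e_i$ and no new critical points elsewhere; a short computation (using $\nabla V(e_i)=0$, $\chi_i(e_i)=1$ and $\chi_k(e_i)=0$ for $k\neq i$) yields $\partial_{a_j}\bigl[V_a\bigl(e_i(a)\bigr)\bigr]_{a=0}=\delta_{ij}$. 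The map $a\mapsto\bigl(V_a(e_i(a))\bigr)_{1\le i\le n}$ is thus a local diffeomorphism at $a=0$, so arbitrarily small $a$ can be chosen making the critical values pairwise distinct, giving a potential in $\mathcal{D}_R$ arbitrarily close to $V$. I do not anticipate any serious obstacle in this argument.
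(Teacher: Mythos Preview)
Your proposal is correct and follows essentially the same approach as the paper: both identify the set of Morse potentials in $\vvvQuad{R}$ with pairwise distinct critical values as the required dense open subset, and both use the conservation of $H_V$ along a standing-front profile to rule out such fronts. Your argument is simply more detailed than the paper's, which only sketches the density step by saying one may apply ``an arbitrarily small localized perturbation around each of these critical points''.
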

\begin{proof}
Let us consider the dense open subset $\vvvQuadMorse{R}$ of $\vvvQuad{R}$ containing the functions of $\vvvQuad{R}$ satisfying the Morse property (this notation was introduced in \cref{def_vvv_Quad_R_Morse}), and let $V$ denote a potential in $\vvvQuadMorse{R}$. The number of critical points of such a potential is finite, and, up to applying to $V$ an arbitrarily small localized perturbation around each of these critical points, it may be assumed that each of these critical points belongs to a level set of $V$ containing no other critical point. This property is open and dense in $\vvvQuadMorse{R}$, thus in $\vvvQuad{R}$, and, since the Hamiltonian $H_V$ defined in \cref{Hamiltonian} is constant along the profile of a standing front, it prevents the existence of a standing front. \Cref{prop:gen_non_existence_standing_fronts} is proved. 
\end{proof}
\section{Proof of the main results}
\label{sec:proof_main}
\Cref{prop:global_generic_transversality_travelling_fronts,prop:global_generic_elementarity_sym_stand_pulses,prop:global_generic_transversality_asym_stand_pulses,prop:gen_non_existence_standing_fronts} show the genericity of the properties considered in \cref{thm:main}, but only inside the space $\vvvQuad{R}$ of the potentials that are quadratic past some radius $R$. 
Working in this last space is easier because it is a second countable Banach space and the flows associated to its potentials are global. In this \namecref{sec:proof_main}, the arguments will be adapted to obtain the genericity of the same properties in the space $\vvvFull=\ccc^{k+1}(\rr^d,\rr)$ of all potentials, endowed with the extended topology (see \cref{subsec:space_of_potentials}).
\subsection{Proof of conclusion \texorpdfstring{\cref{item_thm:main_travelling_front} of \cref{thm:main}}{\ref{item_thm:main_travelling_front} of Theorem \ref{thm:main}}}
\label{subsec:proof_thm_main_for_nonzero_speeds}
%
%
Let us recall the notation $\fff_V$ introduced in \cref{notation_fff_V}, and, for every positive quantity $R$, let us consider the set
\begin{equation}
\label{notation_fff_V_R}
\fff_{V,R} = \left\{ (c,u)\in\fff_V : \sup_{\xi\in{\rr}}\abs{u(\xi)}\le R \right\}
\,
\end{equation}
of the travelling fronts of $\fff_V$ (invading a minimum point of $V$) with a profile contained in $\widebar{B}_{\rr^d}(0,R)$. 
As shown thereafter, the following proposition yields conclusion \cref{item_thm:main_travelling_front} of \cref{thm:main}. 
\begin{proposition}
\label{prop:genericity_transv_tf_up_to_R}
For every positive quantity $R$, there exists a generic subset $\vvvFullTransvfff{R}$ of $\vvvFull$ such that, for every potential function $V$ in this subset, $V$ is a Morse function and every travelling front $(c,u)$ in $\fff_{V,R}$ is transverse. 
\end{proposition}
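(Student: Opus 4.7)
The plan is to reduce Proposition \ref{prop:genericity_transv_tf_up_to_R} to Proposition \ref{prop:global_generic_transversality_travelling_fronts} via the restriction and extension tools of \cref{subsec:topological_properties_restriction_maps}. I fix $R_0=R+1$ and $R_1=R+2$, and start with the following \emph{locality} claim: for every $V\in\vvvFull$, the set $\fff_{V,R}$ and the transversality of each of its elements depend only on the restriction $V|_{\widebar{B}_{\rr^d}(0,R_0)}$. Indeed, any $(c,u)\in\fff_{V,R}$ has profile $u$ contained in $\widebar{B}_{\rr^d}(0,R)\subsetneq B_{\rr^d}(0,R_0)$ with limits $e_\pm\in\widebar{B}_{\rr^d}(0,R)$, so the differential system \cref{trav_wave_system_order_2} governing $u$, the non-degeneracy and minimum character of $e_\pm$, and the local stable/unstable manifolds used in \cref{def:transverse_travelling_front}---which by \cref{prop:loc_stab_unstab_man_c_positive} may be chosen so small that their position components lie inside $\widebar{B}_{\rr^d}(0,R_0)$---all depend solely on $V|_{\widebar{B}_{\rr^d}(0,R_0)}$. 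Consequently there is a subset $\widetilde{T}\subseteq\vvvRes{R_0}$ such that
\[
\{V\in\vvvFull:\text{every $(c,u)\in\fff_{V,R}$ is transverse}\}=\res_{R_0,\infty}^{-1}(\widetilde{T}).
\]

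I next apply \cref{prop:global_generic_transversality_travelling_fronts} with radius $R_1$ to obtain a generic subset $G_1$ of $\vvvQuad{R_1}$ on which every travelling front in $\fff_V$ is transverse. Since $\fff_{V,R}\subseteq\fff_V$, the locality claim yields $G_1\subseteq\res_{R_0,R_1}^{-1}(\widetilde{T})$, so this saturated preimage is generic in $\vvvQuad{R_1}$. Combining \cref{cor:truncation_extension_of_potentials} (open-iff-open and dense-iff-dense) with the product decomposition $\vvvQuad{R_1}\cong\vvvRes{R_0}\times\ker(\res_{R_0,R_1})$ afforded by the continuous section $\ext_{R_1,R_0}$, and invoking a standard Baire category argument on this decomposition, I deduce that $\widetilde{T}$ is itself generic in $\vvvRes{R_0}$. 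Applying \cref{cor:truncation_extension_of_potentials} a second time, now with $R'=\infty$, transfers each dense open witnessing this genericity into a dense open of $\vvvFull$, whence $\res_{R_0,\infty}^{-1}(\widetilde{T})$ is generic in $\vvvFull$.

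To conclude, I intersect with the classical generic subset of Morse functions of $\vvvFull$ to obtain the desired subset $\vvvFullTransvfff{R}$. The delicate step is the middle one: \cref{cor:truncation_extension_of_potentials} directly provides openness and density, but upgrading to a countable intersection of dense open subsets for a saturated set requires exploiting the product structure offered by $\ext_{R_1,R_0}$ via a standard Baire category argument, together with the fact that the choice of $R_0$ strictly larger than $R$ is precisely what makes the locality claim robust under small perturbations of the equilibria.
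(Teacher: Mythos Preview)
Your approach is correct and genuinely different from the paper's. The paper explicitly discusses, and then sidesteps, exactly the difficulty you handle: it observes that \cref{cor:truncation_extension_of_potentials} transfers dense open sets but not generic sets across the restriction maps, and therefore goes back \emph{inside} the proof of \cref{prop:global_generic_transversality_travelling_fronts} to build explicit dense open sets $\ooo_{\VZeroeMinusZeroePlusZerocZeroN}$ (indexed by a bounded ``travel time'' $N$) whose membership depends only on the restriction of $V$ to the smaller ball; it then covers $\vvvQuadMorse{(R+1)}\times(0,+\infty)$ by countably many local patches and intersects. Your argument, by contrast, treats \cref{prop:global_generic_transversality_travelling_fronts} as a black box and exploits the product splitting $\vvvQuad{R_1}\cong\vvvRes{R_0}\times\ker(\res_{R_0,R_1})$: since $\res_{R_0,R_1}^{-1}(\widetilde{T})=\widetilde{T}\times\ker(\res_{R_0,R_1})$ is comeager in the product, the Kuratowski--Ulam theorem (both factors are separable completely metrizable, hence second-countable Baire) gives that $\widetilde{T}$ is comeager in $\vvvRes{R_0}$, after which \cref{cor:truncation_extension_of_potentials} pulls back each witnessing dense open to $\vvvFull$.

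Two comments. First, you should name the Kuratowski--Ulam theorem rather than calling it a ``standard Baire category argument'': this step is the crux, it is \emph{not} obtainable by a naive slicing (a dense open subset of a product can have a non-dense slice), and it is precisely the ingredient the paper's preliminary discussion does not invoke. Second, your locality claim is correct but slightly misargued: the transversality in \cref{def:transverse_travelling_front} involves the \emph{global} stable and unstable manifolds, not the local ones; what makes the claim true is that the tangent spaces to $\Wu_{c,V}(E_-)$ and $\Ws_{c,V}(E_+)$ \emph{along the orbit} $U(\rr)$ are determined by the linearized equation along that orbit, hence by $D^2V$ on $u(\rr)\subset\widebar{B}_{\rr^d}(0,R)$. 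Your approach is shorter and more conceptual; the paper's is more explicit and avoids appealing to Kuratowski--Ulam, at the cost of reopening the proof of \cref{prop:global_generic_transversality_travelling_fronts}.
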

\begin{proof}[Proof that \cref{prop:genericity_transv_tf_up_to_R} yields conclusion \texorpdfstring{\cref{item_thm:main_travelling_front} of \cref{thm:main}}{\ref{item_thm:main_travelling_front} of Theorem \ref{thm:main}}]
The set
\[
\bigcap_{R\in\nn^*}\vvvFullTransvfff{R}
\,,
\]
is a countable intersection of generic subsets of $\vvvFull$ and is therefore again a generic subset of $\vvvFull$. For every potential function $V$ in this set, $V$ is a Morse function and every travelling front in $\fff_V$ belongs to $\fff_{V,R}$ as soon as $R$ is large enough, and is therefore, according to the property of the set $\vvvFullTransvfff{R}$ stated in \cref{prop:genericity_transv_tf_up_to_R}, transverse. Statement \cref{item_thm:main_travelling_front} of \cref{thm:main} is proved. 
\end{proof}
The aim of \cref{subsec:proof_thm_main_for_nonzero_speeds} is thus to prove \cref{prop:genericity_transv_tf_up_to_R}. 
Before doing so, here are a few preliminary comments. Let $R$ be a positive quantity. \Cref{prop:global_generic_transversality_travelling_fronts} states that there exists a generic subset $\vvvQuadTransvfff{R}$ of $\vvvQuad{R}$ such that, for every potential $\Vquad$ in this subset, all travelling fronts in $\fff_{\Vquad}$ are transverse. However, due to the constraint at $|u|=R$, the extension to $\rr^d$ of all the truncations of these potentials in $\widebar{B}_{\rr^d}(0,R)$ is meagre. The idea is to take some margin: consider the generic subset $\vvvQuadTransvfff{(R+1)}$ of $\vvvQuad{(R+1)}$ and, using the notation introduced in definition \cref{def_res_R_prime_R}, consider the set
\begin{equation}
\label{first_attempt_application_res_res_minus_one}
\res_{R,\infty}^{-1}\circ\res_{R,(R+1)}(\vvvQuadTransvfff{(R+1)})
\,.
\end{equation}
For every potential $\Vfull$ in this set, all travelling fronts in $\fff_{\Vfull,R}$ are transverse; indeed, this property depends only on the values of $\Vfull$ inside the ball $\widebar{B}_{\rr^d}(0,R)$, where $\Vfull$ must be identically equal to some potential $\Vquad$ of $\vvvQuadTransvfff{(R+1)}$. It is tempting to look for an extension of \cref{cor:truncation_extension_of_potentials} to generic subsets, which would yield the genericity of the set \cref{first_attempt_application_res_res_minus_one}. Unfortunately, this corollary definitely applies to open dense subsets, and not to generic ones. Pursuing further in this direction, observe that, since $\vvvQuadTransvfff{(R+1)}$ is a generic subset of $\vvvQuad{(R+1)}$, there exists a countable family $(\ooo_N)_{N\in\nn}$ of dense open subsets of $\vvvQuad{(R+1)}$ such that 
\begin{equation}
\label{not_suitable_countable_intersection_of_dense_open_subsets_of_vvvQuadR}
\bigcap_{N\in\nn}\ooo_N \subset \vvvQuadTransvfff{(R+1)}
\,,
\end{equation}
leading to
\[
\res_{R,\infty}^{-1}\circ\res_{R,(R+1)}\Big(\bigcap_{N\in\nn}\ooo_N\Big) ~ \subset ~\res_{R,\infty}^{-1}\circ\res_{R,(R+1)}(\vvvQuadTransvfff{R+1})~. 
\]
According to general properties of functions, the following inclusion holds:
\begin{equation}
\label{inclusion_we_would_like_to_be_an_equality}
\res_{R,(R+1)}\Big(\bigcap_{N\in\nn}\ooo_N\Big) \subset \bigcap_{N\in\nn}\res_{R,(R+1)}(\ooo_N)
\,.
\end{equation}
If this inclusion was an equality, then, still according to general properties of functions, the following equality would hold:
\[
\res_{R,\infty}^{-1}\circ\res_{R,(R+1)}\Big(\bigcap_{N\in\nn}\ooo_N\Big) = \bigcap_{N\in\nn}\res_{R,\infty}^{-1}\circ\res_{R,(R+1)}(\ooo_N)
\,,
\]
and, since according to \cref{cor:truncation_extension_of_potentials} the right-hand side of this equality is a countable intersection of dense open subsets of $\vvvFull$, the intended conclusion that the set \cref{first_attempt_application_res_res_minus_one} is generic in $\vvvFull$ would follow. Unfortunately, \cref{prop:global_generic_transversality_travelling_fronts} provides no clue about the sets $\ooo_N$ and a strict inclusion in \cref{inclusion_we_would_like_to_be_an_equality} cannot be precluded. However, let us make the following key observation, which enlightens the remaining of the proof: if the property ``a given potential $V$ belongs to $\ooo_N$'' only depends on the values of $V$ inside the ball $\widebar{B}_{\rr^d}(0,R)$, then inclusion \cref{inclusion_we_would_like_to_be_an_equality} \emph{is} actually an equality. 

The main step in the proof is thus to construct dense subsets $\ooo_N$ of $\vvvQuad{(R+1)}$ such that: 
\begin{enumerate}
\item for every potential $\Vquad$ in $\bigcap_n \ooo_N$, every travelling front in $\fff_{V,R}$ is transverse, 
\item and the property ``a given potential $V$ belongs to $\ooo_N$'' only depends on the values of $V$ inside the ball $\widebar{B}_{\rr^d}(0,R)$.
\end{enumerate}
\begin{proof}[Proof of \cref{prop:genericity_transv_tf_up_to_R}]
\renewcommand{\qedsymbol}{}
As above, let $R$ denote a positive quantity. Let $V_0$ denote a potential function in $\vvvQuad{(R+1)}$, let $e_{-,0}$ and $e_{+,0}$ denote a non-degenerate critical point and a non-degenerate minimum point of $V_0$ and let $c_0$ denote a positive speed. Let us consider the neighbourhoods $\nu_{\VZeroeMinusZeroePlusZerocZero}$ of $V_0$ in $\vvvQuad{(R+1)}$ and $\ccc_{\VZeroeMinusZeroePlusZerocZero}$ of $c_0$ in $(0,+\infty)$ provided by \cref{prop:local_generic_transversality_travelling_fronts_given_critical_points_speed} for these objects. Recall that those neighbourhoods are the ones from which, for every $V$ in $\nu_{\VZeroeMinusZeroePlusZerocZero}$ and every $c$ in $\ccc_{\VZeroeMinusZeroePlusZerocZero}$, the functions $\hatwuloc{c}{V}$, the sets $\mmm$ and $\www$ and the functions $\Phiu$ and $\Phis$ and $\Phi$ were defined in \cref{subsec:proof_local_gen}. Up to replacing the neighbourhood $\nu_{\VZeroeMinusZeroePlusZerocZero}$ by its interior, we may assume that it is \emph{open} in $\vvvQuad{(R+1)}$. Similarly, we may assume that $\ccc_{\VZeroeMinusZeroePlusZerocZero}$ is \emph{compact} in $\rr$. Let $N$ denote a non negative integer and let us consider the set
\begin{equation}
\label{def_mmm_N}
\mmm_N = \bbbu\times\bbbs\times(-\infty,N]\times\ccc_{\VZeroeMinusZeroePlusZerocZero} = \bigl\{(\bu,\bs,\xi,c) \in \mmm : \xi\le N\bigr\}
\,.
\end{equation}
As in \cref{subsec:proof_local_gen}, let us define $\nnn$ as $(\rr^{2d})^2$, and let us consider the set
\begin{equation}
\label{def_ooo_VZeroeMinusZeroePlusZerocZeroN}
\begin{aligned}
&\ooo_{\VZeroeMinusZeroePlusZerocZeroN} = \Bigl\{
V\in\nu_{\VZeroeMinusZeroePlusZerocZero}: \Phi\bigl(\mmm_N,V\bigr) \text{ is transverse to $\www$ in $\nnn$}
\Bigr\}
\,.
\end{aligned}
\end{equation}
As shown in \cref{prop:equivalence_transversality}, this set $\ooo_{\VZeroeMinusZeroePlusZerocZeroN}$ is made of the potential functions $V$ in $\nu_{\VZeroeMinusZeroePlusZerocZero}$ such that every profile $\xi\mapsto u(\xi)$ of a front travelling at a speed $c$ in $\ccc_{\VZeroeMinusZeroePlusZerocZero}$ and connecting $e_-(V)$ to $e_+(V)$ for this potential, and connecting $\partial\Wuloc{c}{V}\bigl(E_-(V)\bigr)$ to $\partial\Wsloc{c}{V}\bigl(E_+(V)\bigr)$ in a time not larger than $N$, is transverse. 
\end{proof}
\begin{lemma}
\label{lem:ooo_open_dense}
The set $\ooo_{\VZeroeMinusZeroePlusZerocZeroN}$ is a dense open subset of $\nu_{\VZeroeMinusZeroePlusZerocZero}$. 
\end{lemma}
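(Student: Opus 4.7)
The plan is to handle openness and density separately. Density is an immediate consequence of \cref{prop:local_generic_transversality_travelling_fronts_given_critical_points_speed}: that proposition already provides a generic (hence dense) subset $\nuThingGen{\VZeroeMinusZeroePlusZerocZero}$ of $\nu_{\VZeroeMinusZeroePlusZerocZero}$ such that, for every $V$ in $\nuThingGen{\VZeroeMinusZeroePlusZerocZero}$, $\Phi(\cdot,V)$ is transverse to $\www$ on all of $\mmm$. Since transversality on $\mmm$ trivially entails transversality on the subset $\mmm_N$, one has $\nuThingGen{\VZeroeMinusZeroePlusZerocZero}\subset\ooo_{\VZeroeMinusZeroePlusZerocZeroN}$, which proves density.

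The substantive content is thus openness, and the main obstacle is the non-compactness of $\mmm_N$ along the $\xi$ axis. Fixing $V^*\in\ooo_{\VZeroeMinusZeroePlusZerocZeroN}$, I would first establish the following uniform compactness statement: there exist $M>0$ and an open neighbourhood $\nu''$ of $V^*$ in $\nu_{\VZeroeMinusZeroePlusZerocZero}$ such that, for every $V\in\nu''$,
\[
\Phi(\cdot,V)^{-1}(\www)\cap\mmm_N~\subset~K:=\bbbu\times\bbbs\times[-M,N]\times\ccc_{\VZeroeMinusZeroePlusZerocZero}.
\]
The reason is that, as $\xi\to-\infty$, the point $\Phiu(\bu,\xi,c,V)=S_{c,V}\bigl(\xi,\hatwuloc{c}{V}(\bu)\bigr)$ converges to $E_-(V)$ uniformly in $(\bu,c,V)\in\bbbu\times\ccc_{\VZeroeMinusZeroePlusZerocZero}\times\nu''$, as follows from the exponential estimate encoded in the norm $\normu{\cdot}$ of \cref{normu_norms_characterization} together with the smooth dependence of $\hatwuloc{c}{V}$ on $(c,V)$ given by \cref{prop:loc_stab_unstab_man_c_positive}; meanwhile, $\Phis(\bs,c,V)\in\partial\Wsloc{c}{V}\bigl(E_+(V)\bigr)$ remains at a positive distance from $E_-(V)$, thanks to the empty intersection \cref{pi_pos_of_local_unstable_stable_man_do_not_intersect}. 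The compactness of $\bbbu$, $\bbbs$ and $\ccc_{\VZeroeMinusZeroePlusZerocZero}$ then produces the desired uniform bound $M$.

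Once uniform compactness is in hand, openness follows from a standard finite-covering argument. Transversality is an open condition in $(m,V)$ because $D\Phi$ depends continuously on its arguments; thus each point of the compact set $\Phi(\cdot,V^*)^{-1}(\www)\cap K$ has an open neighbourhood in $K\times\nu''$ along which transversality persists. Finitely many such neighbourhoods cover this compact set; their projections to $K$ form an open subset $\ooo'$ of $K$. On the compact complement $K\setminus\ooo'$, which is disjoint from $\Phi(\cdot,V^*)^{-1}(\www)$, continuity of $\Phi$ ensures that $\Phi(\cdot,V)^{-1}(\www)$ also stays disjoint from $K\setminus\ooo'$ for $V$ in a suitable neighbourhood of $V^*$. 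Intersecting this neighbourhood with the finitely many $V$-neighbourhoods produced by the finite cover, and with $\nu''$, yields a neighbourhood of $V^*$ entirely contained in $\ooo_{\VZeroeMinusZeroePlusZerocZeroN}$. The main difficulty is the uniform compactness; everything else is routine.
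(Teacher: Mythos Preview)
Your proof is correct and follows essentially the same approach as the paper: density from \cref{prop:local_generic_transversality_travelling_fronts_given_critical_points_speed}, openness from compactness in the $\xi$-variable. The paper's argument is slightly more direct: instead of bounding $\xi$ below by some $-M$ via exponential estimates, it observes from \cref{pi_pos_of_local_unstable_stable_man_do_not_intersect} that any $m\in\Phi(\cdot,V)^{-1}(\www)\cap\mmm_N$ must have $\xi>0$ (since for $\xi\le 0$ the point $\Phiu(\bu,\xi,c,V)$ lies in $\Wuloc{c}{V}\bigl(E_-(V)\bigr)$, whose position projection is disjoint from that of $\Wsloc{c}{V}\bigl(E_+(V)\bigr)$), and then runs a sequential closedness argument on the compact set $\bbbu\times\bbbs\times[0,N]\times\ccc_{\VZeroeMinusZeroePlusZerocZero}$.
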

\begin{proof}[Proof of \cref{lem:ooo_open_dense}]
The density is a direct consequence of \cref{prop:local_generic_transversality_travelling_fronts_given_critical_points_speed} which states that, generically with respect to $V$ in $\nu_{\VZeroeMinusZeroePlusZerocZero}$, the whole image of $\mmm$ by the map $m\mapsto\Phi(m,V)$ is transverse to $\www$. To prove the openness, let us argue as in \cref{subsec:openness_neighb_transv_asym_stand_pulses}.
Let us consider a sequence $(V_n)_{n\in\nn}$ of potentials in $\nu_{\VZeroeMinusZeroePlusZerocZero}$ converging to a potential $V_\infty$ in $\nu_{\VZeroeMinusZeroePlusZerocZero}$, and such that, for every $n$ in $\nn$, there exists $m_n=(\bu_n,\bs_n,\xi_n,c_n)$ in $\mmm_N$ such that the set $\Phi(\mmm_N,V_n)$ is not transverse to $\www$ at $\Phi(m_n,V_n)$. Observe that, according to the empty intersection \cref{pi_pos_of_local_unstable_stable_man_do_not_intersect}, $\xi_n$ must be positive. As a consequence, by compactness of $\bbbu\times\bbbs\times[0,N]\times\ccc_{\VZeroeMinusZeroePlusZerocZero}$, we may assume that $m_n$ converges, as $n$ goes to $+\infty$, to a point $m_\infty$ of $\mmm_N$. Then, by continuity, the image $\Phi(\mmm_N,V_\infty)$ is not transverse to $\www$ at $\Phi(m_\infty,V_\infty)$. This proves that $\nu_{\VZeroeMinusZeroePlusZerocZero}\setminus\ooo_{\VZeroeMinusZeroePlusZerocZeroN}$ is closed in $\nu_{\VZeroeMinusZeroePlusZerocZero}$, and yields the intended conclusion.
\end{proof}
\begin{proof}[Continuation of the proof of \cref{prop:genericity_transv_tf_up_to_R}]
\renewcommand{\qedsymbol}{}
\label{page_intersection_over_all_critical_points_extension_wwwFull_trav_fronts}
Let us make the additional assumption that the potential $V_0$ is a Morse function. Then, the set of critical points of $V_0$ is finite and depends smoothly on $V$ in a neighbourhood $\nuRobust(V_0)$ of $V_0$. Intersecting the sets $\nu_{\VZeroeMinusZeroePlusZerocZero}$ and $\ccc_{\VZeroeMinusZeroePlusZerocZero}$ and $\ooo_{\VZeroeMinusZeroePlusZerocZeroN}$ above over all the possible couples $(e_{-,0},e_{+,0})$ in $\SigmaCrit(V_0)\times\SigmaMin(V_0)$ provides an open neighbourhood $\nu_{\VZerocZero}$ of $V_0$, a compact neighbourhood $\ccc_{\VZerocZero}$ of $c_0$ and an open dense subset $\ooo_{\VZerocZeroN}$ of $\nu_{\VZerocZero}$ such that, for all $V\in \ooo_{\VZerocZeroN}$, every front travelling at speed $c\in \ccc_{\VZerocZero}$ and connecting the local (un)stable manifolds of two points $(e_{-},e_{+})$ in $\SigmaCrit(V)\times\SigmaMin(V)$ within the ``time'' $N$, is transverse. 

Denoting by $\interior(A)$ the interior of a set $A$ and using the notation of definition \cref{def_res_R_prime_R}, let us introduce the sets
\begin{align}
\label{def_tilde_nu_VZerocZero}
\tilde{\nu}_{\VZerocZero} &= \res_{R,\infty}^{-1}\circ\res_{R,(R+1)}(\nu_{\VZerocZero}) \,, \\
\text{and}\quad
\label{def_tilde_ooo}
\tilde{\ooo}_{\VZerocZeroN} &= \res_{R,\infty}^{-1}\circ\res_{R,(R+1)}\bigl(\ooo_{\VZerocZeroN}\bigr) \,, \\
\text{and}\quad
\label{def_tilde_ooo_ext}\tilde{\ooo}^{\ext}_{\VZerocZeroN} &= \tilde{\ooo}_{\VZerocZeroN} \sqcup \interior\bigl(\vvvFull\setminus\tilde{\nu}_{\VZerocZero}\bigr)
\,.
\end{align}
In other words, a potential $\tilde V$ of $\vvvFull$ is in $\tilde{\nu}_{\VZerocZero}$ (in $\tilde{\ooo}_{\VZerocZeroN}$) if it coincides, inside the ball $\widebar{B}_{\rr^d}(0,R)$, with a potential $\Vquad$ quadratic past $R+1$ and belonging to $\nu_{\VZerocZero}$ (to $\ooo_{\VZerocZeroN}$). The last set $\tilde{\ooo}^{\ext}_{\VZerocZeroN}$ is an extension of the open dense subset $\tilde{\ooo}_{\VZerocZeroN}$ of $\tilde{\nu}_{\VZerocZero}$, obtained by adding all potentials outside (the closure of) $\tilde{\nu}_{\VZerocZero}$. 
\end{proof}
\begin{lemma}
\label{lem:properties_tilde_ooo_ext_VZerocZeroN}
The set $\tilde{\ooo}^{\ext}_{\VZerocZeroN}$ is a dense open subset of $\vvvFull$. 
\end{lemma}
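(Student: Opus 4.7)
The plan is to treat openness and density separately, relying on \cref{lem:res_R_R_prime_is_continuous_and_surjective_and_open,cor:truncation_extension_of_potentials} together with the fact (just established in \cref{lem:ooo_open_dense} and in the patching that follows) that $\ooo_{\VZerocZeroN}$ is an open dense subset of the open set $\nu_{\VZerocZero}$ in $\vvvQuad{(R+1)}$.

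For openness, since $\res_{R,(R+1)}$ is open by \cref{lem:res_R_R_prime_is_continuous_and_surjective_and_open}, the images $A:=\res_{R,(R+1)}(\ooo_{\VZerocZeroN})$ and $B:=\res_{R,(R+1)}(\nu_{\VZerocZero})$ are open in $\vvvRes{R}$. Continuity of $\res_{R,\infty}$ then gives that $\tilde{\ooo}_{\VZerocZeroN}=\res_{R,\infty}^{-1}(A)$ and $\tilde{\nu}_{\VZerocZero}=\res_{R,\infty}^{-1}(B)$ are open in $\vvvFull$, so that $\tilde{\ooo}^{\ext}_{\VZerocZeroN}$ is open as a union of two open sets.

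For density, I would first transfer the density of $\ooo_{\VZerocZeroN}$ in $\nu_{\VZerocZero}$ to density of $A$ in $B$, using that $\res_{R,(R+1)}$ restricted to $\nu_{\VZerocZero}$ is a continuous surjection onto $B$ (continuous surjections send dense subsets to dense subsets of the image). Then equivalence \cref{dense_iff_dense} of \cref{cor:truncation_extension_of_potentials}, applied with $R'=\infty$, yields that $\tilde{\ooo}_{\VZerocZeroN}$ is dense in $\tilde{\nu}_{\VZerocZero}$. To upgrade this to density in the whole space $\vvvFull$, I would show that the complement $C:=\vvvFull\setminus\tilde{\ooo}^{\ext}_{\VZerocZeroN}$ has empty interior. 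Observing that $\interior(\vvvFull\setminus\tilde{\nu}_{\VZerocZero})=\vvvFull\setminus\overline{\tilde{\nu}_{\VZerocZero}}$, this complement reduces to $(\tilde{\nu}_{\VZerocZero}\setminus\tilde{\ooo}_{\VZerocZeroN})\cup\partial\tilde{\nu}_{\VZerocZero}$. A nonempty open set $W\subset C$ must either meet $\tilde{\nu}_{\VZerocZero}$ --- producing a nonempty open subset of $\tilde{\nu}_{\VZerocZero}$ disjoint from the dense set $\tilde{\ooo}_{\VZerocZeroN}$, a contradiction --- or lie entirely inside $\partial\tilde{\nu}_{\VZerocZero}$, which is impossible since the boundary of an open set has empty interior.

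I do not anticipate any serious obstacle: the adjunct piece $\interior(\vvvFull\setminus\tilde{\nu}_{\VZerocZero})$ in the definition of $\tilde{\ooo}^{\ext}_{\VZerocZeroN}$ is precisely designed to take care of density outside $\overline{\tilde{\nu}_{\VZerocZero}}$ for free, leaving only the behaviour on the boundary of $\tilde{\nu}_{\VZerocZero}$ to be handled, which is immediate. The only potentially subtle point is that dense/open properties must be transported through two restriction maps ($\res_{R,(R+1)}$ to $\vvvRes{R}$, then $\res_{R,\infty}^{-1}$ to $\vvvFull$), but this is exactly the chain that \cref{lem:res_R_R_prime_is_continuous_and_surjective_and_open,cor:truncation_extension_of_potentials} were set up to deliver.
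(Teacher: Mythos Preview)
Your proposal is correct and follows essentially the same approach as the paper. The paper's own proof is extremely terse --- it simply invokes \cref{cor:truncation_extension_of_potentials} to assert that $\tilde{\nu}_{\VZerocZero}$ is open and that $\tilde{\ooo}_{\VZerocZeroN}$ is dense open in it, then concludes directly from the definition --- whereas you spell out in detail both the two-step transport of openness and density through $\res_{R,(R+1)}$ and $\res_{R,\infty}^{-1}$, and the elementary topological fact that a dense open subset of an open set, augmented by the interior of the complement, is dense open in the whole space.
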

\begin{proof}[Proof of \cref{lem:properties_tilde_ooo_ext_VZerocZeroN}]
According to \cref{cor:truncation_extension_of_potentials}, the set $\tilde{\nu}_{\VZerocZero}$ is an open subset of $\vvvFull$, and the set $\tilde{\ooo}_{\VZerocZeroN}$ is a dense open subset of $\tilde{\nu}_{\VZerocZero}$. Thus, according to its definition \cref{def_tilde_ooo_ext}, the set $\tilde{\ooo}^{\ext}_{\VZerocZeroN}$ is a dense open subset of $\vvvFull$.
\end{proof}

\begin{proof}[Continuation of the proof of \cref{prop:genericity_transv_tf_up_to_R}]
\renewcommand{\qedsymbol}{}
Since $\vvvQuad{(R+1)}$ is a separable space, it is second-countable. Thus $\vvvQuadMorse{(R+1)}\times(0,+\infty)$ is also second-countable and can be covered by a countable number of products ${\nu}_{\VZerocZero}\times\ccc_{\VZerocZero}$. With symbols, there exists a countable family $(\VZeroicZeroi)_{i\in\nn}$ of elements of $\vvvQuadMorse{(R+1)}\times(0,+\infty)$ so that 
\begin{equation}
\label{countable_cover_of_vvvQuadMorseR_times_zero_plusInfty}
\vvvQuadMorse{(R+1)}\times(0,+\infty) = \bigcup_{i\in\nn}{\nu}_{\VZeroicZeroi}\times\ccc_{\VZeroicZeroi}
\,.
\end{equation}
Notice here the importance of first working with $\vvvQuad{(R+1)}$, which is second-countable, instead of the full space $\vvvFull$, which is not. Let us consider the set
\begin{equation}
\label{intersection_of_dense_opens_subsets_of_vvvQuadR_providing_transversality_in_BRprime}
\vvvFullTransvfff{R} = \vvvFullMorse \cap \left(\bigcap_{(i,N)\in\nn^2}\tilde{\ooo}^{\ext}_{\VZeroicZeroiN}\right)
\,,
\end{equation}
where $\vvvFullMorse$ is the set of potentials in $\vvvFull$ which are Morse functions.
\end{proof}
\begin{lemma}
\label{lem:intersection_of_dense_opens_subsets_of_vvvQuadR_providing_transversality_in_BRprime}
For every potential $\tilde V$ in the set $\vvvFullTransvfff{R}$, every travelling front $(u,c)$ in $\fff_{\tilde V,R}$ is transverse. 
\end{lemma}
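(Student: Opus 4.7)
The plan is to relate the transversality of $(u,c)\in\fff_{\tilde V,R}$ to the transversality properties provided by the dense open sets $\ooo_{\VZeroicZeroiN}\subset\vvvQuad{(R+1)}$, via a Morse extension of $\tilde V|_{\widebar{B}_{\rr^d}(0,R)}$ to $\vvvQuad{(R+1)}$. The key observation is that every ingredient of \cref{def:transverse_travelling_front}---the trajectory $(u,\dot u)$, the critical points $e_\pm$, the local (un)stable manifolds at $E_\pm$ together with their dependence on the speed, and the derivative of the flow along the trajectory---depends only on $\tilde V|_{\widebar{B}_{\rr^d}(0,R)}$, because the trajectory stays in $\widebar{B}_{\rr^d}(0,R)\times\rr^d$ and the local boxes of \cref{prop:loc_stab_unstab_man_c_positive} can be shrunk so that their $\piPos$-projection sits in $\widebar{B}_{\rr^d}(0,R)$. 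Any extension preserving $\tilde V|_{\widebar{B}_{\rr^d}(0,R)}$ therefore yields the same front with the same transversality.

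First I would apply Seeley's extension from \cref{cor:truncation_extension_of_potentials} to $\tilde V|_{\widebar{B}_{\rr^d}(0,R)}$, and then perturb the result on the annulus $R<|u|\le R+1$, using density of Morse functions in $\vvvQuad{(R+1)}$, to obtain a Morse extension $V\in\vvvQuadMorse{(R+1)}$ that still coincides with $\tilde V$ on $\widebar{B}_{\rr^d}(0,R)$. The countable cover \cref{countable_cover_of_vvvQuadMorseR_times_zero_plusInfty} then provides some $i\in\nn$ with $V\in\nu_{\VZeroicZeroi}$ and $c\in\ccc_{\VZeroicZeroi}$. By the definition \cref{def_tilde_nu_VZerocZero}, this forces $\tilde V\in\tilde{\nu}_{\VZeroicZeroi}$, hence $\tilde V\notin\interior(\vvvFull\setminus\tilde{\nu}_{\VZeroicZeroi})$. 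Combined with the inclusion $\tilde V\in\bigcap_{N\in\nn}\tilde{\ooo}^{\ext}_{\VZeroicZeroiN}$ coming from $\tilde V\in\vvvFullTransvfff{R}$, and the disjoint union structure \cref{def_tilde_ooo_ext}, this yields $\tilde V\in\tilde{\ooo}_{\VZeroicZeroiN}$ for every integer $N$; hence for each $N$ there exists $V_N\in\ooo_{\VZeroicZeroiN}$ with $V_N|_{\widebar{B}_{\rr^d}(0,R)}=\tilde V|_{\widebar{B}_{\rr^d}(0,R)}$.

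Because $u(\rr)\subset\widebar{B}_{\rr^d}(0,R)$, the pair $(u,c)$ is also a front for each such $V_N$, connecting the same critical points $e_\pm$; and the trajectory $(u,\dot u)$ crosses the boundaries of $\Wuloc{c}{V_N}\bigl(E_-(V_N)\bigr)$ and $\Wsloc{c}{V_N}\bigl(E_+(V_N)\bigr)$ within a finite transit time $T$ that is intrinsic to the trajectory and to the fixed local boxes. Picking an integer $N>T$, the defining property of $\ooo_{\VZeroicZeroiN}$ guarantees that $(u,c)$ is transverse as a front for $V_N$. Finally, transversality transfers back to $\tilde V$ because the tangent spaces to $\Wu_{c',V}(E_-)$ and $\Ws_{c',V}(E_+)$ along the trajectory, together with their variation in $c'$, are determined by the flow and its linearization along the trajectory plus the local manifolds at $E_\pm$, all of which coincide for $V_N$ and $\tilde V$ since they depend only on $V_N|_{\widebar{B}_{\rr^d}(0,R)}=\tilde V|_{\widebar{B}_{\rr^d}(0,R)}$.

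The main obstacle I anticipate is this last transfer of transversality, which requires a clean identification of the local (un)stable manifolds for $V_N$ and $\tilde V$. It is immediate when $|e_\pm|<R$ strictly, as the local boxes of \cref{prop:loc_stab_unstab_man_c_positive} can be shrunk to have $\piPos$-projection inside $\widebar{B}_{\rr^d}(0,R)$; the borderline case $|e_\pm|=R$ requires the further observation that Seeley's extension makes $V_N$ agree with $\tilde V$ up to all derivatives of order $\le k+1$ on $\widebar{B}_{\rr^d}(0,R)$, so that the local manifolds produced by the standard stable manifold theorem still coincide in a neighbourhood of $E_\pm$, and the transversality transfers.
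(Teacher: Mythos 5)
Your proposal is correct and follows essentially the same route as the paper: obtain a Morse extension $V$ of $\tilde V|_{\widebar{B}_{\rr^d}(0,R)}$ in $\vvvQuadMorse{(R+1)}$, locate it in the countable cover to get an index $i$, argue from the disjoint-union structure of $\tilde\ooo^{\ext}$ that $\tilde V\in\tilde\ooo_{\VZeroicZeroiN}$ for all $N$, pick $N$ larger than the transit time, and transfer transversality back to $\tilde V$ since the trajectory and the relevant linearized data live entirely inside $\widebar{B}_{\rr^d}(0,R)$. The only point worth noting is that your closing remark about the borderline case $\lvert e_\pm\rvert = R$ is slightly off target (agreement of $V_N$ and $\tilde V$ up to all derivatives on the closed ball holds by construction, not via Seeley, since they have the same restriction); the cleaner observation, implicit in both your argument and the paper's, is that the tangent spaces along the trajectory and their $c$-derivatives are determined by the linearized flow along $u(\rr)\subset\widebar{B}_{\rr^d}(0,R)$ together with $D^2V(e_\pm)$, all of which coincide for $V_N$ and $\tilde V$.
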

\begin{proof}[Proof of \cref{lem:intersection_of_dense_opens_subsets_of_vvvQuadR_providing_transversality_in_BRprime}]
Let $\tilde V$ be a potential function in the set $\vvvFullTransvfff{R}$ and $(c,u)$ be a travelling front in $\fff_{\tilde V,R}$. According to \cref{lem:res_R_R_prime_is_continuous_and_surjective_and_open}, the map $\res_{R,(R+1)}$ is surjective, thus there exists a potential function $V$ in $\vvvQuad{(R+1)}$ such that $V$ belongs to $\res_{R,(R+1)}^{-1}\circ\res_{R,\infty}(\tilde V)$ (in other words $V$ coincides with $\tilde V$ on $\widebar{B}_{\rr^d}(0,R)$). Since $\tilde V$ is a Morse function, the 
critical points of $V$ in $\widebar{B}_{\rr^d}(0,R)$ are degenerate, and up to applying to $V$ a small perturbation in $\widebar{B}_{\rr^d}(0,R+1)\setminus \widebar{B}_{\rr^d}(0,R)$, we may assume that its critical point in this set are also nondegenerate, so that $V$ is actually also a Morse function. Since $\tilde V$ coincides with $V$ inside $\widebar{B}_{\rr^d}(0,R)$ and since the travelling front $u$ is contained in this ball, it is also a travelling front of $V$ and it is sufficient to show that $(u,c)$ is a transverse travelling front for $V$. 

According to equality \cref{countable_cover_of_vvvQuadMorseR_times_zero_plusInfty}, there exists an integer $i$ such that $V$ belongs to ${\nu}_{V_{0,i},\,c_{0,i}}$ and $c$ belongs to $\ccc_{V_{0,i},\,c_{0,i}}$. Then, since $V$ and $\tilde V$ coincide on $\widebar{B}_{\rr^d}(0,R)$, $\tilde V$ belongs to $\tilde{\nu}_{V_{0,i},\,c_{0,i}}$ (definition \cref{def_tilde_nu_VZerocZero}). Besides, it follows from definition \cref{intersection_of_dense_opens_subsets_of_vvvQuadR_providing_transversality_in_BRprime} that, for every integer $N$, $\tilde V$ belongs to $\tilde{\ooo}^{\ext}_{\VZeroicZeroiN}$; and since $V$ is also in $\tilde{\nu}_{V_{0,i},\,c_{0,i}}$, it follows from definition \cref{def_tilde_ooo_ext} that $\tilde V$ actually belongs to $\tilde{\ooo}_{\VZeroicZeroiN}$. 

Let us denote by $e_-$ and $e_+$ the critical points of $V$ (and $\tilde{V}$) approached by $u(\xi)$ as $\xi$ goes to $-\infty$ and $+\infty$ respectively. According to the definition of the neighbourhood $\nu_{V_{0,i},\,c_{0,i}}$ of $V_{0,i}$, there exists a (unique) critical point $e_{-,0,i}$ and a (unique) minimum point $e_{+,0,i}$ of $V_{0,i}$ such that, if $W\mapsto e_{-,i}(W)$ and $W\mapsto e_{+,i}(W)$ denote the functions which ``follow'' these critical points for $W$ in $\nuRobust(V_{0,i})$, then $e_-$ equals $e_{-,i}(V)$ and $e_+$ equals $e_{+,i}(V)$. Let us keep the notation $\mmm$ and $\Phi$ to denote the objects defined as in \cref{subsec:proof_local_gen} for the neighbourhoods $\nu_{\VZeroieMinusZeroiePlusZeroicZeroi}$ of $V_{0,i}$ and $\ccc_{\VZeroieMinusZeroiePlusZeroicZeroi}$ of $c_{0,i}$. The travelling front $(c,u)$ therefore corresponds to an intersection between $\Phi(\mmm,V)$ and $\www$, which occurs at a certain point $m$ of $\mmm$ and thus for a certain (positive) time $\xi$ which is the time that the profile of this travelling front takes to go from the border of the local unstable manifold of $e_-$ to the border of the local stable manifold of $e_+$. 

Let $N$ denote an integer not smaller than $\xi$. Since $\tilde V$ belongs to $\tilde{\ooo}_{\VZeroicZeroiN}$, there must exist (according to definition \cref{def_tilde_ooo}) a potential $V_N$ in ${\nu}_{V_{0,i},\,c_{0,i}}$ identically equal to $\tilde V$ (and $V$) on the ball $\widebar{B}_{\rr^d}(0,R)$ and belonging to ${\ooo}_{\VZeroicZeroiN}$. Again, $(c,u)$ is a travelling front for $V_N$ and the previous correspondence between this front and an intersection between $\Phi(\mmm,V_N)$ and $\www$ still holds. Since $V_N$ belongs to ${\ooo}_{\VZeroicZeroiN}$, the aforementioned intersection must be transverse, leading to the transversality of the front $(u,c)$ for $V_N$. 

Again, the three potentials $\tilde{V}$ and $V$ and $V_N$ considered here have the same values along the profile of the travelling front $(u,c)$. Thus, this front is also transverse for $\tilde V$.
\end{proof}
\begin{proof}[End of the proof of \cref{prop:genericity_transv_tf_up_to_R}]
The set $\vvvFullTransvfff{R}$ defined in \cref{intersection_of_dense_opens_subsets_of_vvvQuadR_providing_transversality_in_BRprime} is a countable intersection of dense open subsets of $\vvvFull$, and is therefore a generic subset of $\vvvFull$. In view of \cref{lem:intersection_of_dense_opens_subsets_of_vvvQuadR_providing_transversality_in_BRprime}, \cref{prop:genericity_transv_tf_up_to_R} is proved. 
\end{proof}
\subsection{Proof of conclusions \texorpdfstring{\cref{item_thm:main_symmetric_standing_pulse,item_thm:main_asymmetric_standing_pulse} of \cref{thm:main}}{\ref{item_thm:main_symmetric_standing_pulse,item_thm:main_asymmetric_standing_pulse} of Theorem \ref{thm:main}}}
\label{subsec:proof_thm_main_zero_speed}
The proof of conclusions \cref{item_thm:main_symmetric_standing_pulse,item_thm:main_asymmetric_standing_pulse} of \cref{thm:main} is similar to the proof of conclusion \cref{item_thm:main_travelling_front} provided in the previous \namecref{subsec:proof_thm_travelling}. As a consequence, only the core arguments will be reproduced here. 
Let us recall the notation $\ppp_V$ introduced in \cref{notation_ppp_V}, and, for every positive quantity $R$, let us consider the set
\[
\ppp_{V,R} = \left\{ u\in\ppp_V : \sup_{\xi\in{\rr}}\abs{u(\xi)}\le R \right\}
\,.
\]
As shown in the previous \namecref{subsec:proof_thm_main_for_nonzero_speeds} for \cref{prop:genericity_transv_tf_up_to_R} and conclusion \cref{item_thm:main_travelling_front} of \cref{thm:main}, the following proposition yields conclusions \cref{item_thm:main_symmetric_standing_pulse,item_thm:main_asymmetric_standing_pulse} of of \cref{thm:main}. 
\begin{proposition}
\label{prop:genericity_transv_sp_up_to_R}
For every positive quantity $R$, there exists a generic subset $\vvvFullTransvppp{R}$ of $\vvvFull$, included in $\vvvFullMorse$, such that, for every potential function $V$ in $\vvvFullTransvppp{R}$, every standing pulse $u$ in $\ppp_{V,R}$ is: elementary if this standing pulse is symmetric, and transverse if this standing pulse is asymmetric. 
\end{proposition}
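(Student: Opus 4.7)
The plan is to follow the strategy of Section \ref{subsec:proof_thm_main_for_nonzero_speeds} (proof of Proposition \ref{prop:genericity_transv_tf_up_to_R}), now using Propositions \ref{prop:local_generic_transversality_standing_pulse} and \ref{prop:local_generic_transversality_standing_pulse2} as the local generic transversality inputs for symmetric and asymmetric standing pulses, respectively. I would treat the two cases separately and intersect the resulting generic subsets at the end.

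Fix $V_0$ in $\vvvQuadMorse{(R+1)}$ and a non-degenerate critical point $e_0$ of $V_0$, and let $\nu \subset \vvvQuad{(R+1)}$ be the open neighbourhood from Section \ref{sec:not_and_stat_sym}. For every positive integer $N$, I would define $\ooo^{\mathrm{sym}}_{V_0,e_0,N}$ as the set of $V \in \nu$ for which the restriction of the map $\Phi$ of \eqref{def_Phi_sym_stand_pulse} to $\bbbu \times [0,N]$ has image transverse to $\sssSym$. Its density in $\nu$ follows from the Sard--Smale argument of Section \ref{subsec:proof_local_gen_elementary_sym_standing_pulses}, and its openness from compactness of $\bbbu \times [0,N]$ (same argument as in Section \ref{subsec:openness_neighb_transv_asym_stand_pulses}). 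For positive integers $N$ and $p$, I would analogously set $\ooo^{\mathrm{asym}}_{V_0,e_0,N,p}$ to be the intersection of $\nu$ with $\nuTransvAsymmStandPulses(N,1/p)$, which is open and dense by Proposition \ref{prop:transv_asym_stand_pulses_bded_length_away_symmetry_subspace}. The essential feature shared by both families is that membership depends only on the values of $V$ inside $\widebar{B}_{\rr^d}(0,R)$: the local manifolds of $e(V)$ are determined by $V$ near $e(V) \in B_{\rr^d}(0,R)$, and the bounded-time constraint combined with the a priori bound \eqref{a_priori_bound_on_profiles} confines the relevant flow to this ball. Applying the restriction-extension construction of \eqref{def_tilde_nu_VZerocZero}--\eqref{def_tilde_ooo_ext} then produces dense open subsets $\tilde{\ooo}^{\mathrm{ext,sym}}_{V_0,e_0,N}$ and $\tilde{\ooo}^{\mathrm{ext,asym}}_{V_0,e_0,N,p}$ of $\vvvFull$.

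After intersecting over the finitely many critical points of the Morse function $V_0$, and covering $\vvvQuadMorse{(R+1)}$ by a countable family $(V_{0,i})_{i \in \nn}$ using its second countability, I would set
\[
\vvvFullTransvppp{R} = \vvvFullMorse \cap \bigcap_{(i,N) \in \nn^2} \tilde{\ooo}^{\mathrm{ext,sym}}_{V_{0,i}, N} \cap \bigcap_{(i,N,p) \in \nn^3} \tilde{\ooo}^{\mathrm{ext,asym}}_{V_{0,i}, N, p},
\]
which is a generic subset of $\vvvFull$. Given $\tilde V \in \vvvFullTransvppp{R}$ and $u \in \ppp_{\tilde V, R}$, the final verification mirrors Lemma \ref{lem:intersection_of_dense_opens_subsets_of_vvvQuadR_providing_transversality_in_BRprime}: I pick a Morse $V \in \vvvQuad{(R+1)}$ coinciding with $\tilde V$ on $\widebar{B}_{\rr^d}(0,R)$ and lying in some $\nu_{V_{0,i}}$. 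If $u$ is symmetric, its turning time (measured from the boundary of the local unstable manifold) is finite, whence $u$ is elementary via $\tilde{\ooo}^{\mathrm{ext,sym}}_{V_{0,i}, N}$ for $N$ sufficiently large. If $u$ is asymmetric, its trajectory stays at a uniformly positive distance from $\sssSym$ outside the local manifolds (by compactness, using \cref{lem:equivalent_def_symm_hom_orbit} to ensure that $\dot u$ never vanishes), so it belongs to the corresponding bounded-parameter set for $N$ and $p$ large enough.

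The main obstacle I anticipate is purely bookkeeping: one must verify carefully that each bounded-parameter condition genuinely depends only on values of $V$ inside $\widebar{B}_{\rr^d}(0,R)$, so that the restriction-extension argument applies, and that the countable family of parameters $(N, p)$ captures every pulse of $\ppp_{\tilde V, R}$ at some finite stage. These are exactly the same issues as in Section \ref{subsec:proof_thm_main_for_nonzero_speeds} and Section \ref{sec:generic_tranversality_asym_stand_pulses}, and they are resolved by the same countable intersection arguments.
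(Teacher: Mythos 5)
Your proposal is correct and follows essentially the same route as the paper: local density/openness from Propositions \ref{prop:local_generic_transversality_standing_pulse} and \ref{prop:local_generic_transversality_standing_pulse2} with bounded-time and bounded-distance-from-$\sssSym$ truncations, then the restriction--extension mechanism of \cref{def_tilde_nu_VZerocZero}--\cref{def_tilde_ooo_ext} and a countable cover of $\vvvQuadMorse{(R+1)}$. The only cosmetic difference is that the paper diagonalizes the time bound $N$ and the distance bound $1/N$ into a single parameter and a single family of open dense sets $\ooo_{\VZeroeZeroN}$, whereas you keep two families indexed by $(N)$ and $(N,p)$ and intersect both; this has no bearing on the argument.
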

\begin{proof}
Let $R$ denote a positive quantity and let $V_0$ denote a Morse potential function in $\vvvQuad{(R+1)}$. Let $e_0$ denote a non-degenerate critical point of $V_0$ and let us consider an open neighbourhood $\nu_{\VZeroeZero}$ of $V_0$ in $\vvvQuad{(R+1)}$ included in both neighbourhoods provided by \cref{prop:local_generic_transversality_standing_pulse,prop:local_generic_transversality_standing_pulse2}. For every $N$ in $\nn^*$ and for every $V$ in $\nu_{\VZeroeZero}$, let us consider the subset $\ooo_{\VZeroeZeroN}$ of $\nu_{\VZeroeZero}$ defined as the set of potentials $V$ in $\nu_{\VZeroeZero}$ satisfying the following two conditions: 
\begin{enumerate}
\item every symmetric standing pulse of $V$, connecting $\partial\WulocZero{V}\bigl(E(V)\bigr)$ to the symmetric subspace $\sssSym$ in a time not larger than $N$, is elementary;
\item and every asymmetric standing pulse of $V$, connecting $\partial\WulocZero{V}\bigl(E(V)\bigr)$ to\\ 
$\partial\WslocZero{V}\bigl(E(V)\bigr)$ in a time not larger than $N$ while remaining at a distance not smaller than $1/N$ of $\sssSym$, is transverse.
\end{enumerate}
The same arguments as in the proof of \cref{lem:ooo_open_dense} show that the set $\ooo_{\VZeroeZeroN}$ is a dense open subset of $\nu_{\VZeroeZero}$: the density follows from \cref{prop:local_generic_transversality_standing_pulse,prop:local_generic_transversality_standing_pulse2} and, regarding the openness, the key new ingredient is the condition that every asymmetric standing pulse remains at a distance at least $1/N$ of $\sssSym$. Indeed, a sequence of asymmetric standing pulses (as considered in the proof) may (generally speaking) approach a symmetric standing pulse which may be non-transverse even if it is elementary. Staying away from $\sssSym$ precludes this possibility. 

As on page \pageref{page_intersection_over_all_critical_points_extension_wwwFull_trav_fronts}, let us consider the intersections of the previous sets over all the critical points of $V_0$:
\begin{equation*}
\label{two_sets_nu_ooo_intersections_over_eZero}
\nu_{V_0} = \bigcap_{e_0\in\SigmaCrit(V_0)} \nu_{\VZeroeZero}
\quad\text{and}\quad
\ooo_{\VZeroN} = \bigcap_{e_0\in\SigmaCrit(V_0)} \ooo_{\VZeroeZeroN}
\,.
\end{equation*}
The set $\nu_{V_0}$ is still open in $\vvvQuad{(R+1)}$ and the set $\ooo_{\VZeroN}$ is still a dense open subset of $\nu_{V_0}$. As in definitions \cref{def_tilde_nu_VZerocZero,def_tilde_ooo,def_tilde_ooo_ext}, these sets can be extended as follows:
\[
\begin{aligned}
\tilde{\nu}_{V_0} &= \res_{R,\infty}^{-1}\circ\res_{R,(R+1)}(\nu_{V_0}) \,, \\
\tilde{\ooo}_{\VZeroN} &= \res_{R,\infty}^{-1}\circ\res_{R,(R+1)}\bigl(\ooo_{\VZeroN}\bigr)\,,  \\
\text{and}\quad
\tilde{\ooo}^{\ext}_{\VZeroN} &= \tilde{\ooo}_{\VZeroN} \sqcup \interior\bigl(\vvvQuad{(R+1)}\setminus\tilde{\nu}_{V_0}\bigr)
\,.
\end{aligned}
\]
The end of the proof follows the same arguments as the ones of \cref{subsec:proof_thm_main_for_nonzero_speeds}. The set $\vvvQuadMorse{(R+1)}$ can be covered by a countable number of subsets $\tilde{\nu}_{V_{0,i}}$ and the set 
\[
\vvvFullTransvppp{R}= \vvvFullMorse \cap \left(\bigcap_{(i,N)\in\nn^2}\tilde{\ooo}^{\ext}_{\VZeroiN}\right)
\]
is the generic subset the existence of which was stated in \cref{prop:genericity_transv_sp_up_to_R}. 
\end{proof}
\subsection{Proof of conclusion \texorpdfstring{\cref{item_thm:main_standing_front} of \cref{thm:main}}{\ref{item_thm:main_standing_front} of Theorem \ref{thm:main}}}
\label{subsec:proof_thm_main_no_standing_front}
%
Let us consider the set $\ooo_R$ of potentials $V$ of $\vvvFull$ such that all the critical points of $V$ in $\widebar{B}_{\rr^d}(0,R)$ are non-degenerate and have different values. The same arguments as in \Cref{prop:gen_non_existence_standing_fronts} show that this set $\ooo_R$ is an open dense subset of $\vvvFull$, so that the intersection $\cap_{R\in\nn^*} \ooo_R$ is generic in $\vvvFull$. Since the critical points connected by a standing front must belong to the same level set of the potential, no standing front can exist for a potential in this intersection. 
\subsection{Proof of conclusions \texorpdfstring{\cref{item:cor_main_bistable_front,item:cor_main_bistable_pulse,item:cor_main_countable,item:cor_main_robust}}{\ref{item:cor_main_bistable_front} to \ref{item:cor_main_robust}} of \texorpdfstring{\cref{cor:main}}{Corollary \ref{cor:main}}}
\label{subsec:proof_thm_travelling}
Let $V$ be a potential function belonging to the generic subset provided by \cref{thm:main}, let $(c,u)$ be a travelling front in $\fff_V$, and let $e_-$ and $e_+$ denote the critical point and the minimum point of $V$ connected by this travelling front. According to \cref{table:dim_stable_unstable_centre},
\[
\begin{aligned}
\dim \left(\bigcup_{c'>0} \{c'\}\times \Wu_{c',V}(E_-) \right) 
 &= d-m(e_-)+1 \,,\\
\text{and}\quad
\dim\left(\bigcup_{c'>0} \{c'\}\times \Ws_{c',V}(E_+) \right) &= d+1
\,.
\end{aligned}
\]
The intersection between these two manifolds contains at least the curve $\{c\}\times U(\rr)$ corresponding to the travelling front. Thus, the dimension of the sum of the tangent spaces to these two manifolds is not larger than the quantity
\[
\bigl(d-m(e_-)+1\bigr)+(d+1)-1 = 2d + 1 - m(e_-)
\,.
\]
Since according to \cref{def:transverse_travelling_front,thm:main} the intersection between these two manifolds is transverse in $\rr^{2d+1}$, along the set $\{c\}\times U(\rr)$, this quantity is not smaller than $2d+1$, so that the Morse index $m(e_-)$ must be zero. This proves conclusion \cref{item:cor_main_bistable_front} of \cref{cor:main}. 

Now let us assume that $u$ is the profile of a standing pulse and let $e$ denote the critical point of $V$ such that this pulse connects $e$ to itself. According to \cref{table:dim_stable_unstable_centre},
\[
\dim\left(\Wu_{V}(E)\right) = d-m(e)
\quad\text{and}\quad
\dim\left(\Ws_{V}(E)\right) = d-m(e)
\,.
\]
According to \cref{def:elementary_symm_stand_pulse,thm:main}, if $u$ is symmetric then the intersection between $\Wu_{V}(E)$ and the $d-$dimensional manifold $\sssSym$ is transverse in $\rr^{2d}$, at the point $U(\xiTurn)$ and this can happen only if $m(e)=0$. If $u$ is asymmetric then the intersection between $\Wu_{V}(E)$ and $\Ws_{V}(E)$ is transverse, in $H_V^{-1}\bigl(V(E)\bigr)$, along the trajectory $U(\rr)$. The intersection of $\Wu_{V}(E)$ and $\Ws_{V}(E)$ is at least one-dimensional and the dimension of $H_V^{-1}\bigl(V(E)\bigr)$ is equal to $2d-1$. Again, the transversality can happen only if $m(e)=0$. This proves conclusion \cref{item:cor_main_bistable_pulse} of \cref{cor:main}. 

In all the cases considered above, the counting of the dimensions and the transversality imply that the intersections of the stable and unstable manifolds reduce to the smallest possible set, that is: the one-dimensional curve drawn by the trajectory $U$ for travelling fronts or asymmetric pulses, and the singleton $\bigl\{U(\xiTurn)\bigr\}$ defined by the turning point for symmetric pulses. By local compactness of the unstable manifolds, this implies that the trajectories of a given class are isolated from each other (even if a family of asymmetric standing pulses may accumulate on a non-degenerate --- and in this case non-transverse --- symmetric pulse). In particular, there is only a countable number of such trajectories. Conclusion \cref{item:cor_main_countable} of \cref{cor:main} is proved.

Finally, conclusion \cref{item:cor_main_robust} about the robustness of travelling fronts and standing pulses (the fact that they persist under small perturbations of the potential) follows from their transversality (that, is, the transversality of the intersections considered above). 
\section{Generic asymptotic behaviour for the profiles of bistable travelling fronts and of standing pulses stable at infinity}
\label{sec:generic_asympt_behaviour}
The goal of this \namecref{sec:generic_asympt_behaviour} is to prove \cref{thm:generic_asympt_behaviour} (and thus also conclusion \cref{item:cor_main_direction_of_approach_of_limits_at_ends_of_R} of \cref{cor:main}). 
\subsection{Asymptotic behaviour of profiles}
\label{subsec:asympt_behav_profiles}
Let $V_0$ denote a potential in $\vvvFull$, let $e_0$ denote a nondegenerate \emph{minimum} point of $V$, and let $c$ denote a nonnegative quantity (speed). As in \cref{subsec:eigenspaces_and_dimensions}, let $(u_1,\dots,u_d)$ denote an orthonormal basis of $\rr^d$ made of eigenvectors of $D^2V(e_0)$, and let $\mu_1,\dots,\mu_d$ denote the corresponding (positive) eigenvalues, with $\mu_1\le\dots\le\mu_d$. The statement ``the smallest eigenvalue of $D^2V(e_0)$ is simple'', in conclusion \cref{item:thm_generic_asympt_behaviour_simple_smallest_eig} of \cref{thm:generic_asympt_behaviour}, just mean that $\mu_1$ is smaller than $\mu_2$ (and thus also than all the other eigenvalues of $D^2V(e_0)$). Let us make this assumption. With the notation of \cref{subsec:eigenspaces_and_dimensions}, it follows that, for every $j$ in $\{2,\dots,d\}$, 
\[
\lambda_{j,-}<\lambda_{1,-}<0<\lambda_{1,+}<\lambda_{j,+}
\,;
\]
in other words, $\lambda_{1,-}$ and $\lambda_{1,+}$ are, among all the eigenvalues of $DF_{c,V}(E_0)$ (which are real), the closest ones to $0$ (here $E_0=(e_0,0_{\rr^d})$ is the equilibrium point of the flow $S_{c,V}$ corresponding to $e_0$). If a solution $\xi\mapsto u(\xi)$ of the differential system \cref{trav_wave_system_order_1} goes to $e_0$ as $\xi$ goes to $-\infty$ ($+\infty$), then one among the following two possible cases occurs (see \cref{prop:local_strong_stable_unstable_manifolds} below for a more precise statement): 
\begin{enumerate}
\item there exists a real quantity $K$ such that 
\[
\begin{aligned}
u(\xi) - e_0 &= K e^{\lambda_{1,+}\xi} u_1 + o_{\xi\to-\infty}(e^{\lambda_{1,+}\xi}) \\
\quad\text{(and }
u(\xi) - e_0 &= K e^{\lambda_{1,-}\xi} u_1 + o_{\xi\to+\infty}(e^{\lambda_{1,-}\xi})\,,\text{ respectively)};
\end{aligned}
\]
\label{item:case_slow_approach}
\item $u(\xi) - e_0 = o_{\xi\to-\infty}(e^{\lambda_{1,+}\xi})$ (and $u(\xi) - e_0 = o_{\xi\to+\infty}(e^{\lambda_{1,-}\xi})$, respectively).
\label{item:case_fast_approach}
\end{enumerate}
The words ``$u(\xi)$ approaches its limit (at $\pm\infty$) tangentially to the eigenspace corresponding to the smallest eigenvalue of $D^2V$ at this point'', used in conclusion \cref{item:cor_main_direction_of_approach_of_limits_at_ends_of_R} of \cref{cor:main} and in conclusion \cref{item:thm_generic_asympt_behaviour_approach_limits} of \cref{thm:generic_asympt_behaviour}, mean that case \cref{item:case_slow_approach} above occurs. As illustrated on \cref{fig:attractive_node} (see also \cref{fig:transverse_intersection}), approach of equilibria ``at the slowest possible rate'' (case \cref{item:case_slow_approach} above) is a generic feature among solutions of differential systems. The main goal of this section is thus to provide a formal proof that this feature is indeed generic (with respect to the potential $V$) for bistable travelling fronts and standing pulses stable at infinity of the parabolic system \cref{partial_differential_system}. 
\begin{figure}[htbp]
\centering
\includegraphics[width=.3\textwidth]{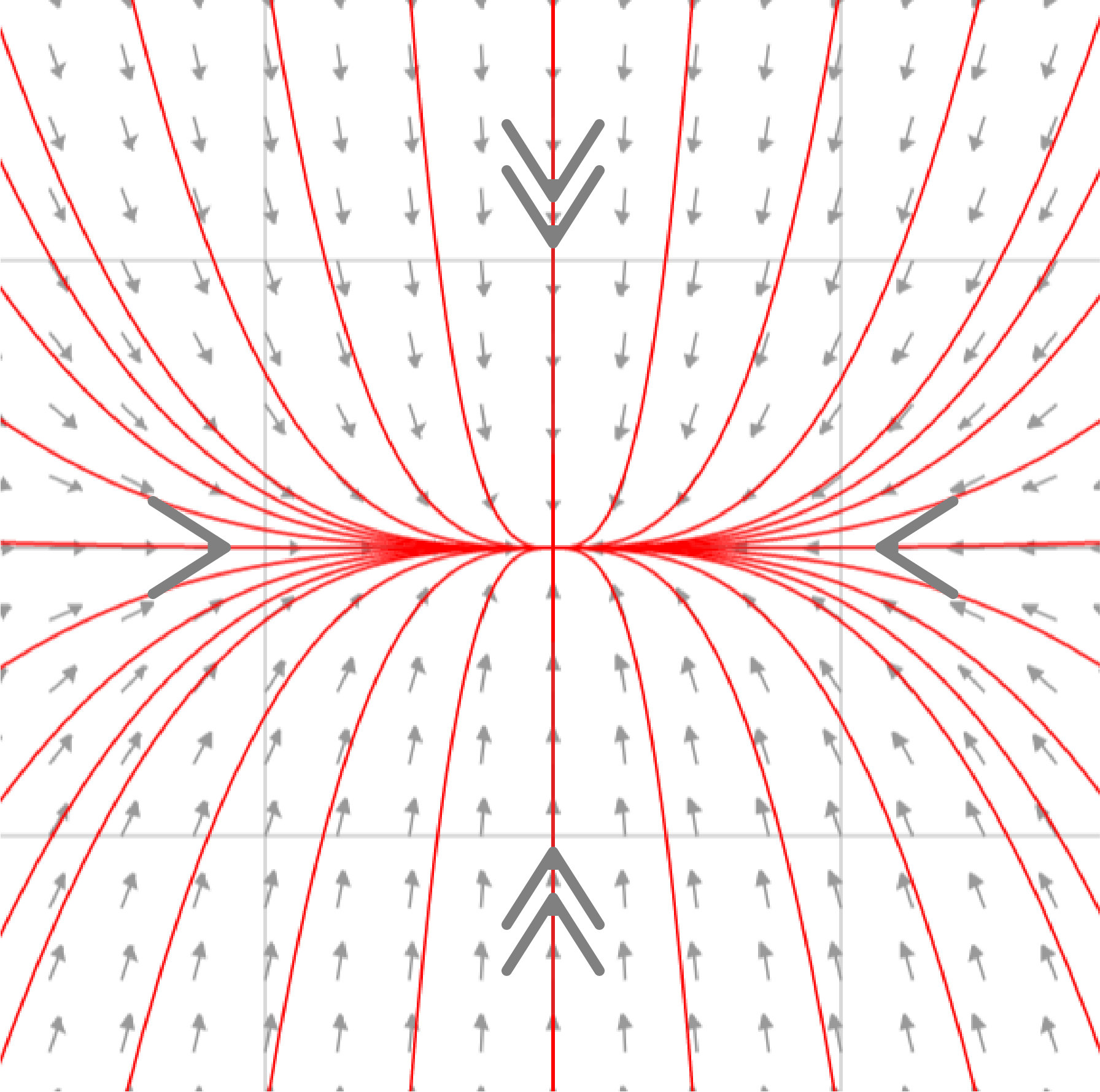}
\caption{Attractive node of a two-dimensional vector field. In the language of \cref{subsec:local_strongly_stable_unstable_manifolds}, the vertical axis is the ``strongly stable subspace'' of the equilibrium.}
\label{fig:attractive_node}
\end{figure}
\subsection{Local strongly stable and unstable manifolds when the speed \texorpdfstring{$c$}{c} is positive}
\label{subsec:local_strongly_stable_unstable_manifolds}
Let us keep the notation and assumptions of the previous \namecref{subsec:asympt_behav_profiles} and let us assume that $c$ is positive. The aim of this \namecref{subsec:local_strongly_stable_unstable_manifolds} is to provide a variant of \cref{prop:loc_stab_unstab_man_c_positive} devoted to the ``strongly'' local stable and unstable manifolds, which are characterized by a ``fast'' convergence (case \cref{item:case_fast_approach} above). Concerning the references, the same comments as in \cref{subsec:local_stable_unstable_manifolds_positive_speed} apply. 

Calling upon the notation of \cref{subsec:eigenspaces_and_dimensions}, let 
\[
\begin{aligned}
\Esu(E_0) &= \spanset\bigl(\{U_{2,+},\dots,U_{d,+}\}\bigr)
\quad\text{and}\quad
\Ess(E_0) = \spanset\bigl(\{U_{2,-},\dots,U_{d,-}\}\bigr)\,,\\
\text{and}\quad
\Em(E_0) &= \spanset\bigl(\{U_{1,-},U_{1,+}\}\bigr)
\end{aligned}
\]
(the superscripts ``su'', ``ss'', and ``m'' stand for ``strongly unstable'', ``strongly stable'', and ``mild'', respectively), and 
\[
\betasu = \lambda_{2,+}
\quad\text{and}\quad
\betass = \lambda_{2,-}
\,.
\]
As in \cref{subsec:local_stable_unstable_manifolds_positive_speed}, there exist norms $\normsu{\cdot}$ on $\Esu(E_0)$ and $\normss{\cdot}$ on $\Ess(E_0)$ such that inequalities \cref{normu_norms_characterization} (with ``su'' instead of ``u'' and ``ss'' instead of ``s'' everywhere) hold. For every positive quantity $r$, let us define the balls $\Bsu_{E_0}(r)$ and $\Bss_{E_0}(r)$ as in \cref{def_Bu_Bs_B_around_E0} (with the same substitutions ``u''$\leftarrow$``su'' and ``s''$\leftarrow$``ss''), let $\Bm_{E_0}(r)$ denote the closed ball centred at $E_0$ and with radius $r$, in the subspace $\Em(E_0)$, for the usual euclidean norm on these subspace, and let 
\[
\widebar{B}_{E_0}(r) = \bigl\{\Usu+\Uss+\Um : \Usu\in\Bsu_{E_0}(r) \text{ and } \Uss\in\Bss_{E_0}(r) \text{ and } \Um\in\Bm_{E_0}(r)\bigr\}
\,.
\] 
Let $\lambda_{3/2,-}$ and $\lambda_{3/2,+}$ denote real quantities satisfying
\[
\lambda_{2,-}<\lambda_{3/2,-}<\lambda_{1,-}
\quad\text{and}\quad
\lambda_{1,+}<\lambda_{3/2,+}<\lambda_{2,+}
\,.
\]
\begin{proposition}[local strong stable and unstable manifolds]
\label{prop:local_strong_stable_unstable_manifolds}
There exist a neighbourhood $\nu$ of $V_0$ in $\vvvFull$, a neighbourhood $\ccc$ of $c_0$ in $(0,+\infty)$ and a positive quantity $r$ such that, for every $(c,V)$ in $\ccc\times\nu$, in addition to the conclusions of Proposition 2.2, the following statements hold.
\item There exist $\ccc^k$-functions
\[
\wsuloc{c}{V}:\Bsu_{E_0}(r)\to \Bm_{E_0}(r)+\Bss_{E_0}(r)
\quad\text{and}\quad
\wssloc{c}{V}:\Bss_{E_0}(r)\to \Bm_{E_0}(r)+\Bsu_{E_0}(r)
\]
such that, if we consider the sets
\[
\begin{aligned}
\Wsuloc{c}{V}\bigl(E(V)\bigr) &= \left\{E(V) + \Usu + \wsuloc{c}{V}(\Usu) : \Usu\in \Bsu_{E_0}(r)\right\} \\ 
\text{and}\quad
\Wssloc{c}{V}\bigl(E(V)\bigr) &= \left\{E(V) + \Uss + \wssloc{c}{V}(\Uss) : \Uss\in \Bss_{E_0}(r)\right\}
\,,
\end{aligned}
\]
then, for every $U$ in $\widebar{B}_{E_0}(r)$ the following two assertions are equivalent:
\begin{enumerate}
\item $U$ is in $\Wsuloc{c}{V}\bigl(E(V)\bigr)$;
\item $S_{c,V}(\xi,U)-E(V)$ remains in $\widebar{B}_{E_0}(r)$ for all $\xi$ in $(-\infty,0]$ and 
\[
\abs{S_{c,V}(\xi,U)-E(V)} = o_{\xi\to -\infty}(e^{\lambda_{3/2,+}\xi})
\,;
\]
\end{enumerate}
and for every $U$ in $\widebar{B}_{E_0}(r)$ the following two assertions are equivalent:
\begin{enumerate}
\setcounter{enumii}{2}
\item $U\in\Wssloc{c}{V}\bigl(E(V)\bigr)$;
\item $S_{c,V}(\xi,U)-E(V)$ remains in $\widebar{B}_{E_0}(r)$ for all $\xi$ in $[0,+\infty)$ and 
\[
\abs{S_{c,V}(\xi,U)-E(V)} = o_{\xi\to +\infty}(e^{\lambda_{3/2,-}\xi})
\,.
\]
\end{enumerate}
\item Both differentials $D\wsuloc{c_0}{V_0}(0)$ and $D\wssloc{c_0}{V_0}(0)$ vanish, and both maps 
\[
\begin{aligned}
\ccc\times\nu\times \Bsu_{E_0}(r)\to \Bm_{E_0}(r)+\Bss_{E_0}(r), &\quad (c,V,\Usu)\mapsto \wsuloc{c}{V}(\Usu) \\
\text{and}\quad
\ccc\times\nu\times \Bss_{E_0}(r)\to \Bm_{E_0}(r)+\Bsu_{E_0}(r), &\quad (c,V,\Uss)\mapsto \wssloc{c}{V}(\Uss)
\end{aligned}
\]
are of class $\ccc^k$. 
\end{proposition}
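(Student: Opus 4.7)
The plan is to apply a parametrized strong (un)stable manifold theorem to the flow $S_{c,V}$ at the equilibrium $E(V)$, in the same spirit as the standard stable/unstable manifold construction already invoked for \cref{prop:loc_stab_unstab_man_c_positive} (see again \cite{Kelley_stableCenterUnstableMan_1967,GuckenHeimerHolmes_nonlinOscDynSystBif_1983,Teschl_odeDynSyst_2012}). The key new ingredient compared to \cref{prop:loc_stab_unstab_man_c_positive} is a spectral gap inside both the unstable and stable parts of the spectrum of $DF_{c_0,V_0}(E_0)$: precisely the simplicity of the smallest eigenvalue $\mu_1$ of $D^2V_0(e_0)$ provides $\lambda_{1,+}<\lambda_{2,+}$ and $\lambda_{2,-}<\lambda_{1,-}$, so that $\lambda_{3/2,+}\in(\lambda_{1,+},\lambda_{2,+})$ and $\lambda_{3/2,-}\in(\lambda_{2,-},\lambda_{1,-})$ fit strictly in these gaps. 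Continuous dependence of eigenvalues on $(c,V)$ ensures that these gaps, as well as the associated strongly unstable and strongly stable invariant eigenspaces of $DF_{c,V}(E(V))$, persist and depend $\ccc^k$-smoothly on $(c,V)$ throughout a sufficiently small neighbourhood $\ccc\times\nu$ of $(c_0,V_0)$, which may moreover be taken to be the one provided by \cref{prop:loc_stab_unstab_man_c_positive}.

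Within this neighbourhood, the strong manifold theorem yields, for each $(c,V)$ in $\ccc\times\nu$, an invariant $\ccc^k$ strongly unstable manifold and an invariant $\ccc^k$ strongly stable manifold passing through $E(V)$, each of dimension $d-1$, tangent at $E(V)$ to the corresponding strong eigenspace of $DF_{c,V}(E(V))$. At $(c_0,V_0)$ those eigenspaces coincide with $\Esu(E_0)$ and $\Ess(E_0)$, and for $(c,V)$ close to $(c_0,V_0)$ they remain transverse to the complementary reference subspaces $\Em(E_0)+\Ess(E_0)$ and $\Em(E_0)+\Esu(E_0)$ respectively. After shrinking $r$ if necessary, each local strong manifold can therefore be written as the graph of a $\ccc^k$ function $\wsuloc{c}{V}$ (respectively $\wssloc{c}{V}$) defined on the fixed reference ball $\Bsu_{E_0}(r)$ (respectively $\Bss_{E_0}(r)$); at $(c_0,V_0)$ the tangency of the graph to $\Esu(E_0)$ (respectively $\Ess(E_0)$) at $E_0$ translates into the vanishing of the differential of the graph function at the origin. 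The joint $\ccc^k$-smoothness in $(c,V)$ follows from the parametrized version of the theorem, using that $F_{c,V}$ depends $\ccc^k$-smoothly on $(c,V)$ in the topology fixed on $\vvvFull$, exactly as in \cref{prop:loc_stab_unstab_man_c_positive}.

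It remains to establish the characterization by the exponential decay rates $\lambda_{3/2,\pm}$. By construction, any $U$ in $\Wsuloc{c}{V}(E(V))$ satisfies, in a suitable norm, a backward exponential bound at a rate arbitrarily close to $\betasu=\lambda_{2,+}$, and in particular $\abs{S_{c,V}(\xi,U)-E(V)}=o(e^{\lambda_{3/2,+}\xi})$ as $\xi\to-\infty$. Conversely, any $U$ in $\widebar{B}_{E_0}(r)$ whose backward orbit remains in $\widebar{B}_{E_0}(r)$ and satisfies this decay estimate belongs, by \cref{prop:loc_stab_unstab_man_c_positive}, to the full local unstable manifold $\Wuloc{c}{V}(E(V))$; decomposing the leading term of the backward orbit along the $DF_{c,V}(E(V))$-invariant splitting of the unstable eigenspace, a nonzero mild component along $U_{1,+}$ would produce a leading contribution of exact order $e^{\lambda_{1,+}\xi}$, which decays strictly more slowly than $e^{\lambda_{3/2,+}\xi}$; the hypothesis on the rate therefore forces this component to vanish, placing $U$ on the strongly unstable manifold. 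The strongly stable case is handled symmetrically in forward time. The main obstacle in executing this plan is not the strong manifold construction itself, which is classical, but the bookkeeping needed to express the graphs over the fixed reference subspaces $\Esu(E_0)$ and $\Ess(E_0)$ while $(c,V)$ varies, and to obtain joint $\ccc^k$-regularity with respect to the infinite-dimensional parameter $V\in\vvvFull$; both points are treated exactly as in the proof of \cref{prop:loc_stab_unstab_man_c_positive}.
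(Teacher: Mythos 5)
The paper states \cref{prop:local_strong_stable_unstable_manifolds} without proof, relying (as for \cref{prop:loc_stab_unstab_man_c_positive}) on the classical references \cite{Kelley_stableCenterUnstableMan_1967,GuckenHeimerHolmes_nonlinOscDynSystBif_1983,Teschl_odeDynSyst_2012}. Your sketch is a correct and standard account of exactly that construction --- the spectral gap coming from the simplicity of the smallest eigenvalue, the parametrized strong (un)stable manifold theorem, the graph representation over the fixed reference balls, and the characterization of the strong manifolds by the sharper exponential rate $\lambda_{3/2,\pm}$ via the dichotomy between mild and strong components --- and is therefore consistent with the approach the paper delegates to those references.
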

\subsection{Idea of the proof of conclusion \texorpdfstring{\cref{item:thm_generic_asympt_behaviour_approach_limits} of \cref{thm:generic_asympt_behaviour}}{\ref{item:thm_generic_asympt_behaviour_approach_limits} of Theorem \ref{thm:generic_asympt_behaviour}}} 
\label{subsec:idea_proof_generic_asympt_behaviour}
The goal of this \namecref{subsec:idea_proof_generic_asympt_behaviour} is to provide a rough idea of the proof of \cref{thm:generic_asympt_behaviour}, more precisely of the main conclusion of this theorem which is conclusion \cref{item:thm_generic_asympt_behaviour_approach_limits} (the proof of conclusion \cref{item:thm_generic_asympt_behaviour_simple_smallest_eig}, carried out in the next \namecref{subsec:potentials_for_which_smallest_eigenvalue_Hessian_simple}, is straightforward). 

The proof of conclusion \cref{item:thm_generic_asympt_behaviour_approach_limits} is actually almost identical to the proof of \cref{thm:main}. Observe that, by contrast with the proof of \cref{thm:main}, only \emph{bistable} travelling fronts and standing pulses that are \emph{stable at infinity} need to be considered. In each case (bistable travelling fronts, symmetric and asymmetric standing pulses stable at infinity), the proof relies on applying Sard--Smale \cref{thm:Sard_Smale} to the same settings as in the proof of \cref{thm:main}, both for potentials that are quadratic past a certain radius and for the extension to general potentials, except for the following change:
\begin{enumerate}
\item either the unstable manifold of the left end equilibrium $E_-(V)$ is replaced by its strongly unstable manifold,
\label{item:replacement_Wu_by_Wsu}
\item or the stable manifold of the right end equilibrium $E_+(V)$ is replaced by its strongly stable manifold. 
\label{item:replacement_Ws_by_Wss}
\end{enumerate}
More precisely, both replacements have to be (separately) considered both for travelling fronts and asymmetric standing pulses, while only the first replacement is relevant for symmetric standing pulses. 

Let us see why such change (replacement) in the setting does not affect the validity of the two assumptions of \cref{thm:Sard_Smale}, and how its conclusions can be interpreted. Concerning assumption \cref{item:thm_sard_smale_condition_regularity} of \cref{thm:Sard_Smale}, this replacement leads to the following consequences:
\begin{enumerate}
\item either the dimension of the manifold denoted by $\mmm$ is decreased by $1$ (this is what happens for travelling fronts, be it with replacement \cref{item:replacement_Wu_by_Wsu} or \cref{item:replacement_Ws_by_Wss}, for symmetric standing pulses with replacement \cref{item:replacement_Wu_by_Wsu}, and for asymmetric standing pulses with replacement~\cref{item:replacement_Wu_by_Wsu}), 
\item or the dimension of the manifold denoted by $\www$ is decreased by $1$ (this is what happens for asymmetric standing pulses with replacement \cref{item:replacement_Ws_by_Wss}). 
\end{enumerate}
In each of these cases, the dimension of the arrival manifold $\nnn$ is unchanged, and as a consequence, the difference $\dim(\mmm)-\codim(\www)$ is exactly decreased by $1$. More precisely, since only bistable travelling fronts and standing pulses stable at infinity are considered, this difference is actually exactly equal to $-1$. Assumption \cref{item:thm_sard_smale_condition_regularity} of \cref{thm:Sard_Smale} is therefore still satisfied. 

Concerning assumption \cref{item:thm_sard_smale_condition_transversality} of \cref{thm:Sard_Smale}, it is also fulfilled in each of these cases, due to the key following observation: in the proof of each of the three lemmas proving that this assumption holds (\cref{lem:reach_all_directions_orthogonal_to_UzeroPrime_of_xiZero,lem:reach_all_directions_symm_stand_pulse,lem:reach_all_directions_asymm_stand_pulse}), the freedom provided by the variables $\bu$ and $\bs$ is not used --- only the freedom provided by the time variable $\xi$ and by the potential $V$ are. As a consequence, the fact that the unstable manifold of $E_-(V)$ is replaced by its strongly unstable manifold does not affect the validity of the conclusion of the lemma, and neither does the fact that the stable manifold of $E_+(V)$ is replaced by its strongly stable manifold. In other words, the key assumption \cref{item:thm_sard_smale_condition_transversality} of \cref{thm:Sard_Smale} still holds. 

In each case and for each of the two replacements \cref{item:replacement_Wu_by_Wsu,item:replacement_Ws_by_Wss}, the conclusions of \cref{thm:Sard_Smale} thus still hold, and ensure that, locally generically with respect to $V$, the profiles of travelling fronts or of (a)symmetric standing pulses locally correspond to transverse intersections between the image of $m\mapsto\Phi(m,V)$ and $\www$ in $\nnn$. But the fact that $\dim(\mmm)-\codim(\www)$ is now equal to $-1$ actually precludes the very existence of such transverse intersections. In other words, locally generically with respect to $V$, profiles of bistable travelling fronts or of (a)symmetric pulses stable at infinity approaching their limit at $-\infty$ through its strongly stable manifold or their limit at $+\infty$ through its strongly stable manifold do simply (locally) not exist, which is the intended conclusion. The emptiness of such a transverse intersection due to the value $-1$ of the difference $\dim(\mmm)-\codim(\www)$ is illustrated by \cref{fig:transverse_intersection}. 
\begin{figure}[htbp]
\centering
\includegraphics[width=.8\textwidth]{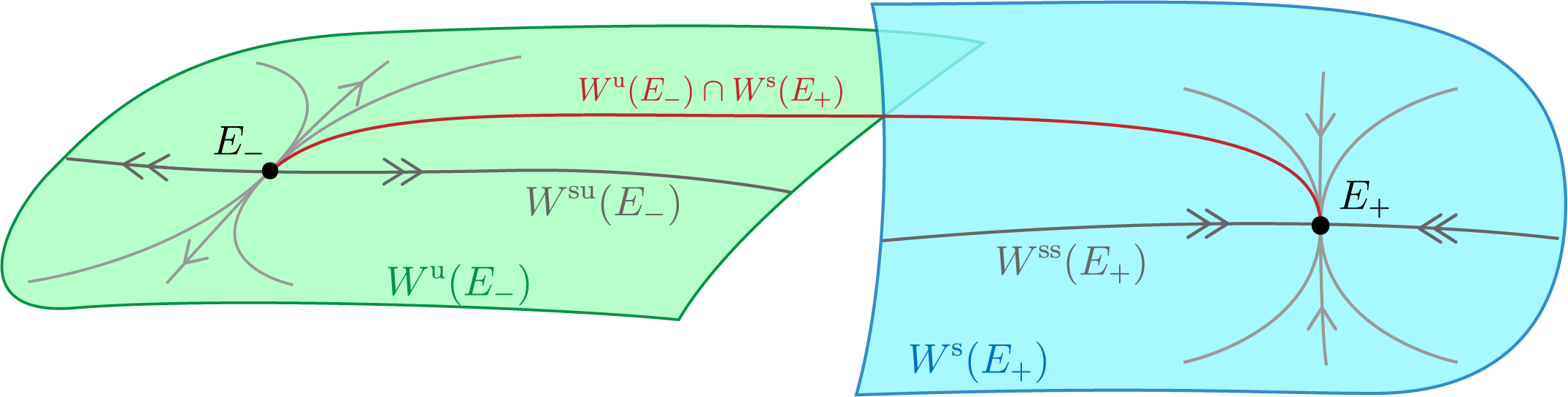}
\caption{Whereas the sum of the dimensions of $\Wu(E_-)$ and $\Ws(E_+)$ has the minimal value for a nonempty transverse intersection between these two manifolds to exist, for $\Wsu(E_-)$ and $\Ws(E_+)$ (or for $\Wu(E_-)$ and $\Wss(E_+)$) this sum is smaller, so that a transverse intersection between such manifolds must be empty. This figure actually depicts the intersection defining a transverse asymmetric bistable standing front, but the same principle applies for bistable travelling fronts, and  elementary symmetric (or transverse asymmetric) standing pulses that are stable at infinity.}
\label{fig:transverse_intersection}
\end{figure}

The remaining arguments, ensuring the first extension to global statements for potentials quadratic past a certain radius (\cref{subsec:reduction,subsec:notation_and_statements_sym_stand_pulses,subsec:notation_and_statements_asym_stand_pulses}), and then the second extension to general potentials (\cref{subsec:proof_thm_main_for_nonzero_speeds,subsec:proof_thm_main_zero_speed}), are unchanged.

To complete these arguments, a few milestones of the proof for travelling fronts are detailed in \cref{subsec:generic_asympt_behaviour_bistable_trav_fronts} below.
\subsection{Proof of conclusion \texorpdfstring{\cref{item:thm_generic_asympt_behaviour_simple_smallest_eig} of \cref{thm:generic_asympt_behaviour}}{\ref{item:thm_generic_asympt_behaviour_simple_smallest_eig} of Theorem \ref{thm:generic_asympt_behaviour}}}
\label{subsec:potentials_for_which_smallest_eigenvalue_Hessian_simple}
Let $R$ denote a positive quantity, let us recall the notation $\vvvQuad{R}$ introduced in \cref{notation_vvvQuad_of_R} and $\vvvQuadMorse{R}$ introduced in \cref{def_vvv_Quad_R_Morse}, and let us consider the set
\[
\begin{aligned}
\vvvQuadMorseSsEig{R} = \bigl\{&V\in\vvvQuadMorse{R}: \text{ at every minimum point of $V$,}\\
&\text{the smallest eigenvalue of $D^2V$ is simple}\bigr\}
\end{aligned}
\]
(the subscript ``ss-eig'' stands for ``simple smallest eigenvalue''). 
\begin{proposition}
\label{prop:vvvQuadMorseSsEig_of_R_dense_open_in_vvvQuadMorse_of_R}
The set $\vvvQuadMorseSsEig{R}$ is a dense open subset of $\vvvQuadMorse{R}$ (and thus of $\vvvQuad{R}$). 
\end{proposition}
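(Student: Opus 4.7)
The plan is to prove openness and density separately, both in a rather direct way.

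\textbf{Openness.} The plan is to use the continuous (in fact smooth) dependence of the non-degenerate critical points of a Morse potential on the potential itself. Fix $V_0$ in $\vvvQuadMorseSsEig{R}$. Since $V_0$ is in $\vvvQuad{R}$, all its critical points lie in the open ball $B_{\rr^d}(0,R)$, and since $V_0$ is Morse they are in finite number; in particular, its minimum points form a finite set $\{e_{0,1},\dots,e_{0,n}\}$. By the implicit function theorem applied to $\nabla V = 0$, there exists a neighbourhood $\nu_0$ of $V_0$ in $\vvvQuadMorse{R}$ and $\ccc^k$-maps $e_i:\nu_0\to\rr^d$ such that, for $V$ in $\nu_0$, the minimum points of $V$ are exactly $e_1(V),\dots,e_n(V)$ (all critical points of $V$ in $\nu_0$ follow a minimum, saddle, or maximum of $V_0$ according to its Morse index, the latter being preserved by continuity of the eigenvalues of $D^2V$). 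The Hessian $D^2 V(e_i(V))$ thus depends continuously on $V$ in $\nu_0$, and so do its eigenvalues. The condition ``the smallest eigenvalue of a real symmetric matrix is simple'' is open (it amounts to a strict inequality between the two lowest eigenvalues), so $\vvvQuadMorseSsEig{R}\cap\nu_0$ contains a neighbourhood of $V_0$.

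\textbf{Density.} The plan is to construct, for every $V_0$ in $\vvvQuadMorse{R}$ and every positive $\eta$, a potential in $\vvvQuadMorseSsEig{R}$ which is $\eta$-close to $V_0$. With the same notation as above, let $\{e_{0,1},\dots,e_{0,n}\}$ denote the minimum points of $V_0$, and choose pairwise disjoint closed balls $\widebar{B}_{\rr^d}(e_{0,i},\rho)$ contained in $B_{\rr^d}(0,R)$ and containing no other critical point of $V_0$. For each $i$, fix a smooth bump function $\chi_i:\rr^d\to[0,1]$, supported in $\widebar{B}_{\rr^d}(e_{0,i},\rho)$ and identically equal to $1$ on $\widebar{B}_{\rr^d}(e_{0,i},\rho/2)$. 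For any real symmetric matrices $A_1,\dots,A_n$, consider the perturbation
\[
W(u) = \sum_{i=1}^n \chi_i(u)\, (u-e_{0,i})\cdot A_i(u-e_{0,i})
\,,
\]
which belongs to $\ccc^{k+1}(\rr^d,\rr)$, vanishes outside $B_{\rr^d}(0,R)$ (so that $V_0+W$ remains in $\vvvQuad{R}$), and has $\ccc^{k+1}$-norm going to $0$ as $\max_i\norm{A_i}\to 0$. Since $\chi_i\equiv 1$ near $e_{0,i}$, the point $e_{0,i}$ is still a critical point of $V_0+W$ with Hessian $D^2V_0(e_{0,i})+2A_i$.

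\textbf{Choice of the perturbations $A_i$.} The set of real symmetric $d\times d$ matrices whose smallest eigenvalue is simple is a dense open subset of the set of all real symmetric matrices (its complement is a finite union of proper algebraic subvarieties defined by the vanishing of resultants). Therefore, for each $i$ and for every positive $\delta$ there exists a symmetric matrix $A_i$ with $\norm{A_i}<\delta$ such that $D^2V_0(e_{0,i})+2A_i$ has a simple smallest eigenvalue. Choosing $\delta$ small enough and a simultaneous selection of $A_1,\dots,A_n$, the potential $V_0+W$ is $\eta$-close to $V_0$, belongs to $\vvvQuad{R}$, and its critical points inside each $\widebar{B}_{\rr^d}(e_{0,i},\rho/2)$ reduce to $e_{0,i}$ with positive definite Hessian whose smallest eigenvalue is simple. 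Outside the supports of the $\chi_i$, the potential is unchanged, so the remaining critical points of $V_0+W$ are the non-minimum critical points of $V_0$, which are non-degenerate. To ensure that no spurious critical point appears in $\widebar{B}_{\rr^d}(e_{0,i},\rho)\setminus \widebar{B}_{\rr^d}(e_{0,i},\rho/2)$, it suffices to take $\max_i\norm{A_i}$ small enough, using a lower bound on $\abs{\nabla V_0}$ on the compact annulus $\bigcup_i(\widebar{B}_{\rr^d}(e_{0,i},\rho)\setminus \widebar{B}_{\rr^d}(e_{0,i},\rho/2))$, which is positive because $e_{0,i}$ are isolated critical points. Then $V_0+W$ is Morse, belongs to $\vvvQuadMorseSsEig{R}$, and is arbitrarily close to $V_0$.

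The only subtle step is to control the critical points of $V_0+W$ in the ``transition'' annulus where $\chi_i$ takes intermediate values; this is handled by making the perturbation much smaller than $\inf\abs{\nabla V_0}$ on the annulus. All other steps are routine consequences of the implicit function theorem and basic linear algebra.
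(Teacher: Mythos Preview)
Your proof is correct and follows essentially the same approach as the paper: openness via continuity of eigenvalues of the Hessian at the (smoothly varying) minimum points, and density via a localized quadratic perturbation near each minimum. The only cosmetic difference is that the paper uses an explicit rank-one perturbation $-\tfrac{\varepsilon}{2}\bigl((u-e)\cdot u_1\bigr)^2\rho(|u-e|/\delta)$ in the direction of a lowest eigenvector (which lowers $\mu_1$ while leaving the other eigenvalues of $D^2V(e)$ unchanged), whereas you invoke the density of symmetric matrices with simple smallest eigenvalue to choose the perturbing matrix $A_i$; both choices work equally well.
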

\begin{proof}
Openness follows from the continuity of the roots (eigenvalues of $D^2V$ at a minimum point) of a polynomial with respect to its coefficients. To prove the density, let $V$ be in $\vvvQuadMorse{R}$, and let us assume that there exists a minimum point $e$ of $V$ such that the smallest eigenvalue $\mu_1$ of $D^2V(e)$ is \emph{not} simple. Let $\delta$ denote a positive quantity, small enough so that the closed ball $\widebar{B}_{\rr^d}(e,\delta)$ contains no critical point of $V$ but $e$. Let $\rho$ denote a smooth function $[0,+\infty)\to\rr$ satisfying
\[
\rho(r)=1
\quad\text{for}\quad
r\text{ in }[0,1/2]
\quad\text{and}\quad
\rho(r)=0
\quad\text{for}\quad
r\text{ in }[1,+\infty)
\,,
\]
and let $\varepsilon$ denote a small positive quantity to be chosen below. Let $u_1$ denote an unit eigenvector of $D^2V(e)$ associated to $\mu_1$, and let us consider the perturbed potential $\Vpert$ defined as:
\[
\Vpert(u) = V(u) - \frac{\varepsilon}{2}\bigl((u-e)\cdot u_1\bigr)^2 \rho\bigl(\abs{u-e}/\delta\bigr)
\,.
\]
Then, $e$ is still a critical point of $\Vpert$ and, for every $v$ in $\rr^d$, 
\[
D^2\Vpert(e)(v,v) = D^2 V(e)(v,v) - \varepsilon (v\cdot u_1)^2
\,.
\]
As a consequence, $u_1$ is still an eigenvector of $D^2\Vpert(e)$, the corresponding eigenvalue $\mu_1-\varepsilon$ is simple, and the other eigenvalues of $D^2\Vpert(e)$ are the same as those of $D^2 V(e)$ (the difference $D^2\Vpert(e)-D^2 V(e)$ vanishes on the orthogonal subspace to $u_1$ in $\rr^d$), these other eigenvalues are therefore larger than $\mu_1-\varepsilon$. In addition, if $\varepsilon$ is small enough, then $\mu_1-\varepsilon$ is positive (so that $e$ is still a minimum point of $\Vpert$) and the closed ball $\widebar{B}_{\rr^d}(e,\delta)$ contains no critical point of $\Vpert$ but $e$. The same procedure, repeated for each minimum point of $V$ such that the smallest eigenvalue of $D^2V$ at this minimum point is not simple, provides an arbitrarily small perturbation of $V$ belonging to $\vvvQuadMorseSsEig{R}$, and therefore proves the intended density. 
\end{proof}
Let $\vvvMorseSEig$ denote the subset of $\vvvFull$ containing Morse potentials $V$ such that, at every minimum point point of $V$, the smallest eigenvalue of the Hessian $D^2V$ at this minimum point is simple. 
Proceeding as in \cref{subsec:proof_thm_main_no_standing_front}, the same arguments as in the proof of \cref{prop:vvvQuadMorseSsEig_of_R_dense_open_in_vvvQuadMorse_of_R} above show that this set $\vvvMorseSEig$ is a generic subset of $\vvvFull$, which proves conclusion \cref{item:thm_generic_asympt_behaviour_simple_smallest_eig} of \cref{thm:generic_asympt_behaviour}. 
\subsection{Proof of conclusion \texorpdfstring{\cref{item:thm_generic_asympt_behaviour_approach_limits} of \cref{thm:generic_asympt_behaviour}}{\ref{item:thm_generic_asympt_behaviour_approach_limits} of Theorem \ref{thm:generic_asympt_behaviour}} for bistable travelling fronts}
\label{subsec:generic_asympt_behaviour_bistable_trav_fronts}
The aim of this \namecref{subsec:generic_asympt_behaviour_bistable_trav_fronts} is to complete the idea of the proof of conclusion \cref{item:thm_generic_asympt_behaviour_approach_limits} of \cref{thm:generic_asympt_behaviour} provided in \cref{subsec:idea_proof_generic_asympt_behaviour} with a few milestones of this proof, in the case of travelling fronts (only). 

As for conclusion \cref{item_thm:main_travelling_front} of \cref{thm:main}, the first goal is to prove the intended conclusion among potentials that are quadratic past a certain (positive) radius $R$. This is stated by the following proposition, which is an extension of \cref{prop:global_generic_transversality_travelling_fronts}. It calls upon the notation $\fff_V$ introduced in \cref{notation_fff_V}. 
\begin{proposition}
\label{prop:global_generic_asympt_behav_trav_fronts}
There exists a generic subset of $\vvvQuad{R}$, included in $\vvvQuadMorseSsEig{R}$ such that, for every potential $V$ in this subset, every travelling front $(c,u)$ in $\fff_V$ is transverse, bistable, and its profile $u$ approaches its limit at $+\infty$ ($-\infty$) tangentially to the eigenspace corresponding to the smallest eigenvalue of $D^2V$ at this point. 
\end{proposition}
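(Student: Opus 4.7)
The strategy is to adapt, with a single modification, the entire proof scheme of \cref{prop:global_generic_transversality_travelling_fronts} (\cref{sec:generic_transversality_travelling_fronts}). The starting point is the observation that, on the dense open subset $\vvvQuadMorseSsEig{R}$ (\cref{prop:vvvQuadMorseSsEig_of_R_dense_open_in_vvvQuadMorse_of_R}), the smallest eigenvalue $\mu_1$ of $D^2V$ at every minimum point is simple, so that the local strongly stable and strongly unstable manifolds $\Wssloc{c}{V}$ and $\Wsuloc{c}{V}$ of \cref{prop:local_strong_stable_unstable_manifolds} are well-defined and depend smoothly on $(c,V)$. By \cref{lem:approach_through_stable_unstable_manifold} (or rather its adaptation to the smallest-eigenvalue eigenspace encoded in \cref{subsec:asympt_behav_profiles}), a profile $\xi\mapsto u(\xi)$ of a bistable travelling front \emph{fails} to approach its limit at $-\infty$ tangentially to the smallest-eigenvalue eigenspace of $D^2V(e_-)$ if and only if $U(\xi)=(u,\dot u)(\xi)$ lies in the strongly unstable manifold $\Wsu_{c,V}(E_-)$; symmetrically at $+\infty$ with the strongly stable manifold of $E_+$. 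The goal is thus to show that, generically, the intersections
\[
\Wsu_{c,V}(E_-)\cap\Ws_{c,V}(E_+)
\quad\text{and}\quad
\Wu_{c,V}(E_-)\cap\Wss_{c,V}(E_+)
\]
are \emph{empty} for every bistable travelling front.

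Starting from the intersection of $\vvvQuadMorseSsEig{R}$ with the generic subset of $\vvvQuad{R}$ provided by \cref{prop:global_generic_transversality_travelling_fronts}, one may assume that every $(c,u)\in\fff_V$ is transverse and (by conclusion \cref{item:cor_main_bistable_front} of \cref{cor:main}) bistable. Fix such a $V_0$, a couple $(e_{-,0},e_{+,0})$ of non-degenerate minimum points of $V_0$, a speed $c_0>0$, and neighbourhoods $\nu$, $\ccc$, $r$ as in \cref{subsec:proof_local_gen}. The plan is to run the Sard--Smale argument of \cref{subsubsec:checking_hyp_regularity_Sard_Smale_trav_fronts,subsubsec:checking_hyp_ii_Sard_Smale_thm} with the following modified setting: replace $\bbbu=\partial\Bu_{E_{-,0}}(r)$ by $\bbbsu=\partial\Bsu_{E_{-,0}}(r)$ (and $\hatwuloc{c}{V}$ by the corresponding $\hatwsuloc{c}{V}$ parametrization of $\Wsuloc{c}{V}(E_-(V))$) in the definition of $\Phiu$, keeping the other ingredients unchanged. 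This yields a new $\ccc^k$ map $\Phi^{\su}:\mmm^{\su}\times\Lambda\to\nnn$ analogous to \cref{def_Phi_trav_front}, with
\[
\dim(\mmm^{\su})=\dim(\bbbsu)+\dim(\bbbs)+2=(d-2)+(d-1)+2=2d-1
\,,\qquad
\codim(\www)=2d
\,,
\]
so $\dim(\mmm^{\su})-\codim(\www)=-1$ and hypothesis \cref{item:thm_sard_smale_condition_regularity} of \cref{thm:Sard_Smale} is satisfied. For the transversality hypothesis \cref{item:thm_sard_smale_condition_transversality}, the crucial observation is that the proof of \cref{lem:reach_all_directions_orthogonal_to_UzeroPrime_of_xiZero} nowhere uses the freedom of moving $\bu$ or $\bs$: only the time variable $\xi$ and the perturbation $W$ are exploited, with $\IOnce$ chosen inside $(0,\xi_1)$ far from the local invariant manifolds. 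Since replacing $\Wu$ by $\Wsu$ only affects the $\bu$-dependence of $\Phi^{\su}$, the exact same perturbation construction (cases 1, 2, and impossibility of case 3 via \cref{dissipation_nulle}) still delivers any prescribed direction orthogonal to $\dot U_1(\xi_1)$, and hypothesis \cref{item:thm_sard_smale_condition_transversality} holds.

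\Cref{thm:Sard_Smale} then yields a generic subset of $\nu$ on which the image of $m\mapsto\Phi^{\su}(m,V)$ is transverse to $\www$. Because $\dim(\mmm^{\su})-\codim(\www)=-1<0$, any transverse intersection is automatically \emph{empty}: the analogue of the one-to-one correspondence of \cref{prop:one_to_one_correspondence_between_fronts_and_diagonal_intersection} shows that this means no bistable travelling front connecting $e_-(V)$ to $e_+(V)$ at a speed in $\ccc$ approaches $e_-(V)$ through the strongly unstable manifold. Running the symmetric argument with the replacement $\Ws\leadsto\Wss$ at $E_+$ rules out, again locally generically, the fast approach at $+\infty$. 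Patching these finitely many local statements over all couples of non-degenerate minimum points of $V_0$, then using the compactness/covering argument of \cref{subsec:reduction} to cover each compact interval of speeds, and finally invoking \cref{lem:local_genericity_implies_global_genericity} as in \cref{subsec:reduction}, produces a generic subset of $\vvvQuad{R}$ on which, for every bistable travelling front $(c,u)$ and every sufficiently localised speed window, neither tail is ``fast''. Intersecting with the generic set where transversality and bistability already hold delivers \cref{prop:global_generic_asympt_behav_trav_fronts}.

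The only conceptual obstacle is the one flagged above: checking that \cref{lem:reach_all_directions_orthogonal_to_UzeroPrime_of_xiZero} transfers verbatim when $\hatwuloc{c}{V}$ is replaced by $\hatwsuloc{c}{V}$. This is why the remark that the lemma's proof never exploits variations in $\bu$ (nor in $\bs$) is essential, and it is precisely this feature that allows the dimension count to shift from $0$ to $-1$ without breaking the Sard--Smale machinery. The remaining steps, including the extension from $\vvvQuad{R}$ to the full space $\vvvFull$ in the spirit of \cref{sec:proof_main}, are then entirely mechanical, and yield the full statement \cref{item:thm_generic_asympt_behaviour_approach_limits} of \cref{thm:generic_asympt_behaviour} for travelling fronts (the analogous arguments for symmetric and asymmetric standing pulses, sketched in \cref{subsec:idea_proof_generic_asympt_behaviour}, proceed identically from \cref{sec:generic_elementarity_sym_stand_pulses,sec:generic_tranversality_asym_stand_pulses}).
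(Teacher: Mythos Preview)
Your proposal is correct and follows essentially the same approach as the paper: replace $\bbbu$ by $\bbbsu$ (and symmetrically $\bbbs$ by $\bbbss$), observe that the dimension count drops to $\dim(\mmm^{\su})-\codim(\www)=-1$, note that \cref{lem:reach_all_directions_orthogonal_to_UzeroPrime_of_xiZero} never uses the freedom in $\bu$ or $\bs$ so hypothesis~\cref{item:thm_sard_smale_condition_transversality} survives, conclude that the transverse intersection is empty, and then patch via the reduction of \cref{subsec:reduction} and intersect with the generic set from \cref{prop:global_generic_transversality_travelling_fronts}. The paper organizes this through an explicit local statement (\cref{prop:local_generic_asympt_behav_trav_fronts}) before globalizing, but the substance is the same.
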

\subsubsection{Reduction to a local statement}
Let $V_0$ denote a potential function in $\vvvQuadMorseSsEig{R}$, and let $e_{-,0}$ and $e_{+,0}$ denote non-degenerate minimum points of $V_0$. Let us consider the neighbourhood $\nuRobust(V_0,e_{-,0},e_{+,0})$ of $V_0$ introduced in \cref{subsec:reduction}, and let us denote by $\tildeNuRobust(V_0,e_{-,0},e_{+,0})$ the intersection $\nuRobust(V_0,e_{-,0},e_{+,0})\cap\vvvQuadMorseSsEig{R}$. The following proposition is a variant (extension in the case of bistable travelling fronts) of \cref{prop:local_generic_transversality_travelling_fronts_given_critical_points_speed}. The notation is similar, except for the ``tilde'' added to the symbols of the various sets, in order to differentiate them for the corresponding sets introduced in \cref{prop:local_generic_transversality_travelling_fronts_given_critical_points_speed}.
\begin{proposition}
\label{prop:local_generic_asympt_behav_trav_fronts}
For every positive speed $c_0$, there exist a neighbourhood $\tilde{\nu}_{\VZeroeMinusZeroePlusZerocZero}$ of $V_0$ in $\vvvQuad{R}$, included in $\tildeNuRobust(V_0,e_{-,0},e_{+,0})$, a neighbourhood $\tilde{\ccc}_{\VZeroeMinusZeroePlusZerocZero}$ of $c_0$ in $(0,+\infty)$, and a generic subset $\tildeNuThingGen{\VZeroeMinusZeroePlusZerocZero}$ of $\tilde{\nu}_{\VZeroeMinusZeroePlusZerocZero}$ such that, for every $V$ in $\tildeNuThingGen{\VZeroeMinusZeroePlusZerocZero}$, every front travelling at a speed $c$ in $\tilde{\ccc}_{\VZeroeMinusZeroePlusZerocZero}$ and connecting $e_-(V)$ to $e_+(V)$, for the potential $V$, is transverse and its profile $u$ approaches its limit at $+\infty$ ($-\infty$) tangentially to the eigenspace corresponding to the smallest eigenvalue of $D^2V$ at this point. 
\end{proposition}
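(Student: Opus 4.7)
The plan is to apply \cref{thm:Sard_Smale} twice to slightly modified versions of the setting of \cref{subsec:proof_local_gen}, and to combine the outcome with the conclusion of \cref{prop:local_generic_transversality_travelling_fronts_given_critical_points_speed}. The hypothesis $V_0\in\vvvQuadMorseSsEig{R}$ ensures (up to shrinking $\nuRobust(V_0,e_{-,0},e_{+,0})$) that, for every $V$ in $\tildeNuRobust(V_0,e_{-,0},e_{+,0})$, the smallest eigenvalues of $D^2V\bigl(e_\pm(V)\bigr)$ remain simple, so that \cref{prop:local_strong_stable_unstable_manifolds} applies and produces, smoothly in $(c,V)$, local strongly unstable and strongly stable manifolds $\Wsuloc{c}{V}\bigl(E_-(V)\bigr)$ and $\Wssloc{c}{V}\bigl(E_+(V)\bigr)$, with respective dimensions $d-m(e_{-,0})-1=d-1$ and $d-1$.

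First, I would replace $\hatwuloc{c}{V}$ in the setting of \cref{subsec:proof_local_gen} by an analogous parametrisation $\hatwsuloc{c}{V}:\widetilde\bbbu\to\rr^{2d}$ of the boundary of $\Wsuloc{c}{V}\bigl(E_-(V)\bigr)$, keeping $\hatwsloc{c}{V}$, and I would consider the function $\widetilde\Phi$ defined by \cref{def_Phi_trav_front} with this replacement. The domain $\widetilde\mmm$ has dimension one less than $\mmm$, so $\dim(\widetilde\mmm)-\codim(\www)=-1$, and \cref{item:thm_sard_smale_condition_regularity} of \cref{thm:Sard_Smale} is fulfilled. The key point is that the transversality hypothesis \cref{item:thm_sard_smale_condition_transversality} is still satisfied: the construction of the perturbation $W$ in \cref{lem:reach_all_directions_orthogonal_to_UzeroPrime_of_xiZero} relies only on the freedoms in $\xi$ and $V$, through a bump function localised in $u_1(\IOnce)$ with $\IOnce\subset(0,\xi_1)$ disjoint from $\piPos\bigl(\Wuloc{c}{V}(E_-(V))\cup\Wsloc{c}{V}(E_+(V))\bigr)$; since $\piPos\bigl(\Wsuloc{c}{V}(E_-(V))\bigr)\subset\piPos\bigl(\Wuloc{c}{V}(E_-(V))\bigr)$, the same $W$ leaves $\hatwsuloc{c}{V}$ unchanged and the full differential reaches every direction in $\rr^{2d}$ transverse to $\dot U_1(\xi_1)$, exactly as before. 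Consequently, \cref{thm:Sard_Smale} yields a generic subset of $\tildeNuRobust$ for which the image of $\widetilde\Phi(\cdot,V)$ meets $\www$ transversally; but, because $\dim(\widetilde\mmm)-\codim(\www)=-1$, this transverse intersection must be \emph{empty}. By the correspondence established in \cref{prop:one_to_one_correspondence_between_fronts_and_diagonal_intersection}, this precisely means that no front connecting $e_-(V)$ to $e_+(V)$ at a speed $c\in\ccc$ has its profile lying in $\Wsu_{c,V}\bigl(E_-(V)\bigr)$, i.e.\ no front approaches $e_-(V)$ through the strongly unstable manifold.

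Second, I would perform the symmetric construction in which $\hatwsloc{c}{V}$ is replaced by a parametrisation $\hatwssloc{c}{V}$ of $\partial\Wssloc{c}{V}\bigl(E_+(V)\bigr)$, keeping $\hatwuloc{c}{V}$. The same dimension count gives $\dim(\mmm)-\codim(\www)=-1$, and the very same construction of $W$ works since the support of $W$ can be kept disjoint from $\piPos\bigl(\Wsloc{c}{V}(E_+(V))\bigr)$ (and a fortiori from $\piPos\bigl(\Wssloc{c}{V}(E_+(V))\bigr)$), leaving $\hatwssloc{c}{V}$ unchanged. The conclusion is again emptiness of the transverse intersection, which translates into: no front has its profile lying in $\Wss_{c,V}\bigl(E_+(V)\bigr)$.

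To conclude, the set $\tildeNuThingGen{\VZeroeMinusZeroePlusZerocZero}$ is obtained as the intersection of the two generic subsets above with $\nuThingGen{\VZeroeMinusZeroePlusZerocZero}$ of \cref{prop:local_generic_transversality_travelling_fronts_given_critical_points_speed}, intersected with $\vvvQuadMorseSsEig{R}$; for $V$ in this set, every front travelling at a speed $c$ in $\tilde{\ccc}$ and connecting $e_-(V)$ to $e_+(V)$ is transverse, and its profile lies neither in $\Wsu_{c,V}\bigl(E_-(V)\bigr)$ nor in $\Wss_{c,V}\bigl(E_+(V)\bigr)$. By the dichotomy recalled in \cref{subsec:asympt_behav_profiles} (case \cref{item:case_slow_approach} vs.\ case \cref{item:case_fast_approach} of approach to a hyperbolic equilibrium), this is exactly the statement that the profile approaches its limit at $\pm\infty$ tangentially to the one-dimensional eigenspace associated with the smallest eigenvalue of $D^2V\bigl(e_\pm(V)\bigr)$. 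The main obstacle, and the reason this plan hinges on \cref{prop:local_strong_stable_unstable_manifolds}, is to obtain smooth (in $c$ and $V$) parametrisations of $\partial\Wsuloc{c}{V}$ and $\partial\Wssloc{c}{V}$ whose boundaries can be treated as codimension-$1$ submanifolds transverse to the flow; once this is in hand, the two Sard--Smale applications proceed as above, with no new perturbation argument to devise.
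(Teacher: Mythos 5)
Your proposal is correct and follows essentially the same approach as the paper: replace the (un)stable manifold by its strongly (un)stable counterpart in two separate applications of \cref{thm:Sard_Smale}, observe that the transversality hypothesis is preserved because the perturbation argument in \cref{lem:reach_all_directions_orthogonal_to_UzeroPrime_of_xiZero} uses only the $\xi$ and $V$ freedoms, note that the dimension count $\dim(\widetilde\mmm)-\codim(\www)=-1$ forces the transverse intersection to be empty, and intersect the resulting generic sets with the one from \cref{prop:local_generic_transversality_travelling_fronts_given_critical_points_speed}. This is exactly the strategy the paper sketches in \cref{subsec:idea_proof_generic_asympt_behaviour} and applies in \cref{subsec:generic_asympt_behaviour_bistable_trav_fronts}.
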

\begin{proof}[Proof that \cref{prop:local_generic_asympt_behav_trav_fronts} yields \cref{prop:global_generic_asympt_behav_trav_fronts}]
\Cref{prop:global_generic_transversality_travelling_fronts} already ensures the existence of a generic subset $\vvvQuadTransvfff{R}$ of $\vvvQuad{R}$ such that, for every potential function $V$ in this subset, every travelling front $(c,u)$ in $\fff_V$ is transverse. According to the arguments of \cref{subsec:proof_thm_travelling}, such a front is necessarily bistable. Thus, only the conclusion of \cref{prop:local_generic_asympt_behav_trav_fronts} relative to the asymptotic behaviour of the profile remains to be proved. 

To this end, the arguments are the same as in \cref{subsec:reduction}. We may introduce the sets $\tilde{\nu}_{\VZerocZero}$ and $\ccc_{\VZerocZero}$ and $\nuThingGen{\VZerocZero}$, defined exactly as the corresponding sets (without tilde) in \cref{three_sets_ccc_nu_nuGen} (with $\nuRobust(V_0,e_{-,0},e_{+,0})$ replaced with $\tildeNuRobust(V_0,e_{-,0},e_{+,0})$), and the same remaining arguments (replacing $\vvvQuadMorse{R}$ with $\vvvQuadMorseSsEig{R}$) show the existence of a generic subset of $\vvvQuad{R}$, included in $\vvvQuadMorseSsEig{R}$, such that, for every potential $V$ in this subset, every \emph{bistable} travelling front $(c,u)$ in $\fff_V$ is transverse and its profile $u$ approaches its limit at both ends of $\rr$ according to the intended conclusion. Intersecting this generic subset with the one provided by \cref{prop:global_generic_transversality_travelling_fronts} provides a generic subset of $\vvvQuad{R}$ for which all conclusions of \cref{prop:global_generic_asympt_behav_trav_fronts} hold.
\end{proof}
\subsubsection{Proof of the local statement}
The proof of \cref{prop:local_generic_asympt_behav_trav_fronts} may be derived from the proof of \cref{prop:local_generic_transversality_travelling_fronts_given_critical_points_speed}, up to a few changes and thanks to some key arguments, all of which are exposed in \cref{subsec:idea_proof_generic_asympt_behaviour} above. 
\subsubsection{Extension to all potentials}
The extension to all potentials is obtained by applying the same strategy as in \cref{subsec:proof_thm_main_for_nonzero_speeds}. Let us recall the notation $\fff_{V,R}$ introduced in \cref{notation_fff_V_R}. The same arguments as in \cref{subsec:proof_thm_main_for_nonzero_speeds} show that the intended extension is a consequence of the following extension of \cref{prop:genericity_transv_tf_up_to_R}. 
\begin{proposition}
\label{prop:genericity_transv_tf_min_rate_up_to_R}
For every positive quantity $R$, there exists a generic subset\\
$\vvvFullTransvfffMinRate{R}$ of $\vvvFull$, included in $\vvvMorseSEig$, such that, for every potential $V$ in this subset, every travelling front $(c,u)$ in $\fff_{V,R}$ is transverse, bistable, and approaches its limit at $+\infty$ ($-\infty$) tangentially to the eigenspace corresponding to the smallest eigenvalue of $D^2V$ at this point. 
\end{proposition}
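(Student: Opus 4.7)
The proof proceeds exactly as for \cref{prop:genericity_transv_tf_up_to_R} in \cref{subsec:proof_thm_main_for_nonzero_speeds}, using \cref{prop:global_generic_asympt_behav_trav_fronts} instead of \cref{prop:global_generic_transversality_travelling_fronts} as the input for potentials quadratic past the radius $R+1$, and enriching the local open dense condition with a closed statement encoding the tangential asymptotic behaviour. The crucial reformulation is that, thanks to \cref{prop:local_strong_stable_unstable_manifolds}, a bistable travelling-front profile $u$ approaches $e_+(V)$ at $+\infty$ tangentially to the eigenspace corresponding to the smallest eigenvalue of $D^2V\bigl(e_+(V)\bigr)$ if and only if the entry point of its trajectory into the local stable manifold $\Wsloc{c}{V}\bigl(E_+(V)\bigr)$ does \emph{not} lie in the codimension-one submanifold $\Wssloc{c}{V}\bigl(E_+(V)\bigr)$, and symmetrically at $-\infty$ for $\Wsuloc{c}{V}\bigl(E_-(V)\bigr)$. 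These strong submanifolds depend $\ccc^k$-smoothly on $(c,V)$, which is the key.

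For every $V_0 \in \vvvQuadMorseSsEig{(R+1)}$, every pair $(e_{-,0},e_{+,0}) \in \SigmaMin(V_0)^2$, every $c_0 > 0$ and every integer $N \ge 1$, I would keep the neighbourhoods $\nu_{\VZeroeMinusZeroePlusZerocZero}$ and $\ccc_{\VZeroeMinusZeroePlusZerocZero}$, the set $\mmm_N$ from \cref{def_mmm_N}, the target $\nnn = (\rr^{2d})^2$, the diagonal $\www$, and the function $\Phi$ from \cref{def_Phi_trav_front}, and then define $\ooo_{\VZeroeMinusZeroePlusZerocZeroN}$ as the set of potentials $V$ in $\nu_{\VZeroeMinusZeroePlusZerocZero}$ satisfying both of the following:
\begin{enumerate}
\item $\Phi(\mmm_N, V)$ is transverse to $\www$ in $\nnn$;
\item for every $(\bu, \bs, \xi, c) \in \mmm_N$ with $\Phi(\bu, \bs, \xi, c, V) \in \www$, the point $\hatwuloc{c}{V}(\bu)$ does \emph{not} lie in $\Wsuloc{c}{V}\bigl(E_-(V)\bigr)$ and the point $\hatwsloc{c}{V}(\bs)$ does \emph{not} lie in $\Wssloc{c}{V}\bigl(E_+(V)\bigr)$.
\end{enumerate}
Density of $\ooo_{\VZeroeMinusZeroePlusZerocZeroN}$ in $\nu_{\VZeroeMinusZeroePlusZerocZero}$ follows from \cref{prop:local_generic_asympt_behav_trav_fronts} applied in the same local setting (with $\ccc_{\VZeroeMinusZeroePlusZerocZero}$ shrunk if needed): the generic subset provided there automatically satisfies the weaker condition restricted to fronts of length $\le N$. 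Openness extends the compactness argument of \cref{lem:ooo_open_dense}: for a sequence $V_n \to V_\infty$ of potentials in $\nu_{\VZeroeMinusZeroePlusZerocZero}\setminus\ooo_{\VZeroeMinusZeroePlusZerocZeroN}$, one extracts by compactness of $\mmm_N$ a limit point $(\bu_\infty,\bs_\infty,\xi_\infty,c_\infty) \in \mmm_N$ with $\Phi(\bu_\infty,\bs_\infty,\xi_\infty,c_\infty,V_\infty) \in \www$; then either failure of transversality (closed condition) or failure of condition (2) is preserved in the limit thanks to the smooth $(c,V)$-dependence of $\Wssloc{c}{V}\bigl(E_+(V)\bigr)$ and $\Wsuloc{c}{V}\bigl(E_-(V)\bigr)$ guaranteed by \cref{prop:local_strong_stable_unstable_manifolds}.

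Intersecting finitely many $\ooo_{\VZeroeMinusZeroePlusZerocZeroN}$ over $(e_{-,0}, e_{+,0}) \in \SigmaMin(V_0)^2$ yields open dense subsets $\ooo_{\VZerocZeroN}$ of neighbourhoods $\nu_{\VZerocZero}$ of $V_0$ in $\vvvQuad{(R+1)}$. The Seeley-type extension machinery of \cref{lem:res_R_R_prime_is_continuous_and_surjective_and_open,cor:truncation_extension_of_potentials} then produces dense open subsets $\tilde\ooo^{\ext}_{\VZerocZeroN}$ of $\vvvFull$ as in \cref{def_tilde_nu_VZerocZero,def_tilde_ooo,def_tilde_ooo_ext}. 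Covering the second-countable product $\vvvQuadMorseSsEig{(R+1)} \times (0, +\infty)$ by a countable family $\{\nu_{V_{0,i}, c_{0,i}}\}_{i \in \nn}$, possible because $\vvvQuadMorseSsEig{(R+1)}$ is dense open in the separable $\vvvQuad{(R+1)}$ by \cref{prop:vvvQuadMorseSsEig_of_R_dense_open_in_vvvQuadMorse_of_R}, the set
\[
\vvvFullTransvfffMinRate{R} := \vvvMorseSEig \cap \Bigl(\bigcap_{(i,N) \in \nn^2} \tilde\ooo^{\ext}_{V_{0,i}, c_{0,i}, N}\Bigr)
\]
is a countable intersection of dense open subsets of $\vvvFull$, hence generic. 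That every travelling front in $\fff_{V,R}$ for $V \in \vvvFullTransvfffMinRate{R}$ is transverse, bistable, and approaches its limits tangentially to the smallest-eigenvalue eigenspace follows from the same ``values on $\widebar B_{\rr^d}(0,R)$ only'' reduction as in the proof of \cref{lem:intersection_of_dense_opens_subsets_of_vvvQuadR_providing_transversality_in_BRprime}: any such front is contained in $\widebar B_{\rr^d}(0,R)$, so its properties are unchanged if $V$ is replaced by a potential quadratic past $R+1$ coinciding with $V$ on $\widebar B_{\rr^d}(0,R)$ and belonging to $\ooo_{V_{0,i}, c_{0,i}, N}$ for $N$ larger than the time the profile takes to traverse from $\partial\Wuloc{c}{V}(E_-)$ to $\partial\Wsloc{c}{V}(E_+)$.

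The main obstacle is proving openness of condition (2), since tangential asymptotic behaviour is a priori an infinite-time statement. This is defused by \cref{prop:local_strong_stable_unstable_manifolds}, which packages the asymptotic condition as a geometric intersection property with finite-dimensional submanifolds varying smoothly in $(c,V)$, reducing it to a compact-trajectory condition to which the compactness arguments already used throughout \cref{sec:generic_transversality_travelling_fronts,sec:proof_main} apply directly.
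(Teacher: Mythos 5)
Your overall strategy follows the paper's: localize to $\vvvQuad{(R+1)}$, encode the infinite-time asymptotic condition via the strong local (un)stable manifolds from \cref{prop:local_strong_stable_unstable_manifolds}, and then run the open-dense, Seeley-extension and countable-cover machinery of \cref{subsec:proof_thm_main_for_nonzero_speeds}. The difference is in how the local open dense sets are built. Following the heuristic of \cref{subsec:idea_proof_generic_asympt_behaviour}, the paper replaces $\bbbu$ (resp.\ $\bbbs$) by $\bbbsu$ (resp.\ $\bbbss$) in $\mmm_N$ and in $\Phi$, so that $\dim(\mmm)-\codim(\www)=-1$ and transversality of the image to $\www$ degenerates into emptiness of $\Phi(\mmm_N,V)\cap\www$; this emptiness is manifestly open, dense by \cref{prop:local_generic_asympt_behav_trav_fronts}, and, by construction, says exactly that no front of bounded length enters an endpoint through its strong manifold. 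You instead keep the original $\mmm_N$ and $\Phi$, and impose a closed side condition at the entry and exit points of each intersection.

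This is where a gap appears. The biconditional you attribute to \cref{prop:local_strong_stable_unstable_manifolds} --- that $u$ approaches $e_+(V)$ tangentially to the $\mu_1$-eigenspace if and only if $\hatwsloc{c}{V}(\bs)$ lies outside $\Wssloc{c}{V}\bigl(E_+(V)\bigr)$ --- has one easy implication (local strong manifolds lie inside the corresponding global ones, so a tangentially approaching trajectory never visits $\Wssloc{c}{V}\bigl(E_+(V)\bigr)$). But you also need the converse: if the trajectory lies in the global strong stable manifold of $E_+(V)$, then the point where it first crosses $\partial\Wsloc{c}{V}\bigl(E_+(V)\bigr)$ already lies inside $\Wssloc{c}{V}\bigl(E_+(V)\bigr)$. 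That is \emph{not} supplied by \cref{prop:local_strong_stable_unstable_manifolds}: the two local manifolds are graphs over balls taken in different decompositions equipped with different adapted norms ($\norms{\cdot}$ versus $\normss{\cdot}$), so a strongly stable trajectory may exit $\Wssloc{c}{V}\bigl(E_+(V)\bigr)$ before exiting $\Wsloc{c}{V}\bigl(E_+(V)\bigr)$, in which case the entry point into the local stable manifold lies outside the local strong stable manifold and your condition~(2) is satisfied while the intended asymptotic conclusion fails. This can be repaired by shrinking the radius defining $\Wsloc{c}{V}$ relative to the one defining $\Wssloc{c}{V}$ (and symmetrically at $E_-(V)$), but that comparison has to be set up explicitly; the paper's substitution of $\bbbsu$/$\bbbss$ for $\bbbu$/$\bbbs$ avoids ever comparing the two local manifolds and is therefore the cleaner route.
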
 
\begin{proof}
\Cref{prop:genericity_transv_tf_up_to_R} already provides a generic subset $\vvvFullTransvfff{R}$ of $\vvvFull$ such that, for every potential $V$ in this subset, every travelling front $(c,u)$ in $\fff_{V,R}$ is transverse, and therefore bistable (\cref{subsec:proof_thm_travelling}). Therefore, only the conclusion relative to the asymptotic behaviour of the profiles remains to be proved. 

The proof of this conclusion is a variation of the proof of \cref{prop:genericity_transv_tf_up_to_R} and follows the ideas exposed in \cref{subsec:idea_proof_generic_asympt_behaviour}: for some potential $V_0$ in $\vvvQuad{(R+1)}$ and for some non-degenerate minimum points $e_{-,0}$ and $e_{+,0}$ of $V$, and for every nonnegative integer $N$, two variants of the set $\mmm_N$ defined in \cref{def_mmm_N} (and of the open subset $\ooo_{\VZeroeMinusZeroePlusZerocZeroN}$ defined in \cref{def_ooo_VZeroeMinusZeroePlusZerocZeroN}) can be introduced: one where $\bbbu$ is replaced by $\bbbsu$, and one where $\bbbs$ is replaced by $\bbbss$. In each of theses two cases, the condition ``$\Phi\bigl(\mmm_N,V\bigr)$ is transverse to $\www$ in $\nnn$'' can be read as ``the intersection between $\Phi\bigl(\mmm_N,V\bigr)$ and $\www$ is empty'', due to the missing dimension induced by the change in each of theses variants. Then, replacing the open subset $\ooo_{\VZeroeMinusZeroePlusZerocZeroN}$ by the intersection of its two variants, the remaining arguments are exactly the same. This proves \cref{prop:genericity_transv_tf_min_rate_up_to_R} (and therefore also completes the proof of conclusion \cref{item:thm_generic_asympt_behaviour_approach_limits} of \cref{thm:generic_asympt_behaviour} for bistable travelling fronts). 
\end{proof}
\printbibliography
\bigskip
\RomainsSignature
\bigskip

\mySignature
\end{document}